\documentclass[10pt, reqno]{amsart}

\usepackage{graphicx}
\usepackage{amsmath,amssymb,mathtools,amsopn,amsfonts,amsthm}
\usepackage{bbm}
\usepackage{subfigure}
\usepackage{eepic}
\usepackage{tikz}
\usetikzlibrary{calc,angles,positioning,intersections,quotes}
\usepackage{a4wide}
\usepackage[utf8]{inputenc}
\usepackage{epsfig}
\usepackage{latexsym}
\usepackage[shortlabels]{enumitem}
\usepackage[UKenglish]{babel}
\usepackage{verbatim}
\usepackage{xcolor}

\usepackage[initials]{amsrefs}
\usepackage[bookmarks]{hyperref}

\numberwithin{equation}{section}
\numberwithin{figure}{section}
\theoremstyle{plain}
\newtheorem{thm}{\protect\theoremname}[section]
\theoremstyle{plain}
\newtheorem{lem}[thm]{\protect\lemmaname}
\theoremstyle{definition}
\newtheorem{defn}[thm]{\protect\definitionname}
\theoremstyle{plain}
\newtheorem{cor}[thm]{\protect\corollaryname}
\theoremstyle{plain}
\newtheorem{prop}[thm]{\protect\propositionname}
\theoremstyle{plain}
\newtheorem*{lem*}{\protect\lemmaname}
\theoremstyle{plain}
\newtheorem*{thm*}{\protect\theoremname}
\newtheorem{rem}[thm]{\protect\remarkname}
\theoremstyle{plain}

\usepackage{babel}
\providecommand{\corollaryname}{Corollary}
\providecommand{\definitionname}{Definition}
\providecommand{\lemmaname}{Lemma}
\providecommand{\propositionname}{Proposition}
\providecommand{\theoremname}{Theorem}
\providecommand{\remarkname}{Remark}
\newcommand{\A}{\mathbf{A}}
\newcommand{\Se}{\mathbf{S}}
\newcommand{\N}{\mathbb{N}}

\newcommand{\R}{\mathbb{R}}
\renewcommand{\P}{\mathbb{P}}
\newcommand{\E}{\mathbb{E}}
\newcommand{\D}{\mathcal{D}}

\newcommand{\Z}{\mathbb{Z}}

\newcommand{\SL}{\mathrm{SL}(3,\mathbb{R})}
\newcommand{\GL}{\mathrm{GL}(3,\mathbb{R})}

\providecommand{\norm}[1]{\lVert#1\rVert}

\newcommand{\I}{\mathcal{I}}
\newcommand{\J}{\mathcal{J}}
\renewcommand{\i}{\mathbf{i}}
\renewcommand{\j}{\mathbf{j}}
\renewcommand{\k}{\mathbf{k}}

\newcommand{\hd}{\dim_\textup{H}}
\newcommand{\bd}{\dim_\textup{B}}

\newcommand{\ld}{\dim_\textup{LY}}

\setlength{\parskip}{1.ex}

\title{Hausdorff dimension of the Rauzy gasket}

\author{Natalia Jurga} \address{Mathematical Institute, University of St Andrews, Scotland, KY16 9SS}
\email{naj1@st-andrews.ac.uk}

\begin{document}

 \maketitle

\begin{abstract}
The Rauzy gasket is the attractor of a parabolic, nonconformal iterated function system on the projective plane which describes an exceptional parameter set in various important topological and dynamical problems. Since 2009 there have been several attempts to calculate the Hausdorff dimension of the Rauzy gasket, which is a challenging problem due to the combination of the parabolicity and nonconformality in the geometry. In this paper we settle this question by proving that the Hausdorff dimension of the Rauzy gasket equals the (projective) affinity dimension. The key technical result underpinning this is a partial generalisation of work of Hochman and Solomyak to the $\SL$ setting, where we establish the exact dimension of stationary (Furstenberg) measures supported on the Rauzy gasket. The dimension results for both stationary measures and attractors are established in broader generality and extend recent work on projective iterated function systems to higher dimensions.
\end{abstract}

\section{Introduction}

Let $\A_R$ denote the set of matrices $\A_R=\{A_1,A_2,A_3\}$ where
\begin{equation}
\label{rauzysystem}
A_1=\begin{pmatrix} 1&1&1 \\0&1&0\\ 0&0&1 \end{pmatrix} ,\;\; A_2=\begin{pmatrix} 1&0&0 \\1&1&1\\ 0&0&1 \end{pmatrix}, \;\; A_3=\begin{pmatrix} 1&0&0 \\0&1&0\\ 1&1&1 \end{pmatrix}.
\end{equation}
The action of the matrices $A_i$  on the standard two-dimensional simplex $S=\{(x,y,z) : x,y,z \geq 0, x+y+z=1\}$ induces an iterated function system (IFS) $\{f_{A_i}: S \to S\}_{i=1}^3$ where 
\begin{align*}
f_{A_1}(x,y,z)&=\left(\frac{1}{2-x}, \frac{y}{2-x}, \frac{z}{2-x}\right)\\
f_{A_2}(x,y,z)&=\left(\frac{x}{2-2}, \frac{1}{2-y}, \frac{z}{2-y}\right)\\
f_{A_3}(x,y,z)&=\left(\frac{x}{2-z}, \frac{y}{2-z}, \frac{1}{2-z}\right).
\end{align*}
 The Rauzy gasket is the attractor of this IFS, i.e. the unique non-empty compact set $R \subset S$ for which $R=\bigcup_{i=1}^3 f_{A_i}(R)$.

The Rauzy gasket has been discovered independently three times in various topological and dynamical contexts. It first appeared in the work of Arnoux and Rauzy \cite{arnoux1991representation} in the context of interval exchange transformations. It was later rediscovered by Levitt \cite{levitt1993dynamique} where it was associated with the simplest example of a pseudogroup of rotations, and finally was reintroduced yet again in the work of Dynnikov and De Leo \cite{deleo2009geometry}  in connection with Novikov’s problem of plane sections of triply periodic surfaces. 

\begin{figure}[h!]
\centering
\includegraphics[width=5cm]{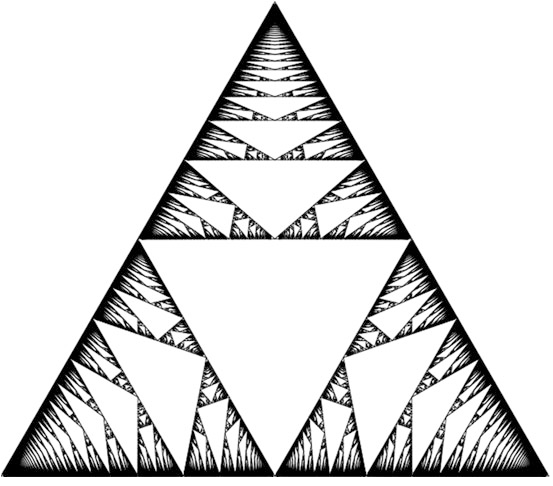}
\caption{The Rauzy gasket contained in the standard 2-simplex $S$.}
\end{figure}

In each of these contexts, the Rauzy gasket describes some parameter set of exceptional behaviour, leading to the natural question: what can one say about the `size' of the Rauzy gasket? There are of course many ways of quantifying size; the literature has focused on what is the Lebesgue measure $\mathcal{L}(R)$ and the more subtle question of what is the Hausdorff dimension $\hd R$.

In \cite{arnoux1991representation} it was conjectured that $\mathcal{L}(R)=0$ which was later proved in \cite{levitt1993dynamique} in a proof accredited to Yoccoz. \cite{deleo2009geometry} contained an alternative proof of this fact and it was conjectured that the Hausdorff dimension should be strictly less than 2. This was rigorously established by Avila, Hubert and Skripchenko \cite{avila2016diffusion}. Since then there have been a number of further papers estimating the Hausdorff dimension of the gasket \cite{fougeron2020dynamical, pollicott2021upper,gutierrez2020lower} which have yielded the bounds $1.19< \hd R < 1.74$. 

There are several reasons why the Hausdorff dimension of the Rauzy gasket has been so challenging to compute. While the dimension theory of conformal and uniformly contracting IFSs are very well understood, the IFS generating the Rauzy gasket is neither conformal (as can be seen by the fact that the copies of $R$ get increasingly distorted as one goes deeper into the construction) nor uniformly contracting (as can be seen by the parabolicity coming from the fixed points). While the dimension theory of \emph{conformal} parabolic IFSs has been widely developed by Mauldin, Urba\'nski and coauthors e.g. \cite{mauldin2000parabolic}, and \emph{affine} nonconformal IFSs are becoming increasingly better understood e.g. \cite{BHR,HR,rapaport2022SelfAffRd}, the understanding of nonconformal nonlinear IFSs is still fairly limited, while simultaneously nonconformal and parabolic IFSs have hardly been studied at all.

While much of the literature on the Rauzy gasket has focused on obtaining numerical bounds using ad hoc methods, there has been no systematic treatment of the Rauzy gasket within its natural broader context: through the geometric measure theory of the action of matrix semigroups on projective space. Throughout this paper we will restrict our attention to strongly irreducible and proximal subsemigroups of $\mathrm{SL}(d,\R)$, and in most of the paper $d=3$ although we keep the discussion general for now. We say $\A$ is irreducible if there is no proper linear subspace which is preserved by each matrix in $\A$, strongly irreducible if there is no finite union of proper linear subspaces which is preserved by each matrix in $\A$ and proximal if the semigroup $\Se$ generated by $\A$ contains a proximal element, i.e. a matrix with a simple leading eigenvalue. Given a proximal and strongly irreducible semigroup $\Se$ generated by $\A \subset \mathrm{SL}(d,\R)$, its (projective) limit set $K_\A \subset \R\P^{d-1}$ can be defined as the closure of the set of attracting fixed points of proximal elements in $\Se$ see eg. \cite[\S 3]{BQ}. Throughout the paper we will refer to a set in $\R\P^{d-1}$ which can be realised as the projective limit set of some semigroup $\Se$ as a self-projective set. While these are generally subsets of $(d-1)$-dimensional real projective space $\R\P^{d-1}$, when $\A$ are positive matrices it is sometimes simpler to study the limit set by identifying directions in $\R^d$ with a suitable linear subspace. We have already seen that one way to do this is to study the limit set within the simplex $S$, whereas in this paper it will be more convenient to consider the induced projective maps on the plane $\{(x,y,1): x,y>0\}$ and study the induced limit set there, see \S \ref{lft}. Under positivity of $\A$ the limit set is actually the attractor of an IFS, although this is not true of self-projective sets in general. 

Closely related to the projective limit sets are the family of stationary measures, often called Furstenberg measures \cite{BQ,BL}. Given a probability measure $\eta$ fully supported on $\A=\{A_1, \ldots,A_N\}$ there exists a unique stationary measure $\mu$ which is supported on the limit set, which describes the distribution of the random vector $A_{i_n} \cdots A_{i_1}v \in \R\P^{d-1}$ where each $A_{i_j}$ is chosen i.i.d with respect to $\eta$, see \S \ref{Slyap}. 

The dimension theory of both self-projective sets and their associated stationary measures have important applications. For example, the Hausdorff dimension of the limit set
is a key determiner in the asymptotics of counting problems for semigroups within the field of geometric group
theory \cite{magee2017uniform}, whereas understanding the “smoothness properties” of stationary measures is key to determining
the regularity properties of the integrated density of states for the Anderson-Bernoulli model of random
Schrödinger operators \cite{bourgain2014application} as well as being a central ingredient in the dimension theory of self-affine sets and
measures \cite{BHR,HR,rapaport2022SelfAffRd}.

By viewing the Rauzy gasket through the lens of the theory of self-projective sets and stationary measures, a wide range of both classical and modern sophisticated tools become available, most notably from the theory of self-affine sets, such as subadditive thermodynamic formalism, the recent pressure approximation techniques in \cite{morris2023variational} and the entropy methods of \cite{BHR,HR,rapaport2022SelfAffRd}. Using these techniques we resolve the question of the Hausdorff dimension of the Rauzy gasket. 

For $A \in \SL$ let $\alpha_1(A) \geq \alpha_2(A) \geq \alpha_3(A)$ denote the singular values of $A$. For $s \geq 0$ we define the singular value function $\phi^s: \SL \to \R_{>0}$ 
\begin{equation}
\phi^s(A)=
\begin{cases}
\left(\frac{\alpha_2(A)}{\alpha_1(A)}\right)^s &\mbox{if } 0 \leq s \leq 1 \\
\frac{\alpha_2(A)}{\alpha_1(A)}\left(\frac{\alpha_3(A)}{\alpha_1(A)}\right)^{s-1} & \mbox{if } 1 \leq s \leq 2\\
\left(\frac{\alpha_2(A) \alpha_3(A)}{\alpha_1(A)^2}\right)^s &\mbox{if } s \geq 2 \end{cases} 
\label{svf}
\end{equation}
and for a finite or countable set $\A \subset \SL$ we define the zeta function $\zeta_\A: [0,\infty) \to [0,\infty]$ by
\begin{equation} \label{zeta}
\zeta_\A(s)= \sum_{n=1}^\infty \sum_{A \in \A^n} \phi^s(A).
\end{equation}
Finally we define the (projective) \emph{affinity dimension} $s_\A$ to be the critical exponent of this series
$$s_\A=\inf\{s\geq 0: \zeta_\A(s)<\infty\}.$$
We delay giving some heuristic to these objects till the next section. The following is our first main result.

\begin{thm} \label{rauzythm}
\begin{equation}\hd R=s_{\A_R}= \inf\left\{s \geq 0: \sum_{n=1}^\infty \sum_{A \in \A_R^n} \frac{\alpha_2(A)}{\alpha_1(A)}\left(\frac{\alpha_3(A)}{\alpha_1(A)}\right)^{s-1} < \infty \right\}. \label{rauzydim} \end{equation}
\end{thm}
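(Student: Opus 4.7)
\emph{Plan of attack.} The theorem will be proved by establishing $\hd R \leq s_{\A_R}$ and $\hd R \geq s_{\A_R}$ separately. For the upper bound I would run a standard covering argument: at level $n$, the gasket is covered by $\{f_A(R) : A \in \A_R^n\}$, and a singular value decomposition plus bounded distortion analysis of the projective maps $f_A$ shows that each $f_A(R)$ is contained in a projective ellipse with principal semi-axes comparable to $\alpha_2(A)/\alpha_1(A)$ and $\alpha_3(A)/\alpha_1(A)$. A direct Hausdorff content estimate then gives $\mathcal{H}^s(f_A(R)) \lesssim \phi^s(A)$ for $s \in [1,2]$, whence $\mathcal{H}^s(R) \lesssim \sum_{A \in \A_R^n} \phi^s(A) \to 0$ as $n \to \infty$ whenever $s > s_{\A_R}$. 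The three parabolic fixed vertices force the distortion estimates to degenerate, so one should follow a Mauldin--Urba\'nski style inducing strategy: restrict attention to orbits that avoid deep parabolic excursions, recovering uniform bounded distortion on the induced scheme, and sum the parabolic contributions separately.

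\emph{Lower bound.} After verifying that $\A_R$ generates a strongly irreducible and proximal subsemigroup of $\SL$, each fully supported Bernoulli measure $\eta$ on $\A_R$ yields a unique Furstenberg measure $\mu_\eta$ on $\R\P^2$ supported on $R$, with three distinct Lyapunov exponents $\lambda_1(\eta) > \lambda_2(\eta) > \lambda_3(\eta)$. The key analytic step would be to prove that $\mu_\eta$ is exact dimensional, with dimension given by a Ledrappier--Young identity of the form
\begin{equation*}
\hd \mu_\eta = d_1(\eta) + \frac{h(\eta) - (\lambda_1(\eta) - \lambda_2(\eta))\, d_1(\eta)}{\lambda_1(\eta) - \lambda_3(\eta)},
\end{equation*}
where $h(\eta)$ is the Furstenberg entropy and $d_1(\eta)$ is the dimension of the projection of $\mu_\eta$ along a one-dimensional projective foliation. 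This would constitute a partial generalisation of the Hochman--Solomyak exact dimensionality theorem from $\mathrm{SL}(2,\mathbb{R})$ to $\SL$. Combining it with a variational principle for $s_{\A_R}$, in the spirit of the pressure approximation of \cite{morris2023variational}, which identifies $s_{\A_R}$ with the supremum of $\hd \mu_\eta$ over ergodic stationary measures, then yields $\hd R \geq \sup_\eta \hd \mu_\eta = s_{\A_R}$.

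\emph{Main obstacle.} The principal technical difficulty lies in the exact dimensionality theorem in the $\SL$ setting under parabolicity. The Hochman--Solomyak argument for $\mathrm{SL}(2,\mathbb{R})$ exploits the existence of a single contracting projective direction; the extension to $\SL$ requires a two-step Ledrappier--Young decomposition along nested projective unstable foliations and simultaneous control of conditional entropy contributions at two different scales. Parabolicity compounds this: since each generator is unipotent, the projective cocycle fails to be uniformly hyperbolic, Lyapunov exponents degenerate near the three parabolic vertices, and random products exhibit slow returns to these configurations. The proof will need to combine an induction scheme restoring uniform hyperbolicity with careful tracking of how entropy, Lyapunov exponents and singular value pressures transform under inducing, so that the Ledrappier--Young identity and the variational principle remain compatible and the resulting dimension formula does attain $s_{\A_R}$.
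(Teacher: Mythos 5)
Your high-level strategy — upper bound by a singular-value covering, lower bound via stationary/Furstenberg measures, a Ledrappier–Young formula, and a variational principle in the spirit of Morris–Sert to bridge the affinity dimension to $\sup_\eta \hd\mu_\eta$ — matches the paper's framework, and your proposed Ledrappier–Young identity is consistent with the paper's definition of $\ld\mu$ once one identifies your $d_1(\eta)$ with the projected dimension $\Delta$. However, there is a genuine structural gap in where you place the handling of parabolicity, and it changes the character of the argument considerably.

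The paper's Theorems~\ref{msrthm} and~\ref{setthm} are proved \emph{exclusively} for finite, positive, Zariski-dense systems satisfying the SOSC. The exact-dimensionality and entropy-increase machinery (the heart of the paper) never sees a parabolic or non-positive matrix, and no entropy, Lyapunov exponent, or pressure is ``tracked under inducing'' inside those proofs. Parabolicity is instead disposed of \emph{before} any measure theory is invoked, via a concrete algebraic reduction: the accelerated generating set $\Gamma = \{A_i^n A_j : i\ne j,\ n\ge 1\}$ (i) omits from $R$ only a countable set (Proposition~\ref{gammadom}(1)), (ii) is simultaneously conjugable by an explicit $M_\epsilon$ into $\SL_{>0}$ with uniformly comparable entries (Proposition~\ref{gammadom}(2)), and (iii) has the same affinity dimension as $\A_R$ (Lemma~\ref{gamma=a}). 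One then approximates $s_\Gamma$ from within by finite subsystems $\Gamma_N$ (Corollary~\ref{approx}), verifies Zariski density of $\Se_{\Gamma_N}$ for large $N$ (Proposition~\ref{zariskiprop}), and applies Theorem~\ref{setthm} to each $\Gamma_N$. Your plan, by contrast, proposes to prove exact dimensionality directly for the parabolic cocycle — controlling ``degenerate Lyapunov exponents near the parabolic vertices'' and ``slow returns'' inside the Ledrappier–Young analysis. That is essentially the route taken by Jiao–Li–Pan–Xu and requires substantially more technical work to cope with the loss of uniform contraction and separation; it is precisely the route this paper was designed to avoid. So the ``main obstacle'' you name is real, but the paper sidesteps it rather than confronting it.

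A related gap appears in your upper bound: covering $R$ by $\{f_A(R) : A\in\A_R^n\}$ and estimating $\mathcal H^s(f_A(R)) \lesssim \phi^s(A)$ requires the singular directions of $A$ to stay uniformly inside the positive cone, which fails for words like $A_i^n$ (whose singular directions degenerate toward the boundary of the simplex). The paper's Lemma~\ref{singvector} supplies the needed uniformity, but only after conjugating $\Gamma$ into $\SL_{>0}$, and the cover is indexed by the ``accelerated'' words $\I_\delta \subset \Gamma^*$ rather than by $\A_R^n$. Your appeal to a ``Mauldin–Urba\'nski style inducing'' to ``recover uniform bounded distortion'' and ``sum the parabolic contributions separately'' points in the right direction, but without identifying the specific acceleration $\Gamma$, the positivity it buys, and the bookkeeping that $\zeta_{\A_R}(s) \asymp \zeta_\Gamma(s)$ (via the Bochi-type estimate $\phi^s(B) \le C\phi^s(BA_i)$), the upper bound as you state it does not close.
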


\eqref{rauzydim} resembles the qualitative type of dimension formulae that appear in the theory of limit sets of Fuchsian and Kleinian groups and the dimension theory of expanding repellers.  We remark that it is possible to characterise \eqref{rauzydim} in terms of the spectral radius of a ``transfer operator'' and appeal to the broad range of modern tools available to rigorously and accurately approximate $s_{\A_R}$ numerically via this spectral radius, e.g \cite{pollicott2022hausdorff, morris2022fast}. We do not pursue this within the current paper.

The key technical result underpinning Theorem \ref{rauzythm} is a result which computes the dimension of a certain class of stationary measures. In the case that $\mu$ is a stationary measure associated to a subsemigroup of $\mathrm{SL}(2,\R)$, exact dimensionality of $\mu$ and the value of the (exact) dimension of $\mu$ is known in many cases by the work of Hochman and Solomyak \cite{HS}. Under the assumption that $\A$ generates a Diophantine semigroup (Definition \ref{ESC}), Hochman and Solomyak computed a formula for the dimension in terms of the Shannon entropy of $\eta$ and the top Lyapunov exponent. This formula is known as the Lyapunov dimension $\ld \mu$ and has a natural extension to higher dimensions, see \cite[Section 1.3]{rapaport2020exact} and \eqref{lyapdim} in the special case that $d=3$.  Rapaport recently established the exact-dimensionality of the stationary measure associated to a subsemigroup of $\mathrm{SL}(d,\R)$ when $d \geq 3$ and showed that its exact dimension is always bounded above by its Lyapunov dimension. However no higher-dimensional analogue of \cite{HS} (i.e. no matching lower bound) has been established. Here we obtain such an analogue in the special case that $\A \subset \SL_{>0}$ (the subsemigroup of matrices in $\mathrm{SL}(3,\mathbb{R})$ with all positive entries) and where the Diophantine property is replaced by the stronger separation condition known as the strong open set condition (SOSC), see Definition \ref{sosc}.

\begin{thm} \label{msrthm}
Suppose $\eta$ is supported on a finite set $\A \subset \SL_{>0}$ which generates a semigroup $\Se$ which is Zariski dense in $\SL$ and satisfies the SOSC. Then $\hd \mu=\ld \mu$.
\end{thm}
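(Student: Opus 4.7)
By Rapaport's exact-dimensionality theorem, $\mu$ is exact dimensional with $\hd\mu \leq \ld\mu$, so the theorem reduces to the matching lower bound $\hd\mu \geq \ld\mu$. My plan is to adapt the entropy-increment strategy of Hochman--Solomyak \cite{HS} from the $\mathrm{SL}(2,\R)$ setting to the two-scale projective geometry of $\SL$ acting on $\R\P^2$, with the combination of SOSC and Zariski density playing the role of the Diophantine hypothesis in \cite{HS}.

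The first step is to convert the hypotheses into an exponential separation condition for the induced projective IFS on the positive chart $\{(x,y,1): x,y>0\}$. Positivity of $\A$ forces this chart to be forward-invariant and the projective maps $f_A$ to be uniformly contracting on a compact forward-invariant subcone $K$, while SOSC gives uniform geometric separation of first-level cylinder images inside $K$. Combining this with Zariski density, which rules out algebraic coincidences between long compositions via non-concentration results for random walks on semisimple groups in the style of \cite{BQ}, should yield a constant $c>0$ such that distinct $A,B \in \A^n$ satisfy a bound of the form $d(f_A, f_B) \geq e^{-cn}$ on $K$. The two natural scales at word length $n$ along a typical word are then $r_n^{(12)} \asymp \alpha_2(A)/\alpha_1(A)$ and $r_n^{(13)} \asymp \alpha_3(A)/\alpha_1(A)$, corresponding to the two distinct contraction rates of the linearisation of $f_A$ at its attracting fixed point.

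The main step is to prove the local entropy lower bound $\liminf_{r\to 0} H(\mu,\mathcal{D}_r)/\log(1/r) \geq \ld\mu$, where $\mathcal{D}_r$ is the dyadic partition of $\R\P^2$ at scale $r$. Iterating the stationarity $\mu = \sum_i \eta_i (f_{A_i})_*\mu$ over $n$ steps expresses $\mu$ as an average of pushforwards by cylinders, and the task is to execute a two-scale version of Hochman's entropy increment on this expression: if at one of the scales $r_n^{(jk)}$ the entropy fell short of its Lyapunov contribution, then the inverse theorem for entropy of convolutions on $\R^2$ would force an approximate additive structure on $\mu$ incompatible with the exponential separation produced in the previous step. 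This is the technical heart of the proof and the main obstacle I expect to face: one must control entropy along the two contracting directions simultaneously, respect the three linear regimes of the singular value function $\phi^s$ (the Rauzy application sits in the range $s \in [1,2]$ where both scales genuinely contribute), and linearise the projective maps via a bounded distortion argument inside $K$ so that the higher-dimensional self-affine entropy machinery of \cite{rapaport2020exact, rapaport2022SelfAffRd} can be brought to bear on what is a genuinely nonlinear IFS. The resulting entropy lower bound then combines with exact-dimensionality to yield $\hd\mu = \ld\mu$.
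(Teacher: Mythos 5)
Your high-level strategy is right in spirit---reduce to a matching lower bound and run an entropy-increment argument against an exponential separation condition---but the architecture you propose differs from the paper's in a way that hides a genuine gap.

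The central step you are missing is the reduction, via Rapaport's Ledrappier--Young formula \cite[Theorem 1.3]{rapaport2020exact}, from the two-dimensional measure $\mu$ to its one-dimensional projections $\varphi_{B_V}\mu$ along $\nu$-typical planes $V$. The paper's Theorem \ref{thm:follows from LY formula} records that under the SOSC, $\hd\mu = \ld\mu$ is equivalent to the single scalar identity $\Delta = \min\{1, H(p)/(\chi_1(p)-\chi_2(p))\}$, where $\Delta$ is the common exact dimension of the projected measures $\varphi_{B_V}\mu$. All of the subsequent entropy work (Theorem \ref{thm:ESC --> Delta =00003D correct val}) is then carried out for these \emph{one-dimensional} measures, so that the convolution which appears after linearisation is a Euclidean convolution on $\R$ and Hochman's one-dimensional inverse theorem \cite[Theorem 2.7]{Ho1} applies cleanly. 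By contrast, you propose to prove $\liminf_{r\to 0} H(\mu,\mathcal{D}_r)/\log(1/r) \geq \ld\mu$ directly for the planar measure $\mu$ by running ``a two-scale version of Hochman's entropy increment'' using ``the inverse theorem for entropy of convolutions on $\R^2$.'' No such two-scale argument is carried out here, and there is no off-the-shelf inverse theorem for the kind of group-valued convolution $\theta.\mu$ on $\R^2$ that arises; obtaining it is precisely what the Ledrappier--Young/projection reduction lets one avoid, both here and in the self-affine precedents \cite{BHR,HR,rapaport2022SelfAffRd}. Without that reduction you would have to develop genuinely new inverse-theoretic machinery, so this is not a detail you can defer to ``bounded distortion.''

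Two smaller misattributions. First, the paper derives the Diophantine (exponential separation) property from the SOSC alone, via \cite[Proposition 2.4]{solomyak2021diophantine} and \cite[\S 6.2]{BHR}; it is not produced by combining SOSC with Zariski density or with non-concentration estimates for random walks. Second, the role of Zariski density in the proof is not to produce separation but to make the entropy-increment argument close: it enters in Lemma \ref{lem:no poly exists} to rule out the degenerate scenario in which a nontrivial polynomial $p_{B_1,B_2}$ vanishes $\mu$-almost everywhere, which is what converts ``projected cylinder measures are concentrated'' into a contradiction. Finally, the three regimes of $\phi^s$ are relevant to the set-dimension Theorem \ref{setthm}, not to the measure Theorem \ref{msrthm}: after the LY reduction the only threshold that matters is whether $\Delta$ hits $1$, i.e.\ the comparison of $H(p)$ with $\chi_1(p)-\chi_2(p)$.
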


We note that it should be possible to relax the SOSC to the Diophantine property and to eliminate the assumption of positivity, however Zariski density cannot be relaxed to e.g. the assumption of proximality and strong irreducibility. To see this, consider the  non-Zariski dense group $SO(2,1)$ which preserves the bilinear form $B(\mathbf{x},\mathbf{y})=x_1y_1+x_2y_2 -x_3y_3$, and thus preserves the curve $C=\{\mathbf{x} \in \R\P^2: x_1^2+x_2^2=x_3^2\}$. One can choose $\mathrm{supp}\,\eta=\A=\{A_1, \ldots,A_N\}$ such that: (i) the entries of each $A_i$ are algebraic (which implies that $\A$ is Diophantine \cite{HS}), (ii) $\A$ contains a proximal element, (iii) $\A$ is strongly irreducible, (iv) sufficiently many matrices $A_i$ are chosen so that $s_\A>1$. Then $\A$ generates a semigroup which is strongly irreducible, proximal and  Diophantine but not Zariski dense in $\SL$, and since $\mathrm{supp} \mu \subset C$ we have $\hd \mu \leq 1<s_\A$.

As in \cite{HS}, the proof of Theorem \ref{msrthm} employs entropy methods which were initially developed to prove analogous results for self-similar measures and self-affine measures e.g. \cite{Ho,BHR}. While \cite{HS} required adapting Hochman's work on self-similar measures \cite{Ho} to the projective line, the nonconformality of the projective action for $d \geq 3$ means that to tackle measures for subsemigroups of $\SL$, it is the much more involved results for self-affine measures \cite{BHR,HR,rapaport2022SelfAffRd} which must be adapted to the projective plane. 

Finally, we remark that Theorem \ref{msrthm} can be used to prove a general result about the Hausdorff dimension of self-projective sets in the projective plane. The dimension theory of self-projective sets in the projective line is relatively well understood by the recent work \cite{solomyak2021diophantine,christodoulou2020hausdorff}, where the state of the art result determines that if $\A$ generates a Diophantine semigroup $\Se$, $\mathrm{Id} \notin \overline{\Se}$ and $K_\A$ is not a singleton then $\hd K_\A=\min\{1,s_\A\}$. Relaxing the Diophantine property to freeness of $\Se$ in this statement is equivalent to proving that if $\A \subset \mathrm{SL}(2,\R)$ freely generates a semigroup then $\hd K_\A=\min\{1,s_\A\}$, which can be considered a projective extension of the well-known `exact overlaps conjecture' \cite{simon1996overlapping} for self-similar sets.
The nonconformality of self-projective sets in higher dimensional projective spaces causes their dimension theory to be even more subtle, since mechanisms other than non-freeness can cause the dimension to drop, such as failure of irreducibility. Here we obtain the following corollary to Theorem \ref{msrthm} which is the first dimension result for self-projective sets in higher dimensional projective spaces.

\begin{thm} \label{setthm}
Suppose a finite set $\A \subset \SL_{>0}$ generates a semigroup $\Se$ which is Zariski dense in $\SL$ and satisfies the SOSC. Then $\hd K_\A=\min\{s_\A,2\}$.
\end{thm}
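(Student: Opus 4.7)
The plan is to prove the two inequalities $\hd K_\A \leq \min\{s_\A,2\}$ and $\hd K_\A \geq \min\{s_\A,2\}$ separately, using a standard subadditive covering estimate for the upper bound and extracting the lower bound from Theorem \ref{msrthm} via a variational principle. For the upper bound, $\hd K_\A \leq 2$ is automatic since $K_\A\subset\R\P^2$, so we may restrict to the case $s_\A<2$. Positivity of the matrices in $\A$ yields uniform distortion control on the projective maps $f_A$ restricted to a neighbourhood of the positive cone, so each $f_A$ with $A\in\A^n$ sends this neighbourhood into a region comparable (up to universal constants) to an ellipse of semi-axes $\alpha_2(A)/\alpha_1(A)$ and $\alpha_3(A)/\alpha_1(A)$. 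A standard cover of such an ellipse contributes an amount of order $\phi^s(A)$ to the $s$-Hausdorff content, and summing gives $\mathcal{H}^s(K_\A)\lesssim\sum_{A\in\A^n}\phi^s(A)$, which vanishes as $n\to\infty$ for every $s>s_\A$.

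For the lower bound I would fix $t<\min\{s_\A,2\}$ and use the subadditive thermodynamic formalism associated with the singular value function $\phi^s$ to produce, for some $n$, a strictly positive probability vector $p$ on $\A^n$ such that the associated Bernoulli stationary measure $\mu_{\eta_p}$ has Lyapunov dimension $\ld\mu_{\eta_p}>t$. The hypotheses of Theorem \ref{msrthm} then transfer from $\A$ to $\A^n$: positivity is preserved since $\SL_{>0}$ is closed under products; the SOSC is inherited because the attractor is unchanged; and Zariski density passes to $\A^n$ because $\Se$ is contained in a finite union of cosets of the Zariski closure of the semigroup generated by $\A^n$, forcing the latter to be all of $\SL$ by irreducibility of $\SL$ as a real algebraic variety. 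Theorem \ref{msrthm} then gives $\hd\mu_{\eta_p}=\ld\mu_{\eta_p}>t$, and $\mathrm{supp}\,\mu_{\eta_p}\subset K_\A$ yields $\hd K_\A>t$; letting $t\nearrow\min\{s_\A,2\}$ completes the proof.

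The principal obstacle is the variational step. The equilibrium state for $\phi^s$ is a Gibbs measure which realises Lyapunov dimension $\min\{s_\A,2\}$ but is typically not Bernoulli, and approximating it by Bernoulli measures on larger alphabets $\A^n$ requires a projective version of the pressure approximation techniques of \cite{morris2023variational}. When $s_\A>2$ there is the further complication of producing Bernoulli measures whose Lyapunov dimension saturates at the ambient dimension $2$, which occurs at the boundary of the regimes of the Lyapunov dimension formula and requires checking that the entropy of suitably chosen Bernoulli measures on $\A^n$ can be made sufficiently large relative to the Lyapunov spectrum; both points should follow from existing subadditive thermodynamic machinery once it is adapted to the $\phi^s$ of \eqref{svf}.
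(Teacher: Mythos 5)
Your proposal is correct and follows essentially the same two-part strategy as the paper: a Falconer-type covering argument for the upper bound (with distortion control coming from positivity of $\A$, which the paper packages as Lemma \ref{singvector} on the orthogonal factors in the singular value decomposition), and a lower bound obtained by feeding a well-chosen Bernoulli measure through Theorem \ref{msrthm}, where the Bernoulli approximation of the equilibrium state for $\phi^s$ is supplied by \cite{morris2023variational} (the paper's Proposition \ref{morrissert}).

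The one place where your route differs is cosmetic but worth noting. You propose working with a \emph{strictly positive} probability vector on the full alphabet $\A^n$, then invoking the argument that Zariski density passes from $\A$ to $\A^n$ by the finite-coset covering and Zariski irreducibility of $\SL$. The paper instead applies Theorem \ref{msrthm} directly to the \emph{sub-alphabet} $\J\subset\I^n$ produced by Morris--Sert, using their built-in guarantee (Proposition \ref{morrissert}(1)) that $\{A_\j:\j\in\J^*\}$ is already Zariski dense in $G_c=\SL$. Your variant therefore needs one additional step the paper avoids: having obtained $\J$ (or some equivalent near-optimal subsystem), you would pad the weights out to a strictly positive vector on $\A^n$ and appeal to continuity of $H(p)$ and of the Lyapunov exponents $\chi_i(p)$ in $p$ to preserve $\ld\mu_{\eta_p}>t$. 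This continuity does hold in the positive setting (uniform cone contraction gives a spectral-gap transfer operator, hence analytic dependence of the Lyapunov exponents on the weights), so the argument closes; it is simply one more moving part than the paper's direct use of $\J$. In exchange you bypass the need to separately verify Zariski density for the subsemigroup, and, as you note, the SOSC transfer is immediate for both variants since the attractor is unchanged. Both routes are valid; the paper's is marginally more economical.
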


Finally we note that Theorem \ref{rauzythm} does not follow directly from Theorem \ref{setthm} since $\A_R$ cannot be simultaneously conjugated to a set of positive matrices. However, we will deduce Theorem \ref{rauzythm} from Theorem \ref{setthm} by showing that $\hd R$ can be approximated from within by subsystems of $\A_R$ which are positive in some basis.

We now outline the remainder of the paper. \S \ref{prel} contains some preliminaries, including the definition of the induced projective maps we will be working with in \S \ref{lft} and a reduction of Theorem \ref{msrthm} to Theorem \ref{thm:ESC --> Delta =00003D correct val}, which makes it sufficient to compute only the dimension of certain projections of the stationary measure. \S \ref{entropylb}-\S \ref{finalproof} contain the proof of Theorem  \ref{thm:ESC --> Delta =00003D correct val}. The scheme of proof follows broadly similar lines to \cite{BHR,HR,rapaport2022SelfAffRd}, and where possible we will avoid duplicating  technical arguments from these papers by simply referring to precise places where identical arguments have appeared. We will give a more detailed outline of the proof of Theorem  \ref{thm:ESC --> Delta =00003D correct val} at the beginning of \S \ref{entropylb}.  In \S \ref{finalsection} we complete the proofs of Theorem \ref{setthm} and \ref{rauzythm}. 

Finally, while preparing this manuscript the recent related work of Jiao, Li, Pan and Xu \cite{li2023dimension,jiao2023dimension} was brought to the authors' attention. Across these two papers the authors obtain an independent proof of Theorem \ref{rauzythm} via a generalised version of Theorem \ref{msrthm}, where positivity of the matrices is not required and the SOSC is relaxed to the Diophantine property. A broadly similar scheme of proof is employed in \cite{jiao2023dimension,li2023dimension} to prove Theorem \ref{msrthm}, however their work requires many extra technical arguments to deal with both a lack of global contraction properties and a lack of good separation properties. Thus we intend for the current paper to be a more straightforward and streamlined reference for readers interested in the fundamental essence of the arguments behind the proof of Theorem \ref{rauzythm}. \\

\noindent \textbf{Acknowledgement.} The author is deeply grateful to Ariel Rapaport for many inspiring discussions and guidance related to this project and for his proof of Theorem \ref{thm:ent inc under conv}. The research was supported by a Leverhulme Early Career Fellowship (ECF-2021-385).

\section{Preliminaries} \label{prel}

Let $m,n\ge2$ and write $\mathrm{Mat}_{m,n}(\mathbb{R})$ for the
vector space of all $m\times n$ real matrices. Throughout the paper for $A \in \mathrm{Mat}_{m,n}(\mathbb{R})$ we let $\norm{A}$ denote the operator norm of $A$.

We will write $X =\Theta_Z (Y)$ to mean that there exists a constant $c>0$ such that $c^{-1}Y \leq X \leq cY$, where $c$ may depend on $Z$ but is uniform in other parameters that appear in the statement.

Given an index set $\I$ we let $\I^n$ denote the set of words of length $n$, $\I^*$ denote the set of all finite words and $\I^\N$ denote the set of all infinite sequences, with digits in $\I$. We let $|\I|$ denote the size of the index set and for $\i=i_1 \ldots i_n \in \I^*$ we let $|\i|=n$ denote the length of the word $\i$. Given $\i=i_1 \ldots i_m \in \I^m$ with $m >n \geq 1$ we let $\i|n=i_1 \ldots i_n$ denote its truncation to its first $n$ digits. Given $\i \in\I^{n}$ we denote the corresponding cylinder set by 
\[
[\i]:=\{\omega\in\I^{\mathbb{N}}\::\:\omega|_{n}=\i\}.
\] $\I^\N$ is considered a topological space with the topology generated by cylinder sets. We will sometimes consider measures on $\I^\N$ which are invariant with respect to the left shift map $\sigma: \I^\N \to \I^\N$.

In \S \ref{alg}.1-\ref{alg}.3, we will keep the discussion general by letting $\A \subset \SL$ denote either a finite or countable set which generates a semigroup $\Se$, unless otherwise specified. Thereafter we will impose further assumptions on $\A$.

\subsection{Algebraic preliminaries}\label{alg}

\subsubsection{Exterior algebra}
Let $\wedge^2\R^3= \mathrm{span}\{e_{i_1} \wedge e_{i_2}: 1 \leq i_1 < i_2 \leq 3\}$. For $A \in \GL$ define the linear map $A^{\wedge 2}: \wedge^2\R^3 \to\wedge^2 \R^3$ by $(A^{\wedge 2})(e_{i_1} \wedge e_{i_2})=Ae_{i_1} \wedge Ae_{i_2}$ and extending by linearity. $A^{\wedge 2}$ can be represented by the $3 \times 3$ matrix whose entries are the $2 \times 2$ minors of $A$. We equip $\wedge^2 \R^3$ with the inner product $\langle v,w \rangle_2=*(v \wedge *w)$ where $*$ is the Hodge star operator $*:\wedge^2\R^3 \to \wedge^1 \R^3$. The norm on $\wedge^2\R^3$ is defined by setting $|v|_2=\langle v,v \rangle_2^{\frac{1}{2}}$ so that $|v_1\wedge v_2|_2$ is the 2- dimensional volume of the parallelipiped with vectors $v_1$ and $v_2$ as sides. The operator norm of the induced linear mapping $A^{\wedge 2}$ is
$$\norm{A^{\wedge 2}}_2 =\max \{|A^{\wedge 2}v|_2: |v|_2=1\}=\alpha_1(A)\alpha_2(A).$$

We say that $\A$ is 2-irreducible if $\forall 0 \neq v,w \in \wedge^2\R^3$, there exists $A \in \Se$ such that $\langle v,A^{\wedge 2} w \rangle_2 \neq 0$. It follows from \cite[Lemma 3.3]{kaenmaki2018structure} that $\A$ is 2-irreducible if and only if $\A$ is irreducible.

\subsubsection{Zariski topology} The Zariski topology on $\SL$ is defined by declaring closed sets to be sets of common zeros of collections of polynomial maps. This topology is finer than the usual topology on $\SL$. The Zariski closure of any subsemigroup of $\SL$ is a Lie group with finitely many connected components. We say that a subset $Z \subset \SL$ is a Zariski dense subset of $\SL$ if it is dense with respect to the Zariski topology, equivalently if  every polynomial which is identically zero on $Z$ is identically zero on $\SL$. Zariski density of $Z$ implies proximality and strong irreducibility of $Z$. 

\subsection{Singular value function}

For $A \in \SL$ let $\alpha_1(A) \geq \alpha_2(A) \geq \alpha_3(A)$ denote the singular values of $A$, noting that $\alpha_1(A)\alpha_2(A)\alpha_3(A)=1$ since $A \in \SL$. For $s \geq 0$, recall the definition of $\phi^s: \SL \to \R_{>0}$ from \eqref{svf}. When $A \in \SL_{>0}$, this is a projective analogue of Falconer's singular value function \cite{falconer1988hausdorff}, since $\frac{\alpha_2(A)}{\alpha_1(A)}$ and $\frac{\alpha_3(A)}{\alpha_1(A)}$ correspond to the larger and smaller contraction ratios for the action of $A$ on most of projective space. In \S \ref{sec:UB} we will see that this function can be used to describe a natural cover for $K_\A$.

\subsubsection{Multiplicativity properties}\label{s:mult} It will be important for us to understand how the multiplicativity properties of $\phi^s$ relate to the algebraic properties of our semigroup. We say that a function $\phi:\Se \to \R_{>0}$ is submultiplicative on $\Se$ if for all $A, B \in \Se$, $\phi(AB)\leq \phi(A)\phi(B)$ and we say $\phi$ is almost-submultiplicative if there exists $C< \infty$ such that for all $A, B \in \Se$, $\phi(AB)\leq C\phi(A)\phi(B)$. Note that this implies that the function $C\phi$ is submultiplicative.  If $s \in [0,1]$,
$$\phi^s(A)=\left(\frac{\alpha_2(A)}{\alpha_1(A)}\right)^s=\left(\frac{\alpha_1(A)\alpha_2(A)}{\alpha_1(A)^2}\right)^s =\left( \frac{\norm{A^{\wedge 2}}_2}{\norm{A}^2}\right)^s$$
and since
$$\frac{\alpha_3(A)}{\alpha_1(A)}=\frac{1}{\alpha_1(A) \cdot \alpha_1(A)\alpha_2(A)} = \frac{1}{\norm{A}\norm{A^{\wedge 2}}_2}$$
it follows that for $s \in [1,2]$ 
$$\phi^s(A)= \frac{\norm{A^{\wedge 2}}_2^{2-s}}{\norm{A}^{1+s}}.$$
\begin{prop}\label{submult}
Let $\A=\{A_i\}_{i \in \I} \subset \SL_{>0}$, where $\I$ is finite or countable. Suppose there exists $c>0$ such that for all $i \in \I$, $\frac{\min(A_i)_{j,k}}{\max(A_i)_{j,k}} \geq c$. 
Then
\begin{enumerate}
\item there exists $C>0$, $r \in (0,1)$ such that $\alpha_2(A_{\i|n})/\alpha_1(A_{\i|n}) \leq Cr^n$ for all $\i \in \I^n$,
\item for all $s \geq 0$, $\phi^s$ is almost-submultiplicative. Moreover the norm function $\norm{\cdot}$ is almost-supermultiplicative.
\end{enumerate}
\end{prop}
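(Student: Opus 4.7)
The plan is to prove (1) using Birkhoff's contraction theorem for Hilbert's projective metric on the positive cone $C=\{x\in\R^3:x_j>0\}$, and to deduce (2) from elementary entrywise estimates combined with the formulas of \S\ref{s:mult}.

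For (1), the positivity hypothesis $\min(A_i)_{j,k}/\max(A_i)_{j,k}\geq c$ implies that for every $x\in C$, the entries of $A_ix$ lie in $\bigl[cM_i\sum_k x_k,\ M_i\sum_k x_k\bigr]$ where $M_i=\max_{j,k}(A_i)_{j,k}$, so $A_i(C)$ is a sub-cone of $C$ of Hilbert diameter at most $2\log(1/c)$. The Birkhoff--Hopf theorem then yields a uniform contraction constant $\tau:=\tanh\!\bigl(\tfrac12\log(1/c)\bigr)<1$ for each $A_i$ acting on $(C,d_H)$, so inductively $A_{\i|n}(C)$ has Hilbert diameter at most $2\tau^{n-1}\log(1/c)$. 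Perron--Frobenius applied to the positive matrix $A_{\i|n}^TA_{\i|n}$ places the top right singular vector $v_1$ of $A_{\i|n}$ inside $C$, and similarly $u_1=A_{\i|n}v_1/\norm{A_{\i|n}v_1}\in A_{\i|n}(C)$; the one-step estimate further shows that the simplex-normalisations of $v_1,u_1$ have coordinates bounded below by a constant $c_*>0$ depending only on $c$. Expanding $A_{\i|n}e_1$ in the SVD basis and using that $A_{\i|n}e_1/\norm{A_{\i|n}e_1}$ and $u_1$ both lie in the normalised image of $A_{\i|n}(C)$, whose projective diameter is bounded by a constant multiple of $\tau^n$, yields $\alpha_2(A_{\i|n})/\alpha_1(A_{\i|n})\leq C'\tau^n$ for some $C'$ depending only on $c$.

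For (2), the key step is almost-supermultiplicativity of $\norm{\cdot}$. For a positive $3\times 3$ matrix $A$ with $\min(A)_{j,k}\geq cM_A$ one has the elementary bound $M_A\leq\norm{A}\leq 3M_A$. For any two such $A,B$,
\[
(AB)_{j,k}=\sum_{l=1}^3 A_{j,l}B_{l,k}\geq 3c^2M_AM_B,
\]
so $\norm{AB}\geq M_{AB}\geq 3c^2M_AM_B\geq (c^2/3)\norm{A}\norm{B}$, giving almost-supermultiplicativity with constant $3/c^2$. Combined with the submultiplicativity of $A\mapsto\norm{A}$ and of $A\mapsto\norm{A^{\wedge 2}}_2$ (the latter via $(AB)^{\wedge 2}=A^{\wedge 2}B^{\wedge 2}$), substituting into the formulas of \S\ref{s:mult} handles all three ranges of $s$: for $s\in[0,1]$,
\[
\phi^s(AB)=\left(\frac{\norm{(AB)^{\wedge 2}}_2}{\norm{AB}^2}\right)^s\leq\left(\frac{3}{c^2}\right)^{2s}\phi^s(A)\phi^s(B),
\]
for $s\in[1,2]$ the analogous substitution yields the constant $(3/c^2)^{1+s}$, and for $s\geq 2$ one uses $\det A=1$ to rewrite $\phi^s(A)=\norm{A}^{-3s}$, whereupon almost-supermultiplicativity of $\norm{\cdot}$ gives the bound with constant $(3/c^2)^{3s}$.

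The main obstacle I anticipate is the identification in (1) of the Hilbert contraction rate with the singular value ratio $\alpha_2/\alpha_1$: one must argue that the top left and right singular vectors of every $A_{\i|n}$ lie uniformly in the interior of $C$, so that the Birkhoff contraction on $C$ directly controls the transverse projective spread of the image ellipsoid. Once this geometric picture is in place, part (2) reduces to routine algebraic bookkeeping with the formulas of \S\ref{s:mult}.
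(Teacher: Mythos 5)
Your part (1) follows the same Birkhoff--Hopf/Hilbert-metric contraction route that the paper sketches (the paper just cites Bochi for the finite case and describes the infinite case in one sentence). Your part (2) is a genuinely different route: the paper simply cites Iommi--Jordan [Lemma 7.1], whereas you give a direct elementary proof via entrywise bounds and the three-range formulas of \S\ref{s:mult}. The arithmetic in all three ranges of $s$ is correct given almost-supermultiplicativity of $\Vert\cdot\Vert$ on $\Se$, and the observation $\phi^s(A)=\Vert A\Vert^{-3s}$ for $s\geq 2$ using $\det A=1$ is a nice simplification.

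There is, however, a gap in your proof of almost-supermultiplicativity of $\Vert\cdot\Vert$. You establish $\Vert AB\Vert\geq(c^2/3)\Vert A\Vert\Vert B\Vert$ \emph{for matrices $A,B$ satisfying the entrywise bound $\min_{j,k}(A)_{j,k}\geq cM_A$}. That hypothesis is given only for the generators $A_i$; it is not given for products $A_\i$ with $|\i|\geq 2$, and the na\"ive iteration of your entrywise estimate gives $\min_{j,k}(A_\i)_{j,k}\geq c^{|\i|}M_{A_\i}$, which degrades with $|\i|$ and therefore does not yield a uniform constant over $\Se$. Since almost-submultiplicativity of $\phi^s$ is a statement about arbitrary pairs in $\Se$, this leaves the conclusion unproven as written. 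The fix is short and you already have the ingredients from part (1): for any $\i$ with $|\i|\geq 2$, each column of $A_\i$ lies in $A_{i_1}(C)$ and each row lies in $A_{i_{|\i|}}^T(C)$, so both columns and rows have min/max \emph{coordinate} ratio $\geq c$. Chaining a column bound and a row bound through the entry where the maximum is attained then gives $\min_{j,k}(A_\i)_{j,k}\geq c^2M_{A_\i}$ uniformly over $\Se$, after which your computation applies verbatim with $c$ replaced by $c^2$. You should add this observation explicitly.

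A minor imprecision in (1): expanding $A_{\i|n}e_1$ in the right singular basis and comparing its direction to $u_1$ controls the component along $u_2$ only if $\langle e_1,v_2\rangle$ is bounded below, which need not hold; taking $e_1$ alone could yield only a bound on $\alpha_3/\alpha_1$. You should instead observe that at least one of $\langle e_j,v_2\rangle$ for $j=1,2,3$ has absolute value $\geq 1/\sqrt{3}$ and run the argument with that $e_j$ (the denominator $\langle e_j,v_1\rangle$ is already bounded below for all $j$ since $v_1$ lies in a fixed compact sub-cone).
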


\begin{proof}
(2) follows from \cite[Lemma 7.1]{iommi2011almost}. In the finite case (1) is well known, see e.g. \cite{bochi2009some}. The infinite case is similar; the assumptions on the entries of the matrices ensure that the projective distance between the images of any two positive vectors $x,y$ under any projective map $A \in \A^n$ decreases exponentially at a uniform rate, and since $\frac{\alpha_2(A)}{\alpha_1(A)}$ times the projective distance between $x$ and $y$ is a lower bound on this distance, the claim is proved.
\end{proof}

We say that $\phi^s$ is quasimultiplicative on $\Se$ if there exists a finite set of words $\Gamma \subset \I^*$ and constant $c>0$ such that for all  $\i,\j \in \I^*$ there exists $\k \in \Gamma$ such that
$$\phi^s(A_{\i\k\j}) \geq c\phi^s(A_\i)\phi^s(A_\j).$$

The following lemma is similar to \cite[Lemma 3.5]{kaenmaki2018structure} and \cite[Proposition 2.8]{feng2009lyapunov}. 

\begin{lem}\label{quasilem}
If $\A$ is 2-irreducible then $\phi(A)=\norm{A^{\wedge 2}}_2$ is quasimultiplicative on $\Se$.
\end{lem}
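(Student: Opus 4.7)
The plan is to follow the standard strategy for proving quasimultiplicativity of norm-like cocycles coming from an irreducible representation (as in \cite{feng2009lyapunov} or \cite{kaenmaki2018structure}): first derive a pointwise lower bound on $\norm{(ABC)^{\wedge 2}}_2$ in terms of $\norm{A^{\wedge 2}}_2$, $\norm{C^{\wedge 2}}_2$ and an inner-product quantity involving the extremal singular directions of $A^{\wedge 2}$ and $C^{\wedge 2}$, and then use 2-irreducibility together with compactness to produce a finite set $\Gamma \subset \I^*$ of ``connecting'' words which keeps that inner product uniformly away from zero.

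For the first step, for $A \in \SL$ fix any unit top singular vectors $U(A), V(A) \in \wedge^2\R^3$ of $A^{\wedge 2}$, so that $A^{\wedge 2}U(A) = \norm{A^{\wedge 2}}_2\, V(A)$. Diagonalising $(A^{\wedge 2})^T A^{\wedge 2}$ gives the elementary bound $|A^{\wedge 2}w|_2 \geq \norm{A^{\wedge 2}}_2\,|\langle w, U(A)\rangle_2|$ for every $w \in \wedge^2\R^3$. Applied with $w = B^{\wedge 2}V(C)$, together with the identity $C^{\wedge 2}U(C)=\norm{C^{\wedge 2}}_2 V(C)$ and the obvious lower bound $\norm{(ABC)^{\wedge 2}}_2 \geq |A^{\wedge 2}B^{\wedge 2}C^{\wedge 2}U(C)|_2$, this yields
$$\norm{(ABC)^{\wedge 2}}_2 \;\geq\; \norm{A^{\wedge 2}}_2\,\norm{C^{\wedge 2}}_2 \cdot \bigl|\langle B^{\wedge 2}V(C),\,U(A)\rangle_2\bigr|.$$

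For the second step, let $\Sigma$ denote the unit sphere in $\wedge^2\R^3$, a compact space. By 2-irreducibility, for each $(u,v) \in \Sigma \times \Sigma$ there exists $\k(u,v) \in \I^*$ with $\langle u, A_{\k(u,v)}^{\wedge 2}v\rangle_2 \neq 0$; by continuity the set
$$U_{(u,v)} := \Bigl\{(u',v') \in \Sigma \times \Sigma :\, \bigl|\langle u', A_{\k(u,v)}^{\wedge 2}v'\rangle_2\bigr| > \tfrac{1}{2}\bigl|\langle u, A_{\k(u,v)}^{\wedge 2}v\rangle_2\bigr|\Bigr\}$$
is an open neighbourhood of $(u,v)$. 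These sets cover $\Sigma \times \Sigma$, so compactness yields a finite subcover indexed by $(u_1,v_1),\dots,(u_N,v_N)$; take $\Gamma := \{\k(u_\ell,v_\ell):\ell=1,\dots,N\}$ and $c := \tfrac{1}{2}\min_\ell |\langle u_\ell, A_{\k(u_\ell,v_\ell)}^{\wedge 2}v_\ell\rangle_2| > 0$. Then for any $\i,\j \in \I^*$, applying the covering property to $(u,v) = (U(A_\i),V(A_\j)) \in \Sigma\times\Sigma$ produces $\k \in \Gamma$ with $|\langle U(A_\i), A_\k^{\wedge 2}V(A_\j)\rangle_2| \geq c$, and inserting this into the inequality from the first step gives
$$\phi(A_{\i\k\j}) \;=\; \norm{(A_\i A_\k A_\j)^{\wedge 2}}_2 \;\geq\; c\,\phi(A_\i)\,\phi(A_\j),$$
which is the required quasimultiplicativity.

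The main (minor) subtlety is that $U(A), V(A)$ are not canonical when the top singular value of $A^{\wedge 2}$ has multiplicity greater than one; however, the argument is independent of any particular choice, so this causes no genuine obstacle. The substantive content is packaged entirely into the compactness/2-irreducibility argument of the second step, which is the direct analogue of the classical arguments in \cite{feng2009lyapunov,kaenmaki2018structure} transplanted to the second exterior power.
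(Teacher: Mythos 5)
Your proof is correct and takes a direct route where the paper argues by contradiction. The paper negates quasimultiplicativity, extracts from each pair $\i_n,\j_n$ the normalised vectors $u_n'=(A_{\i_n}^{\wedge 2})^T u_n/\norm{A_{\i_n}^{\wedge 2}}_2$ and $v_n'=A_{\j_n}^{\wedge 2} v_n/\norm{A_{\j_n}^{\wedge 2}}_2$, and uses Cauchy--Schwarz to force $\langle u_n', A_\k^{\wedge 2} v_n'\rangle_2\le 1/n$; passing to limit points on the unit sphere produces a pair $(u,v)$ with $\langle u, A_\k^{\wedge 2} v\rangle_2=0$ for all $\k$, contradicting 2-irreducibility. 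You instead prove the same Cauchy--Schwarz-type lower bound $\norm{(ABC)^{\wedge 2}}_2\ge\norm{A^{\wedge 2}}_2\norm{C^{\wedge 2}}_2\,|\langle B^{\wedge 2}V(C),U(A)\rangle_2|$ explicitly and then construct $\Gamma$ constructively by a finite subcover of the compact set $\Sigma\times\Sigma$. Both arguments are powered by exactly the same two ingredients---the extremal singular-vector inequality and compactness of the unit sphere of $\wedge^2\R^3$---so neither is shorter in any essential way, but your version has the small advantage of being directly constructive about $\Gamma$ and of working verbatim when $\I$ is countably infinite (in the paper's contradiction the set $\{\k:|\k|\le n\}$ is not finite unless $\I$ is finite, so the paper's setup of \eqref{quasicontra} as the literal negation of quasimultiplicativity is slightly loose in that case, whereas the finite subcover argument sidesteps this entirely). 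Your caveat about non-uniqueness of $U(A),V(A)$ when the top singular value of $A^{\wedge 2}$ has multiplicity is handled correctly: any measurable choice works since the covering of $\Sigma\times\Sigma$ is universal.
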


\begin{proof}
For a contradiction assume that for all $n \in \N$ there exist $\i_n,\j_n \in \I^*$ such that
\begin{equation} \label{quasicontra}
\norm{A_{\i_n\k\j_n}^{\wedge 2}}_2 \leq \frac{\norm{A_{\i_n}^{\wedge 2}}_2 \norm{A_{\j_n}^{\wedge 2}}_2}{n}
\end{equation}
for all $\k \in \I^*$ with $|\k| \leq n$. Choose $u_n, v_n \in \wedge^2\R^3$ such that $|u_n|_2=|v_n|_2=1$, $\norm{A_{\i_n}^{\wedge 2}}_2=\norm{(A_{\i_n}^{\wedge 2})^Tu_n}_2$ and $\norm{A_{\j_n}^{\wedge 2}}_2=\norm{A_{\j_n}^{\wedge 2}v_n}_2$.

Setting $u_n'=\frac{(A_{\i_n}^{\wedge 2})^Tu_n}{\norm{A_{\i_n}^{\wedge 2}}_2}$ and $v_n'=\frac{A_{\j_n}^{\wedge 2}v_n}{\norm{A_{\j_n}^{\wedge 2}}_2}$, we see that Cauchy-Schwarz and \eqref{quasicontra} imply that for all $\k \in \I^*$ with $|\k| \leq n$,
$$\langle u_n', A_\k^{\wedge 2} v_n' \rangle_2= \frac{\langle (A_{\i_n}^{\wedge 2})^Tu_n , A_{\k\j_n}^{\wedge 2}v_n\rangle_2}{\norm{A_{\i_n}^{\wedge 2}}_2 \norm{A_{\j_n}^{\wedge 2}}_2} \leq \frac{\norm{A_{\i_n\k\j_n}^{\wedge 2}}_2}{\norm{A_{\i_n}^{\wedge 2}}_2 \norm{A_{\j_n}^{\wedge 2}}_2}  \leq \frac{1}{n}.$$
Let $u,v \in \wedge^2\R^3$ with $|u|_2=|v|_2=1$ be limit points of $u_n$ and $v_n$ respectively. Then $\langle u, A_\k^{\wedge 2}v \rangle_2=0$ for all $\k \in \I^*$, which contradicts 2-irreducibility of $\A$.

\end{proof}

\begin{cor}\label{quasiprop}
Suppose $\A$ is irreducible. Then for all $s \geq 0$, $\phi^s$ is quasimultiplicative on $\Se$.
\end{cor}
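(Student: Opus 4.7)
The plan is to derive Corollary \ref{quasiprop} from Lemma \ref{quasilem} together with sub-multiplicativity of the operator norm, using the explicit formulas for $\phi^s$ recorded in \S\ref{s:mult}. The key observation is that in each piece of the definition \eqref{svf}, the factor $\norm{A}$ appears with a non-positive exponent while $\norm{A^{\wedge 2}}_2$ appears with a non-negative exponent. Consequently, to lower bound $\phi^s(A_{\i\k\j})$ it suffices to lower bound $\norm{A_{\i\k\j}^{\wedge 2}}_2$ quasimultiplicatively (supplied by Lemma \ref{quasilem}) and to upper bound $\norm{A_{\i\k\j}}$ (automatic from sub-multiplicativity of the operator norm). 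In particular, no separate quasimultiplicativity statement for $\norm{\cdot}$ is needed, which is the technically convenient feature.

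I would split into the three ranges of $s$. For $s \in [0,2]$, take $\Gamma \subset \I^*$ and $c_0 > 0$ as in Lemma \ref{quasilem}, and given $\i, \j \in \I^*$ choose $\k \in \Gamma$ with $\norm{A_{\i\k\j}^{\wedge 2}}_2 \geq c_0\, \norm{A_\i^{\wedge 2}}_2\, \norm{A_\j^{\wedge 2}}_2$. Combining this with the sub-multiplicative bound $\norm{A_{\i\k\j}} \leq \norm{A_\i}\,\norm{A_\k}\,\norm{A_\j}$ and the formula $\phi^s(A) = \norm{A^{\wedge 2}}_2^{2-s}/\norm{A}^{1+s}$ for $s \in [1,2]$ (and $\phi^s = \norm{A^{\wedge 2}}_2^s/\norm{A}^{2s}$ for $s \in [0,1]$, which is analogous) yields
\[
\phi^s(A_{\i\k\j}) \;\geq\; c_0^{2-s}\,\norm{A_\k}^{-(1+s)}\,\phi^s(A_\i)\,\phi^s(A_\j).
\]
Since $\Gamma$ is finite and $s$ is fixed, $\min_{\k \in \Gamma} \norm{A_\k}^{-(1+s)}$ is a strictly positive constant, which absorbs into the overall quasimultiplicativity constant.

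For $s \geq 2$, I would use $\alpha_1(A)\alpha_2(A)\alpha_3(A)=1$ to rewrite $\phi^s(A) = (\alpha_2\alpha_3/\alpha_1^2)^s = \alpha_1^{-3s} = \norm{A}^{-3s}$, at which point sub-multiplicativity of the operator norm alone gives $\phi^s(AB) \geq \phi^s(A)\phi^s(B)$ for all $A,B \in \Se$, so quasimultiplicativity is immediate with $\Gamma$ the singleton containing the empty word and $c=1$; in particular this range does not even require irreducibility. Since the only nontrivial input is Lemma \ref{quasilem}, the main (and only) obstacle is the bookkeeping above, and there is no deeper difficulty.
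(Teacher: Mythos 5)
Your proof is correct and follows exactly the route the paper takes: irreducibility gives 2-irreducibility, Lemma \ref{quasilem} gives quasimultiplicativity of $\norm{\cdot^{\wedge 2}}_2$, and the explicit formulas $\phi^s = \norm{A^{\wedge 2}}_2^s/\norm{A}^{2s}$ (for $s\in[0,1]$) and $\phi^s = \norm{A^{\wedge 2}}_2^{2-s}/\norm{A}^{1+s}$ (for $s\in[1,2]$) together with submultiplicativity of $\norm{\cdot}$ finish the job. The paper's proof states these ingredients without spelling out the bookkeeping; you have supplied the bookkeeping correctly, including the pleasant observation that for $s\geq 2$ one has $\phi^s(A)=\norm{A}^{-3s}$, which is already supermultiplicative with no use of irreducibility.
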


\begin{proof}
Since $\A$ is irreducible it is also 2-irreducible. The claim now follows directly from Lemma \ref{quasilem}, the expression for $\phi^s$ provided at the beginning of \S \ref{s:mult}, and the submultiplicativity of the norm.
\end{proof} 

\subsection{Pressure, zeta function and affinity dimension} Recall the definition of the zeta function and its critical exponent, the affinity dimension.
If $\zeta_\A(s)= \infty$ for all $s \geq 0$ we define $s_\A=\infty$. Since $\phi^s(A)$ is decreasing in $s$ for each $A \in \Se$, $\zeta_\A(s)=\infty$ for all $s<s_\A$ and $\zeta_\A(s)<\infty$ for all $s>s_\A$. The zeta function can be seen as a nonconformal, semigroup analogue of the Poincare series in the theory of limit sets of Fuchsian and Kleinian groups.

We can define another useful function when $\A \subset \SL_{>0}$ satisfies the assumptions of Proposition \ref{submult}. Define the pressure $P_\A:[0,\infty) \to \R$ by
\begin{equation} \label{pressure}
P_\A(s)=\lim_{n \to \infty} \frac{1}{n} \log \left( \sum_{A \in \A^n} \phi^s(A)\right),
\end{equation}
noting that the limit is well-defined by almost-submultiplicativity of $\phi^s$ (Proposition \ref{submult}(2)). $P_\A$ is a continuous, strictly decreasing, convex function in $s$. By the root test and Proposition \ref{submult}(1) it is easy to see that $s_\A$ is the unique value of $s$ for which $P_\A(s)=0$.

\subsection{Linear fractional transformations and the attractor}\label{lft}

From now until \S \ref{finalsection}, we restrict to finite sets $\A \subset \SL_{>0}$ which generate a Zariski dense semigroup $\Se$. Zariski density implies that $\Se$ is strongly irreducible. Instead of considering the limit set as a subset of $\R\P^2$ or $S$ as we did in the introduction, we instead construct it by considering the action of $\A$ on the plane $P=\{(x,y,1): x,y>0\}$. 

Given $A\in\mathrm{Mat}_{m,n}(\mathbb{R})$
we denote its rows by $r_{A,1},...,r_{A,m}\in\mathbb{R}^{n}$. For
$(x_{1},...,x_{n-1})=x\in\mathbb{R}^{n-1}$ set $\tilde{x}:=(x_{1},...,x_{n-1},1)$.
Denote by $\varphi_{A}$ the linear fractional transformation corresponding
to $A$:
\[
\varphi_{A}(x):=\left(\frac{\left\langle r_{A,1},\tilde{x}\right\rangle }{\left\langle r_{A,m},\tilde{x}\right\rangle },\frac{\left\langle r_{A,2},\tilde{x}\right\rangle }{\left\langle r_{A,m},\tilde{x}\right\rangle },...,\frac{\left\langle r_{A,m-1},\tilde{x}\right\rangle }{\left\langle r_{A,m},\tilde{x}\right\rangle }\right)\text{ for }x\in\mathbb{R}^{n-1}\text{ with }\left\langle r_{A,m},\tilde{x}\right\rangle \ne0,
\]
where $\left\langle \cdot,\cdot\right\rangle $ is the usual dot product. 

\begin{lem}
\label{lem:comp of frac lin trans}Let $d,m,n\ge2$, $A\in\mathrm{Mat}_{m,n}(\mathbb{R})$,
$B\in\mathrm{Mat}_{d,m}(\mathbb{R})$ and $x\in\mathbb{R}^{n-1}$
be given. Set $y:=\varphi_{A}(x)$, and suppose that $\left\langle r_{A,m},\tilde{x}\right\rangle \ne0$
and $\left\langle r_{B,d},\tilde{y}\right\rangle \ne0$. Then $\varphi_{B}\circ\varphi_{A}(x)=\varphi_{BA}(x)$.
\end{lem}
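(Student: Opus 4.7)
The plan is to prove this by direct computation, exploiting the fact that $\varphi_A$ is just the affine-chart representative of the projective action of $A$: the map $x \mapsto \tilde{x}$ lifts a point of $\mathbb{R}^{n-1}$ to a representative in $\mathbb{R}^n$ with last coordinate $1$, and $\varphi_A(x)$ is obtained by applying $A$ and then rescaling so that the last coordinate is again $1$.

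First I would observe that the definition of $\varphi_A$ can be rewritten compactly as
\[
\widetilde{\varphi_A(x)} \;=\; \frac{A\tilde{x}}{\langle r_{A,m},\tilde{x}\rangle},
\]
which is well-defined precisely under the hypothesis $\langle r_{A,m},\tilde{x}\rangle \neq 0$. Setting $y=\varphi_A(x)$, this gives $\tilde{y} = A\tilde{x}/\langle r_{A,m},\tilde{x}\rangle$, so $\tilde{y}$ is a nonzero scalar multiple of $A\tilde{x}$.

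Next I apply $\varphi_B$ to $y$. The $i$-th coordinate of $\varphi_B(y)$ is $\langle r_{B,i},\tilde{y}\rangle / \langle r_{B,d},\tilde{y}\rangle$. Substituting $\tilde{y} = A\tilde{x}/\langle r_{A,m},\tilde{x}\rangle$, the common scalar factor $1/\langle r_{A,m},\tilde{x}\rangle$ appears in both numerator and denominator and cancels, so
\[
(\varphi_B(y))_i \;=\; \frac{\langle r_{B,i},A\tilde{x}\rangle}{\langle r_{B,d},A\tilde{x}\rangle}.
\]
The cancellation is exactly where the hypothesis $\langle r_{B,d},\tilde{y}\rangle \neq 0$ gets used, since it guarantees $\langle r_{B,d},A\tilde{x}\rangle \neq 0$.

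Finally I identify $\langle r_{B,i}, A\tilde{x}\rangle$ with $\langle r_{BA,i}, \tilde{x}\rangle$: this is just the standard identity $r_{B,i}A = r_{BA,i}$ for the $i$-th row of a matrix product, combined with the fact that $\langle r_{B,i},A\tilde{x}\rangle = (r_{B,i}A)\tilde{x}$. Substituting this above yields $(\varphi_B(y))_i = \langle r_{BA,i},\tilde{x}\rangle / \langle r_{BA,d},\tilde{x}\rangle$ for $i=1,\ldots,d-1$, which is exactly $\varphi_{BA}(x)$ by definition. There is no real obstacle here; the only thing to be careful about is tracking the non-vanishing conditions so that every division is legitimate, and checking that the scalar $1/\langle r_{A,m},\tilde{x}\rangle$ truly cancels from both numerator and denominator.
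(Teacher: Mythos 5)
Your proof is correct and follows essentially the same computation as the paper's: write $\tilde{y}$ as the rescaled vector $A\tilde{x}/\langle r_{A,m},\tilde{x}\rangle$, observe that the scalar cancels in the ratio defining $\varphi_B(y)$, and identify $\langle r_{B,i},A\tilde{x}\rangle$ with $\langle r_{BA,i},\tilde{x}\rangle$.
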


\begin{proof}
We have $\tilde{y}=\left(\frac{\left\langle r_{A,1},\tilde{x}\right\rangle }{\left\langle r_{A,m},\tilde{x}\right\rangle },...,\frac{\left\langle r_{A,m-1},\tilde{x}\right\rangle }{\left\langle r_{A,m},\tilde{x}\right\rangle },1\right)$.
Hence for $1\le j<d$,
\[
\frac{\left\langle r_{B,j},\tilde{y}\right\rangle }{\left\langle r_{B,d},\tilde{y}\right\rangle }=\frac{\left\langle r_{B,j},\left(\left\langle r_{A,1},\tilde{x}\right\rangle ,...,\left\langle r_{A,m},\tilde{x}\right\rangle \right)\right\rangle }{\left\langle r_{B,d},\left(\left\langle r_{A,1},\tilde{x}\right\rangle ,...,\left\langle r_{A,m},\tilde{x}\right\rangle \right)\right\rangle }=\frac{\left\langle r_{B,j},A\tilde{x}\right\rangle }{\left\langle r_{B,d},A\tilde{x}\right\rangle }=\frac{\left\langle r_{BA,j},\tilde{x}\right\rangle }{\left\langle r_{BA,d},\tilde{x}\right\rangle }.
\]
This implies,
\[
\varphi_{B}\circ\varphi_{A}(x)=\left(\frac{\left\langle r_{B,1},\tilde{y}\right\rangle }{\left\langle r_{B,d},\tilde{y}\right\rangle },...,\frac{\left\langle r_{B,d-1},\tilde{y}\right\rangle }{\left\langle r_{B,d},\tilde{y}\right\rangle }\right)=\left(\frac{\left\langle r_{BA,1},\tilde{x}\right\rangle }{\left\langle r_{BA,d},\tilde{x}\right\rangle },...,\frac{\left\langle r_{BA,d-1},\tilde{x}\right\rangle }{\left\langle r_{BA,d},\tilde{x}\right\rangle }\right)=\varphi_{BA}(x),
\]
which proves the lemma.
\end{proof}

We will be interested in two particular classes of linear fractional transformations. The first class are the maps induced by the action of each $A \in \Se$ on $P$, from which we will build the correponding IFS. Secondly, we will be interested in maps which are induced by projecting points in $P$ to other planes. 

\subsubsection{Induced projective maps} \label{induced}

Write $\mathbb{R}_{>0}^{2}:=\{(x_{1},x_{2})\in\mathbb{R}^{2}\::\:x_{1},x_{2}>0\}.$ For $A\in\mathrm{SL}(3,\mathbb{R})_{>0}$ and $x\in\mathbb{R}_{>0}^{2}$
we have $\varphi_{A}(x)\in\mathbb{R}_{>0}^{2}$. By Lemma \ref{lem:comp of frac lin trans}
this defines an action of $\mathrm{SL}(3,\mathbb{R})_{>0}$ on $\mathbb{R}_{>0}^{2}$. Let $F:\mathbb{R}_{>0}^{2}\rightarrow\mathrm{P}(\mathbb{R}^{3})$
be with $F(x):=\tilde{x}\mathbb{R}$ for $x\in\mathbb{R}_{>0}^{2}$.
Given $A\in\mathrm{SL}(3,\mathbb{R})_{>0}$ and $x\in\mathbb{R}_{>0}^{2}$,
\[
F(\varphi_{A}(x))=\mathrm{span}\left(\frac{\left\langle r_{A,1},\tilde{x}\right\rangle }{\left\langle r_{A,3},\tilde{x}\right\rangle },\frac{\left\langle r_{A,2},\tilde{x}\right\rangle }{\left\langle r_{A,3},\tilde{x}\right\rangle },1\right)=\mathrm{span}(A\tilde{x})=A(F(x)).
\]
Thus, the diffeomorphism $F$ is equivariant with respect to the actions
of $\mathrm{SL}(3,\mathbb{R})_{>0}$ on $\mathbb{R}_{>0}^{2}$ and
$F(\mathbb{R}_{>0}^{2})$.

From now on, our key object of study is the IFS $\{\varphi_A\}_{A \in \A}$ and its attractor, which with slight abuse of notation we also denote by $K_\A$. Note that this attractor is Lipschitz equivalent to the corresponding attractor contained in $\R\P^2$ or $S$, hence from the point of view of the Hausdorff dimension they are equivalent. 

Let $\Pi:\I^{\mathbb{N}}\rightarrow K_{\mathbf{A}}$ be the coding
map 
\[
\Pi(\i)=\underset{n\rightarrow\infty}{\lim}\varphi_{\i|_{n}}(x)\text{ for all }\i\in\I^{\mathbb{N}}\text{ and }x\in\mathbb{R}_{>0}^{2}.
\]

Later on we will be interested in choices of $\A$ for which the IFS $\{\varphi_A\}_{A \in \A}$ satisfies the following separation condition.
\begin{defn} \label{sosc}
We say that $\mathbf{A}$ satisfies the strong open set condition
(SOSC) if there exists an open set $U\subset\mathbb{R}_{>0}^{2}$
so that $U\cap K_{\mathbf{A}}\ne\emptyset$, $\cup_{i\in\I}\varphi_{i}(U)\subset U$
and $\varphi_{i}(U)\cap\varphi_{j}(U)=\emptyset$ for distinct $i,j\in\I$. 
\end{defn}

\subsubsection{Induced plane projections}

Write $\mathrm{Gr}_{2}(3)$ for the Grassmannian of $2$-dimensional
linear subspaces of $\mathbb{R}^{3}$.
Write $\mathbf{Y}$
for the set of $V\in\mathrm{Gr}_{2}(3)$ for which there exists $0\ne x\in V$
with nonnegative coordinates. Let $\mathbf{B}$ be the set of all $B\in\mathrm{Mat}_{2,3}(\mathbb{R})$
so that $|r_{B,2}|=1$, $r_{B,2}$ has nonnegative coordinates, and
$r_{B,1}$ and $r_{B,2}$ are linearly independent. Let $\mathbf{B}_{\mathrm{o}}$ be the set of all $B\in\mathbf{B}$
so that $\{r_{B,1},r_{B,2}\}$ is an orthonormal set. For $B\in\mathbf{B}$ we let $V_B \in \mathbf{Y}$ denote the plane $V_B=\mathrm{span}\{r_{B,1}, r_{B,2}\}$. 

For a plane $V \in \mathbf{Y}$ let $\pi_V: \R^3 \to V$ denote the orthogonal projection to $V$. For $B \in \mathbf{B}$, 
$$
\pi_{V_B}(\tilde{x})=\varphi_B(x)r_{B,1}+|r_{B,1}| r_{B,2}.$$

In particular, when $B \in \mathbf{B}_{\mathrm{o}}$, $\pi_{V_B}(\tilde{x})=\varphi_B(x)r_{B,1}+ r_{B,2}$.  We let $d_\P$ denote the metric on $\R\P^2$ induced by angles. It is easy to check that for any compact $K \subset \R^2_{>0}$, then
\begin{equation}d_{\P}(\pi_{V_B}(\tilde{x}),\pi_{V_B}( \tilde{y}))=\Theta_K(|\varphi_B(x)-\varphi_B(y)|) \label{planeequiv}
\end{equation}
for all $x,y \in K$ and $B \in \mathbf{B}_{\mathrm{o}}$. The consequence of \eqref{planeequiv} is that for any $V \in \mathbf{Y}$ and measure $\mu$ supported on $K_\A$, the dimension of $\varphi_B \mu$ is independent of the choice of representative $B \in \mathbf{B}_{\mathrm{o}}$ for which $V_B=V,$ and both of these concepts are equivalent to projecting $F(\mu)$ to the plane $V$.

\subsection{Stationary measures, Lyapunov dimension and reduction of Theorem \ref{msrthm}} \label{Slyap}

 Let $p=(p_{i})_{i\in\I}$ be a probability vector with strictly
positive coordinates. Since $\mathbf{S}$ is strongly irreducible
and proximal, there exists a unique $\mu\in\mathcal{M}(\mathbb{R}_{>0}^{2})$
with $\mu=\sum_{i\in\I}p_{i}\cdot\varphi_{i}\mu$. Here $\mathcal{M}(\mathbb{R}_{>0}^{2})$
is the space of compactly supported Borel probability measures on
$\mathbb{R}_{>0}^{2}$. Note that $\mathrm{supp}(\mu)=K_{\mathbf{A}}$
and $\mu=\Pi\beta$, where $\beta:=p^{\mathbb{N}}$ is the Bernoulli
measure corresponding to $p$. We will prove Theorem \ref{msrthm} for this measure $\mu$.

Later in this section we will see that Theorem \ref{msrthm} can be reduced to studying the dimension of the projection of $\mu$ to ``typical'' planes. ``Typical'' will be with respect to the following measure which is supported on $\mathrm{Gr}_{2}(3)$.  Since $\mathbf{S}$ is strongly
irreducible and proximal, there exists a unique $\nu\in\mathcal{M}(\mathrm{Gr}_{2}(3))$
so that $\nu=\sum_{i\in\I}p_{i}\cdot A_{i}^{*}\nu$.  Since $\{A_{i}^{*}\}_{i\in\I}\subset\mathrm{SL}(3,\mathbb{R})_{>0}$,
it is not difficult to see that $\nu(\mathbf{Y})=1$. For each $V\in\mathbf{Y}$
fix $B_{V}\in\mathbf{B}_{\mathrm{o}}$ so that $\mathrm{span}\{r_{B_{V},1},r_{B_{V},2}\}=V$. We equip $\mathrm{Gr}_{2}(3)$ with a metric $d$ induced from the metric $d_{\mathbb{P}}$ on $\R\P^2$.

Write $A$ in its singular value decomposition $A=VDU$ where $D$ is the diagonal matrix with $\alpha_1(A), \alpha_2(A), \alpha_3(A)$ down the diagonal. Given $A \in \Se$ such that $\alpha_2(A)>\alpha_3(A)$ we define $L_3(A)=Ve_3$. Define $L_3(\i)$ for $\i \in \I^*$ by $L_3(\i)=L_3(A_\i)$. We also extend the definition of $L_3$ to $\I^{\mathbb{N}}$, by
$$L_3(\i)= \lim_{n \to \infty} L_3(A_{\i|n}),$$
noting that this limit is defined $\beta$-almost everywhere and that $L_3\beta$ is the unique stationary measure on $\R\P^2$ associated to the probability vector $(p_i)_{i \in \I}$ and the set of inverses $\{A_i^{-1}\}_{i \in \I}$\footnote{Unlike $\mu$, the stationary measure $L_3\beta$ cannot be seen as a measure on $\R^2_{>0}$ since the inverses $\{A_i^{-1}\}_{i \in \I}$ are not positive.} (see \cite[\S2.11]{rapaport2022SelfAffRd} and references therein).

\subsubsection{Lyapunov dimension}

 Let $\chi_{1}(p)\ge\chi_{2}(p)\ge\chi_{3}(p)$ be the Lyapunov exponents
corresponding to $\mathbf{A}$ and $p$. That is,
\[
\chi_{i}(p)=\underset{n\rightarrow\infty}{\lim}\:\frac{1}{n}\log\alpha_{i}(A_{\i|_{n}})\text{ for }1\le i\le3\text{ and }\beta\text{-a.e. }\i.
\]
Since $\mathbf{S}$ is strongly irreducible and proximal, we in fact
have $\chi_{1}(p)>\chi_{2}(p)>\chi_{3}(p)$. Given a general ergodic measure $m$ on $\I^\N$ we denote the Lyapunov exponents corresponding to $\mathbf{A}$ and $m$ by $\chi_{1}(m)\ge\chi_{2}(m)\ge\chi_{3}(m)$  i.e. $\chi_i(p)\equiv \chi_i(\beta)$.

Let $H(p)=-\sum_{i \in\I} p_i\log p_i$ be the Shannon entropy of $p$. We also denote the measure theoretic entropy of an invariant measure $m$ on $\I^\N$ by $H(m)$, so that $H(\beta)=H(p)$. The Lyapunov dimension of $\mu$ is defined as in \cite[Section 1.3]{rapaport2020exact}) to be

\begin{equation}
\ld \mu=
\begin{cases}
\frac{H(p)}{\chi_1(p)-\chi_2(p)} &\mbox{if } 0 \leq H(p) \leq \chi_1(p)-\chi_2(p) \\
1+\frac{H(p)-(\chi_1(p)-\chi_2(p))}{\chi_1(p)-\chi_3(p)} & \mbox{if } \chi_1(p)-\chi_2(p) \leq H(p) \leq 2\chi_1(p)-\chi_2(p)-\chi_3(p)\\
2 &\mbox{if } H(p) \geq 2\chi_1(p)-\chi_2(p)-\chi_3(p) \end{cases} 
\label{lyapdim}
\end{equation}

We say that a measure $\eta$ is exact-dimensional if for $\eta$ almost every $x \in \mathrm{supp}\, \eta$, the limit
$$\lim_{r \to 0} \frac{\log \eta(\textup{B}(x,r))}{\log r}$$
exists and is $\eta$ a.e. constant. This constant is called the exact dimension and coincides with the Hausdorff dimension of the measure $\eta$. 

In \cite{rapaport2020exact} Rapaport proved that $\mu$ is exact dimensional (in a more general setting than we consider in this paper). The following result is a particular case of \cite[Theorem 1.3]{rapaport2020exact}.
\begin{thm}\label{thm:follows from LY formula}
 The measure $\mu$ is exact dimensional.
Moreover there exists $\Delta\in[0,1]$ so that,
\begin{enumerate}
\item $\varphi_{B_{V}}\mu$ is exact dimensional with $\hd\varphi_{B_{V}}\mu=\Delta$
for $\nu$-a.e. $V$;
\item $\hd \mu=\ld \mu$ whenever $\A$ satisfies the SOSC
and $\Delta=\min\{1,\frac{H(p)}{\chi_{1}(p)-\chi_{2}(p)}\}$.
\end{enumerate}
\end{thm}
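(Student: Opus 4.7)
The plan is to derive this theorem as a direct specialisation of \cite[Theorem 1.3]{rapaport2020exact}, which establishes exact dimensionality together with a Ledrappier-Young type dimension formula for stationary measures of strongly irreducible and proximal subsemigroups of $\SL$. First I would verify the standing hypotheses of Rapaport's framework: Zariski density of $\mathbf{S}$ in $\SL$ forces strong irreducibility and proximality, and the measures $\mu$ and $\nu$ constructed in this section are precisely the stationary measures to which his theorem applies. This already yields exact dimensionality of $\mu$.

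For part (1), I would identify $\varphi_{B_V}\mu$ with the image in $\P(V)\cong\R\P^1$ of the orthogonal projection $\pi_V F(\mu)$, using the equivariance of $F$ recorded in \S\ref{induced} together with \eqref{planeequiv}. In Rapaport's setup these push-forwards correspond to conditional measures along the filtration dual to the Oseledets flag, and his theorem guarantees they are exact dimensional of some common value $\Delta\in[0,1]$ for $\nu$-a.e.\ $V$.

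For part (2), Rapaport's Ledrappier-Young formula takes, in the $\SL$ case and under SOSC (which is what rules out any dimension drop from overlaps), the form
\[
\hd \mu \;=\; \Delta \;+\; \min\!\left\{1,\; \frac{H(p) - \Delta(\chi_1(p) - \chi_2(p))}{\chi_1(p) - \chi_3(p)}\right\}.
\]
Substituting the hypothesised value $\Delta = \min\{1, H(p)/(\chi_1(p)-\chi_2(p))\}$ and splitting into the three regimes of \eqref{lyapdim} according to how $H(p)$ compares to $\chi_1(p) - \chi_2(p)$ and to $2\chi_1(p) - \chi_2(p) - \chi_3(p)$, a routine arithmetic check verifies that $\hd \mu$ equals $\ld \mu$ in each case: in the first regime the overflow term vanishes, in the second $\Delta=1$ and the overflow contributes the correct fractional piece, and in the third both terms saturate at $1$.

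The substantive work -- adapting the Ledrappier-Young machinery to the nonconformal projective action on $\R\P^2$ and proving the associated dimension conservation statements -- is carried out entirely in \cite{rapaport2020exact}, which we treat here as a black box. The main obstacle therefore vanishes from the present argument; all that remains is the brief case analysis indicated above.
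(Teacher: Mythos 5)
Your overall approach is correct and matches the paper's: derive both parts by specialising \cite[Theorem 1.3]{rapaport2020exact}. Your verification that Zariski density gives strong irreducibility and proximality, the identification of $\varphi_{B_V}\mu$ with the projections in Rapaport's framework via the equivariance of $F$ and \eqref{planeequiv}, and the three-regime arithmetic check for part (2) all go through.

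The one place where you are too quick is the role of the SOSC. You dispatch it with the parenthetical ``which is what rules out any dimension drop from overlaps,'' but Rapaport's theorem does not take the SOSC as a hypothesis; it is phrased in terms of the conditional entropy $H_{\beta}(\mathcal{P}\mid\Pi^{-1}(\mathcal{B}))$, where $\mathcal{P}$ is the first-coordinate partition of $\I^{\N}$ and $\mathcal{B}$ is the Borel $\sigma$-algebra of $\R^{2}$. The Ledrappier--Young formula in \cite[Theorem 1.3]{rapaport2020exact} has this conditional entropy subtracted off the symbolic entropy, so without showing that it vanishes, substituting $\Delta=\min\{1,H(p)/(\chi_1(p)-\chi_2(p))\}$ into your displayed formula does not yet give $\ld\mu$: the entropy appearing there would in general be strictly smaller than $H(p)$. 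The substantive step in the paper's proof is exactly this bridge: the SOSC implies $\Pi^{-1}(\Pi(\i))$ is a singleton for $\beta$-a.e.\ $\i$ (citing \cite[Corollary 2.8]{BK}), and $\beta$-a.e.\ injectivity of $\Pi$ forces $H_{\beta}(\mathcal{P}\mid\Pi^{-1}(\mathcal{B}))=0$. With that in hand, your arithmetic is precisely what finishes the argument, so the gap is localised but it is the only nontrivial point the proof needs to supply beyond the black-box appeal to Rapaport.
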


\begin{proof}
Let $\mathcal{P}$ be the partition of $\I^{\mathbb{N}}$ according
to the first coordinate. That is $\mathcal{P}:=\{[i]\}_{i\in\I}$.
Denote by $H_{\beta}(\mathcal{P}\mid\Pi^{-1}(\mathcal{B}))$ the conditional
entropy of $\mathcal{P}$ given $\Pi^{-1}(\mathcal{B})$ with respect
to $\beta$, where $\mathcal{B}$ is the Borel $\sigma$-algebra of
$\mathbb{R}^{2}$. Since $\mathbf{A}$ satisfies
the SOSC, $\Pi^{-1}(\Pi(\i))$ is a singleton for $\beta$- almost every $\i$ (see e.g.  \cite[Corollary 2.8]{BK})). Therefore $H_{\beta}(\mathcal{P}\mid\Pi^{-1}(\mathcal{B}))=0$ and the proof now follows from \cite[Theorem 1.3]{rapaport2020exact}. 
\end{proof}

Theorem \ref{thm:follows from LY formula} is the starting point for our analysis and reduces the problem to showing that under the SOSC, $\Delta=\min\{1,\frac{H(p)}{\chi_{1}(p)-\chi_{2}(p)}\}$. We will actually prove this under a much weaker assumption than the SOSC, which we introduce now. We denote the operator norm by $\Vert\cdot\Vert$.

\begin{defn}\label{ESC}
We say that $\mathbf{A}$ satisfies the Diophantine property, if there exists $\epsilon>0$
so that,
\[
\Vert A_{u_{1}}-A_{u_{2}}\Vert\ge\epsilon^{n}\text{ for all }n\ge1\text{ and distinct }u_{1},u_{2}\in\I^{n}.
\]
\end{defn}

The following sections are dedicated to proving the following.

\begin{thm}
\label{thm:ESC --> Delta =00003D correct val}Suppose that $\mathbf{A}$
satisfies the Diophantine property. Then $\Delta=\min\{1,\frac{H(p)}{\chi_{1}(p)-\chi_{2}(p)}\}$,
where $\Delta$ is defined in Theorem \ref{thm:follows from LY formula}.
\end{thm}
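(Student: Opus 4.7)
My plan is to adapt the Hochman--Solomyak / Bárány--Hochman--Rapaport entropy-increase scheme to the projective $\SL$ setting, using the cited theorem of Rapaport on entropy increase under convolution as the decisive non-linear input.

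\textbf{Upper bound.} The inequality $\Delta \le \min\{1,H(p)/(\chi_1(p)-\chi_2(p))\}$ is the easy half. The measure $\varphi_{B_V}\mu$ is supported on a bounded subset of $\R$, so $\Delta \le 1$ automatically. For the other term one uses the standard subadditive entropy/Lyapunov comparison: for a typical long word $\i$, $\varphi_\i\mu$ is concentrated in a projective ball whose image under $\varphi_{B_V}$ is, for $\nu$-typical $V$, of size $\asymp \alpha_2(A_\i)/\alpha_1(A_\i)$. Summing over cylinders of length $n$ and using $H(\beta|\mathcal{F}_n)\approx nH(p)$ gives the bound at scale determined by $\chi_1(p)-\chi_2(p)$.

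\textbf{Lower bound via entropy increase.} The heart of the proof is to show that if $\Delta < \min\{1,H(p)/(\chi_1(p)-\chi_2(p))\}$, one obtains a contradiction. The plan is:
\begin{enumerate}
\item Express $\Delta$ as a limit of normalised dyadic entropies: for $\nu$-a.e.\ $V$, $\Delta = \lim_n \tfrac{1}{n}H(\varphi_{B_V}\mu,\mathcal{D}_n)$, where $\mathcal{D}_n$ is the dyadic partition of $\R$ at scale $2^{-n}$.
\item Use the self-similarity $\mu = \sum_{\i \in \I^n} p_\i \,\varphi_\i\mu$ to decompose $\varphi_{B_V}\mu$. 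For $\beta$-typical $\i$ of length $n$, linearise $\varphi_\i$ about $\Pi(\sigma^n\i)$ using that $\A\subset\SL_{>0}$ gives uniform projective contraction (Proposition \ref{submult}). The image $\varphi_{B_V}\circ\varphi_\i$, restricted to a neighbourhood of $\Pi(\sigma^n\i)$, is therefore well approximated by an affine map whose contraction ratio is $\asymp \alpha_2(A_\i)/\alpha_1(A_\i)$ in the direction relevant to the projection to $V$; the other (smaller) ratio $\alpha_3(A_\i)/\alpha_1(A_\i)$ is responsible for the information lost in the projection.
\item The resulting decomposition writes $\varphi_{B_V}\mu$ as (approximately) a convolution of a discrete measure $q_{n,V}$ on translation parameters with a rescaled copy of $\varphi_{B_{V'}}\mu$, where $V' = L_3$-image of $\i$ is $\nu$-distributed in the limit.
\item Apply Rapaport's entropy increase theorem (Theorem \ref{thm:ent inc under conv}) to this convolution structure. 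The Diophantine hypothesis together with Lemma \ref{lem:comp of frac lin trans} guarantees the exponential separation of translates needed to invoke the theorem: distinct words $u_1,u_2 \in \I^n$ give translation parameters differing by at least $\epsilon^{n}$ after rescaling. The theorem then upgrades the base entropy to strictly larger entropy at finer scales, producing $\frac{1}{n}H(\varphi_{B_V}\mu,\mathcal{D}_N) > \Delta$ for $N\gg n$, contradicting step (1).
\end{enumerate}

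\textbf{Main obstacle.} The principal difficulty is step (2)--(3): controlling the linearisation of $\varphi_{B_V}\circ\varphi_\i$ uniformly in the typical $V$ and $\i$, and verifying that the error terms from curvature of the projective action are genuinely negligible at the relevant scales. One must simultaneously ensure that (i) $V$ is $\nu$-typical and transverse to the $L_3$-direction of $\i$ (so that the projection sees the $\alpha_2/\alpha_1$ contraction rather than being degenerate), (ii) $\i$ is $\beta$-typical so that $\chi_i$ are realised, and (iii) the linearised convolution sees the full Diophantine separation from the original words. This requires a careful Oseledets-type decomposition of the projective action into the rate $\chi_1-\chi_2$ (the contraction along the projected direction, contributing to $\Delta$) and $\chi_2-\chi_3$ (the transverse contraction, responsible for the possible saturation at $\Delta = 1$), and the interplay of these with $\nu$. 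The remainder of the proof is a faithful translation of the entropy/tangent-measure machinery of \cite{BHR,HR,rapaport2022SelfAffRd} to the projective plane, which is why the author signals that many sub-arguments will be imported by reference rather than reproduced.
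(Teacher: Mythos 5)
Your scheme is essentially the one the paper follows: reduce to the inequality $\Delta < H(p)/(\chi_1-\chi_2)$ being contradictory by decomposing $\varphi_{B_V}\mu$ according to a cylinder-like partition, observing that the information content at fine scales is driven by $H(p)/(\chi_1-\chi_2)$ while at coarse scales it is driven by $\Delta$, and feeding the resulting entropy of the matrix-valued component measures into the entropy-increase theorem. The roles you assign to Shannon--McMillan--Breiman, to the linearisation of the projective action, and to the Diophantine separation are all correct.

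There is, however, one step you assert without justification that the paper has to work to establish. You write that \emph{distinct words $u_1,u_2\in\I^n$ give translation parameters differing by at least $\epsilon^n$ after rescaling}, inferring this directly from the Diophantine hypothesis $\|A_{u_1}-A_{u_2}\|\ge\epsilon^n$ together with Lemma \ref{lem:comp of frac lin trans}. This does not follow: the Diophantine condition lives in $\mathrm{Mat}_{3,3}$, but what the entropy-increase theorem needs is separation of the \emph{projected} matrices $B_VA_{u_1}$ and $B_VA_{u_2}$ in $\mathbf{B}$, and for a fixed $V$ there is no reason the direction separating $A_{u_1}$ from $A_{u_2}$ should survive under $B_V$. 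The paper addresses exactly this in Lemma \ref{projectedESC}, whose proof is not a simple consequence of transversality: it first strengthens the Diophantine bound to the prefix-free form \eqref{prefix}, then fixes for each pair a witness vector $w^n_{\i,\j}$, and finally runs a Hausdorff-measure/Borel--Cantelli argument using the fact (from \cite{BL}) that $\hd\nu>0$ to show that the set of $V\in\mathbf{Y}$ for which the projected family $\{B_VA_\i\}_{\i\in\Xi_n^{B_V}}$ fails exponential separation infinitely often has dimension $\tfrac12\hd\nu$, hence $\nu$-measure zero. Your remark about requiring $V$ transverse to $L_3(\i)$ is a statement about the contraction rate $c_{A_\i,B_V}$ at one scale, not about the persistence of the Diophantine gap, and so does not substitute for this lemma. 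Without Lemma \ref{projectedESC}, your step (4) has no $\nu$-full set of $V$ on which to apply the entropy-increase theorem, and the argument does not close.

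A secondary, smaller point of imprecision: the paper's decomposition is not literally a Euclidean convolution of a discrete translation measure with $\varphi_{B_{V'}}\mu$; rather, $\varphi_{B_V}\mu = \mathbb{E}\big((\theta_n^{B_V})_{B,i}.\mu\big)$ with $\theta_n^{B_V}=\sum_{\i\in\Xi_n^{B_V}}p_\i\delta_{B_VA_\i}$ a measure on $\mathbf{B}$, and the passage to Euclidean convolutions happens only locally via Lemma \ref{lem:linearisation} inside the proof of the entropy-increase theorem. Working with measures on $\mathbf{B}$ throughout (and with the invariant metric $d_{\mathbf{B}}$) is what lets the paper prove the auxiliary facts — Proposition \ref{dimapprox} giving $\frac1nH(\theta_n^{B_V},\mathcal{D}_0^{\mathbf{B}})\to\Delta$, the Shannon--McMillan--Breiman computation $\frac1nH(\theta_n^{B_V},\mathcal{D}_{Cn})\to H(p)/(\chi_1-\chi_2)$, and the final bookkeeping in the display chain ending in $\tfrac{C+1}{C}\Delta+\epsilon\delta$ — cleanly. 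Your phrasing suggests the convolution is global, which would make the curvature error terms harder to control than they actually are.
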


 Note that Theorem \ref{thm:follows from LY formula} combined with Theorem \ref{thm:ESC --> Delta =00003D correct val} proves Theorem \ref{msrthm}. This is because the SOSC implies the Diophantine property which can be seen by combining \cite[Proposition 2.4]{solomyak2021diophantine} with  \cite[\S 6.2]{BHR}.

\subsection{Random measures and entropy}

\subsubsection{Dyadic partitions of $\R$ and random component measures}

The level-$n$ partition of $\R$ is defined
$$\D_n=\left\{[\frac{k}{2^n},\frac{k+1}{2^n}): k \in \Z\right\}.$$
We write $\D_t=\D_{[t]}$ when $t \in \R$ is not an integer. For $x \in \R$, $n \in \N$ we denote the element of $\D_n$ that $x$ belongs to by $\D_n(x)$. 

Let $\eta$ be a probability measure on $\R$. Assuming $\eta(A)>0$ let $\eta_A$ denote the conditional measure $\eta_A=\frac{1}{\eta(A)} \eta|_A$. For each $n \in \N$ we define measure valued random variables $\eta_{x,n}=\eta_{\D_n(x)}$, i.e. given $D \in \D_n$, $\eta_{x,n}=\eta_D$ with probability $\eta(D)$. We call this an $n$th level component of $\eta$. When several components appear together such as $\eta_{x,n}, \eta_{y,n}$ we assume that $x$ and $y$ are chosen independently, unless stated otherwise. Sometimes $n$ is chosen randomly too, usually uniformly in some range. For example for $n_2 \geq n_1$ and some event $U$,
\begin{equation} \label{randomise}
\P_{n_1\leq i \leq n_2}(\eta_{x,i} \in U)=\frac{1}{n_2-n_1+1} \sum_{n=n_1}^{n_2} \P(\eta_{x,n} \in U).
\end{equation}
We write $\mathbb{E}$ and $\mathbb{E}_{n_1 \leq i \leq n_2}$ for the expected value with respect to $\P$ and $\P_{n_1\leq i \leq n_2}$, where $\mathbb{P}=\mathbb{P}_{i=1}$.

\subsubsection{Dyadic-like partitions of $\mathbf{B}$ and random component measures} \label{dyadiclike}

Write $\mathrm{Aff}(\mathbb{R})$ for the matrix representation of
the affine group of $\mathbb{R}$. That is,
\[
\mathrm{Aff}(\mathbb{R}):=\left\{ \left(\begin{array}{cc}
a & b\\
0 & 1
\end{array}\right)\::\:a,b\in\mathbb{R}\text{ and }a\ne0\right\} .
\]
Note that $AB\in\mathbf{B}$ for all $A\in\mathrm{Aff}(\mathbb{R})$
and $B\in\mathbf{B}$.
\begin{lem}
\label{lem:def =000026 prop of inv metric}There exists a metric $d_{\mathbf{B}}$
on $\mathbf{B}$ so that,
\begin{enumerate}
\item (invariance property) $d_{\mathbf{B}}(AB_{1},AB_{2})=d_{\mathbf{B}}(B_{1},B_{2})$ for all
$B_{1},B_{2}\in\mathbf{B}$ and $A\in\mathrm{Aff}(\mathbb{R})$;
\item the topology induced by $d_{\mathbf{B}}$ is equal to the subspace
topology induced by $\mathrm{Mat}_{2,3}(\mathbb{R})$;
\item for every compact $Q\subset\mathbf{B}$ there exists $C=C(Q)>1$ so
that for all $B_{1},B_{2}\in Q$,
\[
C^{-1}\left\Vert B_{1}-B_{2}\right\Vert \le d_{\mathbf{B}}(B_{1},B_{2})\le C\left\Vert B_{1}-B_{2}\right\Vert ;
\]
\item $(\mathbf{B},d_{\mathbf{B}})$ is a complete metric space, which has
the Heine--Borel property (i.e. closed and bounded subsets of $\mathbf{B}$
are compact).
\end{enumerate}
\end{lem}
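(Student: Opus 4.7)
The plan is to construct $d_{\mathbf{B}}$ via an Iwasawa-type decomposition of $\mathbf{B}$. For each $B\in\mathbf{B}$, decompose its first row as $r_{B,1}=\beta u+\alpha r_{B,2}$, where $\alpha:=\langle r_{B,1},r_{B,2}\rangle$, $\beta:=|r_{B,1}-\alpha r_{B,2}|>0$ (nonzero by the linear independence built into the definition of $\mathbf{B}$), and $u$ is the resulting unit vector orthogonal to $r_{B,2}$. Writing $\mathrm{Aff}^{+}(\mathbb{R})$ for the identity component $\{a>0\}$ of $\mathrm{Aff}(\mathbb{R})$, this yields a smooth bijection
\[
\Psi:\mathbf{B}\longrightarrow\mathrm{Aff}^{+}(\mathbb{R})\times\mathbf{B}_{\mathrm{o}},\qquad \Psi(B)=\bigl(A(B),\,O(B)\bigr),
\]
where $A(B)=\bigl(\begin{smallmatrix}\beta & \alpha\\ 0 & 1\end{smallmatrix}\bigr)\in\mathrm{Aff}^{+}(\mathbb{R})$ and $O(B)\in\mathbf{B}_{\mathrm{o}}$ has rows $u^{T}$ and $r_{B,2}^{T}$. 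By uniqueness of the decomposition, $\Psi(A_{0}B)=(A_{0}A(B),O(B))$ for every $A_{0}\in\mathrm{Aff}^{+}(\mathbb{R})$.

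I would then equip $\mathrm{Aff}^{+}(\mathbb{R})$, identified with the upper half plane $\{(\alpha,\beta):\beta>0\}$, with the hyperbolic metric $d_{H}$, equip $\mathbf{B}_{\mathrm{o}}$ with the restriction of the operator norm, and set
\[
d_{\mathbf{B}}(B_{1},B_{2}):=d_{H}\bigl(A(B_{1}),A(B_{2})\bigr)+\norm{O(B_{1})-O(B_{2})}.
\]
Invariance under $\mathrm{Aff}^{+}(\mathbb{R})$ is then immediate from the left invariance of $d_{H}$ and the product structure of $\Psi$. The subtle step, and in my view the main obstacle, is upgrading to full $\mathrm{Aff}(\mathbb{R})$-invariance, since the group is disconnected and $\Psi$ intertwines only with the positive component. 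Writing $\mathrm{Aff}(\mathbb{R})=\mathrm{Aff}^{+}(\mathbb{R})\sqcup J\cdot\mathrm{Aff}^{+}(\mathbb{R})$ with $J=\mathrm{diag}(-1,1)$, a short direct calculation gives $\Psi(JB)=(\widetilde{A(B)},\,J\cdot O(B))$, where $\widetilde{A(B)}$ is obtained from $A(B)$ by the reflection $\alpha\mapsto-\alpha$ (keeping $\beta$ fixed). The hyperbolic metric on the upper half plane is invariant under this reflection, and left multiplication by the orthogonal matrix $J$ preserves the operator norm, so both factors of $d_{\mathbf{B}}$ are preserved and the desired invariance follows.

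The remaining conditions I expect to be routine. For (2), both factor metrics induce the standard Euclidean topology on the respective factors, and $\Psi$ is a diffeomorphism onto its image, so $d_{\mathbf{B}}$ induces exactly the subspace topology from $\mathrm{Mat}_{2,3}(\mathbb{R})$. For (3), on a compact $Q\subset\mathbf{B}$ both $\Psi$ and $\Psi^{-1}$ are smooth with bounded derivative, hence bilipschitz; combined with the bilipschitz comparison of $d_{H}$ with the Euclidean metric on compact portions of the upper half plane (where $\beta$ is bounded away from $0$ and $\infty$), this yields the required two-sided bound against the operator norm. For (4), the hyperbolic upper half plane is complete and has the Heine--Borel property, while $\mathbf{B}_{\mathrm{o}}$ is compact (it fibers over the compact set of unit vectors in $\mathbb{R}^{3}$ with nonnegative entries, with compact $S^{1}$ fibers), so both factors are complete and Heine--Borel, and these properties pass to the product.
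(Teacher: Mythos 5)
Your proof is correct, and it is genuinely different from what the paper does: the paper simply cites \cite[Lemma 2.6]{rapaport2022SelfAffRd} without reproducing a construction, whereas you build $d_{\mathbf{B}}$ explicitly via a QR/Iwasawa-type decomposition $B = A(B)\,O(B)$ with $A(B)\in\mathrm{Aff}^{+}(\mathbb{R})$ and $O(B)\in\mathbf{B}_{\mathrm{o}}$, and pull back the product of the hyperbolic metric on $\mathrm{Aff}^{+}(\mathbb{R})\cong\mathbb{H}^{2}$ with the operator-norm metric on the compact fibre $\mathbf{B}_{\mathrm{o}}$. This is the right decomposition: the second row of $B$ is pinned to a unit vector, so the affine group acts freely on the left, and your $\Psi$ correctly intertwines the left action with left multiplication on the $\mathrm{Aff}^{+}(\mathbb{R})$ factor while leaving $O(B)$ fixed. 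The one step that could have gone wrong, the passage from $\mathrm{Aff}^{+}(\mathbb{R})$-invariance to full $\mathrm{Aff}(\mathbb{R})$-invariance, you handle correctly: conjugating by $J=\mathrm{diag}(-1,1)$ sends $(\alpha,\beta,u)\mapsto(-\alpha,\beta,-u)$, so $\Psi(JB)=(\widetilde{A(B)},JO(B))$, and both the reflection $z\mapsto-\bar z$ on $\mathbb{H}^{2}$ and left multiplication by the orthogonal matrix $J$ on $\mathbf{B}_{\mathrm{o}}$ are isometries of the two factor metrics. Parts (2)--(4) then follow exactly as you say: $\Psi$ is a diffeomorphism (so topologies agree), on compacta it is bilipschitz and $d_{H}$ is Euclidean-bilipschitz when $\beta$ is bounded away from $0$ and $\infty$, and completeness plus Heine--Borel pass to finite products.

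What the two approaches buy: the paper's citation keeps the exposition short and works in the general $\mathrm{Mat}_{d-1,d}$ setting of Rapaport's paper; your construction is self-contained, gives an explicit formula for $d_{\mathbf{B}}$, and makes all four properties transparent from standard facts about $\mathbb{H}^{2}$. If you wanted to adopt your proof in the paper, the only things worth making fully explicit are (i) that the map $\Psi$ is indeed a bijection onto $\mathrm{Aff}^{+}(\mathbb{R})\times\mathbf{B}_{\mathrm{o}}$ (surjectivity follows because $B=AO$ always lies in $\mathbf{B}$ and $\Psi(AO)=(A,O)$ by the orthonormality of $O$), and (ii) the identification $\bigl(\begin{smallmatrix}\beta&\alpha\\0&1\end{smallmatrix}\bigr)\leftrightarrow\alpha+i\beta$ under which left multiplication becomes the standard isometric Möbius action $z\mapsto a_{0}z+b_{0}$, which is what makes the $d_{H}$-invariance immediate.
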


\begin{proof}
This is a special case of \cite[Lemma 2.6]{rapaport2022SelfAffRd}.
\end{proof}
From now on, unless stated otherwise explicitly, all metric concepts
in $\mathbf{B}$ are with respect to $d_{\mathbf{B}}$. In particular,
for $\emptyset\ne E\subset\mathbf{B}$ we write $\mathrm{diam}(E)$
for the diameter of $E$ with respect to $d_{\mathbf{B}}$, and for
$M\in\mathbf{B}$ and $r>0$ we write $\textup{B}(M,r)$ for the closed ball
in $(\mathbf{B},d_{\mathbf{B}})$ with centre $M$ and radius $r$.
For each $R\ge0$ set,
\[
\mathbf{B}(R):=\left\{ B\in\mathbf{B}\::\:d_{\mathbf{B}}(B,\mathbf{B}_{\mathrm{o}})\le R\right\} .
\]
By Lemma \ref{lem:def =000026 prop of inv metric} and since $\mathbf{B}_{\mathrm{o}}$
is compact, it follows that $\textup{B}(M,r)$ and $\mathbf{B}(R)$ are compact.

By \cite[Remark 2.2]{KRS} there exists a sequence $\{\mathcal{D}_{n}^{\mathbf{B}}\}_{n\ge0}$
of Borel partitions of $\mathbf{B}$ so that,
\begin{enumerate}
\item $\mathcal{D}_{n+1}^{\mathbf{B}}$ refines $\mathcal{D}_{n}^{\mathbf{B}}$
for each $n\ge0$. That is, for each $D\in\mathcal{D}_{n+1}^{\mathbf{B}}$
there exists $D'\in\mathcal{D}_{n}^{\mathbf{B}}$ with $D\subset D'$;
\item There exists a constant $R>1$ so that for each $n\ge0$ and $D\in\mathcal{D}_{n}^{\mathbf{B}}$
there exists $M_{D}\in D$ with
\[
\textup{B}(M_{D},R^{-1}2^{-n})\subset D\subset \textup{B}(M_{D},R2^{-n}).
\]
\end{enumerate}
When there is no risk of confusion, we shall write $\mathcal{D}_{n}$
in place of $\mathcal{D}_{n}^{\mathbf{B}}$.

For a measure $\theta$ on $\mathbf{B}$, and $n \in \N$ we will also consider random component measures $\theta_{B,n}=\theta_{\D_n(B)}$, and we will sometimes randomise uniformly over $n$ analogously to \eqref{randomise}.

\subsubsection{Random cylinder measures} \label{randomcyl}

We will sometimes decompose the stationary measure $\mu$ into its cylinder measures, i.e. measures of the form $\varphi_\i \mu$ for $\i \in \I^*$. To this end we introduce two partitions of $\I^\N$ and associated random words.

 Define
$$\Psi_n=\left\{i_1 \ldots i_{m} \in \I^* \; : \; \frac{\alpha_2(A_{i_1 \ldots i_{m}})}{\alpha_1(A_{i_1\ldots i_{m}})} \leq 2^{-n}< \inf_{1 \leq \ell \leq m} \frac{\alpha_2(A_{i_1 \ldots i_{\ell-1}})}{\alpha_1(A_{i_1 \ldots i_{\ell-1}})} \right\}.$$
Let $I(n) \in \Psi_n$ be a random word chosen according to the probability vector $p$, i.e. $\mathbb{P}(I(n)=\i)=p_\i$ if $\i \in \Psi_n$. For $\i \in \I^\N$ we let $\Psi_n(\i) \in \Psi_n$ denote the unique finite word which is a truncation of $\i$. Note that by definition of singular values, $|\varphi_A(x)-\varphi_A(y)|=O( \frac{\alpha_2(A)}{\alpha_1(A)})$ for all $x, y \in K_\A$ and $A \in \Se$.

We randomise with respect to cylinder measures analogously to \eqref{randomise}, i.e. for an observable $F$,
$$\mathbb{E}_{n_1\leq i \leq n_2}(F(\varphi_{I(i)}\mu))=\frac{1}{n_2-n_1+1} \sum_{n=n_1}^{n_2} \mathbb{E}(F(\varphi_{I(i)}\mu)).$$

Next we define a second way of partitioning $\I^\N$, which this time depends on a choice of $B \in \mathbf{B}_{\mathrm{o}}$. For $B \in \mathbf{B}_{\mathrm{o}}$ we define
\begin{equation} \label{xi}
\Xi_n^B=\left\{\i \in \I^*\; : \; \frac{\norm{A_\i^*u_{A_\i,B}}}{\norm{A_\i^* r_{B,2}}} \leq 2^{-n}< \inf_{1 \leq m \leq |\i|-1}\frac{\norm{A_{\i|m}^*u_{A_{\i|m},B}}}{\norm{A_{\i|m}^* r_{B,2}}} \right\}
\end{equation}
where for any $A \in \Se$, $u_{A,B}$ is defined to be the unique unit vector (up to sign) in $V_B$ with the property that $A^*u_{A,B} \perp A^*r_{B,2}$. It is not immediate from the definition that $\Xi_n^B$ is a partition for each $n \in\N$, but this will follow from Lemma \ref{rescale} and \eqref{xilem1}. We define $J(n,B)$ to be a random element from $\Xi_n^B$ chosen according to $p$. 

\subsubsection{Entropy}

For a probability measure $\eta$ and $\D, \D'$ finite or countable partitions of the underlying probability space, we define the entropy with respect to $\D$ by
$$H(\eta, \D)=-\sum_{I \in \D} \eta(I)\log \eta(I)$$
and the entropy with respect to $\D$, conditioned on $\D'$ by
$$H(\eta, \D|\D')=H(\eta, \D \wedge \D')-H(\eta, \D')$$
where $\D \wedge \D'$ denotes the common refinement of the partitions.
Given two partitions $\D, \D'$ such that each atom of $\D$ intersects at most $C$ atoms of $\D'$ and vice versa,
\begin{equation} \label{commens}
|H(\eta, \D)-H(\eta, \D')|=O(C)
\end{equation}
The entropy is concave: for $\alpha \in (0,1)$ and probability measures $\eta_1,\eta_2$,
$$H(\alpha \eta_1+(1-\alpha) \eta_2, D) \geq \alpha H(\eta, \D) +(1-\alpha) H(\eta_2, \D).$$
Given a measure $\eta$ on $\R^d$, if $f,g$ are such that $\sup_x |f(x)-g(x)|<2^{-n}$ then
\begin{equation}
|H(f\eta,\D_n)-H(g\eta, \D_n)|=O(1).
\label{almostcts}
\end{equation}
Using the definition of the distribution on components, given a measue $\eta$ on either $\R$ or $\mathbf{B}$ it follows that for $n,m \geq 1$,
\begin{equation}
H(\eta, \D_{n+m}|\D_n)= \E(H(\eta_{x,n},\D_{n+m})),\label{compentropy}
\end{equation}
where $\D_n$ is understood to be $\D_n^{\mathbf{B}}$ in the case that $\eta$ is a measure on $\mathbf{B}$.
By \eqref{planeequiv}, for any $B,B' \in \mathbf{B}_{\mathrm{o}}$ such that $V_B=V_{B'}$, $H(\varphi_{B} \mu, \mathcal{D}_n)=H(\varphi_{B'} \mu, \mathcal{D}_n)+O_K(1).$ We will use this fact implicitly throughout \S \ref{entropylb}.\\
Given $A=\begin{pmatrix}a&b\\0&1 \end{pmatrix} \in \mathrm{Aff}(\R)$, a measure $\eta$ on $\R$ and a measure $\theta$ on $\mathbf{B}$,
\begin{align}
\label{affineting}
H(\varphi_A\eta, \D_n) = H(\eta, \D_{n-\log a})&+O(1)= H(\eta, \D_n)+O(|\log a|+1) \\
|H(\theta, \D_n^{\mathbf{B}})&-H(A(\theta), \D_n^{\mathbf{B}})|=O(1) \label{affinetingB}
\end{align}
where we have used Lemma \ref{lem:def =000026 prop of inv metric} in \eqref{affinetingB}.

\section{Entropy lower bounds} \label{entropylb}

In this section we begin the work towards proving Theorem \ref{thm:ESC --> Delta =00003D correct val}. It is sufficient to assume $\Delta<1$. The proof will be by contradiction thus we will assume that $\Delta<\frac{H(p)}{\chi_1(p)-\chi_2(p)}$. Given any $n \in \N$ and $V \in \mathbf{Y}$, $\varphi_{B_V}\mu$ can be expressed as the expectation of $(\theta_n^{B_V})_{B,i} . \mu$, where $\theta_n^{B_V}= \sum_{\i \in \Xi_n^{B_V}} p_\i \delta_{B_VA_\i}$ and $\theta. \mu$ denotes the pushforward of $\theta \times \mu$ via the map $(M,x) \mapsto \varphi_M(x)$. Therefore $\Delta$, which equals the entropy of a typical projected measure $\varphi_{B_V} \mu$, can be bounded from below by the expected entropy of $(\theta_n^{B_V})_{B,i} . \mu$ by concavity of entropy. By relating $(\theta_n^{B_V})_{B,i} . \mu$ to a (random) linear convolution of measures, we will see that its entropy cannot be smaller than $\Delta$. Therefore a contradiction will be obtained by establishing the following entropy growth result (Theorem \ref{thm:ent inc under conv}): with positive probability $(\theta_n^{B_V})_{B,i} . \mu$ will have entropy significantly greater than $\Delta$.

In order to establish this, we will see that the Diophantine property combined with the assumption $\Delta < \frac{H(p)}{\chi_1(p)-\chi_2(p)}$ implies that there exists some $C>0$ such that with positive probability $(\theta_n^{B_V})_{B,Cn}$ has non-negligible entropy. Our main result will be to show that this, combined with the Zariski density implies the entropy growth result. The proof of this entropy growth result Theorem \ref{thm:ent inc under conv} will constitute the bulk of the following two sections. 

The entropy growth result will be proved by contradiction. A key element to this will be in showing that if $\Delta<1$ then most projected components have close to full entropy, which is the main result of this section. 
\begin{prop}
\label{prop:lb on ent of comp of mu} Suppose $\Delta<1$. For every $\epsilon,R>0$, $k\ge K(\epsilon,R)\ge1$,
$n\ge N(\epsilon,R,k)\ge1$ and $B\in\mathbf{B}(R)$,
\[
\mathbb{P}_{1\le i\le n}\left(\frac{1}{k}H(\varphi_{B}\mu_{x,i},\mathcal{D}_{i+k})>\Delta-\epsilon\right)>1-\epsilon.
\]
\end{prop}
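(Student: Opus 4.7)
The plan is to reduce Proposition~\ref{prop:lb on ent of comp of mu} to the exact-dimensionality of $\varphi_{B_V}\mu$ for $\nu$-a.e.\ $V$ (Theorem~\ref{thm:follows from LY formula}(1)) via a decomposition of $\mu$ into the cylinder measures $\varphi_{\mathbf{i}}\mu$ for $\mathbf{i}\in\Psi_i$. After a suitable affine rescaling, $\varphi_B\varphi_{\mathbf{i}}\mu$ agrees up to $O_R(1)$ error in entropy with $\varphi_{B'_{\mathbf{i}}}\mu$ for some $B'_{\mathbf{i}}\in\mathbf{B}_{\mathrm{o}}$ whose associated plane $V_{B'_{\mathbf{i}}}$ is distributed approximately according to $\nu$; the dimension result for $\nu$-typical projections then supplies the desired entropy lower bound.

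Concretely, I would first compare the dyadic components $\mu_{x,i}$ of $\mu$ on $\R^2$ with the cylinder measures $\varphi_{\mathbf{i}}\mu$ for $\mathbf{i}\in\Psi_i$. By Proposition~\ref{submult}(1), each cylinder support $\varphi_{\mathbf{i}}(K_{\mathbf{A}})$ has diameter $\Theta_R(2^{-i})$; combined with the Diophantine property this gives bounded overlap multiplicity between the cylinder and dyadic partitions, so by \eqref{commens} it suffices to bound $\tfrac{1}{k}H(\varphi_B\varphi_{I(i)}\mu,\mathcal{D}_{i+k})$ from below. Second, by Lemma~\ref{lem:comp of frac lin trans}, $\varphi_B\varphi_{\mathbf{i}}\mu = \varphi_{BA_{\mathbf{i}}}\mu$; factoring $BA_{\mathbf{i}} = T_{\mathbf{i}} B'_{\mathbf{i}}$ with $T_{\mathbf{i}}\in\mathrm{Aff}(\R)$ contracting by a factor $\Theta_R(2^{-i})$ (its diagonal entry being comparable to the ratio $\|BA_{\mathbf{i}}^* u_{A_{\mathbf{i}},B}\|/\|BA_{\mathbf{i}}^* r_{B,2}\|$ controlling $\Xi_n^B$) and $B'_{\mathbf{i}}\in\mathbf{B}_{\mathrm{o}}$, the scaling identity \eqref{affineting} yields
\[
H(\varphi_B\varphi_{\mathbf{i}}\mu,\mathcal{D}_{i+k}) = H(\varphi_{B'_{\mathbf{i}}}\mu,\mathcal{D}_k) + O_R(1).
\]

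Third, Theorem~\ref{thm:follows from LY formula}(1) gives that $\varphi_{B_V}\mu$ is exact-dimensional with $\hd\varphi_{B_V}\mu = \Delta$ for $\nu$-a.e.\ $V$. A standard entropy-porosity argument (with $\Delta<1$ ensuring the limiting entropy does not saturate the trivial upper bound $k$) converts this into the pointwise statement $\tfrac{1}{k}H(\varphi_{B_V}\mu,\mathcal{D}_k) \to \Delta$ as $k \to \infty$, and Egorov's theorem then produces a compact $G\subset\mathbf{Y}$ with $\nu(G)>1-\tfrac{\epsilon}{3}$ on which the convergence is uniform. Fourth, the law of $V_{B'_{I(i)}}$ converges weakly to $\nu$ as $i\to\infty$, uniformly over $B\in\mathbf{B}(R)$: this follows from the uniform exponential contraction of the projective random walk on $\mathrm{Gr}_2(3)$, a consequence of Proposition~\ref{submult}(1), together with strong irreducibility and proximality (from Zariski density). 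Combining, for $i$ and $k$ sufficiently large (depending only on $\epsilon$ and $R$) the event $V_{B'_{I(i)}}\in G$ has probability $\geq 1-\tfrac{\epsilon}{2}$ uniformly in $B\in\mathbf{B}(R)$, on which the preceding steps yield $\tfrac{1}{k}H(\varphi_B\varphi_{I(i)}\mu,\mathcal{D}_{i+k}) > \Delta-\epsilon$; averaging over $i\in\{1,\dots,n\}$ and translating back through the commensurability of step one completes the proof.

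The main obstacle is the uniform equidistribution of $V_{B'_{I(i)}}$ towards $\nu$ across all initial $B\in\mathbf{B}(R)$ in the fourth step: while convergence for each fixed $B$ is classical, obtaining a rate that is uniform over the compact family $\mathbf{B}(R)$ rests on the uniform projective contraction available here by virtue of the positivity hypothesis on $\mathbf{A}$. The commensurability in the first step is also somewhat delicate given the anisotropy of the cylinder sets $\varphi_{\mathbf{i}}(K_{\mathbf{A}})$ (which are thin rectangles of aspect ratio $\alpha_3/\alpha_2$), but follows routinely from the Diophantine separation combined with the uniform bound on cylinder diameters.
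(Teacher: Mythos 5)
Your high-level strategy is broadly the one the paper uses: realise $\varphi_B\mu$ as an average of rescaled projections $\varphi_{B'_{\mathbf i}}\mu$ for conjugate planes, invoke Theorem~\ref{thm:follows from LY formula}(1) together with equidistribution of $A_{I(i)}^*V$ towards $\nu$, and pass from a statement about cylinder measures to one about dyadic components. The rescaling step (your step two) matches Lemmas~\ref{rescale}, \ref{scaling}, \ref{entropy_rescale2}, and the equidistribution step (your step four) corresponds to the analogue of [HR, Proposition~2.8] used in Lemma~\ref{uniformlb}; both are sound in outline, though you gloss over the fact that $c_{A_{\mathbf i},B}=\Theta(\alpha_2/\alpha_1)$ only for words $\mathbf i$ whose $L_3(A_{\mathbf i})$ stays away from $V_B$ (Lemmas~\ref{measure-bound}, \ref{scaling}), which is exactly why the paper first controls that event.

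However, your first step — the conversion from cylinder measures to component measures — contains a genuine gap and is where your argument breaks down. You claim that the diameter bound on $\varphi_{\mathbf i}(K_{\mathbf A})$ together with the Diophantine property gives \emph{bounded overlap multiplicity} between the cylinder family $\{\varphi_{\mathbf i}(K_{\mathbf A})\}_{\mathbf i\in\Psi_i}$ and the dyadic partition, and that \eqref{commens} then reduces the problem to cylinders. Both halves of this fail. The Diophantine property (Definition~\ref{ESC}) controls separation of the \emph{matrices} $A_{u_1},A_{u_2}$ in operator norm, not separation or packing of the projected cylinder \emph{sets}; nothing prevents many cylinders in $\Psi_i$ from stacking in a single dyadic cell. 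More fundamentally, even if such bounded overlap held, \eqref{commens} compares the entropy of a \emph{single} measure with respect to two commensurable partitions; it says nothing about how $H(\varphi_B\mu_{x,i},\mathcal D_{i+k})$ relates to $H(\varphi_B\varphi_{I(i)}\mu,\mathcal D_{i+k})$, which are entropies of two \emph{different} measures with respect to the same partition. The paper circumvents this via a different mechanism: Lemma~\ref{thickening} and its consequence Lemma~\ref{lem:mu mass of bd of dyad} show that the $\mu$-mass near the boundaries of dyadic cells is small, whence (after passing to scale $n+k$) most cylinder measures $\varphi_{A_{I(n+k)}}\mu$ lie \emph{entirely inside} a single level-$n$ dyadic cell with high probability. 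On that event, the cylinder measure is absolutely continuous with respect to a component, and the conversion is completed by the argument of [HR, Lemma~3.7]. Your proof as written never establishes anything of this kind, nor does it notice the need for Lemma~\ref{lem:mu mass of bd of dyad}, so the conclusion does not follow. (Incidentally, the Diophantine hypothesis plays no role at all in this proposition; it is used only later, in Lemma~\ref{projectedESC}.)
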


Roughly speaking, Proposition \ref{prop:lb on ent of comp of mu} will be proved initially for $B=B_V \in \mathbf{B}_{\mathrm{o}}$ by realising $\varphi_{B_V} \mu$ as the expectation of rescaled copies of $\varphi_{B_{A_{I(n)}^*V}} \mu$ and showing that most projected measures $\varphi_{B_{A_{I(n)}^*V}} \mu$ have entropy close to $\Delta$. The result for $B \in \mathbf{B}(R)$ will follow by using the invariance of the metric $d_{\mathbf{B}}$.

\subsection{Expressing $\varphi_{B_VA}$ as an affine rescaling of $\varphi_{B_{A^*V}}$ }

In this section we show that given $A \in \Se$, the entropy of $\varphi_{B_VA}\mu$ can be expressed in terms of the entropy of $\varphi_{B_{A^*V}}\mu$. We begin by rewriting the function $\varphi_{B_VA}$ as an affine rescaling of $\varphi_{B_{A^*V}}$.

\begin{lem}\label{rescale}
For $B\in\mathbf{B}_{\mathrm{o}}$ and $A \in \mathrm{SL}(3,\mathbb{R})_{>0}$, we have
\begin{equation}
\varphi_{BA}(x)=c_{A,B}\varphi_M(x)+t_{A,B} \label{rescale_eqn}
\end{equation}
 where $M \in \mathbf{B}_{\mathrm{o}}$ satisfies $V_M=A^*V_B$ and 
\begin{align*}
c_{A,B}&=\Theta_{\mathbf{S}}\left( \frac{\norm{A^* u_{A,B}}}{\norm{A^* r_{B,2}}}\right)
\end{align*}
where $u_{A,B}$ was defined below \eqref{xi} to be the unique unit vector (up to sign) in $V_B$ with the property that $A^*u_{A,B} \perp A^*r_{B,2}$. 
\end{lem}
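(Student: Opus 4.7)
My plan is to verify the affine identity by a direct change of coordinates on the plane $A^*V_B$, then extract the magnitude of $c_{A,B}$ from a wedge product computation, with positivity of $\mathbf{S}$ entering only to bound a final uniformity constant.

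First I would observe that the rows of $BA$ are $A^*r_{B,1}$ and $A^*r_{B,2}$, both of which lie in $A^*V_B$. Set $r_{M,2}:=A^*r_{B,2}/\|A^*r_{B,2}\|$; this is a unit vector with nonnegative coordinates since $A\in\mathrm{SL}(3,\mathbb{R})_{>0}$ and $r_{B,2}$ has nonnegative coordinates, and the denominator is strictly positive because $r_{B,2}\ne 0$ is nonnegative and $A^*$ has strictly positive entries. Let $r_{M,1}$ be a unit vector in $A^*V_B$ orthogonal to $r_{M,2}$. Then $\{r_{M,1},r_{M,2}\}$ is an orthonormal basis of $A^*V_B$, so the resulting $M\in\mathbf{B}_{\mathrm{o}}$ satisfies $V_M=A^*V_B$. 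Writing $A^*r_{B,1}=a\,r_{M,1}+b\,r_{M,2}$ with $a=\langle A^*r_{B,1},r_{M,1}\rangle$ and $b=\langle A^*r_{B,1},r_{M,2}\rangle$, I compute directly
\[
\varphi_{BA}(x)=\frac{\langle A^*r_{B,1},\tilde{x}\rangle}{\langle A^*r_{B,2},\tilde{x}\rangle}=\frac{a\langle r_{M,1},\tilde{x}\rangle+b\langle r_{M,2},\tilde{x}\rangle}{\|A^*r_{B,2}\|\,\langle r_{M,2},\tilde{x}\rangle}=\frac{a}{\|A^*r_{B,2}\|}\varphi_M(x)+\frac{b}{\|A^*r_{B,2}\|},
\]
giving \eqref{rescale_eqn} with $c_{A,B}=a/\|A^*r_{B,2}\|$ and $t_{A,B}=b/\|A^*r_{B,2}\|$.

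To extract $|c_{A,B}|$ I would compute $A^*r_{B,1}\wedge A^*r_{B,2}$ in two ways. Using $A^*r_{B,2}=\|A^*r_{B,2}\|\,r_{M,2}$ and expanding $A^*r_{B,1}$ in the orthonormal basis $\{r_{M,1},r_{M,2}\}$ gives $|A^*r_{B,1}\wedge A^*r_{B,2}|_2=|a|\,\|A^*r_{B,2}\|$. On the other hand, writing $u_{A,B}=s\,r_{B,1}+t\,r_{B,2}$ with $s^2+t^2=1$ and applying bilinearity of $\wedge$, we get $A^*u_{A,B}\wedge A^*r_{B,2}=s\,(A^*r_{B,1}\wedge A^*r_{B,2})$, and since $A^*u_{A,B}\perp A^*r_{B,2}$ the left side has norm $\|A^*u_{A,B}\|\,\|A^*r_{B,2}\|$. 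Combining,
\[
|c_{A,B}|=\frac{|a|}{\|A^*r_{B,2}\|}=\frac{\|A^*u_{A,B}\|}{|s|\,\|A^*r_{B,2}\|}.
\]

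The main obstacle is showing $|s|=\Theta_{\mathbf{S}}(1)$ uniformly in $A\in\mathbf{S}$ and $B\in\mathbf{B}_{\mathrm{o}}$. The orthogonality condition defining $u_{A,B}$ yields $t=-s\,\langle A^*r_{B,1},A^*r_{B,2}\rangle/\|A^*r_{B,2}\|^2$, so Cauchy--Schwarz gives $|t/s|\le\|A^*r_{B,1}\|/\|A^*r_{B,2}\|$, and combined with $s^2+t^2=1$ it suffices to bound this ratio from above. The numerator is trivially at most $\|A^*\|=\|A\|$. For the denominator, since $r_{B,2}$ is a unit nonnegative vector and $A^*$ has strictly positive entries, a standard Perron--Frobenius/Birkhoff--Hopf estimate uniform over products of matrices from the finite positive generating set $\mathbf{A}$ gives $\|A^*r_{B,2}\|\gtrsim_{\mathbf{A}}\|A\|$. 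This uniformity step is the only place where positivity is essential and is the one requiring care; the rest is routine linear algebra.
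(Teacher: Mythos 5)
Your proof is correct and follows essentially the same route as the paper: both extract the affine identity by expanding $A^*r_{B,1}$ in the orthonormal frame $\{r_{M,1},r_{M,2}\}$ of $A^*V_B$ (with $r_{M,2}$ the normalisation of $A^*r_{B,2}$), and both reduce the $\Theta_{\mathbf{S}}$-estimate to the uniform lower bound $\|A^*r_{B,2}\|\gtrsim_{\mathbf{A}}\alpha_1(A)$ coming from positivity. Your wedge-product step is just an alternative way of recording the identity $|a|=\|A^*u_{A,B}\|/|s|$ that the paper writes directly via $A^{-*}v_{A,B}$, and your bound on $|s|$ via $|t/s|\le\|A^*r_{B,1}\|/\|A^*r_{B,2}\|$ is the Cauchy--Schwarz form of the paper's $\cos(\cdot)\le1$ estimate.
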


\begin{proof}
Observe that
$$BA= \begin{pmatrix} A^* r_{B,1} \\ A^* r_{B,2} \end{pmatrix} \in \mathbf{B}.$$
Define
$$v_{A,B}=A^*r_{B,1}-\frac{\langle A^*r_{B,1}, A^*r_{B,2}\rangle}{\norm{A^*r_{B,2}}^2}A^*r_{B,2}$$
noting that $v_{A,B} \in V_{BA}= A^*V_B$ and $\langle v_{A,B}, A^*r_{B,2} \rangle=0$. 

Writing $A^{-*}=(A^*)^{-1}$ define
$$u_{A,B}= \frac{A^{-*}v_{A,B}}{\norm{A^{-*}v_{A,B}}},$$
noting that this definition coincides with the one below \eqref{xi} and define
$$M=M_{A,B}= \begin{pmatrix}\frac{v_{A,B}}{\norm{v_{A,B}}} \\ \frac{A^* r_{B,2}}{\norm{A^* r_{B,2}}}\end{pmatrix} \in \mathbf{B}_{\mathrm{o}}.$$
One can easily check that
\begin{equation} \label{u'}
\varphi_{BA}(x)=\frac{\norm{A^{-*}v_{A,B}}\norm{A^* u_{A,B}}}{\norm{A^* r_{B,2}}} \varphi_{M_{A,B}}(x)+\frac{\langle A^*r_{B,1}, A^*r_{B,2}\rangle}{\norm{A^*r_{B,2}}^2}.
\end{equation}
Thus to complete the proof it is sufficient to show that $\norm{A^{-*}v_{A,B}}=\Theta(1)$ uniformly over $A \in \mathbf{S}$ and $B \in \mathbf{Y}$. To see this, first observe that by positivity of $\A$ there exists $c$ depending only on $\mathbf{A}$ such that for any $A \in \mathbf{S}$,
\begin{equation} \label{uniform}
c\alpha_1(A) \leq  \norm{A^* r_{B,2}} \leq \alpha_1(A).
\end{equation}
This allows us to show that $\norm{A^{-*}v_{A,B}}$ is bounded above and below by uniform constants. Indeed writing
$$\norm{A^{-*}v_{A,B}}=\norm{r_{B,1}-\frac{\langle A^*r_{B,1}, A^*r_{B,2}\rangle}{\norm{A^*r_{B,2}}^2}r_{B,2}}$$
then $r_{B,1}$ and $r_{B,2}$ are unit vectors perpendicular to each other, $\norm{A^{-*}v_{A,B}} \geq 1$. On the other hand
\begin{align*}
\norm{A^{-*}v_{A,B}}& \leq 1+ \frac{|\langle A^*r_{B,1}, A^*r_{B,2}\rangle|}{\norm{A^*r_{B,2}}^2}\\
&=1+ \frac{\norm{A^* r_{B,1}}}{\norm{A^* r_{B,2}}} \cos (d_\P (A^*r_{B,1},A^*r_{B,2}))\\
&\leq 1+\frac{1}{c}
\end{align*}
by applying \eqref{uniform}, thus we are done.\end{proof}

Currently the expression for the scaling constant $c_{A,B}$ is not useful, so over the next few lemmas we obtain a more useful estimate on $c_{A,B}$ for ``most'' choices of $A=A_\i$.

Recall the definition of $L_3$ from \S \ref{Slyap}, which we think of as a random variable on $(\I^{\mathbb{N}}, \beta)$, and $\Psi_n$ and $I(n)$ from \S \ref{randomcyl}. 
The following asserts that for any $V \in \mathbf{Y}$, most of the time $L_3(A_{I(n)})$ will not lie too close to $V$.

\begin{lem}\label{measure-bound}
For every $\epsilon>0$ and $0<\theta \leq \theta(\epsilon)$, if $n \geq N(\epsilon, \theta)$, then for all $V \in \mathbf{Y}$,
$$\mathbb{P}\left(\min_{v \in V}d_{\mathbb{P}}(L_3(A_{I(n)}), v) >  \theta\right)>1-\epsilon.$$

\end{lem}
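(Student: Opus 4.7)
The approach is to identify the distribution of $L_3(A_{I(n)})$ with the pushforward under $\beta$ of $\i \mapsto L_3(A_{\Psi_n(\i)})$, and then reduce to weak convergence to the stationary measure $L_3\beta$ combined with the fact that $L_3\beta$ assigns zero mass to any proper projective subspace.

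First, I would establish that $L_3(A_{\Psi_n(\i)}) \to L_3(\i)$ for $\beta$-a.e. $\i$. Since the top Lyapunov gap $\chi_1(p) - \chi_2(p)$ is strictly positive by strong irreducibility and proximality, the ratios $\alpha_2(A_{\i|m})/\alpha_1(A_{\i|m})$ decay exponentially $\beta$-a.s., so $|\Psi_n(\i)| \to \infty$ as $n \to \infty$; combined with $L_3(\i) = \lim_m L_3(A_{\i|m})$ (defined $\beta$-a.e.~in \S\ref{Slyap}), this yields the a.s.~convergence. Hence the distribution $\rho_n$ of $L_3(A_{I(n)})$ converges weakly to $L_3\beta$ on $\mathbb{RP}^2$.

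Next I would invoke the key property that $L_3\beta$ assigns zero mass to any projective line, i.e. $L_3\beta(V \cap \mathbb{RP}^2) = 0$ for every $V \in \mathrm{Gr}_2(3)$. This is the classical non-atomicity of Furstenberg measures for strongly irreducible and proximal semigroups (\cite{BL}), applied to the inverse semigroup $\{A_i^{-1}\}$, which inherits Zariski density (hence strong irreducibility and proximality) from $\Se$. Writing $N_\theta(V) := \{w \in \mathbb{RP}^2 : d_{\mathbb{P}}(w,v) \leq \theta \text{ for some } v \in V\}$, we then have $L_3\beta(N_\theta(V)) \to 0$ as $\theta \to 0$ for each fixed $V$.

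Finally, I would upgrade this to uniformity over $V \in \mathbf{Y}$ using compactness of the Grassmannian. The map $V \mapsto L_3\beta(N_\theta(V))$ is upper semicontinuous in $V$ (the sets $N_\theta(V_k)$ satisfy $\limsup_k N_\theta(V_k) \subset N_\theta(V)$ when $V_k \to V$, so Fatou applies), hence a standard compactness/contradiction argument shows that $\sup_{V \in \mathbf{Y}} L_3\beta(N_{2\theta}(V)) \to 0$ as $\theta \to 0$. Given $\epsilon > 0$, choose $\theta$ small enough that this supremum is below $\epsilon/2$. For the uniform-in-$V$ version of Portmanteau, I would fix a finite $\theta$-net $\{V_1, \dots, V_M\}$ of $\mathbf{Y}$ so that $N_\theta(V) \subset N_{2\theta}(V_j)$ for some $j$; weak convergence $\rho_n \to L_3\beta$ applied to the finitely many closed sets $N_{2\theta}(V_j)$ then gives $\rho_n(N_\theta(V)) \leq \rho_n(N_{2\theta}(V_j)) < L_3\beta(N_{2\theta}(V_j)) + \epsilon/2 < \epsilon$ for $n \geq N(\epsilon,\theta)$, uniformly in $V$. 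The main technical point is the uniformity step, but it is handled cleanly by this finite-net reduction; the non-trivial input is the classical non-atomicity of $L_3\beta$ on proper subvarieties, which is guaranteed by Zariski density.
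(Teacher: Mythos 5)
Your proposal is correct and takes essentially the same route as the paper: establish $\beta$-a.s.\ convergence of $L_3(A_{\Psi_n(\i)})$ to $L_3(\i)$, deduce convergence in distribution of $L_3(A_{I(n)})$ to $L_3\beta$, invoke that $L_3\beta$ gives zero mass to projective lines (by (strong) irreducibility of the inverse system), and then obtain the bound uniformly over $V\in\mathbf{Y}$ by a compactness argument. Your handling of the last step via upper semicontinuity and a finite $\theta$-net is a slightly more careful rendering of the paper's terse assertion that $L_3\beta(A(V,\theta))$ is continuous in $V$ and $\theta$, but it is the same idea.
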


\begin{proof}
The sequence $\{L_3(A_{\Psi_n(\i)})\}_{n \geq 1}$ converges to $L_3(\i)$ for $\beta$ almost every $\i \in \I^{\mathbb{N}}$. By definition
$$\mathbb{P}(I(n)=\mathbf{j})= \beta(\{\i: \Psi_n(\i)=\mathbf{j}\}).$$
So $\{L_3(A_{I(n)})\}_{n \geq 1}$ converges in distribution to $L_3$. Let 
$$A(V,\theta)=\{u \in \R\P^2: \min_{v \in V}d_{\mathbb{P}}(u,v)\} \leq \theta\}.$$ By irreducibility of $\A$, $L_3\beta$ does not give positive mass to any plane. Since $L_3\beta(A(V,\theta))$ is continuous in $V$ and $\theta$ there exists $\theta(\epsilon)>0$ such that for all $0<\theta\leq \theta(\epsilon)$,
$$L_3\beta(A(V,2\theta))<\epsilon/2$$
for all $V \in \mathbf{Y}$. Hence for all $n \geq N(\epsilon,\theta)$ and for all $V \in \mathbf{Y}$,
$$\mathbb{P}(L_3(A_{I(n)}) \in A(V,\theta))<\epsilon$$
completing the proof.

\end{proof}

In the following we obtain an estimate for the scaling factor $c_{A,B}$ under the assumption that we have control over the distance of $L_3(A)$ from $V_B$. 

\begin{lem}\label{scaling}
Let $\epsilon>0$. There exist $c_\epsilon<C$, depending on $\mathbf{A}$, such that for any $A \in \mathbf{S}$ with $\min_{v \in V} d_{\mathbb{P}}(L_3(A),v) \geq \epsilon,$ 
\begin{equation} \label{xilem1}
c_\epsilon\frac{\alpha_2(A)}{\alpha_1(A)} \leq c_{A,B_V} \leq C \frac{\alpha_2(A)}{\alpha_1(A)}.
\end{equation}
Moreover, there exists $c>0$ such that for any $A \in \Se$ and $A' \in \A$,
\begin{equation} \label{xilem2}
\frac{\norm{(AA')^*u_{AA',B}}}{\norm{(AA')^* r_{B,2}}}\geq c\frac{\norm{A^*u_{A,B}}}{\norm{A^* r_{B,2}}}.
\end{equation}
\end{lem}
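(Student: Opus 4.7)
The plan is to handle both claims via a single factorization of the ratio $\|A^* u_{A,B}\|/\|A^* r_{B,2}\|$. Writing $u_{A,B} = a\, r_{B,1} + b\, r_{B,2}$ with $a^2 + b^2 = 1$, the defining orthogonality $A^* u_{A,B} \perp A^* r_{B,2}$ yields
\[
\|A^* u_{A,B}\| \cdot \|A^* r_{B,2}\| = \bigl|A^* u_{A,B} \wedge A^* r_{B,2}\bigr|_2 = |a|\, \bigl|A^{*\wedge 2}(r_{B,1} \wedge r_{B,2})\bigr|_2,
\]
so
\[
\frac{\|A^* u_{A,B}\|}{\|A^* r_{B,2}\|} = \frac{|a|\, \bigl|A^{*\wedge 2}(r_{B,1} \wedge r_{B,2})\bigr|_2}{\|A^* r_{B,2}\|^2}.
\]

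For the first claim I would control each of the three factors separately. The denominator is $\Theta(\alpha_1(A)^2)$ by \eqref{uniform} (positivity of $A$). The orthogonality condition forces $a = 1/\sqrt{1 + \langle A^* r_{B,1}, A^* r_{B,2}\rangle^2 / \|A^* r_{B,2}\|^4}$, and \eqref{uniform} makes the correction factor uniformly bounded, so $|a| = \Theta(1)$ uniformly in $A \in \mathbf{S}$ -- essentially the computation at the end of the proof of Lemma \ref{rescale}. For the numerator, expand $r_{B_V,1} \wedge r_{B_V,2}$ in the orthonormal basis $\{v_i \wedge v_j : i<j\}$ built from the right singular vectors $v_1, v_2, v_3$ of $A^*$ (which are the left singular vectors of $A$, so $v_3 = L_3(A)$). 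Then
\[
\bigl|A^{*\wedge 2}(r_{B_V,1} \wedge r_{B_V,2})\bigr|_2^2 = \sum_{i<j} c_{ij}^2\, \alpha_i(A)^2 \alpha_j(A)^2
\]
with $\sum c_{ij}^2 = 1$, so the universal (and $\epsilon$-independent) upper bound $\alpha_1(A)\alpha_2(A)$ is immediate. Hodge duality identifies $|c_{12}|$ with $|\langle *(r_{B_V,1} \wedge r_{B_V,2}), v_3\rangle| = \sin(\mathrm{angle}(v_3, V))$, which is at least $\sin\epsilon$ by hypothesis; this yields the matching lower bound $c_\epsilon\alpha_1(A)\alpha_2(A)$, and multiplying the three factor estimates proves \eqref{xilem1}.

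For the second claim I would take the ratio of the factorization applied to $AA'$ and $A$:
\[
\frac{\|(AA')^* u_{AA',B}\|/\|(AA')^* r_{B,2}\|}{\|A^* u_{A,B}\|/\|A^* r_{B,2}\|} = \frac{|a_{AA'}|}{|a_A|} \cdot \frac{\bigl|A'^{*\wedge 2}\bigl(A^{*\wedge 2}(r_{B,1}\wedge r_{B,2})\bigr)\bigr|_2}{\bigl|A^{*\wedge 2}(r_{B,1}\wedge r_{B,2})\bigr|_2} \cdot \frac{\|A^* r_{B,2}\|^2}{\|A'^* A^* r_{B,2}\|^2}.
\]
Each factor is uniformly bounded below: the first by $|a| = \Theta(1)$ applied to both $A$ and $AA'$ (both positive since $\mathbf{S} \subset \mathrm{SL}(3,\mathbb{R})_{>0}$), the second by the smallest singular value $\alpha_2(A')\alpha_3(A')$ of $A'^{*\wedge 2}$ (uniformly positive since $\A$ is finite), and the third by $\|A'^*\|^{-2}$ (also uniformly positive). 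Multiplying these bounds gives \eqref{xilem2}. The only non-routine step in the whole argument is the Hodge-duality identification of $|c_{12}|$ with $\sin(\mathrm{angle}(L_3(A), V))$, which is precisely where the hypothesis that $L_3(A)$ is far from $V$ makes the direction of largest stretch of $A^{*\wedge 2}$ pick up nontrivial weight in the bivector of $V$; everything else is bookkeeping from positivity and the finiteness of $\A$.
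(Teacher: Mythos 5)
Your proof is correct, and it takes a genuinely different route from the paper. The paper argues directly in the ellipsoid picture: for the upper bound it compares $\|A^* u_{A,B_V}\|$ to the semiaxes of the ellipse $A^*V \cap A^*(\textup{B}(0,1))$ and controls the angle $\theta = d_{\P}(A^*r_{B_V,2}, A^*x)$ using the positivity-driven bound $\theta \le C\alpha_2(A)/\alpha_1(A)$; for the lower bound it introduces the intersection line $P_1 \cap P_2$ of two auxiliary planes and evaluates $\|A^* u_{A,B_V}\|$ via $\sup_{\|v\|=1}\langle Av, u_{A,B_V}\rangle$; and for \eqref{xilem2} it reduces to $\|(AA')^*u_{AA',B}\| \ge c\|A^*u_{A,B}\|$ by submultiplicativity of $\alpha_1$ and \eqref{uniform}, then appeals again to the ellipse to compare $\|A^*u_{AA',B}\|$ to the short semiaxis. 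Your approach instead packages the whole ratio into the exact identity
\[
\frac{\|A^* u_{A,B}\|}{\|A^* r_{B,2}\|} \;=\; \frac{|a|\,\bigl|A^{*\wedge 2}(r_{B,1}\wedge r_{B,2})\bigr|_2}{\|A^* r_{B,2}\|^2},
\]
controls $|a|$ by the same orthogonality computation already in Lemma~\ref{rescale}, and isolates the dependence on the geometry of $V$ into the single Hodge-dual coefficient $|c_{12}| = \sin\bigl(\mathrm{angle}(L_3(A),V)\bigr)$; both the upper bound (which needs no angle hypothesis) and the lower bound drop out of the inequality $c_{12}^2\alpha_1^2\alpha_2^2 \le \sum_{i<j} c_{ij}^2\alpha_i^2\alpha_j^2 \le \alpha_1^2\alpha_2^2$. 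What this buys: a single exact factorization handles both inequalities in \eqref{xilem1} as well as \eqref{xilem2} (the latter becoming a product of three manifestly bounded ratios), it makes the role of the hypothesis $\min_{v\in V}d_{\P}(L_3(A),v)\ge\epsilon$ completely transparent, and it avoids the slightly delicate ellipse estimate the paper needs to finish the proof of \eqref{xilem2}. The paper's version is more elementary in the sense of not invoking the exterior power machinery, but that machinery has already been set up in Section~2.1, so there is no real cost; your argument is cleaner and I would consider it an improvement.
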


\begin{proof}

By \eqref{uniform}, in order to prove \eqref{xilem1} it is enough to show that
$$c_\epsilon\alpha_2(A) \leq \norm{A^*u_{A,B_V}} \leq C\alpha_2(A).$$

For the upper bound, consider the ellipse $A^* V \cap A^* ( \textup{B}(0,1))$. Let $x \in V$, $\norm{x}=1$ such that $A^*x$ is the longer semi-axes of this ellipse $E$ noting that $\norm{A^*x}\leq \alpha_1(A)$ and also that $x$ is positive since the longest semiaxes of the ellipsoid $A^*(\textup{B}(0,1))$ is positive and $V \in \mathbf{Y}$. Let $y \in V$, $\norm{y}=1$ so that $A^*y$ is the shorter semi-axes of the ellipse $E$, noting that $\norm{A^*y} \leq \alpha_2(A)$ since $A^*V$ must intersect the plane spanned by the two shorter semiaxes of $A^* ( \textup{B}(0,1))$ . 
Write $\theta=d_\P (A^*r_{B_V,2}, A^*x)$. Then
\begin{align*}
\norm{A^*u_{A,B_V}}&= \sqrt{ \norm{A^*u_{A,B_V}}^2\cos^2 \theta+ \norm{A^*u_{A,B_V}}^2 \sin^2 \theta}\\
&\leq \sqrt{\norm{A^*y}^2+ \norm{A^*x}^2 \sin^2 \theta}\\
&\leq \sqrt{\alpha_2(A)^2+C^2\alpha_1(A)^2 \frac{\alpha_2(A)^2}{\alpha_1(A)^2}}\\
&\leq \sqrt{1+C^2} \alpha_2(A)
\end{align*}
where in  the second line we have used that $A^*r_{B_V,2}$ and $A^*u_{A,B_V}$ are orthogonal to each other, and in the third line we have used that $\sin \theta \leq \theta$ and the fact that there exists $C=C(\mathbf{A})$, such that for all non-negative vectors $w,w'$, $d_\P(A^*w,A^*w')  \leq C \frac{\alpha_2(A)}{\alpha_1(A)}$.  This completes the proof of the upper bound.

We now prove the lower bound. Let $P_1$ be the plane spanned by the two longer semiaxes of the ellipsoid $A(\textup{B}(0,1))$ and $P_2$ be the plane spanned by $V^\perp$ (the normal direction to the plane $V$) and $u_{A,B_V}$. Let $\norm{x'}=1$ be such that $Ax' \in P_1 \cap P_2$. Since $Ax' \in P_1$, $\norm{Ax'} \geq \alpha_2(A)$. Also
$$d_{\mathbb{P}}(Ax',u_{A,B_V})=\frac{\pi}{2}-d_{\mathbb{P}}(Ax',V^\perp) \leq \frac{\pi}{2}-\epsilon$$
since $\min_{v \in V} d_{\mathbb{P}}(L_3(A),v) \geq \epsilon$. 
Now, 
\begin{align*}
\norm{A^*u_{A,B_V}} &= \sup_{\norm{v}=1} \langle v,A^*u_{A,B_V} \rangle \\
&= \sup_{\norm{v}=1} \langle Av, u_{A,B_V}\rangle \\
& \geq \langle Ax', u_{A,B_V}\rangle= \norm{Ax'} \cos (d_\P(Ax',u_{A,B_V})) \geq \alpha_2(A) \cos(\frac{\pi}{2}-\epsilon)
\end{align*}
which completes the proof with $c_\epsilon=\cos(\frac{\pi}{2}-\epsilon)$.

Next we turn to proving \eqref{xilem2}. Since  $\norm{A^* r_{B,2}} \leq \alpha_1(A)$ and $\alpha_1(\cdot)=\norm{\cdot}$ is submultiplicative, by \eqref{uniform} it is enough to prove that for all $A \in \Se$ and $A' \in \A$, 
$$\norm{(A A')^* u_{A A',B}} \geq c\norm{A^* u_{A,B}}.$$
This holds because for any $v \in A^*V \cap A^*(\textup{B}(0,1))$, $\norm{(A')^*v} \geq \alpha_3(A')\norm{v} \geq \alpha_3(A') \norm{A^*y} \geq \Theta_\A(1) \norm{A^* u_{A,B}}$ by invoking the bound $\theta \leq C\frac{\alpha_2(A)}{\alpha_1(A)}$.



\end{proof}

Combining Lemmas \ref{measure-bound} and \ref{scaling} gives:

\begin{prop}\label{lyapdiff}
For $\beta \times \nu$ almost every $(\i,V)$,
$$\lim_{n \to \infty}\frac{1}{n} \log c_{A_{\i|n},B_V}=\chi_2(p)-\chi_1(p).$$

\end{prop}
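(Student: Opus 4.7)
The plan is to combine the estimate for $c_{A,B_V}$ from Lemma \ref{scaling} with the standard convergence $n^{-1} \log(\alpha_2(A_{\i|_n})/\alpha_1(A_{\i|_n})) \to \chi_2(p) - \chi_1(p)$ for $\beta$-a.e. $\i$, which is immediate from the definition of the Lyapunov exponents. The only missing ingredient is to verify that the hypothesis of Lemma \ref{scaling}—namely that $\min_{v \in V} d_\P(L_3(A_{\i|_n}), v)$ stays bounded away from $0$—holds for typical $(\i,V)$.

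The first step is a Fubini-type claim: for $(\beta \times \nu)$-a.e. $(\i,V)$, $L_3(\i) \notin V$. This follows directly from the observation made inside the proof of Lemma \ref{measure-bound} that, by irreducibility of $\A$, the stationary measure $L_3\beta$ assigns no mass to any proper projective subspace. In particular, for every fixed $V \in \mathrm{Gr}_2(3)$ one has $\beta(\{\i : L_3(\i) \in V\}) = L_3\beta(V) = 0$, and Fubini then gives $(\beta \times \nu)(\{(\i,V) : L_3(\i) \in V\}) = 0$.

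The second step is to promote this to a quantitative bound. Fix a typical $(\i,V)$ in the full-measure set above, and set $\epsilon := \tfrac{1}{2}\min_{v\in V} d_\P(L_3(\i),v) > 0$. Since $L_3(A_{\i|_n}) \to L_3(\i)$ for $\beta$-a.e. $\i$, continuity of the projective distance yields $\min_{v \in V} d_\P(L_3(A_{\i|_n}), v) \geq \epsilon$ for all $n$ sufficiently large (the threshold depending on $(\i,V)$). Lemma \ref{scaling} then supplies constants $c_\epsilon, C > 0$, depending only on $\epsilon$ and $\mathbf{A}$, such that
\[
c_\epsilon \cdot \frac{\alpha_2(A_{\i|_n})}{\alpha_1(A_{\i|_n})} \leq c_{A_{\i|_n}, B_V} \leq C \cdot \frac{\alpha_2(A_{\i|_n})}{\alpha_1(A_{\i|_n})}
\]
for all such $n$. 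Taking $n^{-1}\log$ of the chain and letting $n \to \infty$, the bounding constants contribute $0$ to the limit and the Lyapunov asymptotic delivers the claimed value $\chi_2(p)-\chi_1(p)$.

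I do not expect any genuine difficulty in this argument: both the Lyapunov convergence and the non-concentration of $L_3\beta$ on projective lines are already available by the time this proposition is stated, and the Fubini step is routine. The only mild subtlety is that the constant $c_\epsilon$ depends on $(\i,V)$ through $\epsilon$, but since $\epsilon$ is fixed once $(\i,V)$ is chosen, this contributes only an $O(1)$ term that washes out after dividing by $n$.
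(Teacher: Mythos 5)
Your argument is correct and fills in exactly the details the paper leaves unwritten (the paper's entire ``proof'' is the single line ``Combining Lemmas \ref{measure-bound} and \ref{scaling} gives''). One small but worthwhile observation: you have, rightly, not used Lemma \ref{measure-bound} as a black box --- as stated it is an in-probability bound about the random words $I(n)$, uniform in $V$, which does not directly produce the almost-sure statement needed here --- and have instead gone back to the two ingredients inside its proof (that $L_3\beta$ charges no projective line by irreducibility, and that $L_3(A_{\i|n})\to L_3(\i)$ $\beta$-a.s.), combined them with Fubini to get $L_3(\i)\notin V$ for $\beta\times\nu$-a.e.\ $(\i,V)$, and then applied Lemma \ref{scaling} pathwise with the constants absorbed into the $o(1)$ after dividing by $n$. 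That is the correct reading of ``combining'' the two lemmas.
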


Finally we are able to reinterpret the entropy of most projected cylinder measures $\varphi_{B_VA_\i}\mu$ in terms of the entropy of $\varphi_{B_{A_\i^*V}}\mu$.

\begin{lem}\label{entropy_rescale2}
For every $\epsilon>0$ and $\theta>0$, if $m>M(\epsilon, \theta)$ the following holds for all $n \geq 1$: \;

For all $V \in \mathbf{Y}$ and $\mathbf{i} \in \Psi_n$ satisfying $\min_{v \in V} d_{\mathbb{P}}(L_3(A_\i),v) \geq \theta$, 
$$\left|\frac{1}{m} H(\varphi_{B_V A_\i }\mu, \mathcal{D}_{n+m})-\frac{1}{m} H(\varphi_{B_{A_\i^* V}} \mu, \mathcal{D}_m)\right|<\varepsilon.$$

\end{lem}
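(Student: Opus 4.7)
The strategy is to use Lemma \ref{rescale} to express $\varphi_{B_V A_\i}$ as an affine image of $\varphi_M$ for some $M \in \mathbf{B}_{\mathrm{o}}$ with $V_M = A_\i^* V$, then push entropy through this affine change of coordinates via \eqref{affineting}. The hypotheses in the statement — $\i \in \Psi_n$ and the $\theta$-angular separation between $L_3(A_\i)$ and $V$ — are precisely what is needed to pin the scaling constant $c_{A_\i, B_V}$ down to order $2^{-n}$.

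\textbf{Steps 1--3 (rescaling and the scale).} By Lemma \ref{rescale},
\[
\varphi_{B_V A_\i}(x) = c_{A_\i, B_V}\,\varphi_M(x) + t_{A_\i, B_V}
\]
where $M \in \mathbf{B}_{\mathrm{o}}$ has $V_M = A_\i^* V$. The angular assumption $\min_{v \in V} d_{\P}(L_3(A_\i), v) \ge \theta$ puts us in the regime of \eqref{xilem1}, so
\[
c_\theta\,\tfrac{\alpha_2(A_\i)}{\alpha_1(A_\i)} \;\le\; c_{A_\i, B_V} \;\le\; C\,\tfrac{\alpha_2(A_\i)}{\alpha_1(A_\i)}.
\]
Because $\i \in \Psi_n$, the upper bound $\alpha_2(A_\i)/\alpha_1(A_\i) \le 2^{-n}$ is immediate; for the matching lower bound, observe that the penultimate truncation satisfies $\alpha_2(A_{\i|(|\i|-1)})/\alpha_1(A_{\i|(|\i|-1)}) > 2^{-n}$, and appending one letter from the finite set $\A$ can decrease $\alpha_2/\alpha_1$ by at most a uniform multiplicative constant (a consequence of the almost-sub/supermultiplicativity in Proposition \ref{submult}(2) together with finiteness of $\A$). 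Hence $\log_2 c_{A_\i, B_V} = -n + O_{\A,\theta}(1)$. Writing $T(x) = c_{A_\i, B_V}\,x + t_{A_\i, B_V} \in \mathrm{Aff}(\R)$, the affine entropy formula \eqref{affineting} combined with the commensurability estimate \eqref{commens} yields
\[
H(\varphi_{B_V A_\i}\mu, \mathcal{D}_{n+m}) \;=\; H(\varphi_M\mu, \mathcal{D}_m) \;+\; O_{\A,\theta}(1).
\]

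\textbf{Steps 4--5 (change of representative and conclusion).} Since $V_M = A_\i^* V = V_{B_{A_\i^* V}}$ and both $M$ and $B_{A_\i^* V}$ lie in $\mathbf{B}_{\mathrm{o}}$, the remark following \eqref{planeequiv} (which exploits that projective distance and $|\varphi_B(\cdot) - \varphi_B(\cdot)|$ are comparable on the compact set $K_{\mathbf A}$ uniformly over orthonormal representatives of a given plane) gives
\[
H(\varphi_M \mu, \mathcal{D}_m) \;=\; H(\varphi_{B_{A_\i^* V}}\mu, \mathcal{D}_m) \;+\; O_{K_{\mathbf{A}}}(1).
\]
Combining these two identities, dividing by $m$, and choosing $m \ge M(\epsilon,\theta)$ so that the resulting $O_{\A,\theta,K_{\mathbf A}}(1/m)$ drops below $\epsilon$, delivers the claim.

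The main potential obstacle is the lower estimate on $c_{A_\i, B_V}$ in Step 2: one needs to rule out that the ratio $\alpha_2/\alpha_1$ collapses by more than a bounded factor when the last letter is appended to obtain $\i \in \Psi_n$, and one needs the angular condition to prevent $c_{A_\i, B_V}$ from being much smaller than $\alpha_2(A_\i)/\alpha_1(A_\i)$ (which is where the $\theta$-separation is used via Lemma \ref{scaling}). Everything else is a routine bookkeeping of $O(1)$ entropy errors that become negligible after dividing by the large parameter $m$; in particular $n$ plays no role in the final error beyond being absorbed into the choice of $M(\epsilon,\theta)$.
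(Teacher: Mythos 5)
Your proof is correct and follows essentially the same route as the paper: rescale $\varphi_{B_V A_{\mathbf i}}$ to $\varphi_M$ via Lemma \ref{rescale}, use Lemma \ref{scaling} and the definition of $\Psi_n$ to pin $\log c_{A_{\mathbf i},B_V}=-n+O_\theta(1)$, then shift the dyadic scale by \eqref{affineting} and divide by $m$. You are somewhat more explicit than the paper (which collapses the argument to one line) on two points it leaves implicit — the two-sided bound $\alpha_2(A_{\mathbf i})/\alpha_1(A_{\mathbf i})=\Theta_{\A}(2^{-n})$ coming from the penultimate truncation and finiteness of $\A$, and the passage from $M$ to the chosen representative $B_{A_{\mathbf i}^*V}$ via the remark after \eqref{planeequiv} — but these are expository fills, not a different method.
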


\begin{proof}
Let $\i \in \Psi_n$. By Lemmas \ref{rescale} and \ref{scaling} it follows that $\min_{v \in V} d_{\mathbb{P}}(L_3(A_\i),v) \geq \theta$ implies
$$H(\varphi_{B_V A_\i} \mu, \mathcal{D}_m)= H(\varphi_{B_{A_\i^*V}} \mu, \mathcal{D}_{m+ \log \left(\frac{\alpha_2(A_\i)}{\alpha_1(A_\i)}\right)})+O_\theta(1)=H(\varphi_{B_{A_\i^*V}} \mu, \mathcal{D}_{m+ n})+O_\theta(1)$$
where the final line follows by definition of $\Psi_n$.
\end{proof}

\subsection{The $\mu$-measure of neighbourhoods of dyadic grids}

Given $\emptyset\ne E\subset\mathbb{R}^{2}$ and $\delta>0$ we denote
by $E^{(\delta)}$ the closed $\delta$-neighbourhood of $E$. For
the proof of Proposition \ref{prop:lb on ent of comp of mu} we shall
need the following lemma.

\begin{lem}
\label{lem:mu mass of bd of dyad}
For every $\epsilon>0$ there exists
$\delta>0$ so that for all $n\ge0$,
\[
\mu\left(\cup_{D\in\mathcal{D}_{n}^{2}}(\partial D)^{(2^{-n}\delta)}\right)<\epsilon.
\]
\end{lem}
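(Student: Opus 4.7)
The plan is to reduce the two-dimensional statement to a one-dimensional non-concentration statement for the coordinate projections of $\mu$, to prove non-concentration of $\mu$ on affine lines via Zariski density, and then to bootstrap from a fixed scale to all dyadic scales using the self-projective structure of $\mu$.

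First, since
\[
\bigcup_{D\in\mathcal{D}_n^2}(\partial D)^{(2^{-n}\delta)} \;\subseteq\; \pi_1^{-1}\!\bigl((2^{-n}\mathbb{Z})^{(2^{-n}\delta)}\bigr) \;\cup\; \pi_2^{-1}\!\bigl((2^{-n}\mathbb{Z})^{(2^{-n}\delta)}\bigr),
\]
where $\pi_j$ denotes projection onto the $j$th coordinate of $\R^2$, it suffices to show that for each $\epsilon>0$ there exists $\delta>0$ with $\pi_j\mu\bigl((2^{-n}\mathbb{Z})^{(2^{-n}\delta)}\bigr)<\epsilon/2$ uniformly in $n\ge 0$ and $j\in\{1,2\}$.

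Next I would establish a uniform line-non-concentration lemma: for every $\eta>0$ there exists $\delta_0>0$ such that $\mu(L^{(\delta)})<\eta$ for every affine line $L\subset\R^2$ meeting $K_\A$ and every $0<\delta<\delta_0$. Zariski density of $\Se$ forces strong irreducibility, which by the standard Furstenberg argument implies that $\mu$ assigns zero mass to every proper projective subspace of $\R\P^2$; in the affine chart this means $\mu(L)=0$ for every affine line $L$. Since $L\mapsto\mu(L^{(\delta)})$ is upper semicontinuous in $L$ and decreases monotonically to $\mu(L)=0$ as $\delta\to 0$, and since the set of affine lines meeting the compact set $K_\A$ is compact, a Dini-type argument yields the claimed uniform bound.

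To bootstrap this to all scales I would use the cylinder decomposition $\mu=\sum_{\i\in\Psi_n}p_\i\,\varphi_{A_\i}\mu$. For each $\i\in\Psi_n$ the support $\varphi_{A_\i}(K_\A)$ is contained in a ball of radius $O(2^{-n})$, so its projection onto either coordinate axis meets only $O(1)$ dyadic intervals of level $n$. Pulling the corresponding thin strips back under $\varphi_{A_\i}^{-1}$ produces a union of $O(1)$ neighbourhoods of affine lines intersected with $K_\A$, to each of which the line-non-concentration lemma applies once the pullback thickness is brought below $\delta_0$; summing over $\i\in\Psi_n$ then gives the desired bound.

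The main obstacle is controlling this pullback thickness: because $\varphi_{A_\i}$ is genuinely nonconformal with principal contraction ratios $\alpha_2(A_\i)/\alpha_1(A_\i)\sim 2^{-n}$ and $\alpha_3(A_\i)/\alpha_1(A_\i)\le 2^{-n}$, the pullback of a $2^{-n}\delta$-thin strip can have thickness as large as $\delta\cdot\alpha_1(A_\i)/(\alpha_3(A_\i)\cdot 2^n)$ when the strip's normal is aligned with the least-contracted direction of $A_\i$. The compensating feature is that for cylinders with this unfavourable alignment, the image $\varphi_{A_\i}(K_\A)$ is a very thin rectangle (of short-axis extent $\sim 2^{-n}\alpha_3(A_\i)/\alpha_2(A_\i)$), so its coordinate projection meets $(2^{-n}\mathbb{Z})^{(2^{-n}\delta)}$ only for a correspondingly small fraction of cylinders; combining this positional cancellation with the line lemma applied to the remaining well-aligned cylinders is what is needed to obtain the uniform bound.
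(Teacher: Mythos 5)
Your first two steps mirror the paper exactly: the paper also reduces to one-dimensional non-concentration of the coordinate projections (noting that $\pi_1=\varphi_{B_1}$, $\pi_2=\varphi_{B_2}$ with $B_1,B_2\in\mathbf{B}_{\mathrm{o}}$), and the paper's Lemma~\ref{thickening} likewise begins with a ``fixed-scale'' non-concentration statement proved via weak-$*$ compactness of $\{\varphi_B\mu\}_{B\in\mathbf{B}_{\mathrm{o}}}$ and irreducibility of $\A$ (the analogue of your Dini argument for lines).

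The bootstrap step is where your proposal diverges and where there is a genuine gap. You decompose $\mu=\sum_{\i\in\Psi_n}p_\i\,\varphi_{A_\i}\mu$, and you correctly identify the obstruction: for $\i\in\Psi_n$, the scaling factor of the \emph{projected} map $\pi_j\circ\varphi_{A_\i}$ is $c_{A_\i,B_j}$, which by Lemma~\ref{scaling} is comparable to $\alpha_2(A_\i)/\alpha_1(A_\i)\approx 2^{-n}$ \emph{only when} $L_3(A_\i)$ is bounded away from the plane $V_{B_j}$. For badly aligned cylinders, $c_{A_\i,B_j}$ can be as small as $\alpha_3(A_\i)/\alpha_1(A_\i)\ll 2^{-n}$, so the pullback of a $2^{-n}\delta$-strip under the projected map has thickness $\gg\delta$ and the line lemma gives nothing. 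Your proposed remedy — ``positional cancellation,'' i.e.\ that such thin cylinders only occasionally project into the $2^{-n}\delta$-neighbourhood of $2^{-n}\mathbb{Z}$ — is not a proof: it presumes an equidistribution of cylinder positions relative to the dyadic grid that is not established and is not obviously true (a priori the $p$-mass could concentrate exactly there, which is circular with what you are trying to prove). The paper avoids this issue entirely by not using $\Psi_n$: it decomposes $\varphi_B\mu$ using the \emph{projection-adapted} partition $\Xi_n^B$ from~\eqref{xi}, for which $c_{A_\i,B}\in(c_0 2^{-n},2^{-n}]$ holds by construction (Lemma~\ref{scaling}), so the affine rescaling in Lemma~\ref{rescale} always pulls a $2^{-n}\delta$-ball back to an $O(\delta/c_0)$-ball with respect to some $\varphi_M\mu$, $M\in\mathbf{B}_{\mathrm{o}}$, and the fixed-scale bound applies uniformly. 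If you insist on working with $\Psi_n$, the correct fix is not positional cancellation but Lemma~\ref{measure-bound}, which bounds the total $p$-mass of badly aligned cylinders by $\epsilon$ (for $n\ge N(\epsilon,\theta)$), allowing you to discard them outright and apply the line lemma to the remainder; you would then handle the finitely many small $n$ directly from the uniform line lemma.
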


Setting
\[
B_{1}:=\left(\begin{array}{ccc}
1 & 0 & 0\\
0 & 0 & 1
\end{array}\right)\text{ and }B_{2}:=\left(\begin{array}{ccc}
0 & 1 & 0\\
0 & 0 & 1
\end{array}\right),
\]
we have $\varphi_{B_{1}}(x)=x_{1}$ and $\varphi_{B_{2}}(x)=x_{2}$
for $(x_{1},x_{2})=x\in\mathbb{R}_{>0}^{2}$. Thus, Lemma \ref{lem:mu mass of bd of dyad}
follows easily from the following lemma.

\begin{lem}\label{thickening}
For every $\epsilon,R>0$ there exists $\delta=\delta(\epsilon,R)>0$
so that for every $B\in\mathbf{B}_{\mathrm{o}}$,
\[
\varphi_{B}\mu(t-\delta r,t+\delta r)\le\epsilon\cdot\varphi_{B}\mu(t-r,t+r)\text{ for all }t\in\mathbb{R}\text{ and }0<r<1.
\]
\end{lem}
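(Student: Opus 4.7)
The plan is to combine a uniform non-atomicity statement at unit scale with the self-similar decomposition of $\mu$, in order to transfer the estimate to an arbitrary scale $r\in(0,1)$.

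First I will establish the uniform non-atomicity: for every $\epsilon_{0}>0$ there exists $\delta_{0}>0$ such that $\varphi_{M}\mu(s-\delta_{0},s+\delta_{0})<\epsilon_{0}$ for every $M\in\mathbf{B}_{\mathrm{o}}$ and $s\in\mathbb{R}$. Each measure $\varphi_{M}\mu$ is individually non-atomic because Zariski density of $\mathbf{S}$ implies strong irreducibility, and hence $\mu$ assigns zero mass to every proper projective subspace of $\mathbb{R}\mathbb{P}^{2}$, in particular to the fibers of $\varphi_{M}$. Compactness of $\mathbf{B}_{\mathrm{o}}$ together with the weak-$*$ continuity of $M\mapsto\varphi_{M}\mu$ on $\mathrm{supp}(\mu)=K_{\mathbf{A}}$ then upgrades this pointwise non-atomicity to the uniform statement via a standard compactness-contradiction argument: any failure produces a subsequence $M_{n}\to M$, $s_{n}\to s$ with $\varphi_{M}\mu(\{s\})>0$.

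Now fix $B\in\mathbf{B}_{\mathrm{o}}$, $t\in\mathbb{R}$ and $r\in(0,1)$, and choose $n$ with $2^{-n}\asymp r$. Decompose $\varphi_{B}\mu=\sum_{\i\in\Psi_{n}}p_{\i}\varphi_{BA_{\i}}\mu$; by Lemma \ref{rescale} each summand is the affine pushforward of $\varphi_{M_{\i}}\mu$ with $M_{\i}\in\mathbf{B}_{\mathrm{o}}$ and scaling factor $c_{A_{\i},B}$. By Lemma \ref{measure-bound} with $V=V_{B}$, the total $p$-mass of ``bad'' $\i\in\Psi_{n}$ (those for which $L_{3}(A_{\i})$ lies within angle $\theta$ of $V_{B}$) can be made at most $\epsilon/4$ uniformly in $B$ by shrinking $\theta$; for the remaining ``good'' $\i$, Lemma \ref{scaling} yields $c_{A_{\i},B}\asymp_{\theta}r$. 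For good $\i$ the interval $(t-\delta r,t+\delta r)$ pulls back under the affine rescaling to an interval of length $\asymp\delta$ in $\varphi_{M_{\i}}\mu$, which has mass at most $\epsilon_{0}$ by Step~1. Moreover, since $\varphi_{BA_{\i}}(K_{\mathbf{A}})$ has diameter $\lesssim_{\theta}r$, any good cylinder meeting $(t-\delta r,t+\delta r)$ lies inside $(t-Cr,t+Cr)$ for some $C=C(\theta)$ and contributes its full mass $p_{\i}$ there. This yields simultaneously
\[
\varphi_{B}\mu(t-\delta r,t+\delta r)\le\epsilon_{0}P_{g}+\epsilon/4,\qquad\varphi_{B}\mu(t-Cr,t+Cr)\ge P_{g},
\]
where $P_{g}$ is the total $p$-weight of good cylinders contributing to the small interval. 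After rescaling $r\mapsto r/C$ and choosing $\delta$ small enough that $\epsilon_{0}\le\epsilon/4$, the desired multiplicative bound follows in this regime.

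The main obstacle is the case when $\varphi_{B}\mu(t-r,t+r)$ is itself smaller than the additive error $\epsilon/4$ produced by bad cylinders, so that $P_{g}$ does not dominate the bad contribution. I propose to handle this by a zoom-in iteration: select the minimal projected cylinder $\varphi_{BA_{\i^{*}}}(K_{\mathbf{A}})$ containing most of the mass of $(t-r,t+r)$, use Lemma \ref{rescale} to transfer the question to $\varphi_{M_{\i^{*}}}\mu$ at the rescaled scale $r/c_{A_{\i^{*}},B}$, and iterate until the larger interval has $\varphi_{M^{*}}\mu$-mass bounded away from zero, at which point the argument above applies. Finally, the extension from $\mathbf{B}_{\mathrm{o}}$ to a bound depending on $R$ uses the invariance property $d_{\mathbf{B}}(AB_{1},AB_{2})=d_{\mathbf{B}}(B_{1},B_{2})$ from Lemma \ref{lem:def =000026 prop of inv metric}, writing any $B$ near $\mathbf{B}_{\mathrm{o}}$ as $A B'$ with $B'\in\mathbf{B}_{\mathrm{o}}$ and $A\in\mathrm{Aff}(\mathbb{R})$ of controlled norm, so that $\varphi_{B}\mu$ is an affine image of $\varphi_{B'}\mu$.
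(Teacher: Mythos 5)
Your Step 1 (uniform non‑atomicity at unit scale via compactness of $\{\varphi_M\mu\}_{M\in\mathbf{B}_{\mathrm{o}}}$ in the weak-$*$ topology, plus irreducibility to rule out atoms) matches the paper's opening step exactly.

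The gap is in how you decompose $\varphi_B\mu$ at scale $r$. You use the stopping time $\Psi_n$ (stop when $\alpha_2/\alpha_1$ reaches $2^{-n}$). With that choice, Lemma~\ref{scaling} only controls the affine scaling factor $c_{A_\i,B}$ on the \emph{good} cylinders where $L_3(A_\i)$ avoids $V_B$, and the bad cylinders produce an additive error. You correctly observe that this additive error is fatal when $\varphi_B\mu(t-r,t+r)$ is small, and you gesture at a ``zoom-in iteration'' to repair it. That iteration is not worked out: it is unclear how to select ``the minimal projected cylinder containing most of the mass'' (the mass may be spread across several cylinders with no dominant one), it is unclear that the iteration terminates, and it is unclear that the multiplicative constants compose correctly across the zoom steps. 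As written, the argument does not close.

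The paper avoids the problem entirely by replacing $\Psi_n$ with the $B$-adapted stopping time $\Xi_n^B$ from \eqref{xi}, which stops a word precisely when the quantity
\[
\frac{\norm{A_{\i}^*u_{A_{\i},B}}}{\norm{A_{\i}^* r_{B,2}}}
\]
drops below $2^{-n}$. By Lemma~\ref{rescale} this quantity \emph{is} the scaling constant $c_{A_\i,B}$ (up to a uniform multiplicative constant), and the almost-monotonicity in \eqref{xilem2} ensures $c_{A_\i,B}\asymp 2^{-n}$ for \emph{every} $\i\in\Xi_n^B$. There are no bad cylinders and hence no additive error term: every projected cylinder $\varphi_{BA_\i}\mu$ is an affine rescaling of some $\varphi_{M_\i}\mu$ with $M_\i\in\mathbf{B}_{\mathrm{o}}$ at scale $\asymp r$, Step~1 bounds its mass on a $\delta r$-ball by $\epsilon$, and since its support has diameter $\lesssim r$, any contributing cylinder is contained in $\textup{B}(x,r)$, giving the clean multiplicative inequality directly. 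This choice of stopping time is the missing idea. (Your final paragraph extending from $\mathbf{B}_{\mathrm{o}}$ to $\mathbf{B}(R)$ is not needed here, since the lemma as stated already restricts to $B\in\mathbf{B}_{\mathrm{o}}$.)
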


\begin{proof}
Without loss of generality we can assume that $\mu$ is supported on $\textup{B}(0,1)$. We begin by proving that for all $\delta>0$ there exists $\rho>0$ such that for all $B \in \mathbf{B}_{\mathrm{o}}$ and for all $x \in \R^2_{>0}$, 
$$\varphi_B\mu(\textup{B}(x,\rho)) <\delta.$$
For a contradiction we suppose the above is false. Notice that $\{\varphi_B\mu\}_{B \in\mathbf{B}_{\mathrm{o}}}$ is compact in the weak-* topology. Thus there must exist $B \in \mathbf{B}_{\mathrm{o}}$ and $x \in \R^2_{>0}$ such that $\varphi_B\mu(\{x\})>0$, i.e. $\mu(\varphi_B^{-1}(\{x\}))>0$. But $\varphi_B^{-1}(\{x\})$ is a line in $\R^2$, hence this contradicts our irreducibility assumption.

We are now ready to prove the lemma. By Lemma \ref{scaling} there exists $c_0\in (0,1)$ such that
$$c_02^{-n} \leq \frac{\norm{A_\i^*u_{A_\i,B}}}{\norm{A_\i^* r_{B,2}}} \leq 2^{-n}$$
for all $n \in \N$ and $\i \in \Xi_n^B$. Let $r, \epsilon>0$ and choose $\delta>0$ such that
$$\sup_x \sup_{B \in \mathbf{B}_{\mathrm{o}}} \varphi_B\mu(\textup{B}(x, \frac{\delta}{c_0}))<\epsilon.$$
Choose $n=-\log (r/3)$. Now
$$\varphi_B \mu(\textup{B}(x, \frac{\delta r}{3}))= \mathbb{E}(\varphi_{BA_{J(n,B)}} \mu(\textup{B}(x, \frac{\delta r}{3})),$$
noting that 
\begin{align} \label{bitofmeasure}
\varphi_{BA_{J(n,B)}} \mu(\textup{B}(x, \frac{\delta r}{3}))&= T_{A,B}S_{A,B} \varphi_M  \mu(\textup{B}(x, \frac{\delta r}{3}))
\end{align}
where $T_{A,B}$ denotes translation by $t_{A_{J(n,B)},B}$, $S_{A,B}$ denotes scaling by $c_{A_{J(n,B)},B}$ and $M=M_{A,B}$ is as in Lemma \ref{rescale}. By definition of $J(n,B)$, $c_{A_{J(n,B)},B}=\frac{\norm{A_{J(n,B)}^*u_{A_{J(n,B)},B}}}{\norm{A_{J(n,B)}^* r_{B,2}}} \in (\frac{c_0r}{3}, \frac{r}{3}]$. Hence, by rescaling the expression in \eqref{bitofmeasure} by $1/c_{A_{J(n,B)},B}$ we obtain the mass of a ball of radius $<\delta/c_0$ with respect to $\varphi_M \mu$ which by choice of $\delta$ is less than $\epsilon$. However, this is only if the mass is positive, so by conditioning on that event we obtain
\begin{align*}
\mathbb{E}(\varphi_{BA_{J(n,B)}} \mu(\textup{B}(x, \frac{\delta r}{3}))&\leq \epsilon \mathbb{P}\left\{\varphi_{B}\varphi_{A_{J(n,B)}}(\textup{B}(0,1)) \cap \textup{B}(x,\frac{\delta r}{3}) \neq \emptyset \right\}\\
&\leq \epsilon \varphi_B \mu(\textup{B}(x,r))
\end{align*}
noting that this is where we have used the assumption that $\mu$ is supported on $\textup{B}(0,1)$.

\end{proof}

\subsection{Proof of Proposition \ref{prop:lb on ent of comp of mu}}

Throughout this section we will use the assumption that $\Delta<1$. In Lemma \ref{uniformlb}, by expressing $\varphi_{B_V}\mu$ as the expectation of rescaled copies of $\varphi_{B_{A_{I(n)}^*V}}\mu$ we will combine the fact that these equidistribute to $\nu$ and the fact that $\nu$ almost surely $\varphi_{B_V}\mu=\Delta$ to obtain a global lower bound on the entropy of $\varphi_{B_V}\mu$ which holds over all $V \in \mathbf{Y}$. This will imply almost immediately that for $B \in \mathbf{B}(R)$, most projected cylinder measures $\varphi_{BA_{I(n)}}\mu$ have entropy close to $\Delta$ in Lemma \ref{cylindermeasures}. By combining this with Lemma \ref{lem:mu mass of bd of dyad} we will be able to extend this to a  statement about most projected component measures, which will prove Proposition \ref{prop:lb on ent of comp of mu}.

\begin{lem}
\label{lem:ent(thet.sig)>=00003Davg(ent of comp)}
For $\eta \in \mathcal{M}(\R)$, let $n,m \geq 1$ and $k \geq 0$ be given. Suppose $\mathrm{diam}( \mathrm{supp} \, \eta)=O(2^{-k})$. Then
$$\frac{1}{n} H(\eta, \D_{k+n})=\E_{k \leq i \leq k+n} \left(\frac{1}{m} H(\eta_{x,i},\D_{i+m})\right)+O(\frac{m}{n}).$$
\end{lem}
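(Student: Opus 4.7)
The strategy is the standard chain-rule-plus-multiplicity-count argument that underpins Hochman's entropy framework. First I would apply the chain rule for conditional entropy to write
\[
H(\eta, \D_{k+n}) = H(\eta, \D_k) + \sum_{\ell=k}^{k+n-1} H(\eta, \D_{\ell+1}\mid\D_\ell).
\]
The diameter hypothesis ensures $\mathrm{supp}\,\eta$ meets only $O(1)$ atoms of $\D_k$, so $H(\eta, \D_k) = O(1)$ and multiplying by $m$ gives $m H(\eta, \D_{k+n}) = m\sum_{\ell=k}^{k+n-1} H(\eta, \D_{\ell+1}\mid\D_\ell) + O(m)$.

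Next I would expand the right hand side of the target identity using \eqref{compentropy}, which yields
\[
\E\bigl(H(\eta_{x,i}, \D_{i+m})\bigr) = H(\eta, \D_{i+m}\mid\D_i) = \sum_{j=0}^{m-1} H(\eta, \D_{i+j+1}\mid\D_{i+j}).
\]
Summing over $i \in \{k,\dots,k+n\}$ and swapping the order of summation regroups this as $\sum_\ell c_\ell H(\eta, \D_{\ell+1}\mid\D_\ell)$, where $c_\ell$ counts the pairs $(i,j)$ with $i+j=\ell$, $k \le i \le k+n$ and $0 \le j \le m-1$. A direct count shows $c_\ell = m$ for $\ell$ in the bulk range $[k+m-1, k+n]$, while $c_\ell < m$ for $\ell$ in one of the two boundary ranges $[k, k+m-2]$ and $[k+n+1, k+n+m-1]$, which together contain $O(m)$ indices.

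Since each one-level conditional entropy $H(\eta, \D_{\ell+1}\mid\D_\ell) \le \log 2$ (each dyadic cell is split into two finer cells), the discrepancy between $m\sum_{\ell=k}^{k+n-1} H(\eta, \D_{\ell+1}\mid\D_\ell)$ and $\sum_\ell c_\ell H(\eta, \D_{\ell+1}\mid\D_\ell)$ is controlled by the $O(m)$ boundary indices, each contributing at most $O(m)$, giving a total error of $O(m^2)$. Combining everything gives $m H(\eta, \D_{k+n}) = \sum_{i=k}^{k+n} H(\eta, \D_{i+m}\mid\D_i) + O(m^2)$; dividing by $nm$ and absorbing the $O(1/n)$ discrepancy between $\tfrac{1}{nm}$ and $\tfrac{1}{(n+1)m}$ (which appears in $\E_{k \le i \le k+n}$) into the $O(m/n)$ error yields the claim.

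There is no real obstacle: the whole argument is bookkeeping with the chain rule and a multiplicity count. The only point that requires mild care is verifying that the boundary contribution is $O(m^2)$ rather than larger, which hinges on the uniform bound $H(\eta, \D_{\ell+1}\mid\D_\ell) \le \log 2$ and on the diameter hypothesis being used precisely to control $H(\eta,\D_k)$.
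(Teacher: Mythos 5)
Your proposal is correct, and it is the standard chain-rule-plus-multiplicity argument that underlies this type of estimate in Hochman's entropy framework. The paper itself gives no proof, simply referring to \cite[Lemma 2.5]{HR}; your argument is almost certainly the same as (or a minor variant of) the one there. All the bookkeeping checks out: the diameter hypothesis gives $H(\eta,\D_k)=O(1)$; the telescoping via $H(\eta, \D_{i+m}\mid\D_i) = \sum_{j=0}^{m-1} H(\eta, \D_{i+j+1}\mid\D_{i+j})$ together with $H(\eta,\D_{\ell+1}\mid\D_\ell)\le\log 2$ controls the boundary discrepancy at $O(m^2)$; dividing by $nm$ gives the claimed $O(m/n)$; and the $\tfrac{1}{n+1}$ versus $\tfrac{1}{n}$ mismatch in the uniform average is absorbed because $\tfrac{1}{m}H(\eta_{x,i},\D_{i+m})\le\log 2$ (each level-$i$ component meets at most $2^m$ cells of $\D_{i+m}$), so the discrepancy is $O(1/n)\subset O(m/n)$.
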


\begin{proof}Similar to the proof of \cite[Lemma 2.5]{HR}.
\end{proof}

We begin by obtaining a global lower bound on the entropy of projections of $\mu$.

\begin{lem}\label{uniformlb}
For every $\epsilon>0$ and $n \geq N(\epsilon) \geq 1$,
$$\inf_{V \in \mathbf{Y}} \frac{1}{n} H(\varphi_{B_V}\mu, \mathcal{D}_n) >\Delta-\epsilon.$$
\end{lem}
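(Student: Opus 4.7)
The strategy is to bootstrap the $\nu$-a.e. entropy lower bound of $\Delta$ (coming from Theorem \ref{thm:follows from LY formula}) into a uniform lower bound over all $V\in\mathbf{Y}$. Since $\varphi_{B_V}\mu$ is compactly supported on $\mathbb{R}$ and $\Delta<1$, the exact dimensionality assertion in Theorem \ref{thm:follows from LY formula} is equivalent to $\tfrac{1}{m}H(\varphi_{B_V}\mu,\mathcal{D}_m)\to\Delta$ for $\nu$-a.e.\ $V$. By Egorov's theorem, for every $\epsilon>0$ I can find a closed set $E\subset\mathbf{Y}$ with $\nu(\partial E)=0$, $\nu(E)>1-\epsilon$, together with an $M_0=M_0(\epsilon)\in\mathbb{N}$ such that
\[
\frac{1}{m}H(\varphi_{B_V}\mu,\mathcal{D}_m) > \Delta-\epsilon\quad\text{for all }V\in E\text{ and }m\ge M_0.
\]

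Next, for any $V\in\mathbf{Y}$, stationarity of $\mu$ yields $\varphi_{B_V}\mu=\mathbb{E}(\varphi_{B_V A_{I(n_0)}}\mu)$ for every $n_0$. The key intermediate fact is that the law of $A_{I(n_0)}^*V$ on $\mathrm{Gr}_2(3)$ converges weakly to $\nu$, \emph{uniformly in $V\in\mathbf{Y}$}, as $n_0\to\infty$; this is a standard equidistribution statement for random walks on the Grassmannian under Zariski density combined with the contracting action of positive matrices on $\mathbf{Y}$. Since $\nu(\partial E)=0$, this gives $\mathbb{P}(A_{I(n_0)}^*V\in E)>1-2\epsilon$ uniformly in $V$ for $n_0$ large, and combining with Lemma \ref{measure-bound} applied to a suitably small $\theta=\theta(\epsilon)$, the ``good event''
\[
G_{n_0,V}:=\bigl\{\i\in\Psi_{n_0}\::\:A_\i^*V\in E\text{ and }\min_{v\in V}d_{\mathbb{P}}(L_3(A_\i),v)>\theta\bigr\}
\]
has probability $>1-3\epsilon$ for all $V\in\mathbf{Y}$ and all $n_0\ge N_1(\epsilon)$.

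For $\i\in G_{n_0,V}$, Lemma \ref{entropy_rescale2} gives $\tfrac{1}{m}H(\varphi_{B_V A_\i}\mu,\mathcal{D}_{n_0+m})>\tfrac{1}{m}H(\varphi_{B_{A_\i^*V}}\mu,\mathcal{D}_m)-\epsilon>\Delta-2\epsilon$ for $m\ge\max(M_0,M_1(\epsilon,\theta))$. Using concavity of entropy in the decomposition $\varphi_{B_V}\mu=\mathbb{E}(\varphi_{B_V A_{I(n_0)}}\mu)$ and nonnegativity of entropy on the bad event,
\[
\frac{1}{m}H(\varphi_{B_V}\mu,\mathcal{D}_{n_0+m}) \ge \mathbb{E}\!\left(\frac{1}{m}H(\varphi_{B_V A_{I(n_0)}}\mu,\mathcal{D}_{n_0+m})\right) \ge (1-3\epsilon)(\Delta-2\epsilon).
\]
Writing $n=n_0+m$ and choosing $n$ large enough that $(n-n_0)/n>1-\epsilon$ converts this into $\tfrac{1}{n}H(\varphi_{B_V}\mu,\mathcal{D}_n)>\Delta-C\epsilon$ for an absolute constant $C$, and rescaling $\epsilon$ then proves the lemma. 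I expect the main obstacle to be the uniform equidistribution in the second paragraph: pointwise convergence of the law of $A_{I(n_0)}^*V$ to $\nu$ is classical, but making this uniform in $V\in\mathbf{Y}$ requires the strong contraction provided by positivity of $\mathbf{A}$, and the use of the ``stopping-time'' partition $\Psi_{n_0}$ in place of fixed-length cylinders requires some additional verification (or, as a fallback, replacing $n_0$ by a Ces\`aro average over a range of scales).
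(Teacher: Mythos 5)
Your proposal is correct in its essential ideas but takes a genuinely different route from the paper. The paper decomposes $\tfrac{1}{n}H(\varphi_{B_V}\mu,\mathcal{D}_n)$ as a Ces\`aro average of conditional entropies over scales $1\le k\le n$ (via Lemma \ref{lem:ent(thet.sig)>=00003Davg(ent of comp)} and \eqref{compentropy}), uses \eqref{almostcts} to manufacture an \emph{open} set $U$ with $\nu(U)>1-\epsilon$ on which the entropy lower bound holds, and then invokes a Ces\`aro-averaged equidistribution statement (the analogue of \cite[Proposition 2.8]{HR}): $\inf_{V}\mathbb{E}_{1\le i\le n}(\delta_{A_{I(i)}^*V}(U))>1-C\epsilon$. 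You instead work at a single fixed stopping scale $n_0$, pair Egorov's theorem with a \emph{pointwise} uniform weak convergence of the law of $A_{I(n_0)}^*V$ to $\nu$, and then do the affine rescaling $n=n_0+m$ at the end. Your version is structurally cleaner but rests on a strictly stronger equidistribution claim. That claim is in fact true here: the $\beta$-a.s.\ uniform Furstenberg contraction of $A_{\i|\ell}^*$ on $\mathrm{Gr}_2(3)$ (strong irreducibility and proximality from Zariski density), combined with $|\Psi_{n_0}(\i)|\to\infty$ as $n_0\to\infty$ and dominated convergence, gives $\sup_V|\mathbb{E}(f(A_{I(n_0)}^*V))-\nu(f)|\to 0$ for continuous $f$. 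But you assert this as ``standard'' rather than deriving it, and you correctly flag it as the main obstacle; the Ces\`aro fallback you mention is exactly what the paper implements. Two smaller points: Egorov's theorem alone does not deliver $\nu(\partial E)=0$ --- you would need to shrink $E$ or, as the paper does, pass to an open set using the $O(1)$-continuity of $V\mapsto H(\varphi_{B_V}\mu,\mathcal{D}_m)$ from \eqref{almostcts}; and the hypothesis $\Delta<1$ is a standing assumption for the section rather than something your argument actually uses in the equivalence between exact dimension and entropy dimension.
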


\begin{proof}
This proof is very similar to \cite[Lemma 3.3]{HR}; here we describe how to adapt that proof to our current setting.
By Lemma \ref{lem:ent(thet.sig)>=00003Davg(ent of comp)}, \eqref{compentropy} and assuming $n$ is sufficiently large with respect to $m$, it follows that for $V \in \mathbf{Y}$,
\begin{equation}\label{uniformlbx}
\frac{1}{n} H(\varphi_{B_V} \mu, \D_n)=\frac{1}{n} \sum_{k=1}^n \frac{1}{m} H(\varphi_{B_V} \mu, \D_{k+m}|\D_k)+O(\epsilon).
\end{equation}
Next, note that $\varphi_{B_V} \mu=\mathbb{E}_{i=k}(\varphi_{B_V}\varphi_{A_{I(i)}}\mu)$ and $\mathrm{supp}(\varphi_{A_{I(i)}} \mu)=\Theta(2^{-i})$ by definition of $\Psi_i$. By these facts, Lemmas \ref{entropy_rescale2} and \ref{measure-bound} we can proceed from \eqref{uniformlbx} exactly as in the proof of \cite[Lemma 3.3]{HR} to deduce that
\begin{equation} \label{uniformlb1}
\frac{1}{n} H(\varphi_{B_V} \mu, \mathcal{D}_n) \geq \mathbb{E}_{1 \leq i \leq n}( \frac{1}{m} H(\varphi_{B_{A_{I(i)}^* V}} \mu, \mathcal{D}_m))-O(\epsilon).
\end{equation}
We now use the fact that (roughly speaking) $A_{I(n)}^* V$ equidistributes to $\nu$. Indeed, since $\hd \varphi_{B_V} \mu=\Delta$ for $\nu$ almost every $V \in \mathbf{Y}$, then if $m$ is large enough then
$$\frac{1}{m} H(\varphi_{B_V} \mu, \mathcal{D}_m)>\Delta-\frac{\epsilon}{2}$$
for all $V$ in a set of $\nu$ measure $>1-\epsilon$. 

Applying \eqref{almostcts} to the projections $\varphi_{B_V}$ implies that there exists an open set $U \subset \mathbf{Y}$ with $\nu(U)>1-\epsilon$ such that
$$\frac{1}{m} H(\varphi_{B_V}\mu, \mathcal{D}_m)>\Delta-\epsilon$$
for all $V \in U$.

We are almost done. Finally note that we have the following analogue of \cite[Proposition 2.8]{HR}: if $U \subset \mathbf{Y}$ is open and $\nu(U)>1-\epsilon$ then for $n \geq N(\epsilon, U)$,
$$\inf_{V \in \mathbf{Y}} \E_{1 \leq i \leq n}(\delta_{A^*_{I(i)}V}(U))>1-C\epsilon$$
for some $C=C(\A)$. The last two displayed equations imply that for all $n$ large relative to $\epsilon$,
$$\mathbb{P}_{1 \leq i \leq n}\left(\frac{1}{m} H(\varphi_{B_{A_{I(i)}^* V}} \mu, \mathcal{D}_m)>\Delta-\epsilon\right) \geq 1-O(\epsilon)$$
which combined with \eqref{uniformlb1} proves the result.
\end{proof}

This can now easily be used to obtain a lower bound on entropy of most projected cylinder measures:

\begin{lem} \label{cylindermeasures}
For every $\epsilon>0$, $m \geq M(\epsilon)$ and $n \geq N(\epsilon)$,
$$\inf_{B \in \mathbf{B}(R)} \mathbb{P} \left(\frac{1}{m} H(\varphi_{BA_{I(n)}} \mu, \mathcal{D}_{n+m}) \geq \Delta-\epsilon\right)>1-\epsilon.$$
\end{lem}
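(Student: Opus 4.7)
\emph{Proof proposal.} The plan is to combine Lemma \ref{uniformlb}, Lemma \ref{measure-bound}, and Lemma \ref{entropy_rescale2}, and then extend from $B=B_V\in\mathbf{B}_{\mathrm{o}}$ to arbitrary $B\in\mathbf{B}(R)$ by affine invariance. The strategy mirrors that of Lemma \ref{uniformlb}, except applied ``pointwise'' in $V$ rather than in expectation over random components: Lemma \ref{uniformlb} already packages the key information that projected copies of $\mu$ have near-$\Delta$ entropy at every direction, and so we just need to identify $\varphi_{B_VA_{I(n)}}\mu$ with a rescaled copy of $\varphi_{B_{A_{I(n)}^*V}}\mu$, the direction $A_{I(n)}^*V$ being an element of $\mathbf{Y}$ by positivity of $\A$.

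I would first treat the case $B=B_V$ for $V\in\mathbf{Y}$. Fix $\epsilon>0$. Lemma \ref{measure-bound} (applied with $\epsilon/3$) furnishes $\theta=\theta(\epsilon)>0$ such that for all $n$ large and every $V\in\mathbf{Y}$, the event
\[
\mathcal{E}_V(n):=\left\{\min_{v\in V}d_{\mathbb{P}}(L_3(A_{I(n)}),v)>\theta\right\}
\]
has probability at least $1-\epsilon/3$. Since $\A\subset\mathrm{SL}(3,\mathbb{R})_{>0}$, the plane $A_{I(n)}^*V$ belongs to $\mathbf{Y}$, so Lemma \ref{uniformlb} at level $m$ yields $\tfrac{1}{m}H(\varphi_{B_{A_{I(n)}^*V}}\mu,\mathcal{D}_m)>\Delta-\epsilon/3$ deterministically, provided $m$ is sufficiently large. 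On $\mathcal{E}_V(n)$, Lemma \ref{entropy_rescale2} (applied with $\theta$ and $\epsilon/3$) converts this into
\[
\tfrac{1}{m}H(\varphi_{B_VA_{I(n)}}\mu,\mathcal{D}_{n+m})>\Delta-2\epsilon/3
\]
once $m\ge M(\epsilon,\theta)$. Thus the desired inequality holds with probability $>1-\epsilon/3$ for $B=B_V$.

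To pass to general $B\in\mathbf{B}(R)$, I would exploit the $\mathrm{Aff}(\mathbb{R})$-invariance of $d_{\mathbf{B}}$ from Lemma \ref{lem:def =000026 prop of inv metric}. Every $B\in\mathbf{B}(R)$ decomposes as $B=A_BB^{\mathrm{o}}$ with $B^{\mathrm{o}}\in\mathbf{B}_{\mathrm{o}}$ and $A_B=\begin{pmatrix}a&b\\0&1\end{pmatrix}\in\mathrm{Aff}(\mathbb{R})$ ranging over a compact subset of $\mathrm{Aff}(\mathbb{R})$ depending only on $R$. Consequently $\varphi_{BA_{I(n)}}=a\varphi_{B^{\mathrm{o}}A_{I(n)}}+b$, and \eqref{affineting} gives
\[
\left|H(\varphi_{BA_{I(n)}}\mu,\mathcal{D}_{n+m})-H(\varphi_{B^{\mathrm{o}}A_{I(n)}}\mu,\mathcal{D}_{n+m})\right|=O_R(1).
\]
Dividing by $m$ and taking $m$ large relative to $R/\epsilon$ absorbs the error into the remaining slack, yielding the statement uniformly on $\mathbf{B}(R)$. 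I do not expect any serious obstacle here, since all the substantive input sits in Lemma \ref{uniformlb} and Lemma \ref{entropy_rescale2}; the one thing to monitor is the uniformity of the various quantifiers (in $V$ for Lemmas \ref{measure-bound} and \ref{uniformlb}, and in $B\in\mathbf{B}(R)$ for the affine reduction), all of which follow directly from the hypotheses of the cited results together with compactness of $\mathbf{B}_{\mathrm{o}}$ and of the slice $\mathbf{B}(R)/\mathbf{B}_{\mathrm{o}}$ in the affine parameter.
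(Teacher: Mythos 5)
Your proof follows essentially the same route as the paper: reduce to $B \in \mathbf{B}_{\mathrm{o}}$ by combining Lemma \ref{measure-bound} (to restrict to the high-probability event that $L_3(A_{I(n)})$ stays away from $V$), Lemma \ref{entropy_rescale2} (to relate the entropy of $\varphi_{B_VA_{I(n)}}\mu$ to that of $\varphi_{B_{A_{I(n)}^*V}}\mu$), and Lemma \ref{uniformlb} (applied deterministically over $\mathbf{Y}$ since $A_{I(n)}^*V\in\mathbf{Y}$ by positivity), and then extend to $\mathbf{B}(R)$ via the affine decomposition $B=A_BB^{\mathrm{o}}$ and \eqref{affineting}. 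The one small point to make explicit in the affine step is that the scaling factor $a=\norm{v}$ is bounded \emph{both} above and away from $0$ in terms of $R$ (the latter by compactness of $\mathbf{B}(R)$ and linear independence of $r_{B,1},r_{B,2}$), since \eqref{affineting} incurs an error $O(|\log a|+1)$; your phrasing that $A_B$ ranges over a compact subset of $\mathrm{Aff}(\mathbb{R})$ already captures this, so there is no gap.
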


\begin{proof}
We begin by proving that this infimum holds over all $B \in \mathbf{B}_{\mathrm{o}}$. To see this notice that by Lemmas \ref{entropy_rescale2} and \ref{measure-bound}, it is enough to prove (possibly for a different $\epsilon$) that
\begin{equation}
\inf_{V \in \mathbf{Y}} \mathbb{P}\left(\frac{1}{m} H(\varphi_{B_{A_{I(n)}^* V}}\mu, \mathcal{D}_m) \geq \Delta-\epsilon\right)>1-\epsilon
\label{Bo}
\end{equation}
which follows from Lemma \ref{uniformlb}. Now we use \eqref{Bo} to prove the lemma. Note that if  $B=\begin{pmatrix}r_{B,1}\\ r_{B,2}\end{pmatrix} \in \mathbf{B}(R)$ then $B'=\begin{pmatrix}\frac{v}{\norm{v}}\\  r_{B,2}\end{pmatrix} \in \mathbf{B}_{\mathrm{o}}$ where $v=r_{B,1}-\langle r_{B,1},r_{B,2}\rangle r_{B,2}$. Similarly to the proof of Lemma \ref{rescale}, we see that $\varphi_B=\norm{v}\varphi_{B'}+\langle r_{B,1}, r_{B,2}\rangle$. Since $B \in \mathbf{B}(R)$, $\norm{v}=O_R(1)$. Therefore for any $A \in \Se$, by \eqref{affineting}
\begin{equation} \label{Br}
H(\varphi_{BA}\mu, \D_n) = H(\varphi_{B'A}\mu,\D_n)+O_R(1).
\end{equation}
Combining \eqref{Bo} and \eqref{Br} gives the lemma.
\end{proof}

Lemma \ref{cylindermeasures} is the type of result we want, except Proposition \ref{prop:lb on ent of comp of mu} is a statement about projected component measures rather than projected cylinder meaures. Lemma \ref{lem:mu mass of bd of dyad} provides the extra ingredient to make this step.

\begin{proof}[Proof of Proposition \ref{prop:lb on ent of comp of mu}]
Assume that $\Delta<1$. Let $\epsilon>0$, $\delta$ be small relative to $\epsilon$, $k \geq 1$ large with respect to $\delta$, $m$ large with respect to $k$ and $n$ large with respect to $\epsilon$. By Lemma \ref{lem:mu mass of bd of dyad} we may assume that
$$\mu\left(\cup_{D\in\mathcal{D}_{n}^{2}}(\partial D)^{(2^{-n}\delta)}\right)<\epsilon.$$
Since $k$ is large relative to $\delta$ we may assume that if $\eta$ is such that $\mathrm{diam}(\mathrm{supp}\, \eta) \leq C \cdot 2^{-n-k}$ and
$$\# \{D \in \mathcal{D}_n \; : \; \mathrm{supp}(\eta) \cap D \neq \emptyset\}>1,$$
then $\mathrm{supp}( \eta) \subseteq \cup_{D\in\mathcal{D}_{n}^{2}}(\partial D)^{(2^{-n}\delta)}$. 
Hence 
\begin{align*}
\mathbb{P}_{i=n+k}( \textnormal{$\varphi_{A_{I(i)}}\mu$ is contained in a level$-n$ dyadic cell})>1-\mu\left(\cup_{D\in\mathcal{D}_{n}^{2}}(\partial D)^{(2^{-n}\delta)}\right)>1-\epsilon.
\end{align*}
On the other hand by applying Lemma \ref{cylindermeasures} with $n+k$ instead of $n$ we obtain
$$\mathbb{P}_{i=n+k}(\frac{1}{m} H(\varphi_{BA_{I(i)}} \mu, \mathcal{D}_{i+m}) \geq \Delta-\epsilon)>1-\epsilon.$$
The last two displayed equations can now be combined exactly as in the proof of \cite[Lemma 3.7]{HR} to deduce that
$$\mathbb{P}_{i=n}\left( \frac{1}{m} H(\varphi_{B}\mu_{x,i}, \mathcal{D}_{i+m})>\Delta-O(\sqrt{\epsilon}\right)>1-O(\sqrt{\epsilon})$$
which is what we wanted (if we start from a smaller $\epsilon$).
\end{proof}

\subsection{Entropy of components of projections of components}

Apart from Proposition \ref{prop:lb on ent of comp of mu} we will also require the following finer statement. Given integers $n\ge k\ge1$ let $\mathcal{N}_{k,n}:=\{k,k+1,..,n\}$
and denote the normalized counting measure on $\mathcal{N}_{k,n}$
by $\lambda_{k,n}$, i.e. $\lambda_{k,n}\{i\}=\frac{1}{n-k+1}$ for
each $k\le i\le n$. We write $\mathcal{N}_{n}$ and $\lambda_{n}$
in place of $\mathcal{N}_{1,n}$ and $\lambda_{1,n}$.
\begin{lem}
\label{lem:lb on ent of comp of proj of comp}For every $\epsilon,R>0$,
$m\ge M(\epsilon,R)\ge1$, $k\ge K(\epsilon,R)\ge1$, $n\ge N(\epsilon,R,k,m)\ge1$
and $B\in\mathbf{B}(R)$,
\[
\int\mathbb{P}_{i\le j\le i+k}\left\{ \frac{1}{m}H\left((\varphi_{B}\mu_{x,i})_{y,j},\mathcal{D}_{j+m}\right)>\Delta-\epsilon\right\} \:d\lambda_{n}\times\mu(i,x)>1-\epsilon.
\]
\end{lem}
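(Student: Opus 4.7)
The plan is to express the integrated probability as an expected component entropy via the averaging lemma, bound this expectation from below using Proposition \ref{prop:lb on ent of comp of mu}, establish a matching pointwise upper bound from the exact-dimensionality of $\varphi_B\mu$, and then conclude by a short Markov argument.

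First I would apply Lemma \ref{lem:ent(thet.sig)>=00003Davg(ent of comp)} to $\eta=\varphi_B\mu_{x,i}$, which is supported on a set of diameter $O_R(2^{-i})$ since $\varphi_B$ is Lipschitz on compact subsets of $\mathbb{R}^2_{>0}$ with constants depending only on $R$. This yields the identity
\[
\tfrac{1}{k} H(\varphi_B\mu_{x,i}, \mathcal{D}_{i+k}) \;=\; \mathbb{E}_{i \le j \le i+k}\!\left[\tfrac{1}{m} H\bigl((\varphi_B\mu_{x,i})_{y,j}, \mathcal{D}_{j+m}\bigr)\right] + O(m/k).
\]
Integrating over $(i,x)\sim\lambda_n\times\mu$ and applying Proposition \ref{prop:lb on ent of comp of mu} with a smaller parameter $\epsilon'\ll\epsilon$, combined with the trivial bound $\tfrac{1}{k}H\le 1$ on the exceptional set, gives
\[
\mathbb{E}\!\left[\tfrac{1}{m} H\bigl((\varphi_B\mu_{x,i})_{y,j}, \mathcal{D}_{j+m}\bigr)\right] \;\ge\; \Delta - O(\epsilon').
\]

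The main obstacle is that this lower bound on the expectation alone is not sufficient for concentration via Markov, because the random variable $X:=\tfrac{1}{m}H((\varphi_B\mu_{x,i})_{y,j},\mathcal{D}_{j+m})$ only satisfies $X\le 1$ rather than $X\le\Delta+\epsilon'$ (recall $\Delta<1$). The fix is to prove a matching \emph{pointwise} upper bound: with probability greater than $1-\epsilon'$ over $(i,x,j,y)$,
\[
\tfrac{1}{m}H\bigl((\varphi_B\mu_{x,i})_{y,j},\mathcal{D}_{j+m}\bigr) \;\le\; \Delta+\epsilon'.
\]
This is the technical heart of the lemma. It comes from the exact-dimensionality of $\varphi_B\mu$ (Theorem \ref{thm:follows from LY formula}), which forces the component entropies of $\varphi_B\mu$ to cluster near $\Delta$ at typical scales. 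The measures $(\varphi_B\mu_{x,i})_{y,j}$ and $(\varphi_B\mu)_{y,j}$ differ only because the former incorporates the $2$D restriction to $\mathcal{D}_i^2(x)$, and Lemma \ref{lem:mu mass of bd of dyad} (together with its proof mechanism via Lemma \ref{thickening}) shows that the $\mu$-mass of thin neighbourhoods of cell boundaries is negligible. Hence for most $(x,i,j,y)$ the two component measures essentially agree and the upper bound transfers.

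With both bounds in hand, the conclusion follows quickly. Let $E=\{X\le\Delta+\epsilon'\}$ so that $\mathbb{P}(E^c)<\epsilon'$. On $E$ the variable $(\Delta+\epsilon')-X$ is nonnegative, and the lower bound above gives $\mathbb{E}[(\Delta+\epsilon')-X\mid E]=O(\epsilon')$. Markov's inequality then yields
\[
\mathbb{P}(X \le \Delta-\epsilon \mid E) \;\le\; \frac{O(\epsilon')}{\epsilon+\epsilon'} \;=\; O(\epsilon'/\epsilon),
\]
which is less than $\epsilon/2$ provided $\epsilon'\ll\epsilon^2$. Combining with $\mathbb{P}(E^c)<\epsilon'$ gives $\mathbb{P}(X\le\Delta-\epsilon)<\epsilon$, as required. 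The thresholds $M(\epsilon,R)$, $K(\epsilon,R)$, $N(\epsilon,R,k,m)$ are chosen in this order to make the $O(m/k)$ error from the averaging lemma and the various $\epsilon'$-deviations all small relative to the final $\epsilon$.
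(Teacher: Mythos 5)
The proposal diverges from the paper at exactly the point you call ``the technical heart of the lemma,'' and that step has a genuine gap. You want to establish the pointwise upper bound $\tfrac{1}{m}H\bigl((\varphi_B\mu_{x,i})_{y,j},\mathcal{D}_{j+m}\bigr)\le\Delta+\epsilon'$ with high probability, deriving it from the exact-dimensionality of $\varphi_B\mu$ (Theorem \ref{thm:follows from LY formula}). But that theorem only asserts $\hd\varphi_{B_V}\mu=\Delta$ for $\nu$-a.e.\ $V$, whereas Lemma \ref{lem:lb on ent of comp of proj of comp} must hold for \emph{every} $B\in\mathbf{B}(R)$. For a $B$ with $\hd\varphi_B\mu>\Delta$ (which is entirely possible off a $\nu$-null set), the component entropies of $\varphi_B\mu$ will typically concentrate near a value strictly above $\Delta$, and the proposed upper bound simply fails. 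The asymmetry is deliberate: the lower bound in Lemma \ref{uniformlb} holds uniformly over $V\in\mathbf{Y}$, but no uniform upper bound is available or claimed anywhere in the paper. A secondary issue is the claim that Lemma \ref{lem:mu mass of bd of dyad} makes $(\varphi_B\mu_{x,i})_{y,j}$ and $(\varphi_B\mu)_{y,j}$ essentially agree: these are genuinely different measures for the same $(j,y)$ (the former restricts to $\mathcal{D}^2_i(x)\cap\varphi_B^{-1}(\mathcal{D}_j(y))$, the latter to the whole strip $\varphi_B^{-1}(\mathcal{D}_j(y))$, which crosses many level-$i$ cells), and the boundary lemma does not identify them.

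The paper circumvents both problems by never passing through an expectation$+$Markov argument that would require a two-sided bound. It first applies Proposition \ref{prop:lb on ent of comp of mu} (together with the component-redistribution lemma \cite[Lemma 2.7]{Ho}) to $\varphi_B(\mu_{x,i})_{y,j}$ — the projection of a level-$j$ component of $\mu$, which by composability of components is just $\varphi_B\mu_{y,j}$ — obtaining the concentration inequality directly. Then it transfers to $(\varphi_B\mu_{x,i})_{y,j}$, the component of the \emph{projected} measure, via a double-averaging identity $\mathbb{E}_{s=j}\bigl[(\varphi_B\mu_{x,i})_{y,s}\bigr]=\mathbb{E}_{s=j+q}\bigl[\varphi_B(\mu_{x,i})_{y,s}\bigr]$, the boundary control of Lemma \ref{lem:mu mass of bd of dyad}, and concavity of entropy (as in \cite[Lemma 3.11]{rapaport2022SelfAffRd}). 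This chain of estimates produces only a lower bound and needs only a lower bound, so it works uniformly over $B\in\mathbf{B}(R)$. To repair your proof, you would have to replace the exact-dimensionality argument with this one-sided transfer; the averaging-lemma setup in your first two steps is fine but does not by itself upgrade an expectation estimate into the required concentration.
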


\begin{proof}
Let $\epsilon>0$ be small, let $R>0$, let $\delta>0$ be small with respect to $\epsilon,R$, let $q \geq 1$ be large with respect to $q$, let $m,k \geq 1$ be large with respect to $q$, let $n \geq 1$ be large with respect to $m,k$ and fix $B \in \mathbf{B}(R)$.

By Lemma \ref{lem:mu mass of bd of dyad},
\begin{align*}
\int\int \varphi_B \mu_{x,i}\left(\cup_{D\in\mathcal{D}_{j}^{2}}(\partial D)^{(2^{-j}\delta)}\right) &d \lambda_{i,i+k}(j)d \lambda_n \times \mu(i,x) \\&= \int \int \varphi_B \mu \left(\cup_{D\in\mathcal{D}_{j}^{2}}(\partial D)^{(2^{-j}\delta)}\right)d \lambda_{i,i+k}(j)d \lambda_n(i)<\epsilon. \end{align*}
By Proposition \ref{prop:lb on ent of comp of mu},
$$\int \P_{i \leq j \leq i+k} \left(\left\{ \frac{1}{m} H(\varphi_B(\mu_{x,i})_{y,j}, \D_{j+m})>\Delta-\epsilon \right\} \right) d \lambda_n \times \mu(i,x)>1-\frac{\epsilon}{2}-O(\frac{k}{n})>1-\epsilon$$
where we have used \cite[Lemma 2.7]{Ho} in the first inequality.

We can now proceed exactly as in \cite[Lemma 3.11]{rapaport2022SelfAffRd}) to deduce that there exists $Z \subset \mathcal{N}_n \times \R^2$ with $\lambda_n \times \mu(Z)>1-\epsilon$ such that for all $(i,x) \in Z$
\begin{equation} \label{rap1}
\P_{s=j+q} \left(\left\{ \frac{1}{m} H(\varphi_B(\mu_{x,i})_{y,s}, \D_{s-q+m})>\Delta-\epsilon \right\} \right) >1-\epsilon
\end{equation}
and for each $(i,x) \in Z$ there exists a set $Q_{i,x} \subset \{i, \ldots, i+k\}$ with $\lambda_{i,i+k}(Q_{i,x})>1-\epsilon$ such that for all $j \in Q_{i,x}$,
\begin{equation} \label{rap2}
\varphi_B \mu_{x,i}\left(\cup_{D\in\mathcal{D}_{j}^{2}}(\partial D)^{(2^{-j}\delta)}\right)<\epsilon.
\end{equation}
Fix $(i,x) \in Z$ and $j \in Q_{i,x}$. Since $B \in \mathbf{B}(R)$,
$$\mathrm{diam}(\mathrm{supp} (\varphi_B(\mu_{x,i})_D)=O_R(2^{-j-q})$$
for all $D \in \D_{j+q}^2$ with $\mu_{x,i}(D)>0$. We can now follow the proof exactly as in \cite[Lemma 3.11]{rapaport2022SelfAffRd}) to show how this, \eqref{rap1}, \eqref{rap2} can be combined with
$$\E_{s=j}((\varphi_B \mu_{x,i})_{y,s})=\varphi_B \mu_{x,i}=\E_{s=j+q}(\varphi_B(\mu_{x,i})_{y,s})$$
and the concavity of entropy to deduce that
$$\P_{s=j} \left(\left\{ \frac{1}{m} H(\varphi_B(\mu_{x,i})_{y,s}, \D_{s+m})>\Delta-\epsilon \right\} \right) >1-\epsilon$$
for all $(i,x) \in Z$ and $j \in Q_{i,x}$. Since $\lambda_n \times \mu(Z)>1-\epsilon$ and $\lambda_{i,i+k}(Q_{i,x})>1-\epsilon$ we are done.
\end{proof}

\section{An entropy increase result}

In this section, we will use Proposition \ref{prop:lb on ent of comp of mu} to prove the aforementioned entropy increase result. This will be divided into two results, one which deals with $\theta.\mu$ for $\theta$ supported close to $\mathbf{B}_{\mathrm{o}}$ i.e. projections of matrices which are close to conformal (Theorem \ref{thm:ent inc under conv}) and one which deals with the other case (Theorem \ref{thm:ent inc under conv2}), which will follow from the former by the invariance of the metric $d_{\mathbf{B}}$.

Given $\theta\in\mathcal{M}(\mathbf{B})$ and $\sigma\in\mathcal{M}(\mathbb{R}_{>0}^{2})$, recall that we
write $\theta.\sigma\in\mathcal{M}(\mathbb{R})$ for the pushforward
of $\theta\times\sigma$ via the map sending $(B,x)$ to $\varphi_{B}(x)$.

\begin{thm}\label{thm:ent inc under conv2}
Suppose $\Delta<1$ and let $\epsilon,R>0$ be given. Then there exists $\delta=\delta(\epsilon,R)>0$ so that for all $n\ge N(\epsilon,R,\delta)\ge1$ and  $\theta \in \mathcal{M}(\mathbf{B})$ for which
\begin{itemize}
\item[ (a)] there exists $B \in \mathbf{B}_{\mathrm{o}}$ and $A \in \mathbf{S}$ such that $BA \in \mathrm{supp}\,\theta$ and $\mathrm{supp}\, \theta \subset \textup{B}(BA, R)$, 
\item [ (b)] $\frac{1}{n} H(\theta, \mathcal{D}_n) \geq \epsilon$,
\end{itemize}we have $\frac{1}{n}H(\theta.\mu,\mathcal{D}_{n+ \log c_{A,B}})>\Delta+\delta$.
\end{thm}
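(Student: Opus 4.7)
The plan is to reduce Theorem \ref{thm:ent inc under conv2} to Theorem \ref{thm:ent inc under conv} by conjugating $\theta$ with a single affine transformation that carries $BA$ into $\mathbf{B}_{\mathrm{o}}$, exploiting the $\mathrm{Aff}(\mathbb{R})$-invariance of the metric $d_{\mathbf{B}}$ (Lemma \ref{lem:def =000026 prop of inv metric}(1)). By Lemma \ref{rescale} I can write $\varphi_{BA}(x)=c_{A,B}\varphi_M(x)+t_{A,B}$ with $M\in\mathbf{B}_{\mathrm{o}}$, which is equivalent to the matrix factorisation $BA=T_{A,B}M$ for
\[
T_{A,B}:=\begin{pmatrix} c_{A,B} & t_{A,B}\\ 0 & 1\end{pmatrix}\in \mathrm{Aff}(\mathbb{R}).
\]
Since $c_{A,B}>0$, the element $T_{A,B}$ is invertible, so I set $\theta':=T_{A,B}^{-1}\theta$. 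Invariance of $d_{\mathbf{B}}$ gives $\mathrm{supp}(\theta')\subset \textup{B}(M,R)\subset \mathbf{B}(R)$, and \eqref{affinetingB} gives $H(\theta',\mathcal{D}_n^{\mathbf{B}})=H(\theta,\mathcal{D}_n^{\mathbf{B}})+O(1)$, so that hypothesis (b) translates to $\frac{1}{n}H(\theta',\mathcal{D}_n^{\mathbf{B}})\ge \epsilon/2$ for $n$ sufficiently large.

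Next I would apply Theorem \ref{thm:ent inc under conv} to $\theta'$ with parameter $\epsilon/2$ in place of $\epsilon$, obtaining a $\delta'=\delta'(\epsilon,R)>0$ for which $\frac{1}{n}H(\theta'.\mu,\mathcal{D}_n)>\Delta+\delta'$ once $n$ is sufficiently large. To transport this bound back to $\theta$, note that $\theta=T_{A,B}\theta'$, and by Lemma \ref{lem:comp of frac lin trans} we have $\varphi_{T_{A,B}B''}=\varphi_{T_{A,B}}\circ\varphi_{B''}$ for each $B''\in\mathbf{B}$, so integrating against $\theta'\times\mu$ yields
\[
\theta.\mu = \varphi_{T_{A,B}}(\theta'.\mu) = c_{A,B}(\theta'.\mu) + t_{A,B}.
\]
Applying \eqref{affineting} to this affine pushforward gives
\[
H(\theta.\mu,\mathcal{D}_{n+\log c_{A,B}})=H(\theta'.\mu,\mathcal{D}_n)+O(1),
\]
and dividing by $n$ and setting $\delta:=\delta'/2$ produces $\frac{1}{n}H(\theta.\mu,\mathcal{D}_{n+\log c_{A,B}})>\Delta+\delta$ for $n$ sufficiently large, completing the reduction.

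The main point to verify carefully is the uniformity of the $O(1)$ constants in \eqref{affinetingB} and \eqref{affineting} across the (potentially extreme) values of $c_{A,B}$ and $t_{A,B}$. For the former, this is precisely the content of the left-invariance of $d_{\mathbf{B}}$ under $\mathrm{Aff}(\mathbb{R})$, which ensures that the partitions $\mathcal{D}_n^{\mathbf{B}}$ and $T_{A,B}\mathcal{D}_n^{\mathbf{B}}$ are uniformly commensurable. For the latter, the shifted dyadic level $n+\log c_{A,B}$ is engineered precisely so that the scaling by $c_{A,B}$ is compensated at the level of partitions, with the $O(1)$ loss coming only from the translation and the rounding $[\,\cdot\,]$ in the definition $\mathcal{D}_t=\mathcal{D}_{[t]}$. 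Beyond this routine bookkeeping, no further analytical input is required, since all the substance lives in Theorem \ref{thm:ent inc under conv}.
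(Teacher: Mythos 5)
Your proof is correct and follows essentially the same route as the paper: conjugate $\theta$ by the inverse of the affine map $T_{A,B}$ (which is exactly the paper's matrix $Y$) to land in $\mathbf{B}(R)$, apply Theorem~\ref{thm:ent inc under conv} to the conjugated measure, and transport the entropy bound back via the shifted dyadic scale $n+\log c_{A,B}$ using \eqref{affineting} and \eqref{affinetingB}. No meaningful difference from the paper's argument.
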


Theorem \ref{thm:ent inc under conv2} will follow from the following theorem.

\begin{thm}
\label{thm:ent inc under conv}Suppose that $\Delta<1$ and let $\epsilon,R>0$
be given. Then there exists $\delta=\delta(\epsilon,R)>0$ so that
for all $n\ge N(\epsilon,R,\delta)\ge1$ and $\theta\in\mathcal{M}(\mathbf{B}(R))$
with $\frac{1}{n}H(\theta,\mathcal{D}_{n})\ge\epsilon$, we have $\frac{1}{n}H(\theta.\mu,\mathcal{D}_{n})>\Delta+\delta$.
\end{thm}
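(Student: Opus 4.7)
The plan is to argue by contradiction, in close analogy with the entropy-increase results of \cite{BHR,HR,rapaport2022SelfAffRd}. Suppose towards a contradiction that there exist arbitrarily large $n$ and $\theta\in\mathcal{M}(\mathbf{B}(R))$ with $\frac{1}{n}H(\theta,\mathcal{D}_{n})\ge\epsilon$ yet $\frac{1}{n}H(\theta.\mu,\mathcal{D}_{n})\le\Delta+\delta$ for $\delta$ arbitrarily small. The strategy is to promote this near-saturation to a structural constraint on $\theta$ via Hochman's inverse theorem for convolutions in $\mathbb{R}$ (recall that $\varphi_{B}\colon\mathbb{R}^{2}\to\mathbb{R}$ for $B\in\mathbf{B}$, so $\theta.\mu\in\mathcal{M}(\mathbb{R})$), and then contradict the hypothesis $\frac{1}{n}H(\theta,\mathcal{D}_n)\ge\epsilon$.

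First I would localize and linearize. Pick an intermediate scale $1\ll k\ll n$ and use \eqref{compentropy} to write $\frac{1}{n}H(\theta.\mu,\mathcal{D}_n)$ as an average of $\frac{1}{n}H(\theta_{B,k}.\mu_{x,k},\mathcal{D}_n)$ over pairs $(B,x)$. On a typical component, $B$ lies in a $d_{\mathbf{B}}$-ball of radius $O(2^{-k})$ about some $B_{0}$ and $x$ lies in a Euclidean ball of radius $O(2^{-k})$ about some $x_{0}$. Using Lemma~\ref{lem:def =000026 prop of inv metric} and Taylor-expanding $(B,x)\mapsto\varphi_{B}(x)$, one obtains an affine approximation
\[
\varphi_{B}(x)\approx\varphi_{B_{0}}(x_{0})+D_{x}\varphi_{B_{0}}(x_{0})(x-x_{0})+L_{B_{0},x_{0}}(B-B_{0}),
\]
with errors negligible at scale $2^{-n}$. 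Consequently $\theta_{B_{0},k}.\mu_{x_{0},k}$ equals, up to an affine map of $\mathbb{R}$ whose linear part has derivative of order $1$, the convolution of $\nu_{B_{0},x_{0}}:=L_{B_{0},x_{0}}(\theta_{B_{0},k})$ with $\xi_{B_{0},x_{0}}:=(D_{x}\varphi_{B_{0}}(x_{0}))(\mu_{x_{0},k})$, both living on $\mathbb{R}$.

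Second, by Lemma~\ref{lem:lb on ent of comp of proj of comp}, for a $(1-o(1))$-proportion of components $(B_{0},x_{0})$ the factor $\xi_{B_{0},x_{0}}$ has normalized entropy within $o(1)$ of $\Delta$ at scales up to $n-k$. If $\frac{1}{n}H(\theta.\mu,\mathcal{D}_n)$ fails to exceed $\Delta$ by a definite amount $\delta$, then by concavity of entropy and \eqref{affineting} we deduce that on most such components,
\[
\tfrac{1}{n}H(\nu_{B_{0},x_{0}}\ast\xi_{B_{0},x_{0}},\mathcal{D}_n)-\tfrac{1}{n}H(\xi_{B_{0},x_{0}},\mathcal{D}_n)<\delta',
\]
with $\delta'\to 0$ as $\delta\to 0$ and $k,n\to\infty$ suitably. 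Hochman's inverse theorem for entropy growth under convolutions in $\mathbb{R}$, in the multi-scale form used in \cite[\S 5]{BHR} and \cite[\S 5]{HR}, then forces $\nu_{B_{0},x_{0}}$ to have average normalized entropy $O(\delta')$ across scales $1\le i\le n$, since $\xi_{B_{0},x_{0}}$ has entropy bounded away from $0$. Because $L_{B_{0},x_{0}}$ is bi-Lipschitz on a fixed compact set of $(B_{0},x_{0})$'s (this is the crucial non-degeneracy, see below), the entropy of $\nu_{B_{0},x_{0}}$ is comparable to that of $\theta_{B_{0},k}$. Summing the resulting upper bounds across components via \eqref{compentropy} yields $\frac{1}{n}H(\theta,\mathcal{D}_n)=o(1)$, contradicting the standing hypothesis.

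The main obstacle is twofold. First, the linear functional $L_{B_{0},x_{0}}$ acting on $(\mathbf{B},d_{\mathbf{B}})$ must have operator norm bounded away from $0$ for $(B_{0},x_{0})$ in a set of large $\theta\times\mu$ measure, so that entropy does not collapse when pushing $\theta_{B_{0},k}$ forward by $L_{B_{0},x_{0}}$; this is a transversality-type calculation tied to the parametrization of $\mathbf{B}$ by the invariant metric $d_{\mathbf{B}}$ (Lemma~\ref{lem:def =000026 prop of inv metric}) and is where positivity of $\mathbf{A}$ together with the definition of $\mathbf{B}(R)$ will enter. Second, Hochman's inverse theorem is intrinsically multi-scale, so the telescoping between components of $\theta$, components of $\mu$, and components of $\theta.\mu$ at every scale $i\le n$ must be carried out simultaneously; this is the scheme developed in \cite{BHR,HR,rapaport2022SelfAffRd} for self-affine measures, and the technical heart of the argument will be transferring it to the projective plane, where the ``linear part'' of $\varphi_{B}$ depends nontrivially on $x$.
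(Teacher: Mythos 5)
Your overall skeleton — localize, linearize via Lemma~\ref{lem:linearisation}, invoke Hochman's inverse theorem to pass from ``no entropy growth'' to a structural statement, and draw a contradiction — matches the paper's strategy in broad outline, but the key step in the middle has a genuine gap that the paper resolves differently.

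The flaw is in the claim that ``because $L_{B_0,x_0}$ is bi-Lipschitz on a fixed compact set of $(B_0,x_0)$'s, the entropy of $\nu_{B_0,x_0}=L_{B_0,x_0}(\theta_{B_0,k})$ is comparable to that of $\theta_{B_0,k}$.'' Here $L_{B_0,x_0}$ is the partial derivative of $(B,x)\mapsto\varphi_B(x)$ in $B$, i.e.\ a \emph{linear functional} from (tangent directions of) $\mathbf{B}$ to $\mathbb{R}$. Since $\mathbf{B}$ is of dimension greater than one, such a functional cannot be bi-Lipschitz: it has a nontrivial kernel, and nothing prevents $\theta_{B_0,k}$ from carrying most of its entropy inside (or nearly inside) $\ker L_{B_0,x_0}$. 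In that case $H(L_{B_0,x_0}(\theta_{B_0,k}))$ collapses while $H(\theta_{B_0,k})$ does not, and your concluding step, deducing $\frac{1}{n}H(\theta,\mathcal{D}_n)=o(1)$ by summing over components, breaks. Hochman's inverse theorem by itself only forces each $\nu_{B_0,x_0}$ to be nearly atomic; it cannot on its own force $\theta_{B_0,k}$ to be nearly atomic in $\mathbf{B}$, precisely because the direction of near-atomicity forced by the inverse theorem a priori depends on $x_0$ but the kernel need not.

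What the paper does to close this gap is the content of \S\ref{atom}--\S\ref{incproof}: it translates the inverse-theorem conclusion into the existence of a component measure $\xi$ on $\mathbf{B}(R)$ that is \emph{not} $(j,\epsilon')$-atomic in $\mathbf{B}$ but for which $\xi.x$ is $(j,\sigma)$-atomic for $\mu$-a.e.\ $x$ in a large set (Proposition~\ref{prop:key prop for ent inc}). It then extracts two matrices $B_1,B_2$ that are separated away from $B_1\mathbb{R}$ yet make the polynomial $p_{B_1,B_2}$ small on most of $\mu$ (Lemma~\ref{lem:exist B_1,B_2}), and finally uses Zariski density of $\mathbf{S}$ to show that $p_{B_1,B_2}$ cannot vanish $\mu$-a.e.\ (Lemma~\ref{lem:no poly exists}). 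It is this algebraic input — that the various kernels $\ker L_{B_0,x}$ cannot all align unless $\mu$ lives on a proper algebraic subvariety, which Zariski density forbids — that replaces the missing transversality and is completely absent from your sketch. Without it, the proposal cannot be completed: positivity of $\mathbf{A}$ and the definition of $\mathbf{B}(R)$ do not supply the required non-degeneracy of a scalar-valued linear functional.
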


Let us elucidate on the sense in which Theorems \ref{thm:ent inc under conv2} and \ref{thm:ent inc under conv} are considered entropy \emph{increase} results. To see this, we begin with the following linearisation result. Given $\theta\in\mathcal{M}(\mathbf{B})$ and $x\in\mathbb{R}_{>0}^{2}$
we write $\theta.x$ in place of $\theta.\delta_{x}$. Let $*$ denote convolution of measures on $\R$.
\begin{lem}
\label{lem:linearisation}Let $Z\subset\mathbf{B}\times\mathbb{R}_{>0}^{2}$
be compact. Then for every $\epsilon>0$, $k\ge K(\epsilon)\ge1$,
and $0<\delta<\delta(Z,\epsilon,k)$ the following holds. Let $(M_{0},x_{0})\in Z$,
$\theta\in\mathcal{M}(\textup{B}(M_{0},\delta))$, and $\sigma\in\mathcal{M}(\textup{B}(x_{0},\delta)\cap\mathbb{R}_{>0}^{2})$
be given. Then,
\begin{equation}
\left|\frac{1}{k}H\left(\theta.\sigma,\mathcal{D}_{k-\log\delta}\right)-\frac{1}{k}H\left((\theta.x_{0})*(\varphi_{M_{0}}\sigma),\mathcal{D}_{k-\log\delta}\right)\right|<\epsilon.
\label{lineareqn}
\end{equation}
\end{lem}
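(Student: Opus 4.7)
The plan is to linearise the map $(B,x)\mapsto \varphi_B(x)$ jointly in both variables around $(M_0,x_0)$ and observe that the linearised pushforward is exactly (a translate of) the advertised convolution. First I would establish the bilinear Taylor-type estimate
\[
\left|\varphi_B(x) - \bigl(\varphi_B(x_0) + \varphi_{M_0}(x) - \varphi_{M_0}(x_0)\bigr)\right| \le C(Z)\, \|B-M_0\|\cdot|x-x_0| \le C(Z)\,\delta^2
\]
uniformly for $(M_0,x_0)\in Z$, $B\in \textup{B}(M_0,\delta)$ and $x\in \textup{B}(x_0,\delta)\cap\mathbb{R}_{>0}^{2}$. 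This is precisely the statement that the mixed second partial derivatives of $(B,x)\mapsto\varphi_B(x)$ are uniformly bounded on a small neighbourhood of $Z$; since $\varphi_B(x)$ is a rational function of its arguments whose denominator $\langle r_{B,2},\tilde{x}\rangle$ is bounded away from zero on $Z$, it is jointly real-analytic with all partial derivatives uniformly controlled by compactness, and Lemma \ref{lem:def =000026 prop of inv metric}(3) lets me pass between $d_{\mathbf{B}}$ and the matrix norm.

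Second, since $B$ and $x$ are independent under $\theta\times\sigma$, the pushforward of $\theta\times\sigma$ under the linearised map $(B,x)\mapsto \varphi_B(x_0) + \varphi_{M_0}(x) - \varphi_{M_0}(x_0)$ is exactly the translate of $(\theta.x_0) * (\varphi_{M_0}\sigma)$ by the deterministic constant $-\varphi_{M_0}(x_0)$. Deterministic translation changes the entropy with respect to any dyadic partition $\mathcal{D}_j$ by at most $O(1)$ in view of \eqref{commens}, so it will suffice to compare $H(\theta.\sigma,\mathcal{D}_{k-\log\delta})$ with the entropy at the same level of this translated convolution.

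Third, I would choose $\delta$ small enough that $C(Z)\delta^2 < 2^{-(k-\log\delta)} = \delta\cdot 2^{-k}$, i.e.\ $\delta < C(Z)^{-1}2^{-k}$, which is exactly the quantitative reason $\delta$ has to depend on $k$. Under this choice the pointwise error from the first step is smaller than the scale of $\mathcal{D}_{k-\log\delta}$, so the two pushforward measures differ in entropy on $\mathcal{D}_{k-\log\delta}$ by $O(1)$ by \eqref{almostcts}. Combined with the $O(1)$ error from the translation and dividing by $k$, we obtain an overall error of $O(1/k)$, which falls below $\epsilon$ as soon as $k \ge K(\epsilon)$.

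The main obstacle is the uniform bilinear Taylor estimate: one must control the constant $C(Z)$ uniformly as $(M_0,x_0)$ ranges over the compact set $Z$, which requires showing that a single neighbourhood of $Z$ (with the denominator $\langle r_{B,2},\tilde{x}\rangle$ bounded away from zero) contains all the relevant product balls $\textup{B}(M_0,\delta)\times \textup{B}(x_0,\delta)$. This is where the Heine--Borel property provided by Lemma \ref{lem:def =000026 prop of inv metric}(4) enters: compactness of $Z$ together with continuity of $(B,x)\mapsto\langle r_{B,2},\tilde{x}\rangle$ produces such a uniform neighbourhood, after which everything reduces to a standard application of the almost-continuity of entropy in $L^\infty$-distance between pushforward maps.
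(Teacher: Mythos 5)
Your proposal is correct and takes essentially the same route as the paper: linearise $(B,x)\mapsto\varphi_B(x)$ around $(M_0,x_0)$, observe that the linearised pushforward of $\theta\times\sigma$ is a deterministic translate of $(\theta.x_0)*(\varphi_{M_0}\sigma)$, and then invoke the almost-continuity of entropy under small $L^\infty$ perturbations (\eqref{almostcts}) together with translation-invariance up to $O(1)$ (\eqref{commens}). The only stylistic difference is how the $O(\delta^2)$ bound on $\sup|\varphi_B(x) - (\varphi_B(x_0)+\varphi_{M_0}(x)-\varphi_{M_0}(x_0))|$ is obtained: the paper writes out the first-order Taylor polynomial of $g(B,x)=\langle r_{B,1},\tilde{x}\rangle/\langle r_{B,2},\tilde{x}\rangle$ in the explicit $(a,b,c,d,e,f,u,v)$ coordinates (preceded by an auxiliary $\mathrm{Aff}(\mathbb{R})$-translation to control a denominator), whereas you invoke the standard mixed-second-difference bound $|g(B,x)-g(B,x_0)-g(M_0,x)+g(M_0,x_0)|\le\|\partial_B\partial_x g\|_\infty\,\|B-M_0\|\,|x-x_0|$ directly, using compactness of $Z$, the fact that $\langle r_{B,2},\tilde{x}\rangle$ is bounded away from zero on (a neighbourhood of) $Z$, and Lemma \ref{lem:def =000026 prop of inv metric}(3) to compare $d_{\mathbf{B}}$ with $\|\cdot\|$. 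This is a clean shortcut; in particular the denominator that must be controlled for the mixed partial is automatically bounded below on $\mathbf{B}\times\mathbb{R}_{>0}^{2}$, so your version does not need the paper's preliminary normalisation step. The choice $\delta < C(Z)^{-1}2^{-k}$ to make the sup-error fall below the scale $\delta 2^{-k}$ of $\mathcal{D}_{k-\log\delta}$, and the final division by $k$ to absorb the $O(1)$ errors, match the paper exactly.
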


\begin{proof} Throughout this proof we write $x=(u,v) \in \R^2_{>0}$ and we identify $M=\begin{pmatrix}a&b&c\\d&e&f \end{pmatrix} \in \mathbf{B}$ with $(a,b,c,d,e,f) \in\R^3 \times  S^2_{\geq 0} $ where 
 $S^2_{ \geq 0}=\{(a,b,c) \in \R^3, a,b,c \geq 0, a^2+b^2+c^2=1\}$. Consider the map $g: \R^3 \times S^2  \times \R^2_{>0} \to \R$ given by
$$g(a,b,c,d,e,f,u,v)= \frac{au+bv+c}{du+ev+f}.$$
By considering $\theta$ as a measure on $S^2 \times \R^3$ then $g(\theta \times \sigma)=\theta.\sigma$. Without loss of generality we can assume that for all $M \in \textup{B}(M_0,\delta)$ and $x \in (x_0,\delta)$, $|au+bv+c| \geq 1$. Indeed, if this is not satisfied, by simply ``translating'' $\theta$ and $M_0$ in the hypothesis to $A\theta$ and $AM_0$ where $A=\begin{pmatrix}1&2\\0&1 \end{pmatrix}$, we recover the assumption and only change the expression in \eqref{lineareqn} by a factor $O(1/k)$ by \eqref{affineting}. 

A computation shows that the first order approximation of $g$ at \\$z_0=(M_0,x_0)=(a_0,b_0,c_0,d_0,e_0,f_0,u_0,v_0)$, evaluated at $z=(M,x)=(a,b,c,d,e,f,u,v)$ is given by
\begin{align*}
G_{z_0}(z)&=g(z_0)+\nabla g_{z_0}(z-z_0) \\
&=\varphi_{M_0}(x)+\varphi_M(x_0)-\varphi_{M_0}(x_0)  \\
& + (\varphi_{M_0}(x)-\varphi_M(x_0)) \frac{d_0(u-u_0)+e_0(v-v_0)}{a_0u_0+b_0v_0+c_0} \\
&+ (\varphi_M(x_0)-\varphi_{M_0}(x_0)) \frac{(d-d_0)u_0+(e-e_0)v_0+f-f_0}{a_0u_0+b_0v_0+c_0}.
\end{align*}
By compactness of $Z$, for $z,z_0 \in Z$ with $M \in \textup{B}(M_0,\delta)$ and $x \in \textup{B}(x_0,\delta)$, the last two lines can be bounded  above, in absolute value by $O_Z(\delta^2)$.  Write $h_{z_0}(z)=\varphi_{M_0}(x)+\varphi_M(x_0)-\varphi_{M_0}(x_0) $ so that $(h_{z_0}) (\theta \times \sigma)=\delta_{-\varphi_{M_0}(x_0)} *(\theta.x_0)*(\varphi_{M_0}\sigma)$. Again using compactness of $Z$, for all $k \geq 1$ there exists $\delta(Z,k)>0$ such that for any $0<\delta \leq \delta(Z,k)$ and $z_0 \in Z$,
\begin{align*}
\norm{g-h_{z_0}}_{\textup{B}(M_0,\delta) \times \textup{B}(x_0,\delta)} \leq \norm{g-G_{z_0}}_{\textup{B}(M_0,\delta) \times \textup{B}(x_0,\delta)} +\norm{G_{z_0}-h_{z_0}}_{\textup{B}(M_0,\delta) \times \textup{B}(x_0,\delta)}\leq \delta 2^{-k}
\end{align*}
where $\norm{\cdot}_{\textup{B}(M_0,\delta) \times \textup{B}(x_0,\delta)}$ denotes the supremum norm on $\textup{B}(M_0,\delta) \times \textup{B}(x_0,\delta)$. By \eqref{almostcts}
\begin{align*}
H(\theta.\sigma, \mathcal{D}_{k-\log \delta})=H(g(\theta \times \sigma), \mathcal{D}_{k-\log \delta})&=H((h_{z_0})(\theta \times \sigma), \mathcal{D}_{k-\log \delta})+O_Z(1) \\
&= H((\theta.x_0)*(\varphi_{M_0}\sigma), \mathcal{D}_{k-\log \delta})+O_Z(1).
\end{align*}
Since $O_Z(1/k)$ can be made less than $\epsilon$ by making $k$ large, we are done.
\end{proof}

Lemma \ref{lem:linearisation} tells us that locally near $(B_0,x_0)$, $\theta.\mu$ can be approximated by the Euclidean convolution $\theta.x_0*\varphi_{B_0}\mu$. The results in the previous section tell us that the entropy of $\varphi_{B_0}\mu$ should not be much smaller than $\Delta$. Since Euclidean convolution is a smoothing operation, this implies that the entropy of $\theta.x_0*\varphi_{B_0}\mu$ should not be much smaller than $\Delta$. In particular $\Delta$ is a `trivial' lower bound on $\frac{1}{n}H(\theta.\mu,\mathcal{D}_{n})$, and so heuristically Theorems \ref{thm:ent inc under conv2} and \ref{thm:ent inc under conv} are giving conditions under which $\frac{1}{n}H(\theta.\mu,\mathcal{D}_{n})$ grows significantly above this trivial lower bound.


Theorem \ref{thm:ent inc under conv} will be proved by contradiction. In \S \ref{atom} we introduce a notion of ``atomicity'' which will be important throughout this section. In \S \ref{entropyatom} we assume that Theorem \ref{thm:ent inc under conv} does not hold for some parameters $\epsilon$ and $\delta$  and obtain a translation of this using the language of ``atomicity'' of appropriate measures. In \S \ref{incproof} this is used to show that failure of Theorem \ref{thm:ent inc under conv}  is incompatible with the Zariski density of $\Se$.

\subsection{Atomicity} \label{atom}

Statements which describe how close the entropy of a measure is to zero are naturally related to statements which describe how ``concentrated'' or ``close to atomic'' this measure are and it is by translating the hypothesis of Theorem \ref{thm:ent inc under conv} into this language that will allow us to glean the algebraic implications of our hypothesis. This motivates the following definition.

\begin{defn}
Let $(X,d)$ be a metric space. Given $i\ge0$, $\epsilon>0$ and
$\theta\in\mathcal{M}(X)$, we say that $\theta$ is $(i,\epsilon)$-atomic
if $\theta(\textup{B}(x,2^{-i}\epsilon))>1-\epsilon$ for some $x\in X$.
\end{defn}

We begin by stating a couple of technical results we'll require concerning $(i,\epsilon)$-atomicity. These closely resemble existing results from the literature, which are highlighted.

\begin{lem}
\label{lem:from conc on R to cont on B}For every $\epsilon,R>0$
there exist $\delta=\delta(\epsilon,R)>0$ and $b=b(\epsilon,R)\ge1$
so that the following holds. Let $\theta\in\mathcal{M}(\mathbf{B}(R))$,
$x\in K_{\mathbf{A}}$ and $k\ge1$ be with
\[
\mathbb{P}_{i=k}\left((\theta.x)_{y,i}\text{ is }(i,\delta)\text{-atomic}\right)>1-\delta.
\]
Then,
\[
\mathbb{P}_{k\le i\le k+b}\left(\theta_{B,i}\ldotp x\text{ is }(i,\epsilon)\text{-atomic}\right)>1-\epsilon.
\]
\end{lem}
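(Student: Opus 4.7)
I would prove the lemma by a direct decomposition argument, using the Lipschitz continuity of $F_x\colon B\mapsto\varphi_B(x)$ on $\mathbf{B}(R)$ to translate the hypothesis into a statement about concentration of $\theta$ on a union of narrow sub-slabs in $\mathbf{B}$, and then transferring this to a local concentration on typical level-$i$ cells of $\mathbf{B}$ via a Markov-type argument. Since $\varphi_B(x)$ is smooth in $(B,x)$ on the compact product $\mathbf{B}(R)\times K_{\mathbf{A}}$, the map $F_x$ is $L_R$-Lipschitz on $\mathbf{B}(R)$ with $L_R=L_R(R,\mathbf{A})$ uniform in $x\in K_{\mathbf{A}}$.

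Writing $\mathcal{E}_k(y):=F_x^{-1}(\mathcal{D}_k(y))$ for the preimage slab, the hypothesis says that with $\theta$-probability $>1-\delta$ over $B$ the slab $E=\mathcal{E}_k(F_x(B))$ is \emph{good}, meaning $\theta|_E/\theta(E)$ pushes forward under $F_x$ to a measure with mass $\ge 1-\delta$ in some ball $\textup{B}(y_0(E),\delta 2^{-k})$. Setting $S_E=E\cap F_x^{-1}(\textup{B}(y_0(E),\delta 2^{-k}))$ for each good $E$ and $\mathcal{S}=\bigcup_{\text{good }E}S_E$, summing the mass bounds over good slabs yields $\theta(\mathcal{S})\geq (1-\delta)^2\geq 1-2\delta$. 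Applying Markov at each level $i$---calling $D\in\mathcal{D}_i^{\mathbf{B}}$ \emph{bad} if $\theta|_D(\mathcal{S}^c)>\epsilon\,\theta(D)$ and summing the inequality $\epsilon\theta(D)<\theta(D\cap\mathcal{S}^c)$ over bad $D$---gives $\theta(\bigcup\{D\text{ bad at }i\})\leq \theta(\mathcal{S}^c)/\epsilon\leq 2\delta/\epsilon$.

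Choose $M_R=M_R(R,\mathbf{A})$ so that every $D\in\mathcal{D}_i^{\mathbf{B}}$ with $i\geq k+M_R$ has $F_x(D)$ contained in a single cell of $\mathcal{D}_k$ (possible since $F_x(D)$ has length $\leq L_RR\cdot 2^{-i}$). For such a $D$ that is furthermore good, the containing $\mathcal{E}_k$-atom $E_D$ must itself be good---otherwise $D\subset E_D\subset\mathcal{S}^c$ contradicts $D$ being good---and hence $F_x(\theta|_D/\theta(D))$ places mass $\geq 1-\epsilon$ in the interval $\textup{B}(y_0(E_D),\delta 2^{-k})$. Setting $b=\lceil 3M_R/\epsilon\rceil$ and $\delta\leq\min(\epsilon^2/6,\,\epsilon 2^{-b-1})$ guarantees $\delta 2^{-k}\leq\epsilon 2^{-i}$ for every $i\in[k+M_R,k+b]$, upgrading this concentration to $(i,\epsilon)$-atomicity of $\theta_{B,i}.x$; adding the excluded-range fraction $M_R/(b+1)\leq\epsilon/3$ to the bad-atom fraction $2\delta/\epsilon\leq\epsilon/3$ then bounds the overall failure probability by $2\epsilon/3<\epsilon$. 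The main obstacle is the parameter bookkeeping: extracting the constant $M_R$ from the Lipschitz bound, and coordinating $b$ and $\delta$ so that the bad-atom fraction, the excluded-range fraction, and the concentration radius $\delta 2^{-k}$ are simultaneously controlled uniformly over $k\geq 1$, $x\in K_{\mathbf{A}}$, and $\theta\in\mathcal{M}(\mathbf{B}(R))$.
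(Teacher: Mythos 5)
Your argument breaks down at the step where you choose $M_R$ ``so that every $D\in\mathcal{D}_i^{\mathbf{B}}$ with $i\geq k+M_R$ has $F_x(D)$ contained in a single cell of $\mathcal{D}_k$.'' This is not achievable: small diameter of $F_x(D)$ (here $\le L_R R\, 2^{-i}$) only forces $F_x(D)$ to intersect at most \emph{two} consecutive level-$k$ dyadic intervals, and cells $D$ whose image straddles a dyadic boundary exist at every level $i\ge k+M_R$. Nothing prevents $\theta$ from placing substantial (even full) mass on such boundary-straddling cells. When $D\subset E_1\cup E_2$ for two adjacent good slabs $E_1,E_2$, the conditional $F_x(\theta|_D/\theta(D))$ can split its mass between the two balls $\textup{B}(y_0(E_1),\delta 2^{-k})$ and $\textup{B}(y_0(E_2),\delta 2^{-k})$, whose centres may lie up to roughly $2\cdot 2^{-k}$ apart; since $\epsilon 2^{-i}\ll 2^{-k}$ for $i>k$, this does not merge into a single ball of radius $\epsilon 2^{-i}$, so $(i,\epsilon)$-atomicity of $\theta_{B,i}.x$ fails. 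The remainder of your bookkeeping (Markov at each level, the excluded-range fraction $M_R/(b+1)$, the constraint $\delta\le\epsilon 2^{-b-1}$) is sensible, but the argument cannot work at a \emph{fixed} level $i$.

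The paper resolves exactly this by going through an intermediate $m$-ball notion of atomicity and then randomising over scales. First, using that each $\theta_{B,k}.x$ is absolutely continuous with respect to a weighted average of $m=O_R(1)$ level-$k$ components of $\theta.x$ (this accounts both for the Lipschitz inflation at level $i=k$ and for boundary straddling), together with a Markov argument, one gets that $\theta_{B,k}.x$ is $(k,\delta')^m$-atomic with high probability — concentrated on a union of $m$ balls, not one. The second step follows \cite[Proposition 5.5]{Ho}: from $m$-ball atomicity at level $k$ one deduces single-ball $(i,\delta'')$-atomicity for a $1-\delta''$ fraction of scales $i\in[k,k+b]$ with $b=\tfrac{1}{2}\log(1/\delta')$, because for each pair among the $m$ balls only $O(1)$ scales fail to separate them in the dyadic grid. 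That scale-randomisation is the ingredient your one-step argument is missing, and it is essential.
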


\begin{proof}
Similar to the proof of \cite[Proposition 5.5]{Ho}. Consider the identities
$$\E_{i=k}((\theta. x)_{y,i})=\theta=\E_{i=k}(\theta_{B,i}. x).$$
So $\theta_{B,i} . x$ are ($\theta$ almost surely over the choice of $B$) absolutely continuous with respect to the weighted average of the components $(\theta . x)_{y,i}$. Moreover, by Lemma \ref{lem:def =000026 prop of inv metric} and the definition of the partitions in \S \ref{dyadiclike}, each $\theta_{B,i} . x$ is supported on a set which intersects $m=O_R(1)$ level $k$ dyadic cells $(\theta . x)_{y,i}$. Hence each ``component'' $\theta_{B,i} . x$ is absolutely continuous with respect to an average of these $O_R(1)$ components $(\theta . x)_{y,i}$. 

By a Markov inequality argument as in \cite[Proposition 5.5]{Ho}, this implies
\begin{equation}
\mathbb{P}_{i=k}\left( \theta_{B,i}.x \textnormal{  is $(i,\delta')^m$- atomic}\right)>1-\delta'
\end{equation}
where $\delta' \to 0$ as $\delta \to 0$ and we write that a measure $\eta$ on $\R$ is $(k,\delta')^m$-atomic to mean that there exists $m' \leq m$ and $x_1, \ldots, x_{m'} \in \R$ such that $\eta \left(\bigcup_{j=1}^{m'} \textup{B}(x_j, 2^{-k}\delta')\right) \geq 1-\delta'.$ The proof then follows as in \cite[Proposition 5.5]{Ho} to deduce that for $\delta''=O_{R,m}(\log \log(1/\delta')/\log(1/\delta'))$ and $b=\frac{1}{2} \log (1/\delta')$,
$$\mathbb{P}_{k\le i\le k+b}\left(\theta_{B,i}\ldotp x\text{ is }(i,\delta'')\text{-atomic}\right)>1-\delta''$$
where $\delta''$ can be made arbitrarily small by taking $\delta$ small.
\end{proof}
Recall that $\mathbf{B}\subset\mathrm{Mat}_{2,3}(\mathbb{R})$ and
so $\Vert\cdot\Vert$ induces a metric on $\mathbf{B}$.
\begin{lem}
\label{lem:comp not conc}For every $\epsilon,R>0$ there exists $\delta=\delta(\epsilon,R)>0$
such that for $k\ge K(\epsilon,R,\delta)\ge1$ and $n\ge N(\epsilon,R,\delta,k)\ge1$
the following holds. Let $\theta\in\mathcal{M}(\mathbf{B}(R))$ be
with $\frac{1}{n}H(\theta,\mathcal{D}_{n})>\epsilon$. Then $\lambda_{n}\times\theta(F)>\delta$,
where $F$ is the set of all $(i,B_{1})\in\mathcal{N}_{n}\times\mathbf{B}$
such that
\[
\mathbb{P}_{i\le j\le i+k}\left\{ (\theta_{B_{1},i})_{B_{2},j}\text{ is not }(j,\delta)\text{-atomic with respect to }\Vert\cdot\Vert\right\} >\delta.
\]
\end{lem}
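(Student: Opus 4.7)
The strategy is proof by contradiction: assume $\lambda_n \times \theta(F) \leq \delta$, so that outside a set of $(\lambda_n \times \theta)$-mass at most $\delta$, the inner probability that $(\theta_{B_1,i})_{B_2,j}$ is \emph{not} $(j,\delta)$-atomic is $\leq \delta$. For each such $(i,B_1)$, Markov applied to this joint $(j,B_2)$-probability guarantees that on a $(1-\sqrt{\delta})$-fraction of $j \in \{i,\dots,i+k\}$ the conditional probability over $B_2$ of being non-atomic is itself $\leq \sqrt{\delta}$. The goal will be to upgrade this pervasive atomicity into $\frac{1}{n}H(\theta,\mathcal{D}_n^{\mathbf{B}}) < \epsilon$, contradicting the hypothesis.

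The crux will be an entropy bound at the dedicated scale $k' := \lceil \log(1/\delta) \rceil$. If $\eta$ is supported on an atom of $\mathcal{D}_j^{\mathbf{B}}$ inside $\mathbf{B}(R)$ and is $(j,\delta)$-atomic with respect to $\Vert\cdot\Vert$, then the bi-Lipschitz equivalence of $\Vert\cdot\Vert$ and $d_\mathbf{B}$ on $\mathbf{B}(R)$ (Lemma~\ref{lem:def =000026 prop of inv metric}(3)) places the bulk of $\eta$'s mass in a $d_\mathbf{B}$-ball of radius $O_R(\delta 2^{-j}) = O_R(2^{-(j+k')})$, which meets only $O_R(1)$ atoms of $\mathcal{D}_{j+k'}^{\mathbf{B}}$. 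Since the full support meets at most $O(\delta^{-\dim\mathbf{B}})$ such atoms, an elementary entropy computation gives $H(\eta,\mathcal{D}_{j+k'}^{\mathbf{B}}) = O_R(1)$ uniformly in $j$. Without atomicity, only the trivial bound $H(\eta, \mathcal{D}_{j+k'}^{\mathbf{B}}) = O(k')$ is available.

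Fixing $(i,B_1) \notin F$, the identity $\mathbb{E}_{B_2}H((\theta_{B_1,i})_{B_2,j},\mathcal{D}_{j+k'}^{\mathbf{B}}) = H(\theta_{B_1,i}, \mathcal{D}_{j+k'}^{\mathbf{B}}) - H(\theta_{B_1,i}, \mathcal{D}_j^{\mathbf{B}})$ combined with the atomic/non-atomic split yields, for each good $j$,
\[
\frac{1}{k'}\bigl[H(\theta_{B_1,i}, \mathcal{D}_{j+k'}^{\mathbf{B}}) - H(\theta_{B_1,i}, \mathcal{D}_j^{\mathbf{B}})\bigr] \leq \frac{O_R(1)}{\log(1/\delta)} + O(\sqrt{\delta}),
\]
while for bad $j$ (density $\leq \sqrt{\delta}$) the trivial $O(1)$ bound suffices. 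The standard telescoping identity
\[
\frac{1}{k} H(\theta_{B_1,i}, \mathcal{D}_{i+k}^{\mathbf{B}}) = \mathbb{E}_{i \leq j \leq i+k}\frac{1}{k'}\bigl[H(\theta_{B_1,i}, \mathcal{D}_{j+k'}^{\mathbf{B}}) - H(\theta_{B_1,i}, \mathcal{D}_j^{\mathbf{B}})\bigr] + O(k'/k)
\]
(an analogue for $\mathbf{B}$ of Lemma~\ref{lem:ent(thet.sig)>=00003Davg(ent of comp)}) then forces $\frac{1}{k}H(\theta_{B_1,i}, \mathcal{D}_{i+k}^{\mathbf{B}}) < \epsilon/4$ once $\delta$ is small enough and $k \geq K(\epsilon,R,\delta)$.

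Averaging this over $(i,B_1) \notin F$, bounding the mass-$\leq\delta$ set $F$ by the trivial $O_R(1)$ bound, and finally invoking the global identity $\frac{1}{n}H(\theta,\mathcal{D}_n^{\mathbf{B}}) = \int \frac{1}{k} H(\theta_{B_1,i}, \mathcal{D}_{i+k}^{\mathbf{B}})\,d(\lambda_n\times\theta) + O(k/n)$ for $n \geq N(\epsilon,R,\delta,k)$ yields $\frac{1}{n}H(\theta,\mathcal{D}_n^{\mathbf{B}}) < \epsilon$, the desired contradiction. The only genuinely delicate ingredient is the uniform atomic-entropy bound of the second paragraph; everything else is standard entropy bookkeeping tuned to the scale $k'=\log(1/\delta)$ at which atomicity cashes out as an $O_R(1)$ entropy bound.
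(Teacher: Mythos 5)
Your proposal is correct. The paper's proof runs in the opposite (non-contrapositive) direction: it cites an entropy-of-components lemma (attributed to \cite{HR}, Lemma~6.8) to conclude directly that $\frac{1}{n}H(\theta,\mathcal{D}_n)>\epsilon$ forces a positive $(\lambda_n\times\theta)$-fraction of $(i,B_1)$ for which many second-level components $(\theta_{B_1,i})_{B_2,j}$ satisfy $\frac{1}{\ell}H(\cdot,\mathcal{D}_{j+\ell})>\epsilon/C$, and then closes the argument by showing via convexity of entropy that such a component cannot be $(j,2^{-\ell})$-atomic. You instead argue by contrapositive: if $F$ has mass $\le\delta$ then Markov makes the vast majority of components $(j,\delta)$-atomic, and you quantify the implication "atomic $\Rightarrow$ small entropy" at the precise scale $k'=\log(1/\delta)$ so that after telescoping you obtain $\frac{1}{n}H(\theta,\mathcal{D}_n)<\epsilon$. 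These are two sides of the same coin: the central technical fact in both is the equivalence between $(j,\delta)$-atomicity and $O_R(1)$ entropy at scale $j+\log(1/\delta)$, together with the telescoping of entropy over components. Your route is more self-contained — it reproves the content of \cite{HR} Lemma~6.8 from scratch by running the telescoping identity directly — whereas the paper's is more economical, leaning on the existing lemma. One small thing worth recording explicitly if you were to write this up: when you invoke the bi-Lipschitz comparison of $\Vert\cdot\Vert$ and $d_{\mathbf{B}}$ to conclude that the $\Vert\cdot\Vert$-ball of radius $2^{-j}\delta$ carrying the bulk of $\eta$'s mass meets only $O_R(1)$ atoms of $\mathcal{D}_{j+k'}^{\mathbf{B}}$, you should note that the center of that ball, while a priori an arbitrary point of the metric space, must lie within distance $2^{-j}\delta$ of $\mathrm{supp}\,\eta\subset\mathbf{B}(R)$, so the comparison constant from Lemma~\ref{lem:def =000026 prop of inv metric}(3) applies uniformly (with $R$ replaced by, say, $R+1$). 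Likewise, the $O_R(1)$ count of atoms meeting a small ball is a doubling estimate, which follows on the compact set $\mathbf{B}(R+1)$ from the same bi-Lipschitz comparison. With those remarks the argument is complete.
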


\begin{proof}
The proof is similar to \cite[Lemma 5.9]{rapaport2022SelfAffRd}). Let $C>1$ be a large global constant, $\epsilon, R>0$, $\ell \geq 1$ be large with respect to $\epsilon,R$, $k \geq 1$ large with respect to $\ell$, $n \geq 1$ large with respect to $k$ and $\theta \in \mathcal{M}(\mathbf{B}(R))$ be such that $\frac{1}{n}H(\theta, \mathcal{D}_n)>\epsilon.$

Using a similar argument as in \cite[Lemma 6.8]{HR}, we may assume that $\lambda_n \times \theta(F)>\frac{\epsilon}{C}$ where $F'$ is the set of $(i,B_1) \in \mathcal{N}_n \times \mathbf{B}$ such that
$$\mathbb{P}_{i \leq j \leq i+k}\left(\left\{\frac{1}{\ell} H((\theta_{B_1,i})_{B_2,j}, \mathcal{D}_{\ell+j})>\frac{\epsilon}{C}\right\}\right)>\frac{\epsilon}{C}.$$
Let $(i,B_1) \in \mathcal{N}_n \times \mathbf{B}$, $(j,B_2) \in \mathcal{N}_{i,i+k} \times \mathbf{B}$ be such that $\sigma=(\theta_{B_1,i})_{B_2,j}$ is well-defined and $\frac{1}{\ell} H(\sigma, \mathcal{D}_{\ell +j})>\frac{\epsilon}{C}$. It suffices to show that $\sigma$ is not $(j,2^{-\ell})$ atomic with respect to $\norm{\cdot}$.

Assume for a contradiction that $\sigma(B')>1-2^{-\ell}$ for some closed ball $B'$ of $\norm{\cdot}$- radius $2^{-j-\ell}$. Since $\mathrm{supp}(\sigma)$ is contained in a ball of radius $O(R)$ around $\mathbf{B}_{\mathrm{o}}$ and by Lemma \ref{lem:def =000026 prop of inv metric}, we have $\sigma(B)>1-2^{-\ell}$ for some closed ball $B$ of $d_{\mathbf{B}}$- radius $O(2^{-j-\ell})$. Since $\mathrm{diam}(B)=O(2^{-j-\ell})$ and by definition of the partitions in \S \ref{dyadiclike}, 
$$\# \{D \in \mathcal{D}_{j+\ell}^{\mathbf{B}}: D \cap B \neq \emptyset\}=O(1)$$
which implies $H(\sigma_B, \mathcal{D}_{j+\ell})=O(1).$ Since $\sigma$ is supported on some $D \in \mathcal{D}_j^{\mathbf{B}}$ 
$$\frac{1}{\ell} H(\sigma_{B^c}, \mathcal{D}_{j+\ell})=O(1).$$
Thus by convexity of entropy,
\begin{align*}
\frac{\epsilon}{C}< \frac{1}{\ell} H(\sigma, \mathcal{D}_{j+\ell}) \leq \frac{1}{\ell}(\sigma(B)H(\sigma_B, \mathcal{D}_{j+\ell})+\sigma(B^c) H(\sigma_{B^c}, \mathcal{D}_{j+\ell})+O(1)=O(1/\ell).
\end{align*}
Since $\ell$ is large with respect to $\epsilon$, this is a contradiction.
\end{proof}

We will also require the following variation on Lemma \ref{lem:ent(thet.sig)>=00003Davg(ent of comp)}.

\begin{lem}\label{part2}
Let $R>0$ and $\theta\in\mathcal{M}(\mathbf{B}(R))$
be given. Then for every $n\ge k\ge1$,
\[
\frac{1}{n}H(\theta.\mu,\mathcal{D}_{n})\ge\mathbb{E}_{1\le i\le n}\left(\frac{1}{k}H(\theta_{B,i}.\mu_{x,i},\mathcal{D}_{i+k})\right)+O_{R}\left(\frac{1}{k}+\frac{k}{n}\right).
\]
\end{lem}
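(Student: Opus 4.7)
The plan is to combine the concavity of conditional entropy with a telescoping argument over dyadic scales, in the spirit of Lemma \ref{lem:ent(thet.sig)>=00003Davg(ent of comp)}. The starting point is the identity $\theta.\mu = \mathbb{E}_{(B,x)\sim\theta\times\mu}(\theta_{B,i}.\mu_{x,i})$, valid for every $i\ge 0$, obtained by pushing the trivial decomposition $\theta\times\mu = \mathbb{E}(\theta_{B,i}\times\mu_{x,i})$ forward by $(B,x)\mapsto\varphi_B(x)$.

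First I would establish that $\theta_{B,i}.\mu_{x,i}$ is supported on a set of diameter $O_R(2^{-i})$. Since $\theta\in\mathcal{M}(\mathbf{B}(R))$ and each cell of $\mathcal{D}_i^{\mathbf{B}}$ has $d_{\mathbf{B}}$-diameter $O(2^{-i})$, Lemma \ref{lem:def =000026 prop of inv metric}(3) implies $\theta_{B,i}$ is supported on a set of $\Vert\cdot\Vert$-diameter $O_R(2^{-i})$. Combined with $\mu_{x,i}$ sitting in a cube of side $2^{-i}$ inside the compact set $K_{\mathbf{A}}$, a uniform Lipschitz estimate for $(B',x')\mapsto\varphi_{B'}(x')$ on $\mathbf{B}(R)\times K_{\mathbf{A}}$ yields the bound. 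In particular $H(\theta_{B,i}.\mu_{x,i},\mathcal{D}_i)=O_R(1)$, so the chain rule gives $H(\theta_{B,i}.\mu_{x,i},\mathcal{D}_{i+k}\mid\mathcal{D}_i) \ge H(\theta_{B,i}.\mu_{x,i},\mathcal{D}_{i+k}) - O_R(1)$. The key analytic input is then the concavity of $\nu\mapsto H(\nu,\mathcal{D}_{i+k}\mid\mathcal{D}_i)$, which follows from the identity $H(\nu,\mathcal{D}_{i+k}\mid\mathcal{D}_i)=\sum_{D\in\mathcal{D}_i}\nu(D)H(\nu_D,\mathcal{D}_{i+k})$ combined with linearity of $\nu\mapsto\nu(D)$ and ordinary concavity of entropy. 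Applying this to $\theta.\mu = \mathbb{E}(\theta_{B,i}.\mu_{x,i})$ I obtain
\[
H(\theta.\mu,\mathcal{D}_{i+k}\mid\mathcal{D}_i) \ge \mathbb{E}\bigl(H(\theta_{B,i}.\mu_{x,i},\mathcal{D}_{i+k})\bigr) - O_R(1).
\]

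Finally, summing over $i=1,\ldots,n$ and writing $f(j):=H(\theta.\mu,\mathcal{D}_j)$, telescoping gives
\[
\sum_{i=1}^n H(\theta.\mu,\mathcal{D}_{i+k}\mid\mathcal{D}_i) = \sum_{j=n+1}^{n+k}f(j) - \sum_{j=1}^{k}f(j) \le \sum_{j=n+1}^{n+k}f(j).
\]
Since each cell of $\mathcal{D}_n$ in $\mathbb{R}$ contains exactly $2^r$ cells of $\mathcal{D}_{n+r}$, we have $f(n+r)\le f(n)+r\log 2$, so the right-hand side is at most $k H(\theta.\mu,\mathcal{D}_n)+O(k^2)$. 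Combining with the lower bound above, dividing by $nk$, and rearranging yields the claimed inequality with error $O_R(1/k)+O(k/n)$. There is no single major obstacle here — the proof is a standard multi-scale entropy bookkeeping — but the diameter estimate in the second paragraph is what allows the error to depend only on $R$ (rather than on $\theta$), and the telescoping step relies crucially on the fact that $f$ grows by at most a constant per dyadic level.
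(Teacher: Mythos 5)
Your argument is correct and is exactly the standard component-decomposition proof that the paper (and the cited [HR, Lemma 6.9]) intends: decompose $\theta.\mu$ as a mixture of $\theta_{B,i}.\mu_{x,i}$, use concavity of conditional entropy together with the $O_R(2^{-i})$ diameter bound (which rests on Lemma \ref{lem:def =000026 prop of inv metric}(3) and compactness of $\mathbf{B}(R)\times K_{\mathbf{A}}$), then sum over $i$ and estimate the boundary terms $\sum_{j=n+1}^{n+k}f(j)$ via the one-bit-per-level growth of dyadic entropy. No gaps.
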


\begin{proof}
Similar to the proof of \cite[Lemma 6.9]{HR}.
\end{proof}

\subsection{From entropy to atomicity}\label{entropyatom}

In the following key proposition we show that if we \emph{don't} have entropy increase, i.e. $\frac{1}{n}H(\theta, \mathcal{D}_n)\geq \epsilon$ and $\frac{1}{n}H(\theta.\mu,\mathcal{D}_{n})\le\Delta+\delta$ then we can find a measure $\xi$ on $\mathbf{B}(R)$ which is not very concentrated in measure, but such that for a high proportion of $x \in \R^2$, $\xi . x$ is very concentrated in measure.
\begin{prop}
\label{prop:key prop for ent inc}Suppose that $\Delta<1$ and let
$\epsilon,R>0$ be given. Then there exists $\epsilon'=\epsilon'(\epsilon,R)>0$
such that for all $\sigma>0$ there exists $\delta=\delta(\epsilon,R,\sigma)>0$
so that for all $n\ge N(\epsilon,R,\sigma)\ge1$ the following holds.
Let $\theta\in\mathcal{M}(\mathbf{B}(R))$ be such that $\frac{1}{n}H(\theta,\mathcal{D}_{n})\ge\epsilon$
and $\frac{1}{n}H(\theta.\mu,\mathcal{D}_{n})\le\Delta+\delta$. Then
there exist $\xi\in\mathcal{M}(\mathbf{B}(R))$ and $j\ge0$ so that,
\begin{enumerate}
\item $\mathrm{diam}(\mathrm{supp}(\xi))=O_{R}(2^{-j})$ with respect to
$\Vert\cdot\Vert$;
\item $\xi$ is not $(j,\epsilon')$-atomic with respect to $\Vert\cdot\Vert$;
\item $\mu\left\{ x\::\:\xi.x\text{ is }(j,\sigma)\text{-atomic}\right\} >1-\sigma$;
\end{enumerate}
\end{prop}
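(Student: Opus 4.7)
The plan is to combine three ingredients: Lemma \ref{lem:comp not conc}, which turns the hypothesis $\frac{1}{n}H(\theta,\mathcal{D}_n)\ge\epsilon$ into non-atomicity (in $\|\cdot\|$) of many components of $\theta$; Lemma \ref{part2} followed by the linearisation Lemma \ref{lem:linearisation}, which turn $\frac{1}{n}H(\theta.\mu,\mathcal{D}_n)\le\Delta+\delta$ into near-minimal entropy growth for localised Euclidean convolutions $(\theta_{B,i}.x)*(\varphi_{M_0}\mu_{x,i})$ on $\mathbb{R}$; and Hochman's inverse theorem \cite[Theorem 2.8]{Ho}, fed the near-full entropy of projected $\mu$-components supplied by Proposition \ref{prop:lb on ent of comp of mu} and its refinement Lemma \ref{lem:lb on ent of comp of proj of comp}, to force the convolving factor $\theta_{B,i}.x$ to be nearly atomic at many scales.

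Concretely, Lemma \ref{part2} together with Markov and Lemma \ref{lem:linearisation} produce, after choosing any $M_0\in\mathrm{supp}(\theta_{B,i})\cap\mathbf{B}(R)$, a set $G\subset\mathcal{N}_n\times\mathbf{B}\times\mathbb{R}_{>0}^2$ of $\lambda_n\times\theta\times\mu$-measure $>1-O(\sqrt{\delta})$ on which
\[
\tfrac{1}{k}H\!\left((\theta_{B,i}.x)*(\varphi_{M_0}\mu_{x,i}),\mathcal{D}_{i+k}\right)\le\Delta+O_R(\sqrt{\delta}).
\]
On a further subset we may simultaneously assume $\tfrac{1}{m}H((\varphi_{M_0}\mu_{x,i})_{y,s},\mathcal{D}_{s+m})\in(\Delta-\eta,\Delta+\eta)$ for most $s\in[i,i+k-m]$ and $\varphi_{M_0}\mu_{x,i}$-most $y$: the lower bound comes from Lemma \ref{lem:lb on ent of comp of proj of comp}, and the upper bound from $H(\mu_1*\nu_1,\mathcal{D}_{i+k})\ge H(\nu_1,\mathcal{D}_{i+k})-O(1)$ combined with a chain-rule average over components. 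Rescaling both factors by $2^i$ to $O_R(1)$-sized subsets of $\mathbb{R}$ and applying Hochman's inverse theorem at scale $k$, the ``near-uniform component'' alternative is excluded because $\Delta<1$, forcing the ``near-atomic component'' alternative: at a $(1-\sqrt{\sigma})$-proportion of scales $s\in[i,i+k]$ and $(\theta_{B,i}.x)$-most $y$, the component $(\theta_{B,i}.x)_{y,s}$ is $(s,\sqrt{\sigma})$-atomic, provided $\delta$ is small in terms of $\sigma,\Delta,R$. Lemma \ref{lem:from conc on R to cont on B} then transfers this into $(s,\sigma)$-atomicity on $\mathbb{R}$ of $(\theta_{B,i})_{B_2,s}.x$ for most $(B_2,s)$.

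To close, Lemma \ref{lem:comp not conc} supplies $\epsilon'=\epsilon'(\epsilon,R)>0$ and a set $F\subset\mathcal{N}_n\times\mathbf{B}$ of $\lambda_n\times\theta$-measure $\ge\epsilon'$ on which $(\theta_{B_1,i})_{B_2,s}$ fails to be $(s,\epsilon')$-atomic in $\|\cdot\|$ with probability $>\epsilon'$ over $s\in[i,i+k']$. Choosing $\delta$ so that $O(\sqrt{\delta})<\epsilon'^2/10$ forces $F$ to intersect the $(i,B)$-projection of the good subset of $G$ in positive measure; Fubini over this intersection then selects a single quadruple $(i^*,B^*,B_2^*,s^*)$ such that $\xi:=(\theta_{B^*,i^*})_{B_2^*,s^*}$ with $j:=s^*$ satisfies (2), while $\mu$-most $x$ render $\xi.x$ $(s^*,\sigma)$-atomic on $\mathbb{R}$, which after absorbing the uniform Lipschitz constant of $B\mapsto\varphi_B(x)$ on $\mathbf{B}(R)\times(\text{compact in }\mathbb{R}_{>0}^2)$ into $\sigma$ at the outset is property (3); property (1) is immediate from the construction of the partitions $\mathcal{D}_s^{\mathbf{B}}$. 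The principal obstacle is verifying that Hochman's near-uniform alternative is genuinely excluded, which requires not only the lower bound from Lemma \ref{lem:lb on ent of comp of proj of comp} but a matching upper bound on the normalized component entropy of $\varphi_{M_0}\mu_{x,i}$ strictly below $1$ — precisely where the assumption $\Delta<1$ enters decisively.
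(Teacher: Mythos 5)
Your proposal is correct and follows essentially the same route as the paper's own proof: Lemma \ref{part2} plus linearisation (Lemma \ref{lem:linearisation}) to pass to Euclidean convolutions of components, Hochman's inverse theorem to split into a near-uniform/near-atomic dichotomy, Lemma \ref{lem:lb on ent of comp of proj of comp} for the lower entropy bound and a chain-rule averaging argument for the upper bound so that components have entropy near $\Delta<1$ and the near-uniform alternative is excluded, Lemma \ref{lem:from conc on R to cont on B} to transfer atomicity from scales on $\mathbb{R}$ to components in $\mathbf{B}$, and Lemma \ref{lem:comp not conc} plus a Fubini/intersection argument to extract a single component $\xi$ satisfying (1)--(3). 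Your closing worry about needing a matching upper bound below $1$ is exactly what the paper handles via \cite[Lemma 2.9]{rapaport2022SelfAffRd} applied to $\Delta+\delta\ge\int\mathbb{E}_{1\le i\le n}(\tfrac{1}{k}H(\varphi_B\mu_{x,i},\mathcal{D}_{i+k}))\,d\theta$, so it is a feature of the argument rather than a gap.
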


\begin{proof}
Let $\delta>0$ be small with respect to $\epsilon,R$, let $k\ge1$
be large with respect to $\delta$, let $n\ge1$ be large with respect
to $k$, and let $\theta\in\mathcal{M}(\mathbf{B}(R))$ be with $\frac{1}{n}H(\theta,\mathcal{D}_{n})\ge\epsilon$
and $\frac{1}{n}H(\theta.\mu,\mathcal{D}_{n})\le\Delta+\delta^{2}$. 

By Lemma \ref{part2},
\[
\Delta+2\delta^{2}\ge\mathbb{E}_{1\le i\le n}\left(\frac{1}{k}H(\theta_{B,i}.\mu_{x,i},\mathcal{D}_{i+k})\right).
\]
From this and Lemma \ref{lem:linearisation},
\begin{equation}
\Delta+3\delta^{2}\ge\mathbb{E}_{1\le i\le n}\left(\frac{1}{k}H(\theta_{B,i}.x*\varphi_{B}\mu_{x,i},\mathcal{D}_{i+k})\right).\label{eq:ub on avg of conv of proj}
\end{equation}

Write $\Gamma:=\lambda_{n}\times\mu\times\theta$ and let $E_{0}$
be the set of all $(i,x,B)\in\mathcal{N}_{n}\times\mathbb{R}_{>0}^{2}\times\mathbf{B}$
so that,
\[
\frac{1}{k}H(\theta_{B,i}.x*\varphi_{B}\mu_{x,i},\mathcal{D}_{i+k})<\frac{1}{k}H(\varphi_{B}\mu_{x,i},\mathcal{D}_{i+k})+\delta.
\]
By \cite[Equation (2.17)]{rapaport2022SelfAffRd} we may assume that
for $\Gamma$-a.e. $(i,x,B)$,
\begin{equation}
\frac{1}{k}H(\theta_{B,i}.x*\varphi_{B}\mu_{x,i},\mathcal{D}_{i+k})\ge\frac{1}{k}H(\varphi_{B}\mu_{x,i},\mathcal{D}_{i+k})-\delta^{2}.\label{eq:conv dont dec ent}
\end{equation}
From this, by the definition of $E_{0}$, and from (\ref{eq:ub on avg of conv of proj}),
\[
\Delta+4\delta^{2}\ge\int\mathbb{E}_{1\le i\le n}\left(\frac{1}{k}H(\varphi_{B}\mu_{x,i},\mathcal{D}_{i+k})\right)\:d\theta(B)+\delta\Gamma(E_{0}^{c}).
\]
Moreover, by Proposition \ref{prop:lb on ent of comp of mu}
\[
\int\mathbb{E}_{1\le i\le n}\left(\frac{1}{k}H(\varphi_{B}\mu_{x,i},\mathcal{D}_{i+k})\right)\:d\theta(B)\ge\Delta-\delta^{2}.
\]
Thus, by replacing $\delta$ with a larger quantity (which is still
small with respect to $\epsilon,R$) without changing the notation,
we get $\Gamma(E_{0})\ge1-\delta$.

Let $\sigma>0$ be small with respect to $\epsilon,R$ and suppose
that $\delta$ is small with respect to $\sigma$. Let $m\ge1$ be
an integer which is large with respect to $\sigma$ and suppose that
$\delta$ is small with respect to $m$. By \cite[Theorem 2.7]{Ho1},
it follows that for every $(i,x,B)\in E_{0}$
\begin{equation}
\mathbb{P}_{i\le j\le i+k}\left\{ \begin{array}{c}
\frac{1}{m}H\left((\varphi_{B}\mu_{x,i})_{y,j},\mathcal{D}_{j+m}\right)>1-\sigma\text{ or }\\
(\theta_{B,i}.x)_{z,j}\text{ is }(j,\sigma)\text{-atomic}
\end{array}\right\} >1-\sigma.\label{eq:from Hoch thm in R}
\end{equation}
\begin{lem*}
We can assume that $\Gamma(E_{1})>1-\sigma$, where $E_{1}$ is the
set of all $(i,x,B)\in\mathcal{N}_{n}\times\mathbb{R}_{>0}^{2}\times\mathbf{B}$
with
\[
\mathbb{P}_{i\le j\le i+k}\left\{ (\theta_{B,i}.x)_{z,j}\text{ is }(j,\sigma)\text{-atomic}\right\} >1-\sigma.
\]
\end{lem*}
\begin{proof}
Let $Z$ be the set of all $(i,x,B)\in\mathcal{N}_{n}\times\mathbb{R}_{>0}^{2}\times\mathbf{B}$
so that,
\[
\mathbb{P}_{i\le j\le i+k}\left\{ \left|\frac{1}{m}H\left((\varphi_{B}\mu_{x,i})_{y,j},\mathcal{D}_{j+m}\right)-\Delta\right|<\sigma\right\} >1-\sigma.
\]
Let $\sigma'>0$ be small with respect to $\sigma$ and suppose that
$m$ is large with respect to $\sigma'$. From (\ref{eq:ub on avg of conv of proj})
and (\ref{eq:conv dont dec ent}),
\[
\Delta+\delta\ge\int\mathbb{E}_{1\le i\le n}\left(\frac{1}{k}H(\varphi_{B}\mu_{x,i},\mathcal{D}_{i+k})\right)\:d\theta(B).
\]
Thus, by \cite[Lemma 2.9]{rapaport2022SelfAffRd},
\[
\Delta+\sigma'\ge\int\mathbb{E}_{i\le j\le i+k}\left(\frac{1}{m}H\left((\varphi_{B}\mu_{x,i})_{y,j},\mathcal{D}_{j+m}\right)\right)\:d\Gamma(i,x,B).
\]
Moreover, by Lemma \ref{lem:lb on ent of comp of proj of comp},
\[
\int\mathbb{P}_{i\le j\le i+k}\left\{ \frac{1}{m}H\left((\varphi_{B}\mu_{x,i})_{y,j},\mathcal{D}_{j+m}\right)>\Delta-\sigma'\right\} \:d\Gamma(i,x,B)>1-\sigma'.
\]
From the last two inequalities, and since $\sigma'$ is small with
respect to $\sigma$, we obtain $\Gamma(Z)>1-\sigma$. The lemma now
follows from this and $\Gamma(E_{0})\ge1-\delta$, by (\ref{eq:from Hoch thm in R}),
since $\sigma$ may be assumed to be small with respect to $1-\Delta>0$,
and by replacing $\sigma$ with a larger quantity (which is still
small with respect to $\epsilon,R$) without changing the notation.
\end{proof}
\begin{lem*}
We can assume that $\Gamma(E_{2})>1-\sigma$, where $E_{2}$ is the
set of all $(i,x,B_{1})\in\mathcal{N}_{n}\times\mathbb{R}_{>0}^{2}\times\mathbf{B}$
with
\begin{equation}
\mathbb{P}_{i\le j\le i+k}\left\{ (\theta_{B_{1},i})_{B_{2},j}.x\text{ is }(j,\sigma)\text{-atomic}\right\} >1-\sigma.\label{eq:def prop of E_2}
\end{equation}
\end{lem*}
\begin{proof}
Fix $(i,x,B_{1})\in E_{1}$ with $x\in K_{\mathbf{A}}$, and set
\[
S:=\left\{ j\in\mathcal{N}_{i,i+k}\::\:\mathbb{P}_{l=j}\left\{ (\theta_{B_{1},i}.x)_{y,l}\text{ is }(l,\sigma)\text{-atomic}\right\} \ge1-\sqrt{\sigma}\right\} .
\]
By the definition of $E_{1}$ it follows that $\lambda_{i,i+k}(S)\ge1-\sqrt{\sigma}$.
Let $\sigma'>0$ be small with respect to $\epsilon,R$ and suppose
that $\sigma$ is small with respect to $\sigma'$. By Lemma \ref{lem:from conc on R to cont on B},
there exists an integer $b=b(R,\sigma')\ge1$ such that,
\begin{equation}
\mathbb{P}_{j\le l\le j+b}\left((\theta_{B_{1},i})_{B_{2},l}.x\text{ is }(l,\sigma')\text{-atomic}\right)\ge1-\sigma'\text{ for }j\in S.\label{eq:conc for j in S}
\end{equation}

Let $\sigma''>0$ be small with respect to $\epsilon,R$ and suppose
that $\sigma'$ is small with respect to $\sigma''$. From $\lambda_{i,i+k}(S)\ge1-\sqrt{\sigma}$
and (\ref{eq:conc for j in S}), by assuming that $\sigma,\sigma'$
are sufficiently small with respect to $\sigma''$, and by assuming
that $k$ is sufficiently large with respect to $b$, it follows by
a statement similar to \cite[Lemma 2.7]{Ho} that (\ref{eq:def prop of E_2})
is satisfied with $\sigma''$ in place of $\sigma$. This completes
the proof of the lemma.
\end{proof}
By the previous lemma, by Fubini's theorem, and by replacing $\sigma$
with a larger quantity without changing the notation, we may assume
that $\lambda_{n}\times\theta(F_{1})>1-\sigma$, where $F_{1}$ is
the set of all $(i,B_{1})\in\mathcal{N}_{n}\times\mathbf{B}$ such
that
\[
\mathbb{P}_{i\le j\le i+k}\left(\mu\left\{ x\::\:(\theta_{B_{1},i})_{B_{2},j}.x\text{ is }(j,\sigma)\text{-atomic}\right\} >1-\sigma\right)>1-\sigma.
\]

Let $\epsilon'>0$ be small with respect to $\epsilon,R>0$, and suppose
that $\sigma$ is small with respect to $\epsilon'$. By Lemma \ref{lem:comp not conc},
and the assumptions $\mathrm{supp}(\theta)\in\mathbf{B}(R)$ and $\frac{1}{n}H(\theta,\mathcal{D}_{n})\ge\epsilon$,
it follows that $\lambda_{n}\times\theta(F_{2})>\epsilon'$, where
$F_{2}$ is the set of all $(i,B_{1})\in\mathcal{N}_{n}\times\mathbf{B}$
such that
\[
\mathbb{P}_{i\le j\le i+k}\left\{ (\theta_{B_{1},i})_{B_{2},j}\text{ is not }(j,\epsilon')\text{-atomic with respect to }\Vert\cdot\Vert\right\} >\epsilon'.
\]
Since $\sigma$ is small with respect to $\epsilon'$, we get $\lambda_{n}\times\theta(F_{1}\cap F_{2})>\epsilon'/2$.
Moreover, using again the fact that $\sigma$ is small with respect
to $\epsilon'$, for every $(i,B_{1})\in F_{1}\cap F_{2}$ there exist
$i\le j\le i+k$ and $B_{2}\in\mathbf{B}$ so that $\xi:=(\theta_{B_{1},i})_{B_{2},j}$
is not $(j,\epsilon')$-atomic with respect to $\Vert\cdot\Vert$
and 
\[
\mu\left\{ x\::\:\xi.x\text{ is }(j,\sigma)\text{-atomic}\right\} >1-\sigma.
\]
Note that $\mathrm{supp}(\xi)\subset\mathrm{supp}(\theta)\subset\mathbf{B}(R)$
and $\mathrm{diam}(\mathrm{supp}(\xi))=O(2^{-j})$, and recall that
$\mathbf{B}(R)$ is compact. Thus, by Lemma \ref{lem:def =000026 prop of inv metric}
it follows that $\mathrm{diam}(\mathrm{supp}(\xi))=O_{R}(2^{-j})$
with respect to $\Vert\cdot\Vert$, which completes
the proof of the proposition.
\end{proof}

\subsection{Proof of Theorem \ref{thm:ent inc under conv}} \label{incproof}

We now rephrase the conclusion of Proposition \ref{prop:key prop for ent inc} in terms of the behaviour of certain mappings which we now introduce.

For $B_{1},B_{2}\in\mathrm{Mat}_{2,3}(\mathbb{R})$ let $p_{B_{1},B_{2}}\in\mathbb{R}[X_{1},X_{2}]$
be with
\[
p_{B_{1},B_{2}}(x):=\det\left(\begin{array}{cc}
\left\langle r_{B_{1},1},\tilde{x}\right\rangle  & \left\langle r_{B_{2},1},\tilde{x}\right\rangle \\
\left\langle r_{B_{1},2},\tilde{x}\right\rangle  & \left\langle r_{B_{2},2},\tilde{x}\right\rangle 
\end{array}\right)\text{ for }x\in\mathbb{R}^{2}.
\]
Note that $p_{B_1,B_2}(x)$ is closely related to the linear fractional transformations $\varphi_{B_1}$ and $\varphi_{B_2}$, for example if $x \in \R^2_{>0}$ and $B_1,B_2 \in \mathbf{B}$ then $p_{B_1,B_2}(x)=0$ is equivalent to $\varphi_{B_1}(x)=\varphi_{B_2}(x)$. Proposition \ref{prop:key prop for ent inc} will allow us to find $B_1$ and $B_2$ which are ``far'' from each other in a suitable sense, but for which we have control over the measure of the set of points $x$ for which $p_{B_1,B_2}(x)$ is close to zero.

\begin{lem}
\label{lem:exist B_1,B_2}For every $R>1$ there exists $C=C(R)>1$
so that for all $0<\epsilon<1$ and $0<\delta<\delta(\epsilon)$ the
following holds. Let $m\ge0$ and $\xi\in\mathcal{M}(\mathbf{B}(R))$
be with,
\begin{enumerate}
\item $\mathrm{diam}(\mathrm{supp}(\xi))\le R2^{-m}$ with respect to $\Vert\cdot\Vert$;
\item $\xi$ is not $(m,\epsilon)$-atomic with respect to $\Vert\cdot\Vert$;
\item $\mu\left\{ x\::\:\xi.x\text{ is }(m,\delta)\text{-atomic}\right\} >1-\delta$.
\end{enumerate}
Then there exist $B_{1}\in\mathrm{supp}(\xi)$ and $B_{2}\in\mathrm{Mat}_{2,3}(\mathbb{R})$
so that,
\begin{itemize}
\item $\Vert B_{2}\Vert\le C$ and $\Vert B_{2}-tB_{1}\Vert>C^{-1}\epsilon$
for all $t\in\mathbb{R}$;
\item $\mu\left\{ x\::\:\left|p_{B_{1},B_{2}}(x)\right|<C\delta\right\} >1-C\delta^{1/2}$.
\end{itemize}
\end{lem}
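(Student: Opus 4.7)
The plan is to produce $B_2$ as a rescaling of $B'-B_1$ for a suitably chosen second point $B'\in\mathrm{supp}(\xi)$, exploiting the identity
\[
p_{B_1,B'}(x)\;=\;\bigl(\varphi_{B_1}(x)-\varphi_{B'}(x)\bigr)\,\langle r_{B_1,2},\tilde x\rangle\,\langle r_{B',2},\tilde x\rangle,
\]
which is immediate from the definitions of $p$ and $\varphi$. On the compact set $K_{\mathbf A}\subset\mathbb R_{>0}^{2}$ on which $\mu$ is supported both denominators are uniformly bounded, so smallness of $p_{B_1,B'}(x)$ is equivalent, up to a factor $O_R(1)$, to closeness of $\varphi_{B_1}(x)$ and $\varphi_{B'}(x)$.

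A Fubini/Markov argument applied to hypothesis (3) produces the pair $B_1,B'$. Let $X:=\{x:\xi.x\text{ is }(m,\delta)\text{-atomic}\}$, so $\mu(X)>1-\delta$, and for $x\in X$ choose $t_x\in\mathbb R$ with $\xi\{B:|\varphi_B(x)-t_x|<2^{-m}\delta\}>1-\delta$. Integrating in $x$ and applying Markov to the complement yields a set $G\subset\mathrm{supp}(\xi)$ with $\xi(G)>1-\sqrt{2\delta}$ such that $\mu(X_B)>1-\sqrt{2\delta}$ for every $B\in G$, where $X_B:=\{x\in X:|\varphi_B(x)-t_x|<2^{-m}\delta\}$. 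Fix any $B_1\in G$; by (2) the $\Vert\cdot\Vert$-ball $\textup{B}(B_1,2^{-m}\epsilon)$ has $\xi$-mass at most $1-\epsilon$, and since $\sqrt{2\delta}<\epsilon$ (ensured by taking $\delta<\delta(\epsilon)$) we may pick $B'\in G$ with $\Vert B'-B_1\Vert\ge 2^{-m}\epsilon$. For $x\in X_{B_1}\cap X_{B'}$ (a set of $\mu$-mass at least $1-2\sqrt{2\delta}$) the triangle inequality gives $|\varphi_{B_1}(x)-\varphi_{B'}(x)|\le 2^{1-m}\delta$, so the displayed identity yields $|p_{B_1,B'}(x)|=O_R(2^{-m}\delta)$. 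Setting $B_2:=2^m(B'-B_1)$, the bilinearity of $p$ combined with $p_{B_1,B_1}=0$ gives $p_{B_1,B_2}=2^m p_{B_1,B'}$, whence $|p_{B_1,B_2}(x)|=O_R(\delta)$ on $X_{B_1}\cap X_{B'}$, while $\Vert B_2\Vert\le 2^m\cdot R\, 2^{-m}=R$ by (1).

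The main technical step is verifying the linear independence $\Vert B_2-tB_1\Vert\ge C^{-1}\epsilon$ for every $t\in\mathbb R$, and here the structural constraint $|r_{B,2}|=1$ defining $\mathbf{B}$ enters essentially. Put $s:=1+t\cdot 2^{-m}$, so that $B_2-tB_1=2^m(B'-sB_1)$, and denote $\eta:=\Vert B_2-tB_1\Vert$; we may assume $\eta<1$. Comparing second rows in $\Vert B'-sB_1\Vert=2^{-m}\eta$ and using $|r_{B',2}|=|r_{B_1,2}|=1$ gives $\bigl|\,1-|s|\,\bigr|\le 2^{-m}\eta$. The case $|s+1|\le 2^{-m}\eta$ is ruled out because $r_{B_1,2}$ and $r_{B',2}$ both lie in the nonnegative orthant of the unit sphere, so $|r_{B',2}+r_{B_1,2}|\ge\sqrt 2$, forcing $\eta\ge 2^{m-1}\sqrt 2$, which is inconsistent with $\eta<1$ (the edge case $m=0$ is excluded by taking $C\ge 2$). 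In the remaining case $|s-1|\le 2^{-m}\eta$, the decomposition $B'-B_1=(s-1)B_1+(B'-sB_1)$ gives $\Vert B'-B_1\Vert\le 2^{-m}\eta(1+\Vert B_1\Vert)$, and combining with $\Vert B'-B_1\Vert\ge 2^{-m}\epsilon$ yields $\eta\ge\epsilon/(1+\Vert B_1\Vert)$. Since $\mathbf{B}(R)$ is compact (Lemma \ref{lem:def =000026 prop of inv metric}), $\Vert B_1\Vert$ is bounded by a constant depending only on $R$, so taking $C=C(R)$ large enough closes the argument. The Fubini/Markov extraction is routine; the genuine obstacle is this second-row analysis, which would fail without the normalisation built into $\mathbf{B}$.
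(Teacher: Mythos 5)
Your proof is correct and follows essentially the same route as the paper: a Fubini/Markov argument applied to hypothesis (3) to extract a high-$\xi$-mass set of good $B$'s, non-atomicity (2) to pick a second point $B'$ at $\Vert\cdot\Vert$-distance at least $2^{-m}\epsilon$ from $B_1$, a rescaled difference $B_{2}=2^{m}(B'-B_{1})$ (the paper's $B_{2}=2^{m}B_{2}'-(2^{m}-1)B_{1}$ differs from yours only by $+B_{1}$, which is irrelevant since $t$ ranges over all of $\mathbb{R}$), and the unit-nonnegative-second-row normalization built into $\mathbf{B}$ to rule out $B_{2}\in B_{1}\mathbb{R}$. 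Your explicit factorization $p_{B_1,B'}(x)=(\varphi_{B_1}(x)-\varphi_{B'}(x))\langle r_{B_1,2},\tilde x\rangle\langle r_{B',2},\tilde x\rangle$ and the direct case analysis on $s$ streamline the paper's determinant row operations and its auxiliary ``tube'' set $Q$, but the underlying argument is the same.
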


\begin{proof}
Let $R>1$ and $0<\epsilon<1$ be given, let $\delta>0$ be small
with respect to $\epsilon$, and let $m,\xi$ be as in the statement
of the lemma. By the third assumption and from the continuity of the
map taking $(B,x)\in\mathbf{B}\times\mathbb{R}_{>0}^{2}$ to $\varphi_{B}(x)$,
it follows easily that there exists a Borel map $L:\mathbb{R}_{>0}^{2}\rightarrow\mathbb{R}$
so that
\[
\mu\left\{ x\::\:\xi.x(\textup{B}(L(x),2^{1-m}\delta))>1-\delta\right\} >1-\delta.
\]
From this and by Fubini's theorem,
\[
\int\int1_{\textup{B}(L(x),2^{1-m}\delta)}(\varphi_{B}(x))d\mu(x)\:d\xi(B)>(1-\delta)^{2}>1-2\delta.
\]
Thus $\xi(Y)>1-(2\delta)^{1/2}$, where $Y$ is the set of all $B\in\mathrm{supp}(\xi)$
so that for
\[
E_{B}:=\left\{ x\in K_{\mathbf{A}}\::\:\varphi_{B}(x)\in \textup{B}(L(x),2^{1-m}\delta)\right\} 
\]
we have $\mu(E_{B})>1-(2\delta)^{1/2}$.

Fix some $B_{1}\in Y$. Since $B_{1}\in\mathrm{supp}(\xi)\subset\mathbf{B}(R)$,
we have $\Vert B_{1}\Vert=O_{R}(1)$. Write
\[
Q:=\{B\in\mathbf{B}(R)\::\:\Vert B-tB_{1}\Vert\le C^{-1}2^{-m}\epsilon\text{ for some }t\in\mathbb{R}\},
\]
where $C>1$ is a large constant depending only on $R$. Given $B\in Q$
there exists $t\in\mathbb{R}$ so that $\Vert B-tB_{1}\Vert\le C^{-1}2^{-m}\epsilon$.
By the definition of $\mathbf{B}$ and by assuming that $C$ is sufficiently
large, we must have $t>0$. Hence,
\[
|1-t|=\left||r_{B,2}|-|r_{tB_{1},2}|\right|\le|r_{B,2}-r_{tB_{1},2}|=O(C^{-1}2^{-m}\epsilon),
\]
which gives
\[
\Vert B-B_{1}\Vert\le\Vert B-tB_{1}\Vert+\Vert tB_{1}-B_{1}\Vert=O_{R}(C^{-1}2^{-m}\epsilon).
\]
From this, by assuming $C$ is sufficiently large, and by the second
assumption in the statement of the lemma, it follows that $\xi(\mathbf{B}\setminus Q)\ge\epsilon$.
Since $\xi(Y)>1-(2\delta)^{1/2}$ and $\delta$ is small with respect
to $\epsilon$, we get $\xi(Y\setminus Q)>\epsilon/2$. In particular,
there exists $B_{2}'\in\mathrm{supp}(\xi)\cap Y$ with $B_{2}'\notin Q$.
Set $B_{2}=2^{m}B_{2}'-(2^{m}-1)B_{1}$.

By the first assumption and since $B_{1},B_{2}'\in\mathrm{supp}(\xi)$,
\[
\Vert B_{2}\Vert\le\Vert B_{2}-B_{1}\Vert+\Vert B_{1}\Vert=2^{m}\Vert B_{2}'-B_{1}\Vert+O_{R}(1)=O_{R}(1).
\]
Since $B_{2}'\notin Q$ we have $\Vert B_{2}'-tB_{1}\Vert>C^{-1}2^{-m}\epsilon$
for all $t\in\mathbb{R}$, which implies $\Vert B_{2}-tB_{1}\Vert>C^{-1}\epsilon$
for all $t\in\mathbb{R}$.

Let $x\in E_{B_{1}}\cap E_{B_{2}'}$ be given. Let $f_{1},f_{2}\in\left(\mathrm{Mat}_{2,3}(\mathbb{R})\right)^{*}$
be with $f_{i}(B):=\left\langle r_{B,i},\tilde{x}\right\rangle $
for $i=1,2$ and $B\in\mathrm{Mat}_{2,3}(\mathbb{R})$. Since $x\in E_{B_{2}'}$,
\[
\left|\frac{f_{1}(B_{2}')}{f_{2}(B_{2}')}-L(x)\right|=\left|\varphi_{B_{2}'}(x)-L(x)\right|\le2^{1-m}\delta.
\]
Thus, from $B_{2}'\in\mathbf{B}(R)$ and $x\in K_{\mathbf{A}}$,
\[
\left|f_{1}(2^{m}B_{2}')-L(x)f_{2}(2^{m}B_{2}')\right|=O_{R}(\delta).
\]
Similarly, from $x\in E_{B_{1}}$,
\[
\left|f_{1}(cB_{1})-L(x)f_{2}(cB_{1})\right|=O_{R}(\delta)\text{ for }c=1,2^{m}-1.
\]
Thus,
\begin{multline*}
\left|f_{1}(B_{2})-L(x)f_{2}(B_{2})\right|\le\left|f_{1}(2^{m}B_{2}')-L(x)f_{2}(2^{m}B_{2}')\right|\\
+\left|f_{1}((2^{m}-1)B_{1})-L(x)f_{2}((2^{m}-1)B_{1})\right|=O_{R}(\delta).
\end{multline*}
From the last two formulas and since $|f_{j}(B_{i})|=O_{R}(1)$ for
$i,j=1,2$,
\begin{multline*}
\left|p_{B_{1},B_{2}}(x)\right|=\det\left(\begin{array}{cc}
f_{1}(B_{1}) & f_{1}(B_{2})\\
f_{2}(B_{1}) & f_{2}(B_{2})
\end{array}\right)\\
=\det\left(\begin{array}{cc}
f_{1}(B_{1})-L(x)f_{2}(B_{1}) & f_{1}(B_{2})-L(x)f_{2}(B_{2})\\
f_{2}(B_{1}) & f_{2}(B_{2})
\end{array}\right)=O_{R}(\delta).
\end{multline*}
Additionally, from $B_{1},B_{2}'\in Y$ it follows that $\mu(E_{B_{1}}\cap E_{B_{2}'})>1-(8\delta)^{1/2}$,
which completes the proof of the lemma.
\end{proof}

On the other hand, if $B_2 \notin B_1\R$ then Zariski density implies that $p_{B_1,B_2}$ cannot vanish $\mu$ almost everywhere.

\begin{lem}
\label{lem:no poly exists}Let $B_{1}\in\mathbf{B}$ and $B_{2}\in\mathrm{Mat}_{2,3}(\mathbb{R})$
be with $B_{2}\notin B_{1}\mathbb{R}$. Then $\mu\{x\;:\:p_{B_{1},B_{2}}(x)\ne0\}>0$.
\end{lem}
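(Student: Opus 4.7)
My plan is to regard $p_{B_1,B_2}$ as (the dehomogenisation of) a nonzero quadratic form on $\mathbb{R}^3$, and to show that no such form can vanish on all of $F(K_{\mathbf{A}})$ by exploiting the Zariski density of $\mathbf{S}$ and the irreducibility of the $\mathrm{SL}(3,\mathbb{R})$-representation $\mathrm{Sym}^2(\mathbb{R}^3)$.

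First I would observe that $p_{B_1,B_2}$ is the restriction to the chart $\{v_3=1\}$ of the homogeneous quadratic form $\widetilde p$ on $\mathbb{R}^3$ given by $\widetilde p(v):=\det[B_1 v \mid B_2 v]$, in the sense that $p_{B_1,B_2}(x)=\widetilde p(\tilde x)$. Since $B_2\notin B_1\mathbb{R}$, $\widetilde p$ is nonzero: indeed, if $\widetilde p\equiv 0$ then $B_1 v$ and $B_2 v$ are linearly dependent for every $v\in\mathbb{R}^3$, and a short linear algebra argument (restrict to the open set where $B_1 v\neq 0$ and use continuity of the proportionality scalar) forces $B_2=\lambda B_1$ for some $\lambda\in\mathbb{R}$, contradicting the hypothesis.

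Next, let $Q:=\mathrm{Sym}^2((\mathbb{R}^3)^*)$ be the six-dimensional space of quadratic forms on $\mathbb{R}^3$, and set
\[
I:=\{q\in Q \, : \, q|_{F(K_{\mathbf{A}})}\equiv 0\}.
\]
Forward-invariance of $K_{\mathbf{A}}$ under $\{\varphi_{A_i}\}$ gives $A\cdot F(K_{\mathbf{A}})\subset F(K_{\mathbf{A}})$ for every $A\in\mathbf{S}$, so $I$ is stable under $q\mapsto q\circ A$ for all $A\in\mathbf{S}$. For each fixed $q\in I$ the condition $q\circ A\in I$ is polynomial in the entries of $A$ (pick a complement to $I$ in $Q$ and require the corresponding coordinates of $q\circ A$ to vanish), so the set $\{A\in\SL:q\circ A\in I\}$ is Zariski-closed in $\SL$, contains $\mathbf{S}$, and hence equals $\SL$ by Zariski density. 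Thus $I$ is invariant under the $\SL$-action on $Q$.

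Finally, since $\mathrm{Sym}^2(\mathbb{R}^3)$ is an irreducible real representation of $\mathrm{SL}(3,\mathbb{R})$, and since $I\neq Q$ (any $x_0\in K_{\mathbf{A}}$ yields a form $v\mapsto \langle \tilde x_0,v\rangle^2$ that does not vanish at $\tilde x_0$), we conclude $I=\{0\}$. Therefore $\widetilde p\notin I$, so there is some $x_0\in K_{\mathbf{A}}$ with $p_{B_1,B_2}(x_0)\neq 0$. Continuity of $p_{B_1,B_2}$ produces a neighbourhood of $x_0$ in $\mathbb{R}_{>0}^2$ on which $p_{B_1,B_2}\neq 0$, and since $x_0\in K_{\mathbf{A}}=\mathrm{supp}(\mu)$ this neighbourhood has positive $\mu$-mass, proving the lemma. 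The only non-routine ingredient is citing the irreducibility of the symmetric square representation; the rest is bookkeeping combining Zariski density with the algebraicity of the action on $Q$.
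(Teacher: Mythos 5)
Your proof is correct, but it takes a genuinely different route from the paper's. Both arguments share the preliminary step of showing the homogeneous quadratic form $\widetilde{p}$ (equivalently the polynomial $p_{B_1,B_2}$) is nonzero, although you use a continuity/connectedness argument on the proportionality scalar where the paper uses unique factorisation in $\R[X_1,X_2]$; both work. The divergence is in the main step. The paper assumes by contradiction that $p_{B_1,B_2}$ vanishes $\mu$-a.e., uses $\varphi_{A_u}\mu\ll\mu$ together with countability of $\I^*$ to pin down a single point $z$ with $p_{B_1,B_2}(\varphi_A(z))=0$ for all $A\in\mathbf{S}$, encodes this as the vanishing of an explicit polynomial $q$ on $\mathbf{S}$, promotes this to all of $\SL$ by Zariski density, and derives a contradiction by evaluating $q$ at an explicit translation matrix $M$. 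You instead observe that the space $I\subset\mathrm{Sym}^2((\R^3)^*)$ of forms vanishing on $F(K_{\mathbf{A}})$ is a linear subspace stable under $q\mapsto q\circ A$ for every $A\in\mathbf{S}$, upgrade this stability to all of $\SL$ by Zariski density (the stability condition for each fixed $q$ is Zariski-closed in $A$, since $I$ is linear and $q\circ A$ is polynomial in $A$), and invoke irreducibility of the symmetric square representation of $\SL$ to deduce $I=\{0\}$; positive $\mu$-mass then follows from $\mathrm{supp}(\mu)=K_{\mathbf{A}}$ and continuity. Your approach is shorter and more conceptual, avoiding any explicit polynomial or matrix, at the price of importing the (standard but external) fact that $\mathrm{Sym}^2$ of the standard representation of $\SL$ is irreducible over $\R$; the paper's is longer but self-contained and elementary. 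The two arguments also lean on different facts about $\mu$: you use that $\mu$ has full support on $K_{\mathbf{A}}$, while the paper uses quasi-invariance of $\mu$ under the generating maps.
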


\begin{proof}
Let us first show that $p_{B_{1},B_{2}}\ne0$. Assume by contradiction
that this is not the case. Write $\mathrm{R}$ for the ring $\mathbb{R}[X_{1},X_{2}]$.
For $i,j=1,2$ let $q_{i,j}\in\mathrm{R}$ be with $q_{i,j}(x)=\left\langle r_{B_{i},j},\tilde{x}\right\rangle $
for $x\in\mathbb{R}^{2}$. From $p_{B_{1},B_{2}}=0$ it follows that
$q_{1,1}q_{2,2}=q_{1,2}q_{2,1}$. Since $B_{1}\in\mathbf{B}$, the
vectors $r_{B_{1},1},r_{B_{1},2}$ are linearly independent. From
this and $\deg(q_{1,j})\le1$ for $j=1,2$, it follows that at least
one of the polynomials $q_{1,1},q_{1,2}$ is an irreducible element
of $\mathrm{R}$. Without loss of generality assume that $q_{1,1}$
is irreducible. Since $\mathrm{R}$ is a unique factorization domain,
it follows that $q_{1,1}$ is a prime element of $\mathrm{R}$. Moreover,
$q_{1,1}$ does not divide $q_{1,2}$ since $r_{B_{1},1},r_{B_{1},2}$
are linearly independent. From these facts and $q_{1,1}q_{2,2}=q_{1,2}q_{2,1}$,
it follows that $q_{2,1}=cq_{1,1}$ for some $c\in\mathbb{R}$. Together
with $q_{1,1}q_{2,2}=q_{1,2}q_{2,1}$ this gives $q_{2,2}=cq_{1,2}$.
From $q_{2,1}=cq_{1,1}$ and $q_{2,2}=cq_{1,2}$ we obtain $B_{2}=cB_{1}$,
which contradicts $B_{2}\notin B_{1}\mathbb{R}$. Hence we must have
$p_{B_{1},B_{2}}\ne0$. In particular, $p_{B_{1},B_{2}}(y)\ne0$ for
some $(y_{1},y_{2})=y\in\mathbb{R}^{2}$.

Next, assume by contradiction that $p_{B_{1},B_{2}}(x)=0$ for $\mu$-a.e.
$x$. Given $u\in\I^{*}$ we have $\varphi_{A_{u}}\mu\ll\mu$,
and so $p_{B_{1},B_{2}}(\varphi_{A_{u}}(x))=0$ for $\mu$-a.e. $x$.
Thus, since $\I^{*}$ is countable and $\mathbf{S}=\{A_{u}\}_{u\in\I^{*}}$,
there exists $(z_{1},z_{2})=z\in\mathbb{R}_{>0}^{2}$ so that $p_{B_{1},B_{2}}(\varphi_{A}(z))=0$
for all $A\in\mathbf{S}$. Let $q:\mathrm{Mat}_{3,3}(\mathbb{R})\rightarrow\mathbb{R}$
be the polynomial map with
\[
q(A):=\det\left(\left\{ \left\langle r_{B_{i},j},\left(\left\langle r_{A,1},\tilde{z}\right\rangle ,\left\langle r_{A,2},\tilde{z}\right\rangle ,\left\langle r_{A,3},\tilde{z}\right\rangle \right)\right\rangle \right\} _{i,j=1,2}\right)\text{ for }A\in\mathrm{Mat}_{3,3}(\mathbb{R}).
\]
Direct computation shows that
\begin{equation}
q(A)=\left\langle r_{A,3},\tilde{z}\right\rangle ^{2}p_{B_{1},B_{2}}(\varphi_{A}(z))\text{ for }A\in\mathrm{SL}(3,\mathbb{R})\text{ with }\left\langle r_{A,3},\tilde{z}\right\rangle \ne0,\label{eq:q(A)=00003D for A with...}
\end{equation}
and so $q(A)=0$ for all $A\in\mathbf{S}$. Since $\mathbf{S}$ is
Zariski in $\mathrm{SL}(3,\mathbb{R})$, it follows that $q(A)=0$
for all $A\in\mathrm{SL}(3,\mathbb{R})$.

Set
\[
M:=\left(\begin{array}{ccc}
1 & 0 & y_{1}-z_{1}\\
0 & 1 & y_{2}-z_{2}\\
0 & 0 & 1
\end{array}\right).
\]
Since $M\in\mathrm{SL}(3,\mathbb{R})$ we have $q(M)=0$. On the other
hand, from (\ref{eq:q(A)=00003D for A with...}) and since
\[
\left\langle r_{M,3},\tilde{z}\right\rangle ^{2}=1\text{ and }p_{B_{1},B_{2}}(\varphi_{M}(z))=p_{B_{1},B_{2}}(y)\ne0,
\]
it follows that $q(M)\ne0$. This contradiction completes the proof
of the lemma.
\end{proof}
We are now ready to prove Theorem \ref{thm:ent inc under conv} by contradiction. We will assume that there is a sequence $\delta_k \to 0$ and $\theta_k$ satisfying the hypothesis of Theorem \ref{thm:ent inc under conv} for all $k \in \N$. Applying Proposition \ref{prop:key prop for ent inc} and Lemma \ref{lem:exist B_1,B_2} along this sequence will give a contradiction with Lemma \ref{lem:no poly exists}.
\begin{proof}[Proof of Theorem \ref{thm:ent inc under conv}]
Let $\epsilon':=\epsilon'(\epsilon,R)>0$ be as obtained in Proposition
\ref{prop:key prop for ent inc}. Additionally, for each $k\ge1$
let $\delta_{k}:=\delta(\epsilon,R,1/k)>0$ and $N_{k}:=N(\epsilon,R,1/k)\ge1$
be as obtained in Proposition \ref{prop:key prop for ent inc}. We
may clearly assume that $\epsilon'<1$.

Assume by contradiction that the theorem is false. Then for each $k\ge1$
there exist $n_{k}\ge N_{k}$ and $\theta_{k}\in\mathcal{M}(\mathbf{B}(R))$
so that $\frac{1}{n_{k}}H(\theta_{k},\mathcal{D}_{n_{k}})\ge\epsilon$
and$\frac{1}{n_{k}}H(\theta_{k}\ldotp\mu,\mathcal{D}_{n_{k}})\le\Delta+\delta_{k}$.
From this and by Proposition \ref{prop:key prop for ent inc}, it
follows that for each $k\ge1$ there exist $\xi_{k}\in\mathcal{M}(\mathbf{B}(R))$
and $m_{k}\ge0$ such that,
\begin{enumerate}
\item $\mathrm{diam}(\mathrm{supp}(\xi_{k}))=O_{R}(2^{-m_{k}})$ with respect
to $\Vert\cdot\Vert$;
\item $\xi_{k}$ is not $(m_{k},\epsilon')$-atomic with respect to $\Vert\cdot\Vert$;
\item $\mu\left\{ x\::\:\xi_{k}.x\text{ is }(m_{k},1/k)\text{-atomic}\right\} >1-1/k$.
\end{enumerate}
Let $C>1$ be large in a manner depending only on $R$. From Lemma
\ref{lem:exist B_1,B_2} it follows that for all $k\ge1$ large enough
there exist $B_{k,1}\in\mathrm{supp}(\xi_{k})\subset\mathbf{B}(R)$
and $B_{k,2}\in\mathrm{Mat}_{2,3}(\mathbb{R})$ so that,
\begin{enumerate}
\item $\Vert B_{k,2}\Vert\le C$ and $\Vert B_{k,2}-tB_{k,1}\Vert>C^{-1}\epsilon'$
for all $t\in\mathbb{R}$;
\item $\mu(E_{k})<C/k^{1/2}$, where $E_{k}:=\left\{ x\in\mathbb{R}^{2}\::\:\left|p_{B_{k,1},B_{k,2}}(x)\right|\ge C/k\right\} $.
\end{enumerate}
Recall that $\mathbf{B}(R)$ is compact with respect to $d_{\mathbf{B}}$,
which implies that it is compact also with respect to $\Vert\cdot\Vert$
(see Lemma \ref{lem:def =000026 prop of inv metric}). Hence, by moving
to a subsequence without changing notation, there exist $B_{1}\in\mathbf{B}(R)$
and $B_{2}\in\mathrm{Mat}_{2,3}(\mathbb{R})$ so that $\Vert B_{i}-B_{k,i}\Vert\overset{k}{\rightarrow}0$
for $i=1,2$. It is clear that $B_{2}\notin B_{1}\mathbb{R}$. By
moving to a subsequence without changing notation, we may also assume
that $\sum_{k\ge1}\mu(E_{k})<\infty$. Thus, by the Borel-Cantelli
lemma, we have $\mu(E)=0$ where $E:=\cap_{n\ge1}\cup_{k\ge n}E_{k}$.
For each $x\in\mathbb{R}^{2}\setminus E$ it clearly holds that $p_{B_{k,1},B_{k,2}}(x)\overset{k}{\rightarrow}0$.
Hence, since $p_{B_{k,1},B_{k,2}}\overset{k}{\rightarrow}p_{B_{1},B_{2}}$
pointwise, it follows that $p_{B_{1},B_{2}}(x)=0$ for $\mu$-a.e.
$x$. All of this contradicts Lemma \ref{lem:no poly exists}, which
completes the proof of the theorem.
\end{proof}

We will now show how Theorem \ref{thm:ent inc under conv2} follows from Theorem \ref{thm:ent inc under conv} by the invariance properties of the metric $d_{\mathbf{B}}$ defining our partition. 

\begin{proof}[Proof of Theorem \ref{thm:ent inc under conv2}] Fix $\epsilon, R>0$ and let $BA \in \mathrm{supp} \,\theta$ satisfy Theorem \ref{thm:ent inc under conv2}(a). Let $Y= \begin{pmatrix} \frac{1}{c_{A,B}} & -\frac{t_{A,B}}{c_{A,B}}\\0&1 \end{pmatrix}$. Then $Y(\theta)=\theta'$ where $\theta'$ is a measure supported on $\mathbf{B}(R)$ by Theorem \ref{thm:ent inc under conv2}(a) and Lemma \ref{lem:def =000026 prop of inv metric}(1). By Theorem \ref{thm:ent inc under conv2}(b) and \eqref{affinetingB}, $\frac{1}{n} H(\theta', \mathcal{D}_n) \geq \frac{\epsilon}{2}$ for sufficiently large $n$. Now, applying Theorem  \ref{thm:ent inc under conv} we deduce that $\frac{1}{n} H(\theta' . \mu, \mathcal{D}_n) \geq \Delta+\delta$ for $n \geq N(\epsilon, \delta, R)$. By \eqref{affineting}
\begin{align*}
H(\theta'.\mu, \mathcal{D}_n)=H(Y(\theta) . \mu, \mathcal{D}_n)&=H(\theta.\mu, \mathcal{D}_{n+\log c_{A,B}})+O(1).
\end{align*}
This proves the result.
\end{proof}

We also will require the following simpler lower bound.

\begin{prop} \label{nodrop} 
Let $R>0$, $B_0 \in \mathbf{B}_{\mathrm{o}}$ and suppose that for some $A_0 \in \mathbf{S}$,  $\mathrm{supp}\, \theta \subset \textup{B}(B_0A_0,R)$. Then
$$H(\theta.\mu, \D_n) \geq \inf_{V \in \mathbf{Y}} H(\varphi_{B_V}\mu, \D_{n-\log c_{A_0,B_0}})+O_R(1).$$
\end{prop}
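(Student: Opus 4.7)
The plan is to run the change-of-variables argument from the proof of Theorem \ref{thm:ent inc under conv2}, with the entropy-increase input replaced by the trivial lower bound coming from concavity of entropy.

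Set $Y := \begin{pmatrix} 1/c_{A_0,B_0} & -t_{A_0,B_0}/c_{A_0,B_0} \\ 0 & 1 \end{pmatrix} \in \mathrm{Aff}(\mathbb{R})$ and $\theta' := Y\theta$. By Lemma \ref{rescale}, $Y \cdot B_0 A_0 = M$ for some $M \in \mathbf{B}_{\mathrm{o}}$ with $V_M = A_0^* V_{B_0}$, so by the $\mathrm{Aff}(\mathbb{R})$-invariance of $d_{\mathbf{B}}$ (Lemma \ref{lem:def =000026 prop of inv metric}(1)) the support of $\theta'$ lies in $\textup{B}(M, R) \subset \mathbf{B}(R)$. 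A direct check gives $\theta'.\mu = \varphi_Y(\theta.\mu)$, so by \eqref{affineting},
\[
H(\theta.\mu, \mathcal{D}_n) = H(\theta'.\mu, \mathcal{D}_{n - \log c_{A_0,B_0}}) + O(1),
\]
in the same way as in the proof of Theorem \ref{thm:ent inc under conv2}.

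It remains to bound $H(\theta'.\mu, \mathcal{D}_m)$ from below by the minimum of the entropies of the projections $\varphi_{B_V}\mu$. Writing $\theta'.\mu = \int \varphi_B \mu \, d\theta'(B)$ and using concavity of entropy,
\[
H(\theta'.\mu, \mathcal{D}_m) \geq \int H(\varphi_B\mu, \mathcal{D}_m)\, d\theta'(B).
\]
For each $B \in \mathbf{B}(R)$ one may factor $B = X_B B_{V_B}$ with $X_B \in \mathrm{Aff}(\mathbb{R})$ exactly as at \eqref{Br} in the proof of Lemma \ref{cylindermeasures}; compactness of $\mathbf{B}(R)$ (noted after Lemma \ref{lem:def =000026 prop of inv metric}) then gives $\|X_B\|, \|X_B^{-1}\| = O_R(1)$, so another application of \eqref{affineting} yields
\[
H(\varphi_B\mu, \mathcal{D}_m) = H(\varphi_{B_{V_B}}\mu, \mathcal{D}_m) + O_R(1).
\]
Combining the last two displays gives $H(\theta'.\mu, \mathcal{D}_m) \geq \inf_{V \in \mathbf{Y}} H(\varphi_{B_V}\mu, \mathcal{D}_m) + O_R(1)$.

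Taking $m = n - \log c_{A_0, B_0}$ and substituting into the first display yields the stated inequality. There is no substantive obstacle: the argument is essentially the affine change-of-variables already used for Theorem \ref{thm:ent inc under conv2}, now coupled with concavity of entropy in place of the deep entropy increase Theorem \ref{thm:ent inc under conv}.
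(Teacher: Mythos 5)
Your proof is correct and follows essentially the same route as the paper's: the paper also first establishes the bound for $\theta$ supported in $\mathbf{B}(R)$ via concavity of entropy and the affine factorization $\varphi_B = \|v\|\varphi_{B'} + \langle r_{B,1},r_{B,2}\rangle$ with $B' \in \mathbf{B}_{\mathrm{o}}$, and then transports the general case into $\mathbf{B}(R)$ with the same element $Y \in \mathrm{Aff}(\mathbb{R})$ using Lemma \ref{rescale} and \eqref{affineting}. The only cosmetic difference is the order in which you present the two steps, and the fact that your factorization should produce some $B'\in\mathbf{B}_{\mathrm{o}}$ with $V_{B'}=V_B$ (after which \eqref{planeequiv} identifies its projected entropy with that of $\varphi_{B_{V_B}}\mu$ up to $O(1)$) rather than literally $B_{V_B}$, which does not affect the conclusion.
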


\begin{proof}
First fix $B=\begin{pmatrix}r_{B,1}\\ r_{B,2}\end{pmatrix} \in \mathbf{B}(R)$.
 Define $B'=\begin{pmatrix}\frac{v}{\norm{v}}\\  r_{B,2}\end{pmatrix}\in \mathbf{B}_{\mathrm{o}}$ where $v=r_{B,1}-\langle r_{B,1},r_{B,2}\rangle r_{B,2}$, noting that $\norm{v}=O_R(1)$ since $B \in \mathbf{B}(R)$. Similarly to the proof of Lemma \ref{rescale}, we see that $\varphi_B=\norm{v}\varphi_{B'}+\langle r_{B,1}, r_{B,2}\rangle$. By \eqref{affineting}
$$
H(\varphi_B\mu, \D_n)=H(\varphi_{B'}\mu, \D_n)+O(\norm{v}) \geq \inf_{V \in \mathbf{Y}} H(\varphi_{B_V}\mu,\D_n)+O_R(1).
$$
Hence if $\mathrm{supp}\,\theta \subset \mathbf{B}(R)$ then by concavity of entropy
\begin{equation}
H(\theta.\mu, \D_n)= H\left(\left(\int \delta_B d\theta(B)\right). \mu, \D_n\right) \geq\int H(\varphi_B \mu, \D_n) d\theta(B)\geq  \inf_{V \in \mathbf{Y}}H(\varphi_{B_V}\mu,\D_n)+O_R(1).\label{nodrop1} \end{equation}
Now suppose for some $A_0 \in \Se$ and $B_0 \in \mathbf{B}_{\mathrm{o}}$, $\mathrm{supp}\,\theta \subset \mathbf{B}(B_0A_0,R)$. Let $Y(\theta)=\theta'$ where  $Y= \begin{pmatrix} \frac{1}{c_{A_0,B_0}} & -\frac{t_{A_0,B_0}}{c_{A_0,B_0}}\\0&1 \end{pmatrix}$ where $c_{A_0,B_0}$ and $t_{A_0,B_0}$ is given by Lemma \ref{rescale}. By Lemma \ref{lem:def =000026 prop of inv metric}(1) and Lemma \ref{rescale}, $\theta'$ has support in $\mathbf{B}(R)$. Thus by \eqref{nodrop1},
$$H(\theta'.\mu, \D_n) \geq \inf_{V \in \mathbf{Y}} H(\varphi_{B_V}\mu, \D_n)+O_R(1).$$
Moreover
\begin{align*}
H(\theta'.\mu, \D_n)=H(Y( \theta).\mu, \D_n)&= H(\theta.\mu, \D_{n+\log c_{A_0,B_0}})+O(1).
\end{align*}
The last two displayed equations give the result.
\end{proof}

\section{Proof of Theorem \ref{thm:ESC --> Delta =00003D correct val} } \label{finalproof}

In this section we complete the proof of Theorem \ref{thm:ESC --> Delta =00003D correct val}, and thus Theorem \ref{msrthm}. The proof will be by contradiction. Recall the definition of $\Xi_n^B$ from \eqref{xi} and define $\theta_n^B= \sum_{\i \in \Xi_{n}^B} p_{\i} \delta_{BA_\i}$. We begin with two results which will be combined to show that if the Diophantine property holds and $\Delta< \frac{H(p)}{\chi_1(p)-\chi_2(p)}$ then a non-trivial proportion of the components of $\theta_n^B$ have non-negligible entropy. The entropy increase result Theorem \ref{thm:ent inc under conv2} will then be applied to these components to derive the desired contradiction.

\begin{prop}\label{dimapprox}
For $\nu$ almost every $V \in \bf{Y}$, 
$$\frac{1}{n} H(\theta_n^{B_V}, \mathcal{D}_0^{\mathbf{B}}) \to \Delta.$$
\end{prop}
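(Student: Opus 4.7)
The strategy is to compare $H(\theta_n^{B_V},\mathcal{D}_0^{\mathbf{B}})$ with the dyadic entropy $H(\varphi_{B_V}\mu,\mathcal{D}_n)$ of the projected measure at scale $2^{-n}$, and then invoke the exact-dimensionality of $\varphi_{B_V}\mu$ provided by Theorem \ref{thm:follows from LY formula}. For $\nu$-a.e.\ $V\in\mathbf{Y}$ that theorem gives that $\varphi_{B_V}\mu$ is exact-dimensional with dimension $\Delta$, and since it is compactly supported the standard entropy--dimension identity for exact-dimensional measures in Euclidean space gives
$$\frac{1}{n}H(\varphi_{B_V}\mu,\mathcal{D}_n)\longrightarrow\Delta\quad\text{as }n\to\infty.$$
The goal is therefore to prove the $V$-dependent, $n$-uniform comparison
$$\bigl|H(\theta_n^{B_V},\mathcal{D}_0^{\mathbf{B}})-H(\varphi_{B_V}\mu,\mathcal{D}_n)\bigr|=O_V(1)\qquad(\ast)$$
for $\nu$-a.e.\ $V$.

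Fix $x_0\in K_{\mathbf{A}}$ and introduce the auxiliary map $\Phi_{x_0}:\mathbf{B}\to\mathbb{R}$, $\Phi_{x_0}(B'):=\varphi_{B'}(x_0)$, so that $\Phi_{x_0}(\theta_n^{B_V})=\theta_n^{B_V}.x_0=\sum_{\i\in\Xi_n^{B_V}}p_\i\delta_{\varphi_{B_VA_\i}(x_0)}$. I factor $(\ast)$ into the two bounds
$$|H(\varphi_{B_V}\mu,\mathcal{D}_n)-H(\theta_n^{B_V}.x_0,\mathcal{D}_n)|=O(1) \qquad(\star)$$
and
$$|H(\theta_n^{B_V},\mathcal{D}_0^{\mathbf{B}})-H(\theta_n^{B_V}.x_0,\mathcal{D}_n)|=O(1).\qquad(\star\star)$$
For $(\star)$, Lemma \ref{rescale} together with \eqref{xilem2} gives $c_{A_\i,B_V}\asymp 2^{-n}$ uniformly for all $\i\in\Xi_n^{B_V}$, so each cluster $\varphi_{B_VA_\i}\mu$ is supported in an interval of diameter $\Theta(2^{-n})$ about $\varphi_{B_VA_\i}(x_0)$. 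Realising both $\varphi_{B_V}\mu$ and $\theta_n^{B_V}.x_0$ as pushforwards of a common joint measure on $\Xi_n^{B_V}\times\mathbb{R}$ under maps that differ uniformly by $O(2^{-n})$, and applying \eqref{almostcts} after at most $O(1)$ levels of coarsening (\eqref{commens}), delivers $(\star)$.

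For $(\star\star)$ I will exploit the algebraic structure of $\mathrm{supp}(\theta_n^{B_V})$ inside $\mathbf{B}$. By Lemma \ref{rescale} write $B_VA_\i=Y_\i M_\i$ with $Y_\i=\bigl(\begin{smallmatrix} c_\i & t_\i\\ 0 & 1\end{smallmatrix}\bigr)\in\mathrm{Aff}(\mathbb{R})$ satisfying $c_\i\asymp 2^{-n}$, with $|t_\i|$ uniformly bounded by \eqref{uniform}, and with $M_\i\in\mathbf{B}_{\mathrm{o}}$; so $\Phi_{x_0}(B_VA_\i)=c_\i\varphi_{M_\i}(x_0)+t_\i=t_\i+O(2^{-n})$. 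The $\mathrm{Aff}(\mathbb{R})$-invariance of $d_{\mathbf{B}}$ (Lemma \ref{lem:def =000026 prop of inv metric}(1)) gives
$$d_{\mathbf{B}}(Y_\i M_\i,Y_{\i'}M_{\i'})=d_{\mathbf{B}}\bigl(M_\i,Y_\i^{-1}Y_{\i'}M_{\i'}\bigr),\qquad Y_\i^{-1}Y_{\i'}=\begin{pmatrix}c_{\i'}/c_\i & (t_{\i'}-t_\i)/c_\i\\ 0 & 1\end{pmatrix},$$
and combining this with part (3) of the same lemma on the compact set $\mathbf{B}_{\mathrm{o}}$, a short matrix computation shows that $B_VA_\i,B_VA_{\i'}$ lie in the same cell of $\mathcal{D}_0^{\mathbf{B}}$ iff $|t_\i-t_{\i'}|=O(2^{-n})$, up to the bounded multiplicity introduced by the free choice of $M_\i\in\mathbf{B}_{\mathrm{o}}$. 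Since this is also precisely the condition for $\Phi_{x_0}(B_VA_\i),\Phi_{x_0}(B_VA_{\i'})$ to lie in the same cell of $\mathcal{D}_n$, the partitions $\mathcal{D}_0^{\mathbf{B}}|_{\mathrm{supp}(\theta_n^{B_V})}$ and $\Phi_{x_0}^{-1}(\mathcal{D}_n)|_{\mathrm{supp}(\theta_n^{B_V})}$ are commensurate, and \eqref{commens} yields $(\star\star)$.

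The main obstacle is $(\star\star)$: verifying that, of the two coordinates $(t_\i,M_\i)$ parametrising the support of $\theta_n^{B_V}$ in $\mathbf{B}$, only $t_\i$ is amplified by the factor $c_\i^{-1}\asymp 2^n$ upon embedding into $\mathbf{B}$, whereas the orientation $M_\i\in\mathbf{B}_{\mathrm{o}}$ and the ratio $c_{\i'}/c_\i$ contribute only $O(1)$ distortion. This amplification is exactly what matches the scale-$2^{-n}$ partition $\mathcal{D}_n$ of $\mathbb{R}$ with the scale-$1$ partition $\mathcal{D}_0^{\mathbf{B}}$ of $\mathbf{B}$, and reduces Proposition \ref{dimapprox} to the entropy--dimension formula for the exact-dimensional projection $\varphi_{B_V}\mu$.
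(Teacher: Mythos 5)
Your proposal follows essentially the same route as the paper's proof: both pass from $\frac{1}{n}H(\varphi_{B_V}\mu,\mathcal{D}_n)\to\Delta$ (exact dimensionality) to the evaluated measure $\theta_n^{B_V}.x$ via the $O(2^{-n})$ diameter bound on the cylinders, and then show commensurability of the partition $\rho^{-1}_{x}(\mathcal{D}_n)$ restricted to $\mathrm{supp}\,\theta_n^{B_V}$ with $\mathcal{D}_0^{\mathbf{B}}$ using the affine factorisation $B_VA_\i=Y_\i M_\i$ and the $\mathrm{Aff}(\mathbb{R})$-invariance and local bi-Lipschitz properties of $d_{\mathbf{B}}$. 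The only cosmetic difference is that you evaluate at a point $x_0\in K_{\mathbf{A}}$ while the paper uses $w=(0,0)$, which changes nothing in the argument.
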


\begin{proof}
By definition of $\Delta$, $\frac{1}{n} H(\varphi_{B_V} \mu, \mathcal{D}_n) \to \Delta$ for $\nu$ almost every $V \in \bf{Y}$. We now fix such a typical $V$. Let $w=(0,0)$. Given any $\i \in\theta_n^{B_V}$, $\varphi_{BA_\i} \mu$ is contained in a set of diameter $|K_\A|2^{-n}$ therefore 
$$\frac{1}{n} H(\theta_n^{B_V} . w, \mathcal{D}_{n}) \to \Delta.$$
This directly implies that
\begin{equation}\frac{1}{n} H(\theta_n^{B_V} , \rho_w^{-1}(\mathcal{D}_{n})) \to \Delta \label{entropyconv} \end{equation}
where $\rho_w: \mathbf{B} \to \R^2$ is defined as $\rho_w(B)=\varphi_B(w)$. We think of $\rho_w^{-1}(\D_n)$ as a partition and refer to any element $\rho_w^{-1}(D)$ ($D \in \D_n$) as an atom. To prove the proposition it suffices to show that \eqref{entropyconv} implies that
\begin{equation} \label{entropyconv2}
\frac{1}{n} H(\theta_n^{B_V} , \mathcal{D}_{0}^{\mathbf{B}}) \to \Delta .
\end{equation}
By \eqref{commens}, in order to prove \eqref{entropyconv2} it is sufficient to show that
\begin{enumerate}
\item[(a)] every atom of $\D_0^{\mathbf{B}}$ that intersects $\mathrm{supp}\, \theta_n^{B_V}$ can be covered by $O(1)$ atoms of $\rho_w^{-1}(\D_n)$,
\item[(b)] every atom of $\rho_w^{-1}(\mathcal{D}_{n}) $ that intersects $\mathrm{supp}\, \theta_n^{B_V}$ can be covered by $O(1)$ atoms of $\D_0^{\mathbf{B}}$.
\end{enumerate}
To prove (a), notice that if $E \in \mathcal{D}_0^{\mathbf{B}}$ is an atom and $B_VA, B_VA' \in E$ then $d_{\mathbf{B}}(M, D^{-1}B_VA') =O(1)$ where $D=\begin{pmatrix} c_{A,B_V}&t_{A,B_V} \\0&1 \end{pmatrix}$ and $M=M_{A,B_V} \in \mathbf{B}_{\mathrm{o}}$ is given by applying Lemma \ref{rescale} to $\varphi_{B_VA}$. Hence $D^{-1}B_VA' \in \mathbf{B}(R)$ for some $R=O(1)$. In particular since $\mathbf{B}(R)$ is compact, $\norm{M- D^{-1}B_VA'}=O(1)$  by Lemma \ref{lem:def =000026 prop of inv metric}(3). This implies $|\varphi_M(w)-\varphi_{D^{-1}B_VA'}(w)| =O(1)$ which in turn implies $|\varphi_{B_VA}(w)-\varphi_{B_VA'}(w)|=O(c_{A,B_V})=O(2^{-n})$, by definition of $\Xi_n^{B_V}$. This proves (a).

Next we prove (b). Let $E \in \rho_w^{-1}(\mathcal{D}_{n})$ be an atom for which $\mathrm{supp}\, \theta_n^{B_V} \cap E \neq \emptyset$. Write $X_n=\begin{pmatrix}2^n&0\\0&1\end{pmatrix}$. Note that there exists some atom $[a,a+1]$ of $\mathcal{D}_0$, depending only on $E$, such that for any $B_VA \in\mathrm{supp}\, \theta_n^{B_V} \cap E$, 
\begin{equation}
\varphi_{X_nB_VA}(w) \in [a,a+1].
\label{a}
\end{equation}  
By applying Lemma \ref{rescale} to $\varphi_{B_VA}$ we can express $B_VA=\begin{pmatrix}c_{A,B_V} &t_{A,B_V} \\0&1 \end{pmatrix} M_{A,B_V}$ for $M_{A,B_V} \in \mathbf{B}_{\mathrm{o}}$. There exists a compact (with respect to $d_{\mathbf{B}}$) set $K$, which is independent of $n$, such that for any $B_VA \in \mathrm{supp}\, \theta_n^{B_V} $,  $Y=\begin{pmatrix}2^nc_{A,B_V}&0\\0&1\end{pmatrix} M_{A,B_V} \in K$. Moreover by Lemma \ref{lem:def =000026 prop of inv metric}(3), there exists some compact subset of $\R$, say $[-b,b]$ such that for any $Y \in K$, 
\begin{equation}
\varphi_Y(w) \in [-b,b].
\label{b}
\end{equation}
By \eqref{a} and \eqref{b} and since $X_nB_VA=\begin{pmatrix}1&2^nt_{A,B_V} \\0&1 \end{pmatrix}\begin{pmatrix}2^nc_{A,B_V}&0\\0&1\end{pmatrix} M_{A,B_V}$, there exists a compact interval $S \subset \R$, whose diameter can be bounded above in terms of $b$, such that 
$$\mathrm{supp} \,\theta_n^{B_V} \cap E \subset \bigcup_{s \in S} \bigcup_{Y \in K}\begin{pmatrix} 1&s\\0&1 \end{pmatrix} Y.$$
 Notice that by the triangle inequality the $d_{\mathbf{B}}$-diameter of the right hand side is uniformly bounded above: if $s,s' \in S$ and $Y, Y' \in K$ then 
\begin{align*}
d_{\mathbf{B}}\left(\begin{pmatrix} 1&s\\0&1 \end{pmatrix}Y, \begin{pmatrix} 1&s'\\0&1 \end{pmatrix}Y'\right) &\leq d_{\mathbf{B}}\left(\begin{pmatrix} 1&s\\0&1 \end{pmatrix}Y, \begin{pmatrix} 1&s'\\0&1 \end{pmatrix}Y\right)+ d_{\mathbf{B}}\left(\begin{pmatrix} 1&s'\\0&1 \end{pmatrix}Y, \begin{pmatrix} 1&s'\\0&1 \end{pmatrix}Y'\right).
\end{align*}
By Lemma \ref{lem:def =000026 prop of inv metric}(3) the first term is bounded above by $O(\mathrm{diam}(S))$ and by \ref{lem:def =000026 prop of inv metric}(1) the second term is uniformly bounded above by the $d_{\mathbf{B}}$- diameter of $K$. In particular, the $d_{\mathbf{B}}$-diameter of $E$ is uniformly bounded above, proving (b).

\end{proof}

Next we show that the Diophantine property (Definition \ref{ESC}) implies that almost every projected system is ``exponentially separated''.

\begin{lem} \label{projectedESC}
Suppose $\mathbf{A}$ satisfies the Diophantine property. Then there exists $C_0>1$, $\mathbf{Y}' \subset \mathbf{Y}$ with $\nu(\mathbf{Y}')=1$ such that the following holds. For all $V \in \mathbf{Y}'$ there exists $N=N(V)$ such that for all $\i,\j \in \Xi_n^{B_V}$ with $\i \neq \j$,
\begin{equation}\D_{C'n}(B_VA_\i) \neq \D_{C'n}(B_VA_\j) \label{diffcubes} \end{equation}
for all $ n \geq N$ and $C \geq C_0$. 
\end{lem}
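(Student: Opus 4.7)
The plan is to combine the Diophantine property of $\A$ with the Hölder regularity of the Furstenberg measure $\nu$ (a classical consequence of Zariski density of $\Se$, due to Guivarc'h) to establish a \emph{projected Diophantine} estimate for $\nu$-a.e.~$V$, from which the lemma will follow by invariance of $d_{\mathbf{B}}$ and the bounded-diameter property of the partition $\mathcal{D}_{C'n}^{\mathbf{B}}$.

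First I would control the lengths of words in $\Xi_n^{B_V}$: by Proposition \ref{submult}(1) together with \eqref{xilem2}, every $\i \in \Xi_n^{B_V}$ satisfies $cn \leq |\i| \leq Cn$ with constants $c, C > 0$ depending only on $\A$ (uniformly in $V$). The upper bound follows from the uniform exponential contraction of $\alpha_2/\alpha_1$; the lower bound follows since the ratio $\Vert A_\i^* u_{A_\i, B_V}\Vert/\Vert A_\i^* r_{B_V,2}\Vert$ cannot drop by more than a uniform factor per letter, so at least $cn$ letters are required to reach $2^{-n}$.

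The heart of the argument is to exhibit a uniform $\rho \in (0,1)$ such that, for $\nu$-a.e.~$V$, there exists $m_V$ with $\Vert B_V(A_{u_1}-A_{u_2})\Vert \geq \rho^m$ for all $m \geq m_V$ and all distinct $u_1, u_2 \in \I^m$. Fix such a pair and set $M := A_{u_1} - A_{u_2}$; the Diophantine property gives $\Vert M\Vert \geq \epsilon^m$. A singular value decomposition of $M$ yields the lower bound $\Vert B_V M\Vert \gtrsim \Vert M\Vert \cdot \Vert\pi_V u_M\Vert$, where $u_M$ is a top left singular direction of $M$; so $\Vert B_V M\Vert < \rho^m$ forces $V^\perp$ to lie within angle $\lesssim (\rho/\epsilon)^m$ of the line $\R u_M$. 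Hölder regularity of $\nu$ on $\mathrm{Gr}_2(3)$ gives $\nu$-measure at most $C_1 r^\alpha$ to any angular $r$-neighbourhood of a line, yielding
\[
\nu\{V : \Vert B_V(A_{u_1}-A_{u_2})\Vert<\rho^m\} \leq C_2(\rho/\epsilon)^{m\alpha}.
\]
A union bound over the $|\I|^{2m}$ pairs, combined with Borel--Cantelli after choosing $\rho$ small enough that $|\I|^2(\rho/\epsilon)^\alpha<1/2$, then delivers the projected Diophantine property. I expect this step to be the main obstacle, since it requires importing Hölder regularity of $\nu$ from the theory of random matrix products and carefully verifying the SVD-based lower bound.

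To conclude, fix $V$ in this full-measure set and take distinct $\i, \j \in \Xi_n^{B_V}$ with $|\i|\leq|\j|$ WLOG. Extend $\i$ to $\tilde{\i} := \i\,1^{|\j|-|\i|} \in \I^{|\j|}$; incomparability of $\i, \j$ forces $\tilde{\i}\ne\j$, so projected Diophantine at length $|\j|\leq Cn$ gives $\Vert B_V(A_{\tilde{\i}} - A_\j)\Vert \geq \rho^{Cn}$. Writing $B_V A_\i = D_\i M_\i$ and $B_V A_\j = D_\j M_\j$ via Lemma \ref{rescale} with $M_\i, M_\j \in \mathbf{B}_{\mathrm{o}}$, the invariance of $d_{\mathbf{B}}$ under $\mathrm{Aff}(\R)$ (Lemma \ref{lem:def =000026 prop of inv metric}(1)) and the local comparability $d_{\mathbf{B}} \asymp \Vert\cdot\Vert$ on compacta (Lemma \ref{lem:def =000026 prop of inv metric}(3)) convert this norm bound, up to a correction of the form $B_V A_\i(A_{1^{|\j|-|\i|}}-I)$ whose norm grows at most exponentially in $n$ and is absorbable into the final constant, into $d_{\mathbf{B}}(B_V A_\i, B_V A_\j) \geq 2^{-C_0 n}$ for a uniform $C_0 > 1$. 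By the bounded-diameter property of the cells of $\mathcal{D}_{C'n}^{\mathbf{B}}$, this places $B_V A_\i$ and $B_V A_\j$ in distinct cells for all $C'\geq C_0$, as required.
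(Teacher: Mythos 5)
The route you take is genuinely different from the paper's in the middle part but has a gap in the final step.

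\textbf{The different route.} The paper does not invoke H\"older regularity of $\nu$; it uses the weaker statement $\hd\nu>0$ (citing [BL, Chapter V1.4]), and instead of a Borel--Cantelli argument on $\nu$-measures it runs a Hausdorff-measure covering argument: the exceptional set $E=\bigcap_N\bigcup_{n\ge N}\bigcup_{(\i,\j)}E^n_{\i,\j}$ is covered by sets of diameter $\lesssim(c/b)^n$, so $\mathcal{H}^s(E)=0$ for $s=\tfrac{1}{2}\hd\nu$, hence $\nu(E)=0$. Your variant replaces $\hd\nu>0$ by the stronger Frostman regularity $\nu(\textup{B}(x,r))\le Cr^\alpha$ (Guivarc'h) and sums the resulting $\nu$-measure tails directly; both yield a full-measure set of "projected-Diophantine" planes, and both use essentially the same geometric estimate (your SVD lower bound $\|B_VM\|\ge\|M\|\,\|\pi_Vu_M\|$ plays exactly the role of the paper's sine-rule diameter estimate for $\{V:\|B_Vx-B_Vy\|<\delta\}$). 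So up to that point the proposal is a valid, slightly heavier-handed alternative.

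\textbf{The gap.} Distinct $\i,\j\in\Xi_n^{B_V}$ are incomparable but have different lengths in general, and your fix -- padding $\i$ to $\tilde\i=\i\,1^{|\j|-|\i|}$ and "absorbing the correction $B_VA_\i(A_{1^{|\j|-|\i|}}-I)$ into the constant" -- does not work. The triangle inequality gives
\[
\|B_VA_\i-B_VA_\j\|\;\ge\;\|B_V(A_{\tilde\i}-A_\j)\|-\|B_VA_\i(A_{1^{|\j|-|\i|}}-I)\|\;\ge\;\rho^{Cn}-\|B_VA_\i(A_{1^{|\j|-|\i|}}-I)\|,
\]
and the subtracted term is not small: $\|B_VA_\i\|$ can be of order $\alpha_1(A_\i)$, which is exponentially large in $n$ (the condition $\i\in\Xi_n^{B_V}$ controls the ratio $\alpha_2/\alpha_1$, not the absolute scale), while $\|A_1^k-I\|$ also grows with $k\le(C-c)n$. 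So the right-hand side is typically negative and gives no information, and there is no way to recover a lower bound on $d_{\mathbf B}(B_VA_\i,B_VA_\j)$ from a lower bound on $\|B_V(A_{\tilde\i}-A_\j)\|$.

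The paper avoids this by first upgrading the equal-length Diophantine property to a \emph{prefix-free} version valid for words of unequal length: there is $b>0$ with $\|A_\i-A_\j\|\ge b^{\max(|\i|,|\j|)}$ whenever neither of $\i,\j$ is a prefix of the other. This is proved by the squaring trick
\[
\epsilon^{2n}\le\epsilon^{n+m}\le\|A_{\i\j}-A_{\j\i}\|\le\|A_{\i\j}-A_{\j\j}\|+\|A_{\j\j}-A_{\j\i}\|\le2\kappa^nb^n,
\]
which forces a contradiction if $b$ is chosen small enough. With this in hand the projection argument is run directly on the incomparable pairs in $\Xi_n^{B_V}$, no padding required. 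If you replace your padding step by this prefix-free Diophantine property (and then apply your projected-Diophantine estimate to incomparable pairs of possibly different lengths), your proof closes up. The final conversion from a $\|\cdot\|$-separation of $\{B_VA_\i\}$ to distinct cells of $\mathcal{D}_{C'n}^{\mathbf B}$ then follows as you indicate, using Lemma \ref{lem:def =000026 prop of inv metric} and the fact that the affine factors $D_\i$ from Lemma \ref{rescale} have $\|D_\i\|=O(1)$ (because $c_{A_\i,B_V}\le1$ and $t_{A_\i,B_V}=O(1)$).
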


\begin{proof}
We begin by showing that if $\mathbf{A}$ satisfies the Diophantine property then there exists $b>0$ such that for all $\i,\j \in \I^*$ with $|\i| \geq |\j|$ where $\j$ is not a prefix of $\i$, 
\begin{equation}
\norm{A_\i-A_\j} \geq b^{|\i|}. \label{prefix}
\end{equation}
Suppose this is false. Let $\kappa$ be sufficiently large that for all $\i \in \I^m$, $\norm{A_\j} \leq \kappa^m$. Put $b=\frac{\epsilon^2}{2\kappa}$, where $\epsilon$ is given by Definition \ref{ESC}. Then there exists $\i, \j \in \I^*$ with $n=|\i|\geq |\j|=m$ such that $\j$ is not a prefix of $\i$ with $\norm{A_\i-A_\j}<b^n$. Note that necessarily $\i\j \neq \j\i$. Now,
$$\epsilon^{2n}\leq \epsilon^{n+m} \leq \norm{A_{\i\j}-A_{\j\i}} \leq \norm{A_{\i\j}-A_{\j\j}}  +\norm{A_{\j\j}-A_{\j\i}} \leq 2\kappa^nb^n<\epsilon^{2n}$$
yielding a contradiction and proving \eqref{prefix}.

Recall that we have equipped $\mathrm{Gr}_2(3)$ with the metric induced from $d_{\mathbb{P}}$, i.e. the distance between two planes is given by the angle between their normal vectors. By \cite[Chapter V1.4]{BL}, $\hd \nu >0.$ We will construct a set $E\subset \bf{Y}$ of dimension $\frac{\hd\nu}{2}$ such that for all $V \in \mathbf{Y}\setminus E$, \eqref{diffcubes} holds for the collection $\{B_VA_\i\}_{\i \in \Xi_n^{B_V}}$. This will be sufficient to prove the lemma.

For each $n \geq1 $ define
$$U_n=\{(\i,\j) \in \I^*\times \I^*: \textnormal{$|\i|,|\j| \leq n$, $\i$ is not a prefix of $\j$ and $\j$ is not a prefix of $\i$}\},$$
noting that $\# U_n \leq  (|\I|)^{2(n+1)}$. From Lemma \ref{scaling} and positivity of $\A$ it follows that there exists $\ell \in \mathbb{N}$ such that for all $\i\in \Xi_n^{B_V}$, $|\i| \leq n\ell$ hence for all $\i,\j \in\Xi_n^{B_V}$  with $\i \neq \j$, $(\i,\j) \in U_{n\ell}$. Let $b$ be given by \eqref{prefix} so that $\norm{A_\i-A_\j} \geq b^n$ for all $(\i,\j) \in U_n$. For each $(\i,\j) \in U_n$ we can choose $w_{\i,\j}^n \in \R^3$ with $\norm{w_{\i,\j}^n}=1$ such that
$$\norm{A_\i w_{\i,\j}^n-A_\j w_{\i,\j}^n} \geq b^n.$$
 Let $s=\frac{\hd \nu}{2}$ and $0<c<\frac{b}{|\I|^{2/s}}$ and set
$$E_{\i,\j}^n=\{V \in \mathbf{Y}:\norm{B_VA_\i w_{\i,\j}^n-B_VA_\j w_{\i,\j}^n}<c^n\}.$$
It follows from the sine rule that there exists $\lambda>0$ such that for all $x,y \in \R^3$ and $\delta>0$, the set of $V \in \bf{Y}$ satisfying $\norm{B_Vx-B_Vy}<\delta$ has diameter at most $\frac{\lambda\delta}{\norm{x-y}}$.

Applying this estimate on the diameter with $\delta=c^n$, $x=B_VA_\i w_{\i,\j}^n$ and $y=B_VA_\j w_{\i,\j}^n$ yields that $\mathrm{diam} E_{\i,\j}^n \leq \lambda(c/b)^n$. Now, define
$$E=\bigcap_{N=1}^\infty \bigcup_{n=N}^\infty \bigcup_{(\i,\j) \in U_n} E_{\i,\j}^n.$$
Note that for all $V \in \mathbf{Y} \setminus E$, there exists $N(V)$ such that for $n \geq N(V)$, $\norm{B_VA_\i-B_VA_\j} \geq c^n$ for all $(\i,\j) \in U_n$. This implies that $\{B_VA_\i\}_{\i \in \Xi_n^{B_V}}$ are $c^{n\ell}$ separated in $\mathbf{B}$ with respect to $\norm{\cdot}$. Now \eqref{diffcubes} follows from Lemma \ref{lem:def =000026 prop of inv metric} and \S \ref{dyadiclike}(2).

It remains to show that $\nu(\mathbf{Y} \setminus E)=1$. Indeed, letting $\mathcal{H}^s$ denote the $s$-dimensional Hausdorff measure on $\mathrm{Gr}_2(3)$,
$$\mathcal{H}^s(E) \leq \lim_{N \to \infty} \sum_{n=N}^\infty \sum_{(\i,\j) \in U_n} (\mathrm{diam} E_{\i,\j}^n)^s \leq \lim_{N \to \infty}(|\I|)^{2(n+1)} \frac{c^{sn}}{b^{sn}}=0$$
which proves that $\hd E \leq s$. In particular $\nu(\mathbf{Y} \setminus E)=1$.
\end{proof}

\begin{proof}[Proof of Theorem \ref{thm:ESC --> Delta =00003D correct val}] Assume for a contradiction that $\Delta< \frac{H(p)}{\chi_1(p)-\chi_2(p)}$. Let $V \in \mathbf{Y}'$ belong to the set of full measure for which Theorem \ref{thm:follows from LY formula} holds, where $\mathbf{Y}'$ is given by Lemma \ref{projectedESC}.  Let $C \geq C_0$, where $C_0$ is also given by Lemma \ref{projectedESC}. Then by \eqref{compentropy}
\begin{align*}
\mathbb{E}_{i=0} \left(\frac{1}{n} H((\theta_n^{B_V})_{B,i},\D_{Cn})\right)&=\frac{1}{n} H(\theta_n^{B_V},\D_{Cn}|\D_0) \\
&=\frac{1}{n} H(\theta_n^{B_V},\D_{Cn})-\frac{1}{n} H(\theta_n^{B_V},\D_0).
\end{align*}
We claim that 
\begin{equation}\label{smb}
\frac{1}{n} H(\theta_n^{B_V}, \D_{Cn}) \to \frac{H(p)}{\chi_1(p)-\chi_2(p)}.
\end{equation}
To see this, recall the definition of $J(n,B_V)$ from \S \ref{randomcyl} and note that by Lemma \ref{projectedESC},
\begin{align*}
\frac{1}{n} H(\theta_n^{B_V}, \D_{Cn})&= -\mathbb{E}\left(\frac{1}{n} \log p_{J(n,B_V)}\right) \\
&= -\mathbb{E}\left(\frac{|J(n,B_V)|}{n}\frac{1}{|J(n,B_V)|} \log p_{J(n,B_V)}\right).
\end{align*}
By the Shannon-McMillan-Breiman theorem $\frac{1}{|J(n,B_V)|} \log p_{J(n,B_V)} \to H(p)$. Moreover by Proposition \ref{lyapdiff}, $\frac{|J(n,B_V)|}{n} \to \frac{1}{\chi_1(p)-\chi_2(p)}$. \eqref{smb} follows by the dominated convergence theorem.

Now, assume for a contradiction that $H(p)>(\chi_1(p)-\chi_2(p))\Delta$. By \eqref{smb}, Proposition \ref{dimapprox} and Lemma \ref{projectedESC} there exists $\epsilon'>0$ such that
$$\mathbb{E}_{i=0} \left(\frac{1}{n} H((\theta_n^{B_V})_{B,i},\D_{Cn})\right) \geq \epsilon'$$
for all $n$ sufficiently large, in particular for some $\epsilon>0$,
\begin{align}
\mathbb{P}_{i=0} \left(\frac{1}{Cn} H((\theta_n^{B_V})_{B,i},\D_{Cn})>\epsilon\right)>\epsilon \label{finalproofeq}
\end{align}
for all $n$ sufficiently large. 

By Lemmas \ref{rescale} and \ref{scaling} and definition of $J(n,B_V)$, there exists a constant $C'$ such that
\begin{equation}-C' \leq n + \log c_{A_{J(n,B_V)},B_V} \leq C' \label{kappa} \end{equation}
for all $n \in \N$.


Let $R$ be given by \S \ref{dyadiclike}(2) and let $\delta=\delta(\epsilon, R)$ be given by Theorem \ref{thm:ent inc under conv2}. Note that for any non-trivial level 0 component $\theta$ of $\theta_n^{B_V}$ there exists $\i \in \Xi_n^{B_V}$ such that $B_VA_\i \in \mathrm{supp}\, \theta$ and $\mathrm{supp} \,\theta \subset \textup{B}(B_VA_\i,2R)$.  For $n \geq N(\epsilon,R,\delta,\kappa)$ 
\begin{align*}
\Delta+o(1)= \frac{1}{Cn} H(\varphi_{B_V}\mu,\D_{Cn}) &\geq \mathbb{E}_{i=0}\left(\frac{1}{Cn} H((\theta_n^{B_V})_{B,i} . \mu, \D_{Cn})\right) \\
&\geq \frac{C+1}{C} \mathbb{E}_{i=0}\left(\frac{1}{(C+1)n} H((\theta_n^{B_V})_{B,i} . \mu, \D_{(C+1)n-n})\right) \\
&\geq \frac{C+1}{C} \epsilon(\Delta+\delta) +(1-\epsilon)\Delta+o(1) \\
&=\frac{C+1}{C} \Delta+\epsilon\delta+o(1)
\end{align*}
where the first line follows by the definition of dimension and concavity of entropy and the third line by Theorem \ref{thm:ent inc under conv2}, the definition of $\Xi_n^{B_V}$, Proposition \ref{nodrop}, \eqref{finalproofeq} and \eqref{kappa}. By choosing $C$ sufficiently large and $\kappa$ sufficiently small it follows that there exists $\epsilon''>0$ such that for all sufficiently large $n$, $\frac{1}{Cn} H(\varphi_{B_V}\mu,D_{Cn}) >\Delta+\epsilon''$ which is a contradiction. \end{proof}

\section{Proofs of Theorems \ref{rauzythm} and \ref{setthm}} \label{finalsection}

In this section we use Theorem \ref{thm:follows from LY formula} to prove the dimension results for sets, Theorems \ref{setthm} and \ref{rauzythm}.

\subsection{Proof of Theorem \ref{setthm}}

We begin by considering positive systems $\A \subset \SL_{>0}$. To obtain a lower bound we will apply the recent work of Morris and Sert \cite{morris2023variational} to find stationary measures whose Hausdorff dimension approximates the affinity dimension $s_\A$. The upper bound  in  Theorem \ref{setthm} will be a natural covering argument.
\subsubsection{Proof of lower bound}

Throughout this section we fix a finite set $\A=\{A_i\}_{i \in \I} \subset \SL_{>0}$ which generates a semigroup $\Se$ which is Zariski dense in $\SL$ and satisfies the SOSC. The lower bound will combine the variational principle of Cao, Feng and Huang \cite{cao2008thermodynamic} with recent results of Morris and Sert \cite{morris2023variational} to construct measures whose Hausdorff dimension approximate the affinity dimension. 

Denote $\Sigma=\I^\N$. Let $\mathcal{M}_\sigma(\Sigma)$ denote the $\sigma$- invariant measures. For $m\in \mathcal{M}_\sigma(\Sigma)$ define 
\begin{align*}
\Lambda_s(m)= \lim_{n \to \infty} \frac{1}{n} \int \log \phi^s(A_{\i|n}) dm(\i) =
\begin{cases}
-s(\chi_1(m)-\chi_2(m)) &\mbox{if } s \in [0,1]\\
 -(\chi_1(m)-\chi_2(m))-(s-1)(\chi_1(m)-\chi_3(m)) &\mbox{if } s \in [1,2]\\
-s(2\chi_1(m)-\chi_2(m)-\chi_3(m)) &\mbox{if } s \geq 2
\end{cases}
\end{align*}

Let $P_\A(s)$ denote the pressure \eqref{pressure}. Since $\A \subset \SL_{>0}$ then  $\phi^s$ is almost-submultiplicative (Proposition \ref{submult}) and thus we obtain the following special case of \cite[Theorem 1.1]{cao2008thermodynamic}. 

\begin{prop}\label{vp}For each $s \geq 0$,
$$P_\A(s)= \sup \{H(m)+\Lambda_s(m): m \in \mathcal{M}_\sigma(\Sigma)\}$$
where the supremum is attained by some ergodic measure. We refer to this measure as the equilibrium state.
\end{prop}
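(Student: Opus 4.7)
The strategy is to verify the hypotheses of \cite[Theorem 1.1]{cao2008thermodynamic} for the potential $\Phi_s = \{\log \phi^s(A_{\i|n})\}_{n \geq 1}$ viewed as a sequence of continuous functions on the full shift $\Sigma = \I^{\mathbb{N}}$, and then to extract an ergodic equilibrium state by a standard ergodic decomposition argument. Since $\I$ is finite, $\Sigma$ is a compact metric space and $\sigma$ is a continuous map with finite topological entropy, which puts us in the setting of \cite{cao2008thermodynamic}.

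First I would check that $\Phi_s$ is an almost-subadditive potential in the sense of \cite{cao2008thermodynamic}. By Proposition \ref{submult}(2), since $\A \subset \SL_{>0}$ is finite, there exists $C = C(s) > 0$ such that $\phi^s(AB) \leq C\,\phi^s(A)\phi^s(B)$ for all $A,B \in \Se$. Hence the shifted potential $\widetilde{\Phi}_s := \Phi_s + \log C$ is genuinely subadditive under composition with $\sigma$, and replacing $\Phi_s$ by $\widetilde{\Phi}_s$ changes the pressure and the functional $H(m) + \Lambda_s(m)$ by the same additive constant $\log C$; thus it suffices to work with $\widetilde{\Phi}_s$. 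Moreover, $\widetilde{\Phi}_s$ clearly satisfies the regularity assumptions of \cite{cao2008thermodynamic} (each $\log\phi^s(A_{\i|n})$ depends only on the first $n$ coordinates of $\i$, hence is continuous on $\Sigma$, and is uniformly bounded on cylinders of length $n$).

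With this verified, \cite[Theorem 1.1]{cao2008thermodynamic} directly yields
\[
P_\A(s) \;=\; \sup\bigl\{H(m) + \Lambda_s(m) : m \in \mathcal{M}_\sigma(\Sigma)\bigr\},
\]
where we have used that the asymptotic average $\Lambda_s(m) = \lim_{n\to\infty} \frac{1}{n}\int \log \phi^s(A_{\i|n})\, dm$ exists by Kingman's subadditive ergodic theorem, together with the explicit identification of this limit in terms of Lyapunov exponents via the Oseledets theorem and the formula \eqref{svf}.

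To upgrade ``supremum'' to ``attained by an ergodic measure,'' I would argue as follows. The space $\mathcal{M}_\sigma(\Sigma)$ is compact in the weak-$*$ topology; the entropy map $m \mapsto H(m)$ is upper semi-continuous on shift spaces over finite alphabets; and by subadditivity the map $m \mapsto \Lambda_s(m) = \inf_{n\ge 1} \tfrac{1}{n}\int \log(C\phi^s(A_{\i|n}))\,dm - \log C$ is an infimum of weak-$*$ continuous affine functionals, hence upper semi-continuous and affine. Therefore the functional $m \mapsto H(m) + \Lambda_s(m)$ is upper semi-continuous on a compact convex set, so it attains its supremum at some $m_* \in \mathcal{M}_\sigma(\Sigma)$. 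Writing the ergodic decomposition $m_* = \int m_\omega\, d\omega$ and using that both $H$ and $\Lambda_s$ are affine on $\mathcal{M}_\sigma(\Sigma)$ (affinity of $\Lambda_s$ follows from the subadditive ergodic theorem applied componentwise), the value $H(m_\omega) + \Lambda_s(m_\omega)$ equals $P_\A(s)$ for $\omega$-a.e.\ component; selecting such a component produces the required ergodic equilibrium state. The main point requiring care is the verification of the Cao--Feng--Huang hypotheses for our specific $\phi^s$, but this reduces cleanly to the almost-submultiplicativity provided by Proposition \ref{submult}.
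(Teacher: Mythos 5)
Your proposal is correct and takes essentially the same approach as the paper, which simply cites Proposition~\ref{submult} for almost-submultiplicativity and then invokes \cite[Theorem 1.1]{cao2008thermodynamic}; you spell out the verification and the extraction of an ergodic equilibrium state in more detail, but the content is the same. One small imprecision: replacing each $\log\phi^s(A_{\i|n})$ by $\log\phi^s(A_{\i|n})+\log C$ changes neither $P_\A(s)$ nor $\Lambda_s(m)$ (the extra $\log C$ is killed by the $1/n$ normalisation), rather than shifting both by the additive constant $\log C$ as you state -- but since the point is only that the two sides move in lockstep, this does not affect the argument.
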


We will also require the following special case of \cite[Proposition 4.1]{morris2023variational}. 

\begin{prop}\label{morrissert}
Suppose $\A\subset \GL$ is irreducible. Let $G$ denote the Zariski closure of $\Se$ and $G_c$ denote the unique connected component of $G$ (with respect to the Zariski topology) which contains the identity. Let $m \in \mathcal{M}_\sigma(\Sigma)$. 

For all $\epsilon>0$ there exists $n \geq 1$ and $\J \subset \I^n$ such that:
\begin{enumerate}
\item the Zariski closure of $\{A_\j: \j \in \J^*\}$ is $G_c$,
\item $\# \J \geq e^{n(H(m)-\epsilon)}$,
\item for all $\ell \in \{1,2,3\}$,
$$\left|\sum_{r=1}^\ell \log \alpha_r(A_\j)-n|\j| \sum_{r=1}^\ell \chi_r(\mu) \right| \leq n|\j|\epsilon.$$
\end{enumerate}
\end{prop}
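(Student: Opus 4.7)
The plan is to combine standard ergodic-theoretic estimates (for conditions (2) and (3)) with an algebraic argument using Zariski density (for condition (1)). I interpret condition (3) as holding for $\j$ in $\J^*$, with $|\j|$ denoting the number of factors drawn from $\J$, so that $A_\j$ is a product of $|\j|$ matrices each of length $n$ in $\I$-symbols.

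The first step is to apply the Shannon--McMillan--Breiman theorem to $m$ (after passing to an ergodic component, if necessary, and using that $H$ and the $\chi_r$ are affine in $m$ to recombine) together with Kingman's subadditive ergodic theorem applied to the cocycles $\i \mapsto \log\|A_\i^{\wedge \ell}\|$ for $\ell\in\{1,2,3\}$. Using the identity $\sum_{r=1}^\ell \log\alpha_r(A) = \log\|A^{\wedge \ell}\|$, these classical results jointly furnish, for every $\epsilon>0$ and all sufficiently large $n$, a set $E_n \subset \I^n$ with $\#E_n \geq e^{n(H(m)-\epsilon/3)}$ on which the Lyapunov approximation holds for individual $\j \in E_n$ with error $n\epsilon/3$. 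The extension from individual words to concatenations $\j \in E_n^*$ is standard, combining submultiplicativity of $A\mapsto\|A^{\wedge\ell}\|$ (upper bound) with the quasimultiplicativity available under the irreducibility hypothesis (lower bound; cf.\ Lemma \ref{quasilem}).

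The second step handles condition (1). The quotient $G/G_c$ is a finite group and $\phi\colon G \to G/G_c$ is a morphism of algebraic varieties; by definition of $G$ as the Zariski closure of $\Se$, one has $\phi(\Se) = G/G_c$. Restricting $E_n$ to $\J := E_n \cap \phi^{-1}(e)$ ensures $\{A_\j : \j \in \J^*\} \subset G_c$ at the cost of losing a factor of at most $|G/G_c|$ in cardinality, adding only $(\log|G/G_c|)/n$ to the error in condition (2), absorbable into $\epsilon$ for large $n$. It then remains to verify that the Zariski closure $H$ of $\langle A_\j : \j \in \J^*\rangle$ is all of $G_c$.

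This last verification is the main obstacle. If $H \subsetneq G_c$ were a proper closed subgroup, then by Chevalley's theorem there would exist an algebraic representation of $G_c$ on some finite-dimensional $V$ and a nonzero vector whose projective class is fixed by every element of $H$. One then derives a contradiction either with the Zariski density of $\Se$ in $G$ (by inducing an algebraic invariant on the ambient group) or with the spectral behaviour of typical cylinders in $\J$ under this representation (via the Oseledets filtration, since $m$-typical trajectories cannot stabilise a projective class while exhibiting nontrivial expansion rates). After possibly enlarging $\J$ by a bounded collection of words drawn from a fixed finite generating set for a Zariski-dense subsemigroup of $G_c$ (whose existence is guaranteed, since $\phi(\Se) = G/G_c$ and $\Se$ is Zariski dense in $G$), the construction goes through; such enlargement affects neither the cardinality estimate nor the Lyapunov control for large $n$. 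The detailed algebraic-geometric implementation, in particular the systematic use of stabilisers of tensor representations to simultaneously rule out all proper closed subgroups of $G_c$, is precisely the substantive content of \cite[Proposition 4.1]{morris2023variational}, which I would invoke rather than reprove.
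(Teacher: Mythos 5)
The paper gives no proof of this proposition; it is simply presented as a special case of \cite[Proposition 4.1]{morris2023variational}, which is precisely what you invoke at the end of your proposal. Since you ultimately defer to the same citation, the two approaches coincide; your preparatory sketch (SMB and Kingman for (2)--(3), the quotient $G/G_c$ and a Chevalley-type argument for (1)) is an informative outline of what lies behind that citation, but it is not load-bearing here.
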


We are now ready to prove the lower bound in Theorem \ref{setthm}. We will prove it in the case that $s_\A \in (1,2]$, the other cases are similar. Let $\delta>0$ and choose $s>1$ such that $s_\A-\delta<s<s_\A$. Let $m \in \mathcal{M}_\sigma(\Sigma)$ be the equilibrium state associated to $P_\A(s)$. Since $P_\A(s)>0$ we can choose $\epsilon>0$ such that $P_\A(s) >7\epsilon$. 

We will now apply Proposition \ref{morrissert} with this choice of $m$ and $\epsilon$. Note that since $\SL$ is a Zariski closed and Zariski connected subset of $\GL$, $G=G_c=\SL$. Let $n \in \N$ and $\J \subset \I^n$ be guaranteed by that proposition and let $\beta$ be the uniform Bernoulli measure on $\J^\N$. Then Proposition \ref{morrissert} says that 
\begin{enumerate}
\item the Zariski closure of $\{A_\j: \j \in \J^*\}$ is $\SL$,
\item $H(\beta) \geq n(H(m)-\epsilon)$,
\item for all $\ell \in \{1,2,3\}$,
$$\left|\sum_{r=1}^\ell \chi_r(\beta)-n \sum_{r=1}^\ell \chi_r(m) \right| \leq n\epsilon.$$
\end{enumerate}

Since $\{A_j\}_{j \in \J}$  is a set of positive matrices which generate a Zariski dense semigroup in $\SL$ and which satisfy the SOSC, our Theorem \ref{msrthm} says that $\hd\Pi \beta=\ld \Pi\beta$. 

Moreover since
$$\Lambda_s(\cdot)=((\chi_1(\cdot)+\chi_2(\cdot))-2\chi_1(\cdot)+(s-1)((\chi_1(\cdot)+\chi_2(\cdot))-\chi_1(\cdot))$$
it follows from (3) that
$$|\Lambda_s(\beta)-n\Lambda_s(m)| \leq 5n\epsilon.$$
In particular
\begin{align*}
H(\beta)+\Lambda_s(\beta) &\geq nH(m)+n\Lambda_s(m)-6n\epsilon \\
&\geq n\epsilon(7-6)=\epsilon'>0.
\end{align*}
Rearranging this gives
$$\hd \Pi \beta=\ld \Pi\beta=1+ \frac{H(\beta)-(\chi_1(\beta)-\chi_2(\beta))}{\chi_1(\beta)-\chi_3(\beta)} \geq s+\frac{\epsilon'}{\chi_1(\beta)-\chi_3(\beta)} >s \geq s_\A-\delta.$$
Taking $\delta \to 0$ proves the result.

\subsubsection{Proof of upper bound}\label{sec:UB}

The upper bound will be a covering argument. For $A \in \SL_{>0}$ we will consider the the singular value decomposition of $A=VDU$ where $V,U$ are orthogonal matrices and
\begin{equation} \label{svd}
D=\begin{pmatrix}\alpha_2(A)&0&0\\0&\alpha_3(A)&0\\0&0&\alpha_1(A)\end{pmatrix}.
\end{equation} 

We begin with the following simple lemma.

\begin{lem} \label{singvector}
Let $\A\subset \SL_{>0}$ be finite or countable and suppose there exists a closed cone $K \subset \R^3_{>0}$ such that for all $A \in \A$, $A(\R^3_{\geq 0}) \subset K$ and $A^T(\R^3_{\geq 0}) \subset K$.

 Then there exists $C>0$ such that for all balls $\textup{B}(x,r) \subset \R^2$ and all $A \in \A$,
\begin{enumerate}[(a)]
\item $\varphi_U(\textup{B}(x,r))$ is contained in a ball of radius $Cr$,
\item $\varphi_V(\textup{B}(x,r))$ is contained in a ball of radius $Cr$,
\end{enumerate}
where $V,U$ come from the singular value decomposition \eqref{svd}. 
\end{lem}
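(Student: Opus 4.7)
The plan is to show that positivity together with the cone condition uniformly controls the denominators of the linear fractional transformations $\varphi_U$ and $\varphi_V$, so that their Lipschitz constants on bounded subsets of $\R^2_{\geq 0}$ are bounded by a constant depending only on $K$.

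\textit{Part (a).} I first observe that, with the convention $A=VDU$ and $D=\mathrm{diag}(\alpha_2,\alpha_3,\alpha_1)$, the third row $r_{U,3}=U^Te_3$ is the top right singular vector of $A$: indeed, $A^TA=U^TD^2U$ places the eigenvalue $\alpha_1(A)^2$ in the $(3,3)$ position. The hypotheses $A(\R^3_{\geq 0}),A^T(\R^3_{\geq 0})\subset K$ imply $A^TA(\R^3_{\geq 0})\subset K$, and a Perron--Frobenius argument realising $r_{U,3}$ as the normalised limit of $(A^TA)^n v_0$ for any $v_0\in\R^3_{>0}$ places $r_{U,3}$ inside $K\cap S^2$. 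Since $K$ is a closed cone contained strictly in $\R^3_{>0}$, the set $K\cap S^2$ is compact with every coordinate of every point bounded below by a universal $c=c(K)>0$. Hence $\langle r_{U,3},\tilde{x}\rangle\geq c$ for $\tilde{x}=(x_1,x_2,1)$ with $x_1,x_2\geq 0$; together with the uniform upper bound on the numerators (the rows $r_{U,1},r_{U,2}$ being unit vectors), a direct Jacobian estimate yields the Lipschitz bound for $\varphi_U$.

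\textit{Part (b).} I apply the same reasoning to $A^T$, which satisfies the same cone hypothesis and has SVD $A^T=U^TDV^T$, so that its ``right-singular'' matrix is $V^T$; this places the top right singular vector of $A^T$, equal to $Ve_3$, in $K\cap S^2$. Consequently $\varphi_{V^T}$ is uniformly Lipschitz on bounded subsets of $\R^2_{\geq 0}$ by the argument of part (a). Writing $\varphi_V=\varphi_{V^T}^{-1}$, and using that $\varphi_{V^T}$ is a smooth bijection with uniformly bounded, non-vanishing Jacobian on such a set, it follows that $\varphi_V$ is also Lipschitz on the image $\varphi_{V^T}(\Omega)$ of a suitable bounded $\Omega$.

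The main obstacle is the asymmetry between (a) and (b): in contrast to $r_{U,3}$, the third row $r_{V,3}$ of $V$ need not lie in $K$ for positive $A$ (one can verify this on symmetric positive examples), so the denominator of $\varphi_V$ cannot be controlled directly by a Perron--Frobenius argument. The inverse-map detour via $V^T$ is the natural workaround, and the proof must set up the relevant bounded domain so that the balls of interest (arising in the covering argument for the upper bound of $\hd K_\A$) lie inside an appropriate image $\varphi_{V^T}(\Omega)$.
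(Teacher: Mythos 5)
Your proof is correct, and part (a) coincides with the paper's argument: identify $r_{U,3}=U^Te_3$ as the Perron vector of $A^TA$, use the cone hypothesis to place it in $K$, and read off a uniform lower bound on the denominator of $\varphi_U$. For part (b) you take a genuinely different route. You are right that the row $r_{V,3}=V^Te_3$ governing the denominator of $\varphi_V$ is not the top left singular vector $\mathbf{v}_A=Ve_3$ and need not lie in $K$; you circumvent this by writing $\varphi_V=\varphi_{V^T}^{-1}$, noting that $A^T$ has SVD $U^TDV^T$ whose right factor $V^T$ has third row equal to $\mathbf{v}_A\in\mathrm{P}(K)$, so that $\varphi_{V^T}$ is uniformly Lipschitz by the part-(a) mechanism, and then inverting via the uniformly bounded, non-vanishing Jacobian. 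The paper instead argues directly: it decomposes $V$ as a rotation of the $(e_1,e_2)$-plane followed by the rotation carrying $e_3$ to $\mathbf{v}_A$, and reads off a Lipschitz bound for $\varphi_V$ controlled by $v_A=F^{-1}(\mathbf{v}_A)$, which ranges over the fixed compact set $F^{-1}(\mathrm{P}(K))$. Both arguments pivot on the single fact $\mathbf{v}_A=Ve_3\in\mathrm{P}(K)$, but they exploit it differently. The paper uses it to bound the denominator of $\varphi_V$ at the origin through the single entry $V_{33}$ (the third coordinate of $\mathbf{v}_A$); this suffices because the covering argument only ever applies $\varphi_V$ to small balls near $0$, and it is shorter once one accepts the rotation decomposition. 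Your inversion detour is more systematic and does not depend on the location of the balls, but, as you flag, it requires the additional step of producing a uniform domain $\Omega$ with $\varphi_{V^T}(\Omega)$ covering the balls of interest, which the paper's geometric shortcut avoids.
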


\begin{proof}Let $\mathbf{u}_A \in \mathrm{P}(\R^3)$ be the eigenvector corresponding to the leading eigenvalue of $A^TA$ and let $\mathbf{v}_A=A\mathbf{u}_A$. Recall the definition of $F$ from \S \ref{induced}. Let $u_A=F^{-1}(\mathbf{u}_A)$ and $v_A=F^{-1}(\mathbf{v}_A)$ be the corresponding points in $\R^2$. By assumption $\mathbf{u}_A, \mathbf{v}_A \in \mathrm{P}(K)$, hence $u_A, v_A \in F^{-1}(\mathrm{P}(K))$ which is a compact subset of $\R^2$. Since $U$ is an orthogonal matrix which rotates $\mathbf{u}_A$ to the line in the direction $(0,0,1)$, followed by a (possibly trivial) rotation of the plane spanned by $(1,0,0)$ and $(0,1,0)$, there exists a constant $C_A=O(|u_A|)$ such that the diameter of the image of any ball $\varphi_U(\textup{B}(x,r))$ must be at most $C_Ar$. Similarly $V$ is an orthogonal matrix which can be thought of a (possibly trivial) rotation of the plane spanned by $(1,0,0)$ and $(0,1,0)$ followed by a rotation which sends the direction spanned by $(1,0,0)$ to the direction $\mathbf{v}_A$, thus there exists a constant $C_A'=O(|v_A|)$ such that the diameter of the image of any ball $\varphi_V(\textup{B}(x,r))$ must be at most $C_A'r$. Since $u_A$ and $v_A$ belong to the compact set $F^{-1}(\mathrm{P}(K))$, we are done.\end{proof}

Now, fix $s > s_\A$. Note it is sufficient to assume $s_\A \in [0,2)$. We will assume that $s_\A \in [1,2)$ and comment on how to make the necessary adaptations for the other case at the end. Fix $\delta>0$ and let $B \subset \R^2$ be any ball which contains $K_\A$. Let $r(B)$ denote the radius of the ball $B$, noting that $r(B) =O( \mathrm{diam} K_\A)$. Consider
$$\I_\delta=\left\{\i \in \I^*: \frac{\alpha_3(A_\i)}{\alpha_1(A_\i)} \leq \delta< \inf_{1 \leq n \leq |\i|-1}\frac{\alpha_3(A_{\i|n})}{\alpha_1(A_{\i|n})}\right\}.$$
Let $\i \in \I_\delta$ and let $A_\i=V_\i D_\i U_\i$ be the singular value decomposition of $A_\i$ as in \eqref{svd}.

Since $\A$ is a finite subset of $\SL_{>0}$, Lemma \ref{singvector} is applicable, which infers that $\varphi_{U_\i}(B)$ is contained in a ball $B'_\i$ of radius $r(B'_\i) \leq Cr(B)$ where $0<C(K)=C<\infty$ is a constant which depends only on $K$. Since $\varphi_{D_\i}(x,y)=(\frac{\alpha_2(A_\i)}{\alpha_1(A_\i)} x, \frac{\alpha_3(A_\i)}{\alpha_1(A_\i)}y)$, $\varphi_{D_\i}(B_\i')$ is an ellipse with semiaxes of length $\frac{\alpha_2(A_\i)}{\alpha_1(A_\i)}r(B_\i')$ and $\frac{\alpha_3(A_\i)}{\alpha_1(A_\i)}r(B'_\i)$. So $\varphi_{D_\i}(B'_\i)$ can be covered by a collection $B_{\delta,\i}$ of balls of radius $\frac{\alpha_3(A_\i)}{\alpha_1(A_\i)}r(B'_\i)$, where $\# B_{\delta,\i}=O( \frac{\alpha_2(A_\i)}{\alpha_3(A_\i)})$. For each ball $B'' \in B_{\delta,\i}$, $\varphi_{V_\i}(B'')$ is contained in a ball of radius $C\frac{\alpha_3(A_\i)}{\alpha_1(A_\i)}r(B'_\i)$ by Lemma \ref{singvector}. 

Since $K_\A= \bigcup_{\i \in\I_\delta} \varphi_{A_\i}(K_\A)$, $\varphi_{A_\i}=\varphi_{V_\i}\circ\varphi_{D_\i}\circ\varphi_{U_\i}$ and by definition of $\I_\delta$,
$$\bigcup_{\i \in \I_\delta} \bigcup_{B'' \in B_{\delta,\i}} \varphi_{V_\i}(B'')$$
is a cover of $K_\A$ by sets of diameter at most $C^2 \delta r(B)$. Hence the approximate Hausdorff measure
$$\mathcal{H}^s_{C^2\delta r(B)}(K_\A) \leq \sum_{\i \in \I_\delta}C^{2s}\frac{\alpha_2(A_\i)}{\alpha_1(A_\i)}\left(\frac{\alpha_3(A_\i)}{\alpha_1(A_\i)}\right)^{s-1}r(B)^s \leq C^{2s}r(B)^s \zeta_\A(s)$$
hence $\mathcal{H}^s(K_\A) <\infty$ and $\hd K_\A \leq s$. This proves the result in the case that $s_\A \in [1,2)$.

In the case that $s_\A \in [0,1)$ we modify the argument by replacing $\I_\delta$ by 
$$\I_\delta'=\left\{\i \in \I^*: \frac{\alpha_2(A_\i)}{\alpha_1(A_\i)} \leq \delta< \inf_{1 \leq n \leq |\i|-1}\frac{\alpha_2(A_{\i|n})}{\alpha_1(A_{\i|n})}\right\}$$ and we instead cover each $\varphi_{D_\i}(B'_\i)$ by a single ball $B''_\i$ of radius $\frac{\alpha_3(A_\i)}{\alpha_1(A_\i)} r(B'_\i)$. Then following the argument as before we obtain 
$$\mathcal{H}^s_{C^2\delta r(B)}(K_\A) \leq \sum_{\i \in \I_\delta'}C^{2s}\left(\frac{\alpha_2(A_\i)}{\alpha_1(A_\i)}\right)^sr(B)^s \leq C^{2s}r(B)^s \zeta_\A(s)$$
again giving that $\hd K_\A \leq s$.

\begin{rem}
Note that the only time the finiteness of $\A$ is used is to deduce that the hypothesis of Lemma \ref{singvector} is satisfied. This means that the upper bound also holds in the case that $\A \subset \SL_{>0}$ is countable and satisfies the uniform cone contraction hypothesis of Lemma \ref{singvector}.
\end{rem}

\subsection{Proof of Theorem \ref{rauzythm}}

In this section we will restrict to the set $\A_R$ \eqref{rauzysystem} which generates the Rauzy gasket $R=K_{\A_R}$. Since $\A_R$ is not a subset of $\SL_{>0}$, Theorem \ref{setthm} will not be applicable directly. However we will show that $\hd R$ can be approximated by a sequence $\hd K_{\Gamma_N}$ where each $\Gamma_N\subset \Se$ is a finite set which can be simultaneously conjugated to a subset of $\SL_{>0}$, which is therefore amenable to Theorem \ref{setthm}.

Denote
$$\Gamma:= \{A_1^nA_2, A_1^nA_3, A_2^nA_1, A_2^nA_3, A_3^nA_1, A_3^nA_2\}_{n \geq 1} \subset \Se.$$

Let $\Gamma_N$ be an increasing sequence of finite subsets of $\Gamma$ with the property that $\Gamma=\bigcup_N \Gamma_N$. 

\begin{prop} \label{gammadom}
\begin{enumerate}
\item   $R \setminus K_\Gamma$ is a countable set of points.
\item $\Gamma$ can be simultaneously conjugated to a set of matrices which satisfies the hypothesis of Proposition \ref{submult}.
\end{enumerate}
\end{prop}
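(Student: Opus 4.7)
The plan is to establish (1) and (2) separately.

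For part (1), I would use the coding map $\Pi:\{1,2,3\}^{\N}\to R$ associated with the Rauzy IFS. Each element of $\Gamma$ corresponds to a finite word in $\{1,2,3\}^*$ of the form $i^nj$ with $i\ne j$ and $n\ge1$, so a sequence in $\Gamma^{\N}$ codes (via concatenation) an element of $\{1,2,3\}^{\N}$ decomposable into such blocks; this is precisely the condition of having no eventually constant tail. Conversely, any non-eventually-constant sequence admits a unique such decomposition by reading off maximal runs of a single symbol. Hence $K_\Gamma=\Pi(\Sigma')$, where $\Sigma'\subset\{1,2,3\}^{\N}$ is the set of non-eventually-constant sequences, and therefore $R\setminus K_\Gamma\subseteq\Pi(\{1,2,3\}^{\N}\setminus\Sigma')$ is the image of the countable set of eventually constant sequences.

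For part (2), the subtlety is that no cone strictly contained in $\R^3_{\ge0}$ can be $\Gamma$-invariant: as $n\to\infty$, $\tfrac1n A_i^nA_j$ converges projectively to the rank-one matrix $e_i\otimes v_{ij}$ for a positive row vector $v_{ij}$, so images of $A_i^nA_j$ concentrate on the direction $e_i\in\partial\R^3_{\ge0}$. Instead I would work with a cone slightly \emph{larger} than $\R^3_{\ge0}$: for small $\epsilon>0$, set
\[
K_\epsilon:=\{x\in\R^3 : x_\ell+\epsilon(x_m+x_k)\ge 0 \text{ for each cyclic permutation }(\ell,m,k)\text{ of }(1,2,3)\}.
\]
This is a polyhedral cone whose three extremal rays are $(1+\epsilon,-\epsilon,-\epsilon)$ and its cyclic permutations; it is invariant under coordinate permutations and contains $\R^3_{\ge0}$ (and in particular each vertex $e_i$) in its interior. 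Since $A_2=P_{12}A_1P_{12}$ and $A_3=P_{13}A_1P_{13}$ for the obvious transposition matrices, both $\Gamma$ and $K_\epsilon$ carry a compatible $S_3$-symmetry, so it suffices to verify $A_1^nA_2\,K_\epsilon\subseteq\mathrm{int}(K_\epsilon)$ for all $n\ge1$. Parameterizing a boundary point on the face $\{x_3+\epsilon(x_1+x_2)=0\}$ by the two extremal rays spanning it and substituting into $A_1^nA_2$, the critical strict inequality (for the image still to lie strictly on the positive side of this face) reduces to $(n+1)x_1+nx_2>\epsilon(2n+1)(x_1+x_2)$, which is ensured by $\epsilon<n/(2(n+1))$; uniformly in $n\ge1$, this is guaranteed by $\epsilon<1/4$. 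The two other face-inequalities are checked analogously and impose strictly weaker constraints on $\epsilon$.

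Taking $C$ so that $C^{-1}=(1+2\epsilon)I-\epsilon J$ (where $J$ is the all-ones matrix), the columns of $C^{-1}$ are precisely the three extremal rays of $K_\epsilon$, so $C\Gamma C^{-1}\subset\SL_{>0}$. For the uniform min/max entry-ratio required by Proposition~\ref{submult}, one writes $A_i^nA_j=nL_{ij}+M_{ij}$ where $L_{ij}=e_i\otimes v_{ij}$ is rank-one and $M_{ij}$ is bounded in $n$, so $CA_i^nA_jC^{-1}=n\,CL_{ij}C^{-1}+CM_{ij}C^{-1}$. The leading matrix $CL_{ij}C^{-1}=(Ce_i)\otimes(v_{ij}C^{-1})$ is strictly positive: $C=\tfrac{1}{1+2\epsilon}I+\tfrac{\epsilon}{(1+2\epsilon)(1-\epsilon)}J$ has strictly positive entries, making $Ce_i$ strictly positive, while a direct computation (e.g.\ $(1,1,2)C^{-1}=(1-2\epsilon,1-2\epsilon,2)$, or the analogous formulae for other $v_{ij}$) shows $v_{ij}C^{-1}>0$ coordinatewise for $\epsilon<1/2$. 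Since the correction $CM_{ij}C^{-1}$ is $O(1)$ while the leading term grows like $O(n)$, the min/max entry-ratio of $CA_i^nA_jC^{-1}$ converges as $n\to\infty$ to the positive ratio of $CL_{ij}C^{-1}$; it follows that for each of the six families this ratio is bounded below by a positive constant uniformly in $n\ge1$, and taking the minimum over the six families gives a uniform $c>0$.

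The principal obstacle is the explicit verification that the single cone $K_\epsilon$ is strictly invariant under \emph{every} element of the infinite set $\Gamma$: although checking invariance for a single $A_i^nA_j$ is a routine linear-algebra computation, the uniform-in-$n$ threshold $\epsilon<1/4$ is the quantitative content, and is precisely what forces the cone to be taken slightly larger than $\R^3_{\ge0}$—any cone strictly contained in $\R^3_{\ge0}$ would miss $e_i$, and large powers of $A_i^n$ would then send parts of the cone outside it.
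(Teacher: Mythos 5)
Your proof of part (1) is essentially identical to the paper's: both pass to the symbolic coding and observe that $R\setminus K_\Gamma$ is contained in the image of the countably many eventually-constant sequences. Your reformulation in terms of decomposability into blocks $i^nj$ is fine (though the decomposition is really ``maximal run, plus the single symbol that terminates it'', not maximal runs alone).

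For part (2) you take a genuinely different route. The paper's proof is purely computational: it conjugates by $M_\epsilon = (1+\epsilon)I-\epsilon J$ (with $\epsilon=1/5$), writes out $A_i^nA_jM_\epsilon$ and $M_\epsilon^{-1}$ explicitly, and reads off that every entry of $M_\epsilon^{-1}A_i^nA_jM_\epsilon$ is $\Theta_\epsilon(n)$. You instead proceed geometrically: you exhibit a polyhedral cone $K_\epsilon \supset \R^3_{\ge 0}$ that is strictly invariant under every $A_i^nA_j$ (with the quantitative threshold $\epsilon<1/4$ coming from the binding face inequality at $n=1$), which gives positivity of the conjugates $CA_i^nA_jC^{-1}$; you then use the rank-one decomposition $A_i^nA_j = nL_{ij}+M_{ij}$ to show the min/max entry ratio converges to the strictly positive ratio of $CL_{ij}C^{-1}$, and hence is uniformly bounded below once combined with positivity at finitely many small $n$. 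Your conjugation matrix $C^{-1}=(1+2\epsilon)I-\epsilon J$ differs only in normalisation from the paper's $M_\epsilon$; both are of the form $\alpha I-\epsilon J$ whose columns pick out a slightly dilated positive simplicial cone. The cone-invariance viewpoint is more conceptual and explains \emph{why} such a conjugation must exist and why the cone must strictly contain $\R^3_{\ge 0}$ (as you correctly note, no cone inside the positive orthant is invariant because the images degenerate to $e_i$); the paper's route is shorter and avoids having to verify nine face/ray inequalities. One small caution: the single displayed inequality you extract is necessary but you do need all of $A_i^nA_j\,r\in\mathrm{int}(K_\epsilon)$ for each of the three extremal rays $r$; you wave at the rest as ``analogous'' and ``weaker,'' which is true but worth stating explicitly. (Incidentally the paper's displayed $M_\epsilon^{-1}$ appears to contain a typographical slip in the normalisation, but this does not affect either argument.)
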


\begin{proof}
To prove (1) we appeal to the underlying symbolic coding. Denote $\Sigma=\{1,2,3\}^\N$ and $\Sigma'=\Gamma^\N$ so that $R=\Pi(\Sigma)$ and $K_\Gamma=\Pi(\Sigma')$ where $\Sigma \setminus \Sigma'$ consists of all sequences which end in either $1^\infty$, $2^\infty$ or $3^\infty$. Since there are only countably many such sequences, the claim follows.

Next we turn to proving (2).  For $i \neq j$, $A_i^nA_j$ is characterised as follows: the $i$th row is defined by: $a_{ij}=n$, $a_{ii}=n+1$ and $a_{ik}=2n$ (where $k \neq i,j$); the $j$th row is $(1,1,1)$ and the $k$th row is $e_k$. For example, 
$$A_1^nA_2= \begin{pmatrix} n+1&n&2n \\1&1&1\\ 0&0&1 \end{pmatrix}.$$

We will conjugate each matrix in $\Gamma$ by
$$M_\epsilon= \begin{pmatrix} 1&-\epsilon&-\epsilon \\-\epsilon&1&-\epsilon\\ -\epsilon&-\epsilon&1 \end{pmatrix}$$
for some sufficiently small $\epsilon$ in order to verify (2). Let us write $X=\Theta(n)$ if there exist constants $0<c_1<c_2<\infty$ which depend only on $\Gamma$ such that $c_1 \leq \frac{X}{n} \leq c_2$ and $X=\Theta_\epsilon(n)$ if there exist constants $0<c_1<c_2<\infty$ which depend only on $\Gamma$ and $\epsilon$ such that $c_1 \leq \frac{X}{n} \leq c_2$.  It is sufficient to find $\epsilon>0$ such that each entry in any matrix in the family $\{M_\epsilon^{-1}AM_\epsilon\}_{A \in \Gamma}$ is equal to $\Theta_\epsilon(n)$. 

It is easy to see that for $\epsilon\leq\frac{1}{5}$, for any $A_i^nA_j \in \Gamma$, $A_i^nA_jM_\epsilon$ will have the $i$th row consisting of entries $\Theta(n)$, the $j$th row is $(1-2\epsilon,1-2\epsilon,1-2\epsilon)$ and the $k$th row ($k \neq i,j$) is $e_k$, e.g.
$$A_1^nA_2M=\begin{pmatrix}n+1-3n\epsilon& n-(3n+1)\epsilon&2n-(2n+1)\epsilon \\1-2\epsilon&1-2\epsilon&1-2\epsilon\\ 0&0&1 \end{pmatrix}.$$
Fix $\epsilon=\frac{1}{5}$. Since 
$$M_\epsilon^{-1}=\frac{1}{1-\epsilon^2}\begin{pmatrix} 1-\epsilon^2&\epsilon+\epsilon^2&\epsilon+\epsilon^2 \\ \epsilon+\epsilon^2&1-\epsilon^2&\epsilon+\epsilon^2\\ \epsilon+\epsilon^2&\epsilon+\epsilon^2&1-\epsilon^2 \end{pmatrix}$$ 
it is easy to see that for any $A \in \Gamma$, $M_\epsilon^{-1}AM_\epsilon$ has all of its entries equal to $\Theta_\epsilon(n)$.  This completes the proof of (2).

\end{proof}
\begin{cor}\label{approx}
$\sup_N s_{\Gamma_N}=s_\Gamma$.
\end{cor}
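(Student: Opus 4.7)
Plan. One direction is immediate: since $\Gamma_N\subset\Gamma$, every partition sum $\sum_{A\in\Gamma_N^n}\phi^s(A)$ is bounded by $\sum_{A\in\Gamma^n}\phi^s(A)$, hence $\zeta_{\Gamma_N}(s)\leq \zeta_\Gamma(s)$ for all $s\geq 0$ and so $\sup_N s_{\Gamma_N}\leq s_\Gamma$. The content lies in the reverse inequality; for this I will show that for every $t\in(1,s_\Gamma)$ there exists $N=N(t)$ such that $s_{\Gamma_N}\geq t$, then let $t\uparrow s_\Gamma$.

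By Proposition \ref{gammadom}(2), after simultaneously conjugating $\Gamma$ by the fixed matrix $M_\epsilon$, we land in $\SL_{>0}$ with uniformly bounded entry ratios, so Proposition \ref{submult} applies to the conjugated family. Because the conjugation is by a fixed invertible $M_\epsilon$, the three singular values (and hence $\phi^s$) of each $A\in\Gamma^*$ are preserved up to multiplicative constants depending only on $\Vert M_\epsilon\Vert\Vert M_\epsilon^{-1}\Vert$, so $\phi^s$ is almost-submultiplicative on $\Gamma^*$ and the exponential contraction $\alpha_2(A)/\alpha_1(A)\leq Cr^n$ from Proposition \ref{submult}(1) holds for every $A\in\Gamma^n$. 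Consequently the pressure
\[P_\Gamma(t):=\lim_{n\to\infty}\frac{1}{n}\log\sum_{A\in\Gamma^n}\phi^t(A)=\inf_{n}\frac{1}{n}\log\Big(C_0\sum_{A\in\Gamma^n}\phi^t(A)\Big)\]
is well-defined whenever $\sum_{A\in\Gamma}\phi^t(A)<\infty$. The explicit estimates $\alpha_1(A_i^nA_j)\asymp n$, $\alpha_2(A_i^nA_j)\asymp 1$, $\alpha_3(A_i^nA_j)\asymp n^{-1}$ show that $\sum_{A\in\Gamma}\phi^t(A)<\infty$ exactly for $t>1$, which (together with the known lower bound $\hd R>1$) confirms $s_\Gamma>1$ and pins down the admissible range $t\in(1,s_\Gamma)$ for the rest of the argument.

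I will then establish two properties of $P_\Gamma$. Strict positivity for $t<s_\Gamma$: since $\zeta_\Gamma(t)=\infty$ and the partition sums are almost-submultiplicative, the infimum expression above forces $P_\Gamma(t)\geq 0$; picking $t'\in(t,s_\Gamma)$ and using Proposition \ref{submult}(1) to bound $\phi^t(A)\geq c^{t'-t}r^{-n(t'-t)}\phi^{t'}(A)$ uniformly for $A\in\Gamma^n$ yields $P_\Gamma(t)\geq (t'-t)\log(1/r)+P_\Gamma(t')>0$. Approximation $P_{\Gamma_N}(t)\to P_\Gamma(t)$: the upper bound is immediate. For the matching lower bound, fix $n$ and appeal to monotone convergence to choose $N$ so that $\sum_{A\in\Gamma_N^n}\phi^t(A)\geq \tfrac12\sum_{A\in\Gamma^n}\phi^t(A)\geq (2C_0)^{-1}e^{nP_\Gamma(t)}$. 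Since $\Gamma$ is irreducible (by Zariski density of $\Se$), Corollary \ref{quasiprop} supplies a finite set of gluing words of length at most $\ell$; enlarging $N$ further so that $\Gamma_N$ contains every letter appearing in these finitely many words lets us iterate the quasi-multiplicative inequality entirely inside $\Gamma_N^*$. Choosing a subsequence of levels $nk+(k-1)\ell$ along which the glued sum dominates $c^{k-1}\big(\sum_{A\in\Gamma_N^n}\phi^t(A)\big)^k$ and taking $k\to\infty$ gives
\[P_{\Gamma_N}(t)\geq \frac{1}{n+\ell}\log\Big(c'\sum_{A\in\Gamma_N^n}\phi^t(A)\Big)\geq \frac{n}{n+\ell}P_\Gamma(t)-\frac{O(1)}{n+\ell}.\]
Letting $N\to\infty$ first and then $n\to\infty$ yields $\liminf_N P_{\Gamma_N}(t)\geq P_\Gamma(t)$.

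Combining the two properties, $P_{\Gamma_N}(t)>0$ for all sufficiently large $N$. Since each $\Gamma_N$ is a finite subset of (conjugated) $\SL_{>0}$ satisfying the hypothesis of Proposition \ref{submult}, $P_{\Gamma_N}$ is continuous, strictly decreasing and convex in $s$ with unique zero $s_{\Gamma_N}$, so $P_{\Gamma_N}(t)>0$ forces $s_{\Gamma_N}>t$. Letting $t\uparrow s_\Gamma$ completes the argument. The main technical point is the pressure approximation step: converting a single-level lower bound on $\sum_{A\in\Gamma_N^n}\phi^t(A)$ into a bound on the asymptotic growth rate $P_{\Gamma_N}(t)$ demands that the gluing words from Corollary \ref{quasiprop} all lie inside $\Gamma_N$, and the whole scheme only kicks in once $N$ is large enough to absorb this fixed, finite pool of gluing matrices.
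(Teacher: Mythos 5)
Your proof is correct and follows the same overall scheme as the paper: reduce both inequalities to pressure statements, note the easy direction from monotonicity, and conclude by showing $P_{\Gamma_N}(s)>0$ for large $N$ once $s<s_\Gamma$. The point of departure is that where the paper simply cites \cite[Proposition 3.2]{kaenmaki2014multifractal} for the identity $P_\Gamma(s)=\sup_N P_{\Gamma_N}(s)$, you re-derive the required one-sided approximation from scratch using quasimultiplicativity (Corollary \ref{quasiprop}), and you supplement this with an explicit argument that $P_\Gamma(t)>0$ when $t<s_\Gamma$ (the paper gets this immediately from the earlier observation that $s_\A$ is the unique zero of the strictly decreasing pressure function). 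Your re-derivation correctly isolates the one genuinely delicate point: the finite pool of gluing words supplied by quasimultiplicativity must be absorbed into $\Gamma_N$, which is precisely why the argument only works once $N$ is large. This is the same mechanism that underlies the cited Käenmäki--Reeve result, so at bottom the two proofs rest on the same idea; yours is more self-contained while theirs is shorter. Two small points worth tightening: (i) your restriction to $t\in(1,s_\Gamma)$ leans on $\hd R>1$, but since $\hd R\le s_{\A_R}=s_\Gamma$ is only established later in \S\ref{finalsection}, it would be cleaner to simply note that if $s_\Gamma\le 1$ there is nothing more to prove, or to use the partition-sum divergence directly; (ii) the gluing step as written needs a word of care because the gluing word $\k$ depends on the pair $(\i,\j)$ — one sums the quasimultiplicative inequality over all pairs and uses that each glued word is counted at most $|\Gamma_0|$ times, exactly as in \cite{kaenmaki2014multifractal} — but this is a detail rather than a gap.
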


\begin{proof}
Since $\Gamma_N \subset \Gamma$, clearly $s_{\Gamma_N} \leq s_\Gamma$. Now, let $s<s_\Gamma$. By Proposition \ref{gammadom}(2) and Proposition \ref{submult}, the singular value function $\phi^s$ is almost-submultiplicative on $\Se_\Gamma$ and $\Se_N$ hence $s_\Gamma$ and $s_{\Gamma_N}$ are unique zeros of $P_\Gamma$ and $P_{\Gamma_N}$ respectively. Hence it is sufficient to show that $P_{\Gamma_N}(s)>0$ for some $N \in \N$. 

By the quasimultiplicativity (Proposition \ref{quasiprop}) and almost-submultiplicativity (Lemma \ref{submult}) of $\phi^s$, we can apply \cite[Proposition 3.2]{kaenmaki2014multifractal} to deduce that $P_\Gamma(s)=\sup_N P_{\Gamma_N}(s)$, from which the claim follows.\end{proof}

Let $\Se_R$ denote the semigroup generated by $\A_R$, $\Se_\Gamma$ denote the semigroup generated by $\Gamma$ and $\Se_N$ denote the semigroup generated by $\Gamma_N$. The following lemma determines that to prove Theorem \ref{rauzythm} it will suffice to prove that the dimension of $K_\Gamma$ is given in terms of $s_\Gamma$.

\begin{lem}\label{gamma=a}
$s_{\A_R}=s_\Gamma$.
\end{lem}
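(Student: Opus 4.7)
For the easy direction $s_\Gamma \le s_{\A_R}$, observe that each element of $\Gamma$ has the shape $A_i^n A_j$ with $j\ne i$, so any word $g_1\cdots g_k\in\Gamma^+$ admits a unique decomposition into its $\Gamma$-blocks, obtained by scanning the underlying $\{1,2,3\}$-string left-to-right and closing a block at the first letter change. Consequently the natural map $\bigsqcup_{n\ge1}\Gamma^n\hookrightarrow\bigsqcup_{m\ge1}\A_R^m$ is injective, so $\zeta_\Gamma(s)\le\zeta_{\A_R}(s)$ termwise for every $s\ge 0$, and therefore $s_\Gamma\le s_{\A_R}$.

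For the reverse direction I plan to exploit the canonical decomposition of $\A_R^+$ into three disjoint families. Every nonempty word has a unique maximal suffix of repeated letters, so
\[
\bigsqcup_{m\ge 1}\A_R^m \;=\; T\,\sqcup\,\Gamma^+\,\sqcup\,(\Gamma^+\!\cdot T), \qquad T:=\{A_i^n : i\in\{1,2,3\},\,n\ge1\}.
\]
This splits $\zeta_{\A_R}(s)$ into three sums. A direct singular-value computation yields $\alpha_1(A_i^n)\sim n\sqrt 2$, $\alpha_2(A_i^n)=1$, $\alpha_3(A_i^n)\sim 1/(n\sqrt 2)$, whence $\phi^s(A_i^n)\asymp n^{-(2s-1)}$ for $s\in[1,2]$ (and $\asymp n^{-s}$ for $s\in[0,1]$); the same asymptotic holds for any $A_i^nA_j\in\Gamma$, and therefore $\zeta_\Gamma(s)\ge\sum_n n^{-(2s-1)}=\infty$ for every $s\le 1$, forcing $s_\Gamma\ge 1$. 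The $T$-summand is then automatically finite for $s>s_\Gamma$, the $\Gamma^+$-summand is $\zeta_\Gamma(s)$ itself, and the entire argument reduces to controlling the cross term via the uniform key estimate $(\ast)$:
\[
\phi^s(A_gA_t)\;\le\;C\,\phi^s(A_g)\,\phi^s(A_t) \qquad \text{for all } g\in\Gamma^+,\,t\in T.
\]
Granted $(\ast)$, the cross term is bounded by $C\,\zeta_\Gamma(s)\cdot\sum_{t\in T}\phi^s(A_t)<\infty$ for $s>s_\Gamma$, giving $\zeta_{\A_R}(s)<\infty$ and hence $s_{\A_R}\le s_\Gamma$.

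To prove $(\ast)$ I would conjugate by the matrix $M=M_\epsilon$ of Proposition \ref{gammadom}(2). Setting $\tilde A_g:=M^{-1}A_gM\in\SL_{>0}$ (uniformly in $g\in\Gamma^+$, with uniformly bounded entry ratios, by the proof of that proposition) and $C:=M^{-1}A_tM$, conjugation by $M$ changes $\phi^s$ by a multiplicative constant depending only on $M$, so it suffices to show $\phi^s(\tilde A_gC)\lesssim \phi^s(\tilde A_g)\,\phi^s(C)$. Writing $\phi^s=\norm{\cdot^{\wedge 2}}^{2-s}/\norm{\cdot}^{1+s}$ for $s\in[1,2]$ (the remaining ranges being analogous), submultiplicativity of the exterior norm handles the numerator, and the whole problem reduces to the lower bound $\norm{\tilde A_gC}\gtrsim\norm{\tilde A_g}\cdot\norm{C}$. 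I would obtain this by evaluating $\tilde A_gC$ on the top right singular vector $v^*$ of $C$: since $A_i^n$ collapses almost every direction onto $e_i$, one has $Cv^*\approx\alpha_1(C)\cdot M^{-1}e_i$, and $M^{-1}e_i$ lies in the open positive cone with bounded entry ratios; uniform positivity of $\tilde A_g$ then gives $\norm{\tilde A_g\cdot M^{-1}e_i}\gtrsim\norm{\tilde A_g}$, whence $\norm{\tilde A_gCv^*}\gtrsim\norm{\tilde A_g}\cdot\alpha_1(C)=\norm{\tilde A_g}\cdot\norm{C}$.

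The principal technical obstacle is precisely this last lower bound: the approximation $Cv^*\approx\alpha_1(C)\cdot M^{-1}e_i$ becomes sharp only asymptotically in $n$, so finitely many small $n$ must be handled by a separate compactness argument verifying that the top left singular directions of the finite family $\{M^{-1}A_i^nM\}_{n\le N_0,\,i\in\{1,2,3\}}$ all carry a non-trivial positive-cone component, and hence are not contracted to zero by uniformly positive matrices.
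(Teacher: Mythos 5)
Your easy direction is the same as the paper's (injectivity of the $\Gamma$-block decomposition, i.e.\ $\Gamma^{+}\subset\mathbf S_R$ gives $\zeta_\Gamma\le\zeta_{\A_R}$ termwise), and you use the same three-way decomposition $T\sqcup\Gamma^{+}\sqcup(\Gamma^{+}\!\cdot T)$ of $\A_R^{+}$. But your treatment of the cross term is genuinely different and considerably heavier than the paper's. The paper's device is simply to \emph{append one more letter}: it invokes the bounded-distortion fact that $\phi^s(B)\le C\,\phi^s(BA_i)$ for all $B\in\SL$ and each fixed generator $A_i$ (this is \cite[Lemma 1]{bochi2009some}, a statement about singular values changing by a bounded factor under right multiplication by a single fixed matrix). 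Each word ending in a maximal tail $A_i^n$ is then sent injectively to a $\Gamma^+$-word $\cdot A_i^nA_j$, and the $T$ and cross sums are absorbed into $C\,\zeta_\Gamma(s)$ with no analysis of $\phi^s$ over products. That argument costs two lines. Your route instead requires the almost-supermultiplicativity estimate $(\ast)$ over $\Gamma^{+}\times T$, which you reduce to a norm lower bound $\norm{A_gA_t}\gtrsim\norm{A_g}\,\norm{A_t}$; this is a non-trivial claim because the $A_i^n$ are parabolic, and one must control the top singular direction of $A_i^n$ uniformly in $n$.

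There is a concrete weak point in the way you propose to finish $(\ast)$. After conjugating by $M_\epsilon$, the matrix $C=M_\epsilon^{-1}A_i^nM_\epsilon$ is \emph{not} nonnegative (its eigenvalues are all $1$, so $C$ cannot be positive, and indeed $CC^{T}$ has sign-changing entries). Thus Perron--Frobenius does not apply to $C$, and the top left singular direction $u^{*}_{C}$ need not lie in the positive orthant. Your proposed remedy --- a compactness check that the finitely many small-$n$ singular directions "carry a non-trivial positive-cone component" --- is not sufficient as stated: if $u^{*}_{C}=u_{+}-u_{-}$ with both $u_{\pm}$ nonnegative, then since a uniformly positive $\tilde A_g$ aligns both $\tilde A_g u_{+}$ and $\tilde A_g u_{-}$ toward the same narrow cone, the difference $\tilde A_g u^{*}_{C}$ can nearly cancel, and $\norm{\tilde A_g u^{*}_{C}}\gtrsim\norm{\tilde A_g}$ fails. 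What you actually need is $u^{*}_{C}$ lying in a fixed compact subset of the \emph{closed} positive cone, uniformly in $n$. This you do get if you avoid the conjugation at this step: for the unconjugated $A_i^{n}$, the matrix $A_i^{n}(A_i^{n})^{T}$ is nonnegative and irreducible, so its Perron eigenvector $u^{*}_{A_i^{n}}$ is strictly positive for every finite $n$ and converges to $e_i$; then $\{u^{*}_{A_i^{n}}\}\cup\{e_i\}$ is a compact set of nonnegative unit vectors and $\norm{A_g u}\gtrsim\norm{A_g}$ over that set follows from the positivity properties of $\Se_\Gamma$ inherited from Proposition \ref{gammadom}(2). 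So the route is salvageable, but only after repairing the argument in the form you wrote it; and it remains substantially more laborious than the paper's one-line appeal to bounded distortion.

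Two minor points you should also fix if you pursue this: the exponent for $s\le 1$ is $n^{-s}$, not $n^{-(2s-1)}$, though the conclusion $s_\Gamma\ge 1$ is unaffected since the two agree at $s=1$; and when you apply $(\ast)$ you should check (as you implicitly do) that the map from $\Gamma^{+}\times T$ into $\A_R^{+}$ induced by concatenation is injective, which holds by the uniqueness of the maximal suffix of repeated letters.
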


\begin{proof}
Since $\Gamma \subset \Se_R$, clearly $s_\Gamma \leq s_{\A_R}$. 

To see the other direction, note that
\begin{align*}
\zeta_{\A_R}(s)&=\zeta_\Gamma(s)+\sum_{\i \in \Gamma^n} \sum_{n \in \N} \phi^s(A_\i A_1^n)+ \phi^s(A_\i A_2^n)+ \phi^s(A_\i A_3^n)\\
&\leq C \zeta_\Gamma(s)+\sum_{\i \in \Gamma^n} \sum_{n \in \N} \phi^s(A_\i A_1^nA_2)+ \phi^s(A_\i A_2^nA_1)+ \phi^s(A_\i A_3^nA_1)\\
&\leq 2C\zeta_\Gamma(s)
\end{align*}
where in the second line we have used that there exists $C<\infty$, depending only on $A_1,A_2,A_3$ and $s$, such that for all $i \in \{1,2,3\}$ and $B \in \SL$, $\phi^s(B) \leq C\phi^s(BA_i)$, see e.g. \cite[Lemma 1]{bochi2009some}. Thus $s_{\A_R} \leq s_\Gamma$ completing the proof.

\end{proof}

Before proving the main result, the final thing left to prove is that for sufficiently large $N$, $\Se_N $ is Zariski-dense in $\SL$. 

\begin{prop}
For sufficiently large $N$ $\Se_N $ is Zariski dense in $\SL$. 
\label{zariskiprop}
\end{prop}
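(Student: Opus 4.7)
The strategy is to reduce Zariski density of $\Se_N$ to that of the Rauzy semigroup $\Se_R$, exploiting the fact recalled in \S\ref{alg} that the Zariski closure of any subsemigroup of $\SL$ is an algebraic subgroup of $\SL$, and in particular is closed under inversion.

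First I would establish Zariski density of $\Se_R$ itself. Each $A_i$ is unipotent with $N_i := A_i - I$ nilpotent of index two, so $A_i^n = I + n N_i$, and the infinite set $\{A_i^n\}_{n \geq 1}$ is Zariski dense in the affine line $\{I + tN_i : t \in \R\}$, which is the one-parameter subgroup $\{\exp(tN_i) : t \in \R\}$ of $\SL$. Thus each $N_i$ lies in the Lie algebra $\mathfrak{h}$ of $\overline{\Se_R}$. A direct bracket computation then shows that $N_1, N_2, N_3$ generate $\mathfrak{sl}_3$ as a Lie algebra, so $\mathfrak{h} = \mathfrak{sl}_3$; since $\SL$ is connected, this forces $\overline{\Se_R} = \SL$.

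Now set $H_N := \overline{\Se_N}$. Since $H_N$ is a group, the algebraic identity
\[
(A_i^{n+1} A_j)(A_i^n A_j)^{-1} = A_i \qquad (i \neq j,\ n \geq 1)
\]
shows that whenever $\Se_N$ contains both $A_i^n A_j$ and $A_i^{n+1} A_j$, then $A_i \in H_N$; subsequently, if also $A_i A_j \in \Se_N$ and $A_i \in H_N$, then $A_j = A_i^{-1}(A_i A_j) \in H_N$. In particular the three-element subset $\{A_1 A_2,\, A_1^2 A_2,\, A_2 A_3\} \subset \Gamma$ forces $A_1, A_2, A_3 \in H_N$: from $A_1 A_2$ and $A_1^2 A_2$ one extracts $A_1$, hence $A_2 = A_1^{-1}(A_1 A_2)$; combining $A_2$ with $A_2 A_3$ then yields $A_3$. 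Since $\{\Gamma_N\}$ is an increasing exhaustion of $\Gamma$, there exists $N_0$ with $\Gamma_N \supseteq \{A_1 A_2, A_1^2 A_2, A_2 A_3\}$ for all $N \geq N_0$. For such $N$ we have $H_N \supseteq \overline{\Se_R} = \SL$, as required.

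The main obstacle is the bracket verification in the first step, namely showing that the three explicit nilpotents $N_1, N_2, N_3$ generate all of $\mathfrak{sl}_3$ under iterated Lie brackets. This is a direct but slightly tedious computation with $3 \times 3$ matrices, requiring one to exhibit eight linearly independent elements of $\mathfrak{sl}_3$ obtained by nested commutators of $N_1, N_2, N_3$. Once this is in hand, the remainder of the proof is essentially formal, relying only on the group structure of the Zariski closure and an elementary exhaustion argument.
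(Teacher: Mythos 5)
Your proposal is correct, and both halves take genuinely different routes from the paper.

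For the Zariski density of the full Rauzy semigroup, the paper fixes a polynomial $P$ vanishing on $\Se_\Gamma$, uses the identity $P(A_1^nA_2)=0$ for all $n\in\N$ to place the algebraic curve $x\mapsto B_x$ inside the closure, and then extracts Lie algebra elements by differentiating these curves; it then exhibits eight linearly independent elements by taking two brackets. You instead exploit the unipotency of the generators directly: since $N_i := A_i - I$ squares to zero, $A_i^n = I + nN_i$, so the arithmetic progression $\{A_i^n\}_{n\ge 1} \subset \Se_R$ Zariski-closes to the one-parameter subgroup $\exp(tN_i)$, putting $N_i$ into the Lie algebra $\mathfrak{h}$ of the closure. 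The bracket computation you leave implicit does check out (e.g.\ $N_1,N_2,N_3$, the three brackets $[N_i,N_j]$, and two second-order brackets such as $[N_1,[N_2,N_3]]$ and $[N_3,[N_1,N_2]]$ are linearly independent, so $\mathfrak{h}=\mathfrak{sl}_3$); this is arguably cleaner than the paper's version since the $N_i$ are simpler than the derivatives $C_i$ the paper works with, and it avoids the parabolic generators $\Gamma$ entirely, passing directly to $\Se_R$.

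For the reduction from $\Se_\Gamma$ (or $\Se_R$) to $\Se_N$, the paper invokes \cite[Lemma 3.7]{morris2023variational}, which says roughly that if a semigroup has Zariski-connected closure then some finite subsemigroup already generates a Zariski-dense subsemigroup. You replace this black box with an explicit and elementary argument: since the Zariski closure $H_N$ of $\Se_N$ is a group, the telescoping identity $(A_i^{n+1}A_j)(A_i^n A_j)^{-1} = A_i$ shows that three particular words from $\Gamma$ (namely $A_1A_2$, $A_1^2A_2$, $A_2A_3$) suffice to recover $A_1, A_2, A_3$ in $H_N$, hence $H_N \supseteq \overline{\Se_R} = \SL$. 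As $\Gamma_N$ is an increasing exhaustion of $\Gamma$, these three words lie in $\Gamma_N$ for all $N$ past some finite $N_0$. This is self-contained, uses nothing beyond the group property of the Zariski closure already stated in \S\ref{alg}, and moreover gives an explicit $N_0$. The only thing missing is the actual bracket verification, which you correctly flag as the one nontrivial check; it is straightforward and true.
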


\begin{proof}We begin by proving that $\Se_\Gamma $ is Zariski-dense in $\SL$. Let $\mathfrak{sl}(3,\R)$ denote the special linear Lie algebra which consists of $3 \times 3$  matrices with trace zero and Lie bracket $[X,Y]=XY-YX$ and which generates the Lie group $\SL$. Let $P: \SL \to \R$ be a polynomial map such that $P(A)=0$ for all $A \in \Se_\Gamma $. As we saw in Proposition \ref{gammadom}, for $i \neq j$, $A_i^nA_j$ is characterised as follows: the $i$th row is defined by: $a_{ij}=n$, $a_{ii}=n+1$ and $a_{ik}=2n$ (where $k \neq i,j$); the $j$th row is $(1,1,1)$ and the $k$th row is $e_k$. For concereteness let us take $i=1, j=2$ so
$$A_1^nA_2= \begin{pmatrix} n+1&n&2n \\1&1&1\\ 0&0&1 \end{pmatrix}.$$
For all $n \in \N$, $P(A_1^nA_2)=0$. Let $q(x)$ be the polynomial in one real variable 
$$q(x)=P\left(\begin{pmatrix} x+1&x&2x \\1&1&1\\ 0&0&1 \end{pmatrix}\right).$$
Since $q(x)=0$ for all $x \in \N$ and $q$ is a polynomial, $q \equiv 0$. Write $G$ to be the Zariski closure of $\Se_\Gamma $, recalling that $G$ is a Lie group. Thus we have
$$B_x=\begin{pmatrix} x+1&x&2x \\1&1&1\\ 0&0&1 \end{pmatrix} \in G$$
for all $x \in \R$. Now consider the map $x \mapsto B_x$ which is a smooth algebraic curve in $G$, and in particular
$$M_1=\frac{d}{dx} B_x |_{x=0} = \begin{pmatrix} 1&1&2\\0&0&0\\ 0&0&0 \end{pmatrix} \in \mathfrak{g}$$
where $\mathfrak{g}$ denotes the Lie algebra of $G$. By following this reasoning for the other generators $A_i^nA_j$ we also deduce that
$$C_2=\begin{pmatrix} 1&2&1 \\0&0&0\\ 0&0&0\end{pmatrix}, C_3=\begin{pmatrix} 0&0&0\\2&1&1 \\ 0&0&0\end{pmatrix},C_4=\begin{pmatrix} 0&0&0\\1&1&2 \\ 0&0&0\end{pmatrix}, C_5=\begin{pmatrix} 0&0&0 \\ 0&0&0\\2&1&1\end{pmatrix}, C_6=\begin{pmatrix} 0&0&0 \\ 0&0&0\\1&2&1\end{pmatrix} \in \mathfrak{g}.$$
Moreover, by taking Lie brackets we deduce that
$$C_7=[C_1,C_5]=C_1C_5-C_5C_1=\begin{pmatrix} 4&2&2 \\ 0&0&0\\-2&-2&-4\end{pmatrix} \in \mathfrak{g}$$
and
$$C_8=[C_2,C_4]=C_2C_4-C_4C_2=\begin{pmatrix} 2&2 &4\\ -1&-2&-1\\0&0&0\end{pmatrix}\in \mathfrak{g}.$$
It is easy to see that $\{C_i\}_{i=1}^8$ is a linearly independent subset of $\mathfrak{g}$. However, since $\mathfrak{g}$ must be a subalgebra of the Lie algebra of $\SL$, which is 8 dimensional, this implies that $\mathfrak{g}$ is the Lie algebra of $\SL$ hence $\Se_\Gamma $ is Zariski dense in $\SL$. 

Finally, to prove the proposition note that $\Se_\Gamma$ is a subsemigroup of $ \Se_R$ and so the Zariski closure of $\Se_R$ is $\SL$, which is a Zariski closed and connected subset of $\GL$, thus the claim for $\Gamma_N$ with $N$ sufficiently large follows from \cite[Lemma 3.7]{morris2023variational}.
\end{proof}

We can now prove Theorem \ref{rauzythm}. For the lower bound,
$$\hd R \geq \sup_N \hd K_{\Gamma_N}=\sup_N s_{\Gamma_N}=s_\Gamma=s_{\A_R}$$
where we have used that for sufficiently large $N$, each $\Gamma_N$ is simultaneously conjugate to a subset of $\SL_{>0}$ which satisfies the SOSC and such that $\Se_N$ is Zariski dense in $\SL$, along with Theorem \ref{setthm}, as well as Corollary \ref{approx} and Lemma \ref{gamma=a}.

 For the upper bound 

$$\hd R=\hd K_\Gamma\leq s_\Gamma=s_{\A_R}$$
where the first equality is because $R\setminus K_\Gamma$ is countable and the final by Lemma \ref{gamma=a}. To see the second, we employ almost an identical covering argument to \S \ref{sec:UB}. Indeed, by Proposition \ref{gammadom}, $\Gamma$ can be simultaneously conjugated to a subset $\Gamma_\epsilon \subset\SL_{>0}$ which maps $\R^2_{>0}$ into a compact subset of itself, therefore Remark \ref{singvector} is also applicable to $\Gamma_\epsilon$ and the proof follows exactly as it did in \S \ref{sec:UB}.

\bibliographystyle{plain}
\bibliography{bibfile}

\end{document}